\newsavebox{\measurebox}
\theoremstyle{plain}
\newtheorem{lemma}{Lemma}[section]
\newtheorem{theorem}[lemma]{Theorem}
\newtheorem{corollary}[lemma]{Corollary}
\newtheorem{proposition}[lemma]{Proposition}
\theoremstyle{remark}
\newtheorem{remark}{Remark}
\newtheorem{definition}[lemma]{Definition}
\newcommand{\qq}{\mathfrak{q}}
\newcommand{\pp}{\mathfrak{p}}
\newcommand{\R}{\mathbb{R}}
\newcommand{\N}{\mathbb{N}}
\newcommand{\q}{\bm{q}}
\newcommand{\M}{\mathcal{M}}
\newcommand{\NN}{\mathcal{N}}
\newcommand{\SSS}{\mathcal{S}}
\newcommand{\Z}{\mathbb{Z}}
\newcommand{\PP}{\mathcal{P}}
\newcommand{\Nhalf}{\frac{1}{2}\N}
\DeclareMathOperator{\dt}{dt}
\DeclareMathOperator{\Cov}{Cov}
\DeclareMathOperator{\suc}{succ}
\author[G.~Chapuy]{Guillaume Chapuy}
 \address{IRIF UMR CNRS 8243, Universit\'e Paris 7, Case 7014, 75205 Paris Cedex 13,
France\newline \indent CRM UMI CNRS 3457, Universit\'e de Montr\'eal, Montréal QC H3C 3J7, Canada
}
\email{guilaume.chapuy@liafa.univ-paris-diderot.fr}
\author[M.~Dołęga]{Maciej Dołęga}
\address{
Wydział Matematyki i Informatyki, 
Uniwersytet im.~Adama Mickiewicza, 
Collegium Mathematicum,
Umultowska 87, 
61-614 Poznań, 
Poland, \newline \indent Instytut Matematyczny,
Uniwersytet Wrocławski,  \mbox{pl.\ Grunwaldzki~2/4,} 50-384
Wrocław, Poland}
\email{maciej.dolega@amu.edu.pl}
\thanks{G.C. and M.D. acknowledge support from {\it Agence Nationale de la Recherche}, grant ANR 12-JS02-001-01 ``Cartaplus''. G.C. acknowledges support from {\it Ville de Paris}, grant ``Émergences 2013, Combinatoire à Paris''.}
\subjclass[2010]{Primary: 05C30, Secondary: 05C10, 05C12, 60C05}
\title[A bijection for general rooted maps]
{A bijection for rooted maps on general surfaces}
\begin{document}

\maketitle

\begin{abstract}

We extend the Marcus-Schaeffer bijection between orientable rooted bipartite quadrangulations (equivalently: rooted maps) and orientable labeled one-face maps to the case of \emph{all} surfaces, that is orientable and non-orientable as well. This general construction requires new ideas and is more delicate than the special orientable case, but it carries the same information. In particular, it leads to a uniform combinatorial interpretation of the counting exponent $\frac{5(h-1)}{2}$ for both orientable and non-orientable rooted connected maps of Euler characteristic $2-2h$, and of the algebraicity of their generating functions, 
similar to the one previously obtained in the orientable case via the Marcus-Schaeffer bijection.
It also shows that the renormalization factor $n^{1/4}$ for distances between vertices is universal for maps on all surfaces: the renormalized profile and radius in a uniform random pointed bipartite quadrangulation on any fixed surface converge in distribution when the size $n$ tends to infinity. Finally, we extend the 
Miermont and Ambjørn-Budd bijections
to the general setting of all surfaces. Our construction opens the way to the study of Brownian surfaces for any compact 2-dimensional manifold.
\end{abstract}

\section{Introduction}

\subsection{Maps}
\emph{Maps} (a.k.a. ribbon graphs, or embedded graphs) are combinatorial structures that describe the embedding of a graph in a surface (see Section~\ref{sec:def} for precise definitions). These objects have received much attention from many different viewpoints, because of their deep connections with various branches of discrete mathematics, algebra, or physics (see e.g.~\cite{LandoZvonkin2004, Eynard:book} and references therein). 
In particular, maps have remarkable enumerative properties, and the enumeration of maps (either by generating functions, matrix integral techniques, algebraic combinatorics, or bijective methods) is now a well established domain on its own. The reader may consult~\cite{AP, BF, Chapuy:PhD, Bernardi:HZ} for entry points into this fast-growing literature.
This paper is devoted to the extension of the bijective method of map enumeration to the case of \emph{all} surfaces (orientable and non-orientable), and to its first consequences in terms of combinatorial enumeration and probabilistic results.

\subsection{Orientable surfaces} Let us first recall briefly 
the situation in the orientable case. 
 A fundamental result of Bender and Canfield~\cite{BC1}, obtained with generating functions, says that the number $m_g(n)$ of rooted maps with $n$ edges on the orientable surface of genus~$g\geq 0$ (obtained by adding $g$ handles to a sphere, see Section~\ref{sec:def}) is asymptotically equivalent to
\begin{align}\label{eq:tgintro}
m_g(n) \sim t_g n^{\frac{5(g-1)}{2}} 12^n, \ \ \ n\rightarrow \infty,
\end{align}
for some $t_g>0$. In the planar case ($g=0$) this follows from the exact formula $m_0(n) = \frac{2\cdot 3^n (2n)!}{(n+2)!n!}$ due to Tutte~\cite{Tutte:census}, whose combinatorial interpretation was given by Cori-Vauquelin~\cite{CV} and much improved by Schaeffer~\cite{Schaeffer:phd}. The bijective enumerative theory of planar maps has since grown into a domain of research of its own, out of the scope of this introduction; consult~\cite{AP,BF} and references therein.  For general $g$, the combinatorial interpretation of Formula~\eqref{eq:tgintro}, and, in particular, of the counting exponent $\frac{5(g-1)}{2}$ was given in~\cite{ChapuyMarcusSchaeffer2009},
 using an extension of the Cori-Vauquelin-Schaeffer bijection to the case of higher genus orientable surfaces previously given by Marcus and Schaeffer in~\cite{MS}.
The bijection of~\cite{MS,ChapuyMarcusSchaeffer2009} associates maps on a surface with labeled one-face maps on the same surface. The latter have a much simpler combinatorial structure, and can be enumerated by elementary ways. 
Moreover, the bijective toolbox proved to be relatively flexible, and enabled to prove formulas similar to~\eqref{eq:tgintro} for many different families of orientable maps~\cite{Chapuy:constellations}, extending results previously obtained by generating functions~\cite{Gao}.

Beyond the combinatorial interpretation of counting formulas, an important motivation for developing the bijective methods is that they are the cornerstone of the study of random maps. In the planar case, Schaeffer's bijection was the starting point of the study of distance properties of random planar maps~\cite{ChassaingSchaeffer}. The field has now much developed, culminating with the proof by Le Gall~\cite{LG:GH} and Miermont~\cite{Miermont:GH} that uniform random quadrangulations, rescaled by $n^{1/4}$, converge in distribution to the so-called Brownian map (see the introduction of these papers for exact statements and for more references). For higher genus orientable surfaces, Bettinelli used the Marcus-Schaeffer bijection~\cite{MS, ChapuyMarcusSchaeffer2009} to prove the existence of scaling limits for uniform quadrangulations of fixed genus rescaled by the same exponent $n^{1/4}$, and to study some of their properties~\cite{Bettinelli1, Bettinelli2}.

\subsection{Non-orientable surfaces and main results} From the viewpoint of generating functions, orientable and non-orientable surfaces 
share many features.
In particular, in the paper previously mentioned~\cite{BC1}, Bender and Canfield 
showed, by the same method, that the number $\tilde{m}_h(n)$ of rooted maps with $n$ edges on the non-orientable surface of type $h\geq \frac{1}{2}$ (obtained by adding $2h$ cross-caps to a sphere, see Section~\ref{sec:def}; here $h$ is either an integer or a half-integer) is asymptotically equivalent to
\begin{align}\label{eq:pgintro}
\tilde{m}_h(n) \sim p_h n^{\frac{5(h-1)}{2}} 12^n, \ \ \ n\rightarrow \infty,
\end{align}
for some $p_h>0$. 
However, from the viewpoint of 
existing
bijective methods, the orientable and non-orientable situations are \emph{very} different: indeed all the existing bijections for maps (including the ones of~\cite{MS,ChapuyMarcusSchaeffer2009}) \emph{crucially} use the existence of a global orientation of the surface as a starting point of their construction. In particular, the existing literature says nothing about distance properties of random maps on non-orientable surfaces since it lacks the bijective tools to study them.

The main achievement of this paper is to bridge this gap. We are able to drop the assumption of orientability in the Marcus-Schaeffer construction~\cite{MS,ChapuyMarcusSchaeffer2009} and hence extend it to the case of  \emph{all} surfaces (orientable or non-orientable). For any surface $\mathbb{S}$ there is a  bijection between rooted maps on $\mathbb{S}$ (more precisely, rooted bipartite quadrangulations) and labeled one-face maps on $\mathbb{S}$. This is done at the cost of using only a \emph{local} orientation in the construction instead of a \emph{global} one: the local rules of the bijection are the same as in Marcus-Schaeffer bijection, but the local orientation is recursively constructed in a careful way that in the orientable case is consistent with the global orientation. In particular, the correspondence between \emph{distances} in the quadrangulation and \emph{labels} in the one-face map is preserved.
As a consequence, not only does the bijection lead to a \emph{uniform combinatorial interpretation} of both the orientable~\eqref{eq:tgintro} and non-orientable~\eqref{eq:pgintro} counting formulas, but it also enables to study distances in uniform random maps on a fixed surface. In particular we show that the exponent $n^{1/4}$ is \emph{universal} for distances in maps on \emph{all} surfaces in the following sense: the profile and radius of a uniform random bipartite quadrangulation of size $n$ on a fixed surface converge in distribution, after renormalization by $n^{1/4}$, as $n$ tends to infinity.

Finally, we also extend Miermont's generalization~\cite{Miermont} of the Marcus-Schaeffer bijection to the non-orientable case. It follows that the bijection of Ambjørn and Budd~\cite{AmbjornBudd} can be extended as well.

\subsection{Forthcoming work.} Our construction opens the way to the study of Gromov-Hausdorff scaling limits of random maps of size $n$, renormalized by $n^{1/4}$, on any fixed surface. 
More precisely, for any surface $\mathbb{S}$, let $\qq_n$ be a random map on $\mathbb{S}$ uniformly distributed over the set of rooted bipartite quadrangulations with~$n$ faces. In the forthcoming paper \cite{BettinelliChapuyDolega2015} joint with Jérémie Bettinelli, we will  
 prove that the sequence $\frac{1}{n^{1/4}}\qq_n$ converges to a limiting space $\mathbb{S}_{Brownian}$ in the sense of the Gromov--Hausdorff topology, and that this space has almost surely Haussdorff dimension $4$. These results, which require a fine probabilistic study of the objects inherited from our bijection, crucially rely on the present paper.

\subsection{Related problems}
As we already said, to our knowledge, the bijective enumeration of general maps and the study of distance properties of random maps on non-orientable surfaces have not been addressed before.
However, combinatorial maps on non-orientable surfaces have already been studied from different perspectives. First, there already exist bijective constructions for maps on non-orientable surfaces~\cite{BernardiChapuy:NonOrientable, Bernardi:HZ}, but they deal with the case of \emph{one-face maps}. This is a different problem than the one studied here, involving a very different structure (and although both these constructions and the present work involve one-face maps, we do not know of any interesting way to combine them together).
In a different direction, combinatorics of non-orientable maps was proved to play an important role in understanding the structure of the \emph{double coset algebra of the hyperoctahedral group} \cite{Goulden1996a, MoralesVassilieva2011} and, thus, some \emph{random real matrix models} \cite{HanlonStanleyStembridge1992, Goulden1997}, the structure of \emph{Jack symmetric functions} \cite{Goulden1996, LaCroix2009, DolegaFeraySniady2013} and the  structure of \emph{$\beta$-ensemble models} \cite{LaCroix2009}. It would be interesting to make a connection between these topics and the viewpoint of the present paper.

\subsection{Open question}
We conclude this introduction by mentioning an open question that naturally follows our work.
One may ask whether we can generalize our construction to maps more general than bipartite quadrangulations, for example bipartite maps with given face degrees, as was done in the orientable case~\cite{Chapuy:constellations}. Although it is natural to conjecture that this is possible (given the universal form of the generating functions appearing in~\cite{Gao}), certain arguments in the present paper crucially depend on the fact that we deal with quadrangulations only. We thus leave this question open.

\subsection{Structure of the paper} We now describe the structure of the paper and explain where to find the main results. Section~\ref{sec:def} sets up basic notation and definitions. Section~\ref{sec:bij} deals with the main construction and is divided as follows: the main result is stated as \Cref{theo:MS1} in \Cref{subsect:BijectionOverview}, and a quick overview of the bijection announced by this theorem is also given there. The bijection is fully described in Section~\ref{sec:DEG}, and the converse bijection is given in \Cref{sec:reversebij}, where we also establish some important properties of the construction, in particular a ``distance bounding lemma'' (\cref{lem:DistancesOfGeneric}) that is not used in the present paper but will be crucial for further work on scaling limits. The remaining proofs are given in \cref{sec:bijection}.
Section~\ref{sec:consequences} investigates the consequences of the bijection from the asymptotic and probabilistic viewpoint. This section is mostly devoted to sketching how to apply existing techniques to the present case, since once the bijection is established, most of the consequences we mention follow from the exact same arguments as in the orientable case. In particular we show how to compute combinatorially the generating functions of rooted bipartite quadrangulations on a fixed surface with our approach (\Cref{theo:rationality} and \Cref{prop:explicitGF}), how to derive combinatorially the counting exponents (\Cref{theo:asymptotics}), we study explicitly -- and bijectively -- the case of the projective plane (\Cref{cor:PP}), and we state various probabilistic consequences on the convergence of the normalized distances in \cref{theo:Distributions}.
Section~\ref{sec:Miermont} extends the construction to the case of multipointed maps, \textit{i.e.} it deals with the extension of the Miermont and Ambjørn-Budd bijections to the case of all surfaces, see~\cref{theo:Miermont2} and~\cref{theo:AB}.

\section{Surfaces, maps, and quadrangulations}
\label{sec:def}

\subsection{Surfaces, graphs, and maps}

A \emph{surface} is a compact, connected, $2$-dimensional manifold. We consider surfaces up to homeomorphism.
Denote $\N = \{0, 1, 2, 3, \dots \}$ and $\Nhalf = \{0, \frac{1}{2}, 1, \frac{3}
{2}, \dots \}$. 
 For any $h \in \N$, we denote by $\SSS_h$ the torus of
genus $h$, that is, the \emph{orientable} surface obtained by adding $h$ \emph{handles} to the sphere. For any
$h \in \Nhalf\setminus\{0\}$, we denote by $\NN_h$ the \emph{non-orientable} surface obtained by adding $2h$ \emph{cross-caps} to the
sphere. Hence, $\SSS_0$ is the sphere, $\SSS_1$ is the torus, $\NN_{1/2}$ is the projective plane and $\NN_1$ is the
Klein bottle. The \emph{type} of the surface $\SSS_h$ or $\NN_h$ is the number $h$. By the theorem of classification, surfaces are either orientable or non-orientable, each orientable surface is homeomorphic to one of the $\SSS_h$, and each non-orientable
surface is homeomorphic to one of the $\NN_h$ (see e.g. \cite{MoharThomassen2001}).

\medskip

Our \emph{graphs} are finite and undirected; loops and multiple
edges are allowed. A \emph{map} is an embedding (without edge-crossings) of a connected graph
into a surface, in such a way that the \emph{faces} (connected components of the complement of the
graph) are simply connected. Maps are always considered up to homeomorphism. A map is
\emph{unicellular} if it has a single face. Unicellular maps are also called \emph{one-face maps}. We will call a map \emph{orientable} if the underlying surface is orientable; otherwise we will call it \emph{non-orientable}.

Each edge in a map is made of two \emph{half-edges}, obtained by removing its middle-point.
The \emph{degree of a vertex} is the number of incident half-edges. A \emph{leaf} is a vertex of degree $1$. A
\emph{corner} in a map is an angular sector determined by a vertex, and two half-edges which are
consecutive around it. The \emph{degree of a face} is the number of edges incident
to it, with the convention that an edge incident to the same face on both sides
counts for two. Equivalently, the degree of a face is the number of corners lying in that face. Note that the total number of corners in a map equals the number of half-edges, and also equals twice the number of edges. A map is \emph{rooted} if it is equipped with a distinguished half-edge
called the \emph{root}, together with a distinguished side of this half-edge. The vertex incident to the
root is the \emph{root vertex}. The unique corner incident to the root half-edge and its distinguished
side is the \emph{root corner}.

Note that an equivalent way to root a map is to choose for it a root corner and an orientation of this corner: one can then define the root half-edge as the one lying to the right of the root corner (viewed from the root vertex and according to the root corner orientation). In this paper we will use both conventions depending on the situation, and we will switch from one to the other without explicit mention. 

On pictures, we will represent rooted maps by shading the root corner and by indicating the side of the root half-edge that is incident to it.
 \emph{From now on, all maps are rooted}.

The \emph{type} $h(\M)$ of a map $\M$ is the type of the underlying surface, that is to say, the Euler
characteristic of the surface is $2 - 2h(\M)$. If $\M$ is a map, we let $V(\M)$, $E(\M)$ and $F(\M)$ be its
sets of vertices, edges and faces. Their cardinalities $v(\M), e(\M)$ and $f(\M)$ satisfy the \emph{Euler formula}:
\begin{equation}
\label{eq:euler}
e(\M) = v(\M) + f(\M) - 2 + 2h(\M).
\end{equation}

\subsection{Representation of a unicellular map}
\label{subsec:RepresentationOfMap}

Since unicellular maps play an important role in this paper, we first spend some time discussing their basic properties, and how to represent them in several ways.

Since by definition the unique face of a unicellular map is simply connected, cutting the surface along the edges of a unicellular map with $n$ edges gives rise to a polygon with $2n$ \emph{edge-sides} (a $2n$-gon). This polygon inherits a root corner from the map, and can be oriented thanks to the distinguished side of the root half-edge. Conversely, any unicellular map can be obtained from a rooted oriented $2n$-gon, by gluing edge-sides of the polygon by pairs. It is important to note that given two edge-sides of a polygon, there are two ways of gluing them together: 
\begin{itemize}
\item
we call \emph{orientable
gluing} the identification of the edge-sides giving a topological cylinder; the corresponding edge in the map is called \emph{straight};
\item
we call \emph{non-orientable gluing}
the identification giving a M\"obius band; the corresponding edge in the map is called \emph{twisted}.
\end{itemize}

Observe that, if the edge-sides of the rooted $2n$-gon are oriented into a directed
cycle, then the orientable gluings (non-orientable, respectively) are the one for which  the two edge-sides glued together have opposite (the same) orientations along the gluing.
This corresponds to the fact that there are two kinds of edges in the corresponding unicellular map: straight edges are characterized by the fact that walking along the border of the map, we visit their two sides in opposite direction. On the contrary, the two sides of a twisted edge are visited twice in the same direction when following the border of the map. Note that there is no standard terminology for these two types of edges: for example, straight and twisted edges are sometimes called \emph{two-way} or \emph{one-way} edges~\cite{BernardiChapuy:NonOrientable}.
The set of the straight (twisted, respectively) edges of a unicellular map $\M$ will by denoted by $E_s(\M)$ ($E_t(\M)$, respectively). It is easy to see that $\M$ is orientable iff $E_t(\M) = \emptyset$.

Let $\M$ be a unicellular map and consider its underlying $2n$-gon. If we number the sides of the $2n$-gon from $1$ to $2n$ starting from the root corner and following its orientation, then $\M$ naturally induces a \emph{matching} of the set $[2n] := \{1,2, \dots,2n\}$ (two edge-sides of the $2n$-gon are matched if and only if they are glued together). Clearly, the knowledge of that matching and of the set of edges that are twisted or straight, is enough to reconstruct the polygon gluing, hence the map.
Consequently, there is a one-to-one correspondence between unicellular maps with $n$ edges and pairs of \emph{matchings} $\PP(E_s(\M)), \PP(E_t(\M))$ such that:
\begin{itemize}
\label{item:con1}
\item
they match different points: $\bigcup \PP(E_s(\M)) \cap \bigcup \PP(E_t(\M)) = \emptyset$;
\item
\label{item:con2}
their union $ \PP(E_s(\M)) \cup \PP(E_t(\M))$ is a perfect matching of the set $[2n]$.
\end{itemize}
See \cref{fig:RepresentationOfMap} for an illustration.

\begin{figure}
\centering
\subfloat[]{
	\label{subfig:RepresentationOfMap1}
	\includegraphics[height=65mm]{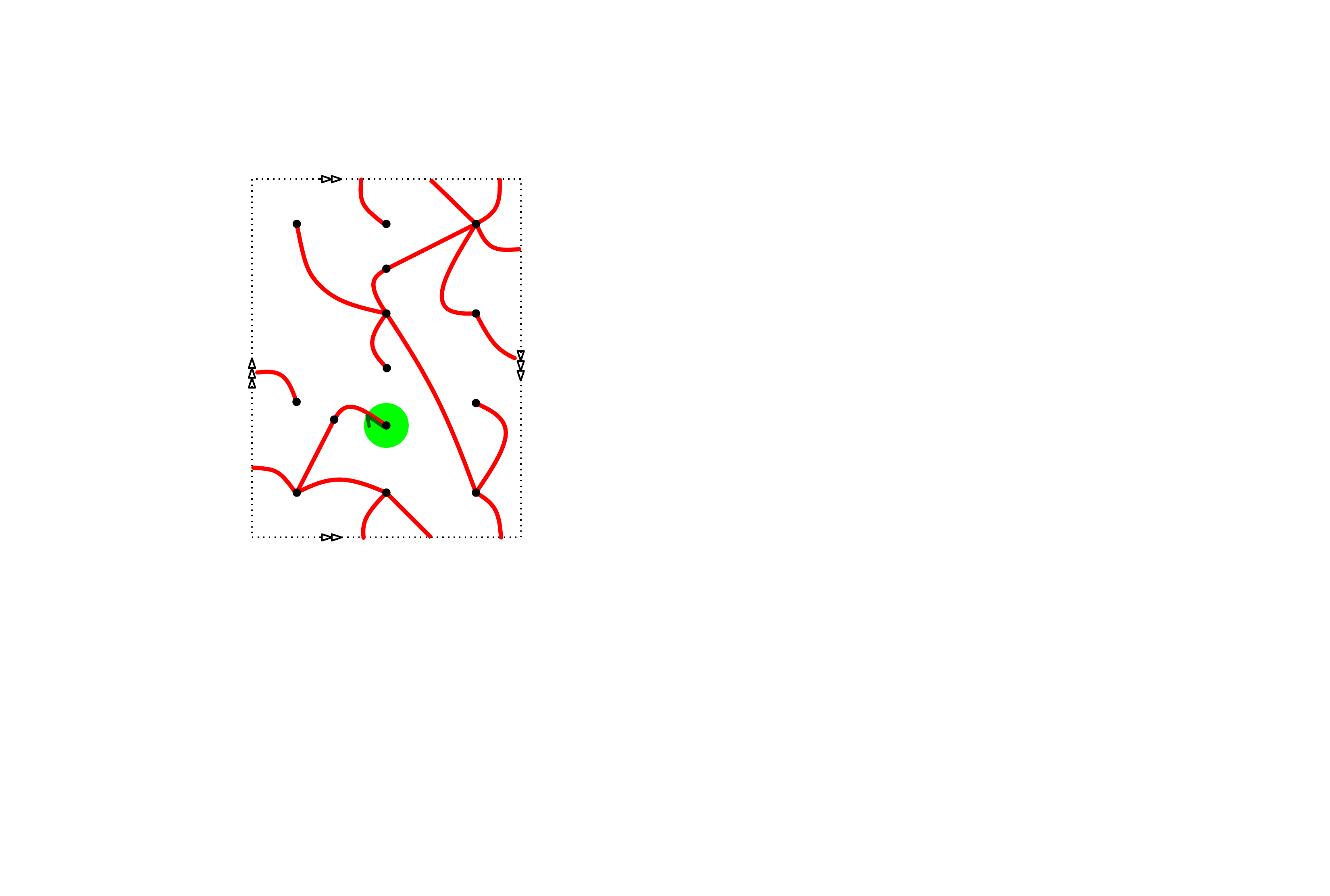}}
\quad
\subfloat[]{
	\label{subfig:RepresentationOfMap2}
	\includegraphics[height=65mm]{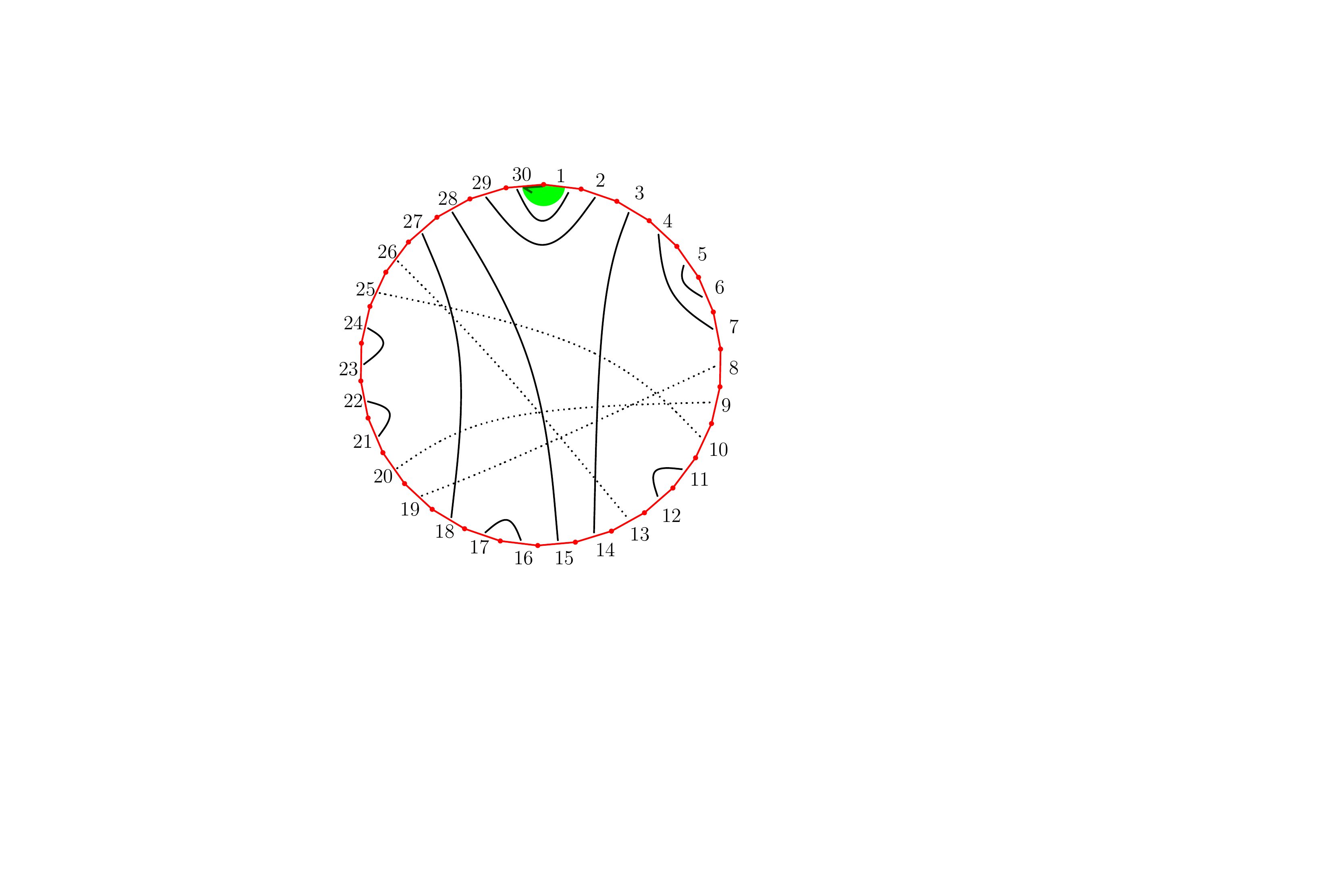}}
\caption{\protect\subref{subfig:RepresentationOfMap1}: A unicellular non-orientable map drawn on a Klein bottle $\NN_1$  (the left side of the dotted square should be glued to the right side,
as well as bottom to top, as indicated by arrows); \protect\subref{subfig:RepresentationOfMap2}: The same map represented as a polygon with glued sides (orientable gluings are indicated by gray, plain lines and non-orientable gluings are indicated by gray, dotted lines). 
The corresponding sets of pairs are $E_s(\M) = \{\{1,30\}, \{2,29\}, \{3,14\}, \{18,27\}, \{23,24\}, \{22,21\}, \{16,17\}$, $\{11,12\}\}$, and $E_t(\M) = \{\{13,26\}, \{10,25\}, \{9,20\}, \{8,19\}\}$.}
\label{fig:RepresentationOfMap}
\end{figure}

\subsection{Maps and bipartite quadrangulations}

A \emph{quadrangulation} is a map having all faces of degree $4$. A
map is \emph{bipartite} if its vertices can be colored in two colors in such a way that adjacent
vertices have different colors (say black and white). By convention the color of the root vertex of a rooted bipartite map is always taken to be black. We recall two
standard results of graph and map theory:
\begin{itemize}
\item
All planar quadrangulations are bipartite.
\item
For all $n, v, f \geq 1$ and any surface $\mathbb{S}$ (orientable or not), there is a bijection between maps on $\mathbb{S}$ with $n$ edges, $v$ vertices and $f$ faces, and bipartite
quadrangulations on $\mathbb{S}$ with $n$ faces, $v$ black and $f$ white vertices (and $2n$
edges). Idem between rooted maps and rooted bipartite quadrangulations.
\end{itemize}
The first result will not be used but is recalled to stress the fact that, a contrario,
a similar statement does not hold for quadrangulations of type $h > 0$: there exist non-bipartite quadrangulations on any surface of positive type.

The second result is based on a classical construction that goes back to Tutte, and which we now briefly recall.
Given a map $\M$ with black vertices, add a new (white) vertex inside each face of $\M$, and link it by a new edge to all the corners incident to that face. By construction, the map $\qq$ obtained by keeping all (black and white) vertices, and all the newly created edges, is a bipartite quadrangulation. Conversely, given a bipartite quadrangulation $\qq$, add a diagonal between the two black corners inside each face: then the set of black vertices, together with the added diagonal edges, forms a map whose associated quadrangulation is $\qq$.

To get a correspondence in the rooted case, a
rerooting convention must be chosen: if $\M$ has root half-edge $e$ and root corner $c$, then let the root of
$\qq$ be the unique half-edge of $\qq$ that lies in the corner $c$ and let the root corner of $\qq$ be the unique corner of $\qq$ where the half-edge $e$ lies. An example of this correspondence is illustrated on \cref{fig:maps-quadrangulations}. This correspondence is the reason why, although we concentrate
in the rest of this text on bipartite quadrangulations, our results have implications
for all maps.

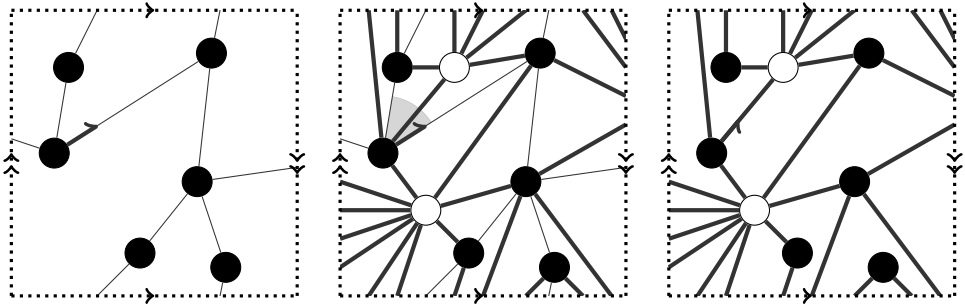
\begin{figure}
	\centering
	\begin{tikzpicture}[scale=0.38,
	white/.style={circle,draw=black,inner sep=4pt},
	black/.style={circle,draw=black,fill=black,inner sep=4pt},
	blue/.style={circle,draw=blue,fill=blue!80,inner sep=1pt},
	connection/.style={draw=black!80,black!80,auto}]
	\footnotesize
	
	\begin{scope}
		\begin{scope}
		\clip (0,0) rectangle (10,10);
		
		\draw (1.5,5) node (b1) [black] {};
		\draw (2,8) node (b2) [black] {};
		\draw (4.5,1.5) node (b3) [black] {};
		\draw (6.5,4) node (b4) [black] {};
		\draw (7,8.5) node (b5) [black] {};
		\draw (7.5,1) node (b6) [black] {};
		
		\draw (b1) +(0,10) node (b1n) [black] {};
		\draw (b1) +(0,-10) node (b1s) [black] {};
		\draw (b2) +(0,10) node (b2n) [black] {};
		\draw (b2) +(0,-10) node (b2s) [black] {};
		\draw (b3) +(0,10) node (b3n) [black] {};
		\draw (b3) +(0,-10) node (b3s) [black] {};
		\draw (b4) +(0,10) node (b4n) [black] {};
		\draw (b4) +(0,-10) node (b4s) [black] {};
		\draw (b5) +(0,10) node (b5n) [black] {};
		\draw (b5) +(0,-10) node (b5s) [black] {};
		\draw (b6) +(0,10) node (b6n) [black] {};
		\draw (b6) +(0,-10) node (b6s) [black] {};
		
		\draw[connection] (b2) -- (b1); 
		\draw[connection, -left to, ultra thick, shorten >=45pt] (b1) -- (b5);
		\draw[connection] (b1) -- (b5);
		\draw[connection] (b2) -- (3,10);
		\draw[connection] (3,0) -- (b3);
		\draw[connection] (b3) -- (b4);
		\draw[connection] (b6) -- (b4);
		\draw[connection] (b4) -- (b5);
		\draw[connection] (b5) -- (b6n);
		\draw[connection] (b5s) -- (b6);
		\draw[connection] (b4) -- (10,4.5);
		\draw[connection] (0,5.5) -- (b1);
		
		\end{scope}

	\draw[very thick,dotted, decoration={
	    markings,
	    mark=at position 0.5 with {\arrow{>}}},
	    postaction={decorate}]  
	(0,0) -- (10,0);
	
	\draw[very thick,dotted, decoration={
	    markings,
	    mark=at position 0.5 with {\arrow{>}}},
	    postaction={decorate}]  
	(0,10) -- (10,10);
	
	\draw[very thick,dotted, decoration={
	    markings,
	    mark=at position 0.5 with {\arrow{>>}}},
	    postaction={decorate}]  
	(0,0) -- (0,10);
	
	\draw[very thick,dotted, decoration={
	    markings,
	    mark=at position 0.5 with {\arrow{<<}}},
	    postaction={decorate}]  
	(10,0) -- (10,10);
	
	\end{scope}
	
	\begin{scope}[xshift=11.5cm]
		\begin{scope}
		\clip (0,0) rectangle (10,10);
		\begin{scope}
			\path[clip] (2,8) -- (1.5,5) -- (7,8.5);
			\fill[black!80, opacity=0.2, draw=black] (1.5,5) circle (2cm);
		\end{scope}
		
		\draw (1.5,5) node (b1) [black] {};
		\draw (2,8) node (b2) [black] {};
		\draw (4.5,1.5) node (b3) [black] {};
		\draw (6.5,4) node (b4) [black] {};
		\draw (7,8.5) node (b5) [black] {};
		\draw (7.5,1) node (b6) [black] {};
		
		\draw (b1) +(0,10) node (b1n) [black] {};
		\draw (b1) +(0,-10) node (b1s) [black] {};
		\draw (b2) +(0,10) node (b2n) [black] {};
		\draw (b2) +(0,-10) node (b2s) [black] {};
		\draw (b3) +(0,10) node (b3n) [black] {};
		\draw (b3) +(0,-10) node (b3s) [black] {};
		\draw (b4) +(0,10) node (b4n) [black] {};
		\draw (b4) +(0,-10) node (b4s) [black] {};
		\draw (b5) +(0,10) node (b5n) [black] {};
		\draw (b5) +(0,-10) node (b5s) [black] {};
		\draw (b6) +(0,10) node (b6n) [black] {};
		\draw (b6) +(0,-10) node (b6s) [black] {};

		\draw (3,3) node (w1) [white] {};
		\draw (4,8) node (w2) [white] {};
		\draw (w1) +(0,10) node (w1n) [white] {};

		\draw[connection, ultra thick] (w1) -- (b1);
		\draw[connection, ultra thick] (w1) -- (b3);
		\draw[connection, ultra thick] (w1) -- (b4);
		\draw[connection, ultra thick] (w1) -- (b5);
		\draw[connection, ultra thick] (w1) -- (2,0);
		\draw[connection, ultra thick] (2,10) -- (b2);
		\draw[connection, ultra thick] (w1) -- (1,0);
		\draw[connection, ultra thick] (1,10) -- (b1);
		\draw[connection, ultra thick] (w1) -- (0,1);
		\draw[connection, ultra thick] (10,9) -- (9.5,10);
		\draw[connection, ultra thick] (9.5,0) -- (b4);
		\draw[connection, ultra thick] (w1) -- (0,2);
		\draw[connection, ultra thick] (10,8) -- (8.5,10);
		\draw[connection, ultra thick] (8.5,0) -- (b6);
		\draw[connection, ultra thick] (w1) -- (0,3);
		\draw[connection, ultra thick] (10,7) -- (b5);
		\draw[connection, ultra thick] (w1) -- (0,4);
		\draw[connection, ultra thick] (10,6) -- (b4);
		
		\draw[connection, ultra thick] (w2) -- (b2);
		\draw[connection, ultra thick] (w2) -- (b1);
		\draw[connection, ultra thick] (w2) -- (b5);
		\draw[connection, ultra thick] (w2) -- (6.5,10);
		\draw[connection, ultra thick] (6.5,0) -- (b6);
		\draw[connection, ultra thick] (w2) -- (5,10);
		\draw[connection, ultra thick] (5,0) -- (b4);
		\draw[connection, ultra thick] (w2) -- (4,10);
		\draw[connection, ultra thick] (4,0) -- (b3);
		
		\draw[connection] (b2) -- (b1); 
		\draw[connection, -left to, ultra thick, shorten >=45pt] (b1) -- (b5);
		\draw[connection] (b1) -- (b5);
		\draw[connection] (b2) -- (3,10);
		\draw[connection] (3,0) -- (b3);
		\draw[connection] (b3) -- (b4);
		\draw[connection] (b6) -- (b4);
		\draw[connection] (b4) -- (b5);
		\draw[connection] (b5) -- (b6n);
		\draw[connection] (b5s) -- (b6);
		\draw[connection] (b4) -- (10,4.5);
		\draw[connection] (0,5.5) -- (b1);
		
		\end{scope}
	
	\draw[very thick, dotted, decoration={
	    markings,
	    mark=at position 0.5 with {\arrow{>}}},
	    postaction={decorate}]  
	(0,0) -- (10,0);
	
	\draw[very thick,dotted, decoration={
	    markings,
	    mark=at position 0.5 with {\arrow{>}}},
	    postaction={decorate}]  
	(0,10) -- (10,10);
	
	\draw[very thick,dotted, decoration={
	    markings,
	    mark=at position 0.5 with {\arrow{>>}}},
	    postaction={decorate}]  
	(0,0) -- (0,10);
	
	\draw[very thick,dotted, decoration={
	    markings,
	    mark=at position 0.5 with {\arrow{<<}}},
	    postaction={decorate}]  
	(10,0) -- (10,10);
	\end{scope}
	
	\begin{scope}[xshift=23cm]
		\begin{scope}
		\clip (0,0) rectangle (10,10);
		
		\draw (1.5,5) node (b1) [black] {};
		\draw (2,8) node (b2) [black] {};
		\draw (4.5,1.5) node (b3) [black] {};
		\draw (6.5,4) node (b4) [black] {};
		\draw (7,8.5) node (b5) [black] {};
		\draw (7.5,1) node (b6) [black] {};
		
		\draw (b1) +(0,10) node (b1n) [black] {};
		\draw (b1) +(0,-10) node (b1s) [black] {};
		\draw (b2) +(0,10) node (b2n) [black] {};
		\draw (b2) +(0,-10) node (b2s) [black] {};
		\draw (b3) +(0,10) node (b3n) [black] {};
		\draw (b3) +(0,-10) node (b3s) [black] {};
		\draw (b4) +(0,10) node (b4n) [black] {};
		\draw (b4) +(0,-10) node (b4s) [black] {};
		\draw (b5) +(0,10) node (b5n) [black] {};
		\draw (b5) +(0,-10) node (b5s) [black] {};
		\draw (b6) +(0,10) node (b6n) [black] {};
		\draw (b6) +(0,-10) node (b6s) [black] {};

		\draw (3,3) node (w1) [white] {};
		\draw (4,8) node (w2) [white] {};
		\draw (w1) +(0,10) node (w1n) [white] {};

		\draw[connection, ultra thick] (w1) -- (b1);
		\draw[connection, ultra thick] (w1) -- (b3);
		\draw[connection, ultra thick] (w1) -- (b4);
		\draw[connection, ultra thick] (w1) -- (b5);
		\draw[connection, ultra thick] (w1) -- (2,0);
		\draw[connection, ultra thick] (2,10) -- (b2);
		\draw[connection, ultra thick] (w1) -- (1,0);
		\draw[connection, ultra thick] (1,10) -- (b1);
		\draw[connection, ultra thick] (w1) -- (0,1);
		\draw[connection, ultra thick] (10,9) -- (9.5,10);
		\draw[connection, ultra thick] (9.5,0) -- (b4);
		\draw[connection, ultra thick] (w1) -- (0,2);
		\draw[connection, ultra thick] (10,8) -- (8.5,10);
		\draw[connection, ultra thick] (8.5,0) -- (b6);
		\draw[connection, ultra thick] (w1) -- (0,3);
		\draw[connection, ultra thick] (10,7) -- (b5);
		\draw[connection, ultra thick] (w1) -- (0,4);
		\draw[connection, ultra thick] (10,6) -- (b4);
		
		\draw[connection, -right to, ultra thick, shorten >=20pt] (b1) -- (w2);
		\draw[connection, ultra thick] (b1) -- (w2);
		\draw[connection, ultra thick] (w2) -- (b2);
		\draw[connection, ultra thick] (w2) -- (b5);
		\draw[connection, ultra thick] (w2) -- (6.5,10);
		\draw[connection, ultra thick] (6.5,0) -- (b6);
		\draw[connection, ultra thick] (w2) -- (5,10);
		\draw[connection, ultra thick] (5,0) -- (b4);
		\draw[connection, ultra thick] (w2) -- (4,10);
		\draw[connection, ultra thick] (4,0) -- (b3);
		
		\end{scope}
	
	\draw[very thick, dotted, decoration={
	    markings,
	    mark=at position 0.5 with {\arrow{>}}},
	    postaction={decorate}]  
	(0,0) -- (10,0);
	
	\draw[very thick,dotted, decoration={
	    markings,
	    mark=at position 0.5 with {\arrow{>}}},
	    postaction={decorate}]  
	(0,10) -- (10,10);
	
	\draw[very thick,dotted, decoration={
	    markings,
	    mark=at position 0.5 with {\arrow{>>}}},
	    postaction={decorate}]  
	(0,0) -- (0,10);
	
	\draw[very thick,dotted, decoration={
	    markings,
	    mark=at position 0.5 with {\arrow{<<}}},
	    postaction={decorate}]  
	(10,0) -- (10,10);
	\end{scope}
	\end{tikzpicture}
	\caption{Tutte's bijection between maps and bipartite quadrangulations. Left: a map of the Klein bottle $\NN_1$: the left side of the dotted square should be glued to the right side,
	as well as bottom to top, as indicated by arrows. Center and Right: constructing the associated bipartite quadrangulation. The root corner of the quadrangulation is shaded in the second step to help the reader visualize the rooting convention.}
	\label{fig:maps-quadrangulations}
\end{figure}

\section{The bijection}
\label{sec:bij}

\subsection{Statement of the main result}
\label{subsect:BijectionOverview}

A unicellular map $\M$ is called \emph{labeled} if its vertices are labeled by integers such that:
\begin{itemize}
\item[(1)] the root vertex has label $1$;
\item[(2)] if two vertices are linked by an edge, their label differ by at most $1$.
\end{itemize}
If in addition we have:
\begin{itemize}
\item[(3)] all the vertex labels are positive (that is, the root vertex has the minimum label),
\end{itemize}
then the unicellular map is called \emph{well-labeled}.

\medskip

The main result of this paper is a bijection establishing the following result:
\begin{theorem}\label{theo:MS1}
For each surface $\mathbb{S}$ and integer $n\geq 1$, there exists a bijection between the set of rooted bipartite quadrangulations on $\mathbb{S}$ with $n$ faces, and the set of well-labeled unicellular maps on $\mathbb{S}$ with $n$ edges.

Moreover, if for a given bipartite quadrangulation we denote by $N_i$ the set of its vertices at distance $i$ from the root vertex, and by $E(N_i,N_{i-1})$ the set of edges between $N_i$ and $N_{i-1}$, then the associated well-labeled unicellular map has $|N_i|$ vertices of label $i$ and $|E(N_i,N_{i-1})|$ corners of label $i$.
\end{theorem}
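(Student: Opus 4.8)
The plan is to construct the two maps explicitly and to verify that they are mutually inverse, following the skeleton of the Marcus--Schaeffer construction but replacing the global orientation by a locally defined one.

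\textbf{Forward map.} Given a rooted bipartite quadrangulation $\qq$ on $\mathbb{S}$ with $n$ faces, I would first label every vertex by its graph distance to the root vertex $v_\star$. Since $\qq$ is bipartite, adjacent vertices have distances differing by exactly $1$, so around every face of degree $4$ the labels vary by $1$ at each step; each face is then of one of two types, a \emph{balanced} face reading $i,i+1,i,i+1$ or a \emph{confluent} face reading $i,i+1,i+2,i+1$. In each face I would add exactly one new edge joining two corners whose labels differ by at most $1$, according to the usual Schaeffer local rule; as there are $n$ faces this produces $n$ new edges. Deleting $v_\star$ together with all original edges of $\qq$ leaves a graph $\M$ whose vertex set is $V(\qq)\setminus\{v_\star\}$, carrying the inherited distance labels and embedded in the same surface $\mathbb{S}$, since every new edge lies inside a face (a disk) of $\qq$.

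The difficulty is that the Schaeffer rule breaks the symmetry between two corners playing equivalent roles by referring to a rotation direction inside the face, and on a non-orientable surface there is no global such direction. I would therefore not fix an orientation in advance, but build a local orientation of the faces incrementally, propagating it along an exploration of $\qq$ so that (i) on an orientable surface it coincides with a global orientation, and (ii) it is intrinsically recoverable from $\M$ so the rule can be read backwards. The point is that the labels, together with the root data and the straight/twisted type of the edges ($E_s,E_t$), should encode exactly the information needed to reconstruct this local orientation; this is the mechanism that makes the correspondence invertible.

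\textbf{Unicellularity, type, and well-labeling.} The crux is to prove that $\M$ has a single face on $\mathbb{S}$. I would do this by analyzing the contour walk of the modified object $\qq\cup\M$: the new edges and the sides of the old edges organize the corners into a system of arcs, and I would show that following these arcs realizes one closed walk visiting each corner of $\M$ exactly once, exhibiting $\M$ as unicellular. Euler's formula~\eqref{eq:euler} then forces the type of $\M$ to equal that of $\mathbb{S}$ and confirms that $\M$ has $n$ edges and $n+1-2h$ vertices. Well-labeledness follows immediately: the surviving labels are the distances $\ge 1$, hence all positive; the rerooting convention places the root of $\M$ at a distance-$1$ vertex, giving it label $1$; and each new edge joins vertices with labels differing by at most $1$.

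\textbf{Count correspondence and inverse.} The statement about $|N_i|$ and $|E(N_i,N_{i-1})|$ I would read off from the construction: a vertex of $\qq$ at distance $i\ge 1$ survives in $\M$ with label $i$, which gives $|N_i|$ vertices of label $i$; and the local rule puts the corners of $\M$ in bijection with the edges of $\qq$ (the counts $2n=|E(\qq)|$ already match), matching a corner of label $i$ with an edge joining level $i$ to level $i-1$, which gives $|E(N_i,N_{i-1})|$ corners of label $i$. Finally I would define the inverse on a well-labeled unicellular map by the reverse local rule: reconstruct the local orientation from the labels and edge-types, recover the quadrangular faces and the deleted vertex $v_\star$, and check that the two constructions compose to the identity in both directions. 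I expect the main obstacle, and the place where genuinely new arguments are required beyond the orientable case, to be establishing that the recursively defined local orientation is simultaneously well-defined (consistent around every vertex and compatible globally) and faithfully reconstructible from $\M$, since it is precisely this property that yields both unicellularity and invertibility at once.
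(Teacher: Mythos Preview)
Your outline correctly identifies the central difficulty (replacing the global orientation by a local one) and the overall shape of the argument, but it stops short of the actual mechanism that makes the construction work, and your proposed route to unicellularity differs from the paper's in a way that matters.

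The paper does not merely ``propagate a local orientation along an exploration''. Its key object is an auxiliary directed graph, the \emph{dual exploration graph} $\DEG(\qq)$, built recursively by increasing label: first a directed cycle around $v_\star$, then branches attached one at a time according to precise rules (which face to enter, which edge to cross, how to turn). Well-definedness of this procedure is nontrivial and rests on an invariant lemma classifying the possible local configurations inside a face (kinds (a)--(e)) and a counting argument showing that a face of the right kind is always available. Only after $\DEG(\qq)$ is fully built are the red edges of $\bij(\qq)$ drawn, and the local orientation you allude to is exactly the one carried by the directed edges of $\DEG(\qq)$. Your proposal does not supply this object or any substitute for the invariant lemma, so the step ``build a local orientation incrementally \dots\ intrinsically recoverable from $\M$'' remains a wish rather than a construction.

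Your unicellularity argument (``follow arcs and obtain a single closed walk'') is not the paper's and, without the DEG, is hard to carry out: on a non-orientable surface the arc-following rules are ambiguous precisely where the Schaeffer rule is, so you would be assuming what you need to prove. The paper instead argues topologically: since $\DEG(\qq)$ is a directed cycle with trees attached, the complement $\mathbb{S}\setminus\bij(\qq)$ retracts along this tree-like structure to a disk, hence $\bij(\qq)$ has one face. Likewise, the inverse is not obtained by ``reconstructing the local orientation from labels and edge-types''; the paper rebuilds $\DEG$ and the quadrangulation simultaneously inside the polygon of $\M$, maintaining areas of three controlled types \ref{T1}--\ref{T3}, and proves the forward and reverse DEGs coincide edge by edge. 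In short, your plan is compatible with the paper's strategy, but the substantive content --- the DEG, its invariants, the area-type analysis, and the step-by-step identification of the two DEGs --- is precisely what is missing.
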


It is easy to see, using Euler's formula, that a quadrangulation of type $h$ with $n$ faces (and $2n$ edges) has $n+2-2h$ vertices, hence $n+1-2h$ vertices different from the root vertex. So the above statement is compatible with the fact that (by Euler's formula) a unicellular map of type $h$ with $n$ edges (and $2n$ corners) has $n+1-2h$ vertices in total.

As we will see, the above theorem easily implies the following variant, which turns out to be easier to use for enumerative purposes:
\begin{theorem}
\label{theo:MS2}
For each surface $\mathbb{S}$ and integer $n\geq 1$, there exists a $2$-to-$1$ correspondence between the set of rooted bipartite quadrangulations on $\mathbb{S}$ with $n$ faces carrying a pointed vertex $v_0$, and labeled unicellular maps on $\mathbb{S}$ with $n$ edges.

Moreover, if for a given bipartite quadrangulation we denote by $N_i$ the set of its vertices at distance $i$ from the vertex $v_0$, and by $E(N_i,N_{i-1})$ the set of edges between $N_i$ and $N_{i-1}$, then the associated labeled unicellular map has $|N_i|$ vertices of label $i+\ell_{min}-1$ and $|E(N_i,N_{i-1})|$ corners of label $i+\ell_{min}-1$, where $\ell_{min}$ is the minimum vertex label in the unicellular map.
\end{theorem}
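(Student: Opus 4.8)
The plan is to deduce \Cref{theo:MS2} from \Cref{theo:MS1} by running the \emph{same} local construction, but taking the pointed vertex $v_0$ as the base point for distances instead of the root vertex. The construction that establishes \Cref{theo:MS1} (and its inverse) is purely local: it uses only that $v_0$ is a vertex of the quadrangulation $\qq$ and that all faces are quadrangles, never that $v_0$ is the root vertex. Applied with base point $v_0$, it therefore produces a unicellular map $\M$ on the same surface, with vertex set $V(\qq)\setminus\{v_0\}$, together with the labeling $\ell(u)=d(u,v_0)$; exactly as in \Cref{theo:MS1}, this labeling satisfies condition~(2), and its minimum value is $1$, attained on the neighbours of $v_0$.

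First I would fix the normalization. The root vertex $r$ of $\M$, inherited from the root of $\qq$, carries the label $\delta:=d(r,v_0)\ge 1$, which in general exceeds the minimum. To turn $\M$ into a \emph{labeled} unicellular map one translates every label by $1-\delta$, so that $r$ receives label $1$ (condition~(1)) while condition~(2), depending only on label differences, is preserved. After this shift the minimum label is $\ell_{min}=2-\delta\le 1$, a vertex at distance $i$ from $v_0$ receives label $i+1-\delta=i+\ell_{min}-1$, and the corner statement of \Cref{theo:MS1} translates verbatim into ``$|E(N_i,N_{i-1})|$ corners of label $i+\ell_{min}-1$''. Conversely, from a labeled unicellular map one recovers the distance labeling by translating so that the minimum label becomes $1$, and then applies the inverse of the construction of \Cref{theo:MS1}; this reconstructs the pointed quadrangulation $(\qq,v_0)$ and all of its structure except for the root of $\qq$ itself.

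The remaining, and only substantial, point is to locate the factor $2$. In the situation of \Cref{theo:MS1} the base point $v_0$ \emph{is} the root vertex of $\qq$, so the root of $\qq$ is incident to $v_0$ and is recovered without ambiguity, which is why that correspondence is a genuine bijection. When $v_0$ is arbitrary, the root of $\qq$ is instead incident to $r\neq v_0$, and the labeled unicellular map determines it only up to a sign $\epsilon\in\{+,-\}$ encoding the orientation of the root edge of $\qq$ relative to the distance-to-$v_0$ function, i.e.\ whether the endpoint of the root edge other than $r$ is closer to or farther from $v_0$. Both values of $\epsilon$ yield admissible rooted quadrangulations mapped to the same labeled unicellular map, and no other preimages occur; hence the correspondence is exactly $2$-to-$1$.

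I expect the main obstacle to be precisely this last step: making the sign $\epsilon$ rigorous requires the explicit rerooting convention of the construction (\Cref{sec:reversebij}), namely checking that the two choices of $\epsilon$ produce two distinct rooted quadrangulations both inducing the given root corner of $\M$, and that these exhaust the preimages. The label bookkeeping of the first two paragraphs, by contrast, is routine once \Cref{theo:MS1} is granted.
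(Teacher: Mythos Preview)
Your overall strategy --- run the construction of \Cref{theo:MS1} with the pointed vertex $v_0$ as the base point for distances, then shift labels --- is the right idea, and your reading of the sign $\epsilon$ matches the paper's. But there is a genuine gap in the first paragraph.

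You assert that the construction of \Cref{theo:MS1} ``is purely local: it uses only that $v_0$ is a vertex of the quadrangulation $\qq$ and that all faces are quadrangles, never that $v_0$ is the root vertex.'' For the construction in this paper, that is false. In \ref{Step0b} of the construction of $\DEG(\qq)$, the root \emph{corner} at $v_0$ is used both to orient the initial blue cycle and to fix the initial LVC, and the entire recursive construction of the DEG (hence of $\bij(\qq)$) depends on these choices. The paper explicitly remarks after \Cref{prop:welldef} that ``the initial choice of the LVC in \ref{Step0b} may influence the construction.'' On a non-orientable surface there is no global orientation to fall back on, so this dependence cannot be removed: the map $(\qq,v_0)\mapsto \M$ you describe is simply not well-defined from the data $(\qq,v_0)$ alone.

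The paper deals with this by first choosing an oriented corner at $v_0$ (an ``oracle''), rerooting $\qq$ there, applying \Cref{theo:MS1}, and then encoding the original root of $\qq$ as a marked corner of the resulting unicellular map together with the sign $\epsilon$ (defined exactly as you suggest). The new difficulty is that different oracle choices may give different outputs; the paper resolves this non-constructively by observing that the bipartite incidence graph between pointed quadrangulations with $\deg v_0=d$ and signed labeled unicellular maps with $d$ minimum-label corners is $2d$-regular on both sides, and invoking Hall's marriage theorem to extract a perfect matching, i.e.\ a consistent oracle. Your argument is missing both the oracle and this regularity/Hall step; without them your purported $2$-to-$1$ map is not defined, and your last paragraph (``Both values of $\epsilon$ yield admissible rooted quadrangulations mapped to the same labeled unicellular map, and no other preimages occur'') cannot be made precise.
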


It is natural to ask (at least for readers familiar with the orientable case) if the rooting in the last theorem is necessary, or if it is an artifact of our method. Indeed, in the case of orien\emph{ted} surfaces, the Marcus-Schaeffer bijection in its purest form is a bijection between \emph{unrooted} bipartite quadrangulations with a pointed vertex and \emph{unrooted} labeled unicellular maps
(by orien\emph{ted} we mean a surface which is not only orien\emph{table}, but in which an orientation has been fixed; maps are considered up to homeomorphisms preserving this orientation). The bijection preserves the size of the automorphism group, which is why it extends to a correspondence between rooted objects. 
We first observe that this property does not hold in the case of general surfaces, and in fact it is already not true for orien\emph{table} surfaces: the numbers do not agree for $n=4$, neither on the Klein bottle nor on the non-oriented torus.
However, one could ask the following question: is there, for any surface, a bijection between unrooted bipartite quadrangulations with a pointed vertex whose neighbourhood is oriented, and unrooted labeled one-face maps in which the unique face is oriented? We have tried to enumerate the first cases on the Klein bottle by hand, but even for $n=4$ the numbers are big and it is easy to make a mistake, so we prefer not to make any conjecture in general (this property is true for orientable surfaces).

\subsection{Digression: the dual exploration graph in the orientable case}
\label{subsect:OrientableCase}

The full construction leading to Theorem~\ref{theo:MS1} goes in two steps. First, one has to construct what we call the \emph{dual exploration graph (DEG)} and then use it to construct the labeled unicellular map associated with a quadrangulation. This is different from the classical presentation in the orientable case, in which one can construct the labeled unicellular map directly, without mentioning the DEG, but where the DEG then appears as a crucial tool of proof (see~\cite{MS,ChapuyMarcusSchaeffer2009} where a version of the DEG appears, even if it is not given that name). For the convenience of the reader, we will thus start by recalling the construction in the orientable case. This should help motivating the general construction, given in Section~\ref{sec:DEG}, where the DEG plays the most important role.

\begin{figure}[h!]
\centering
\includegraphics[scale=0.6]{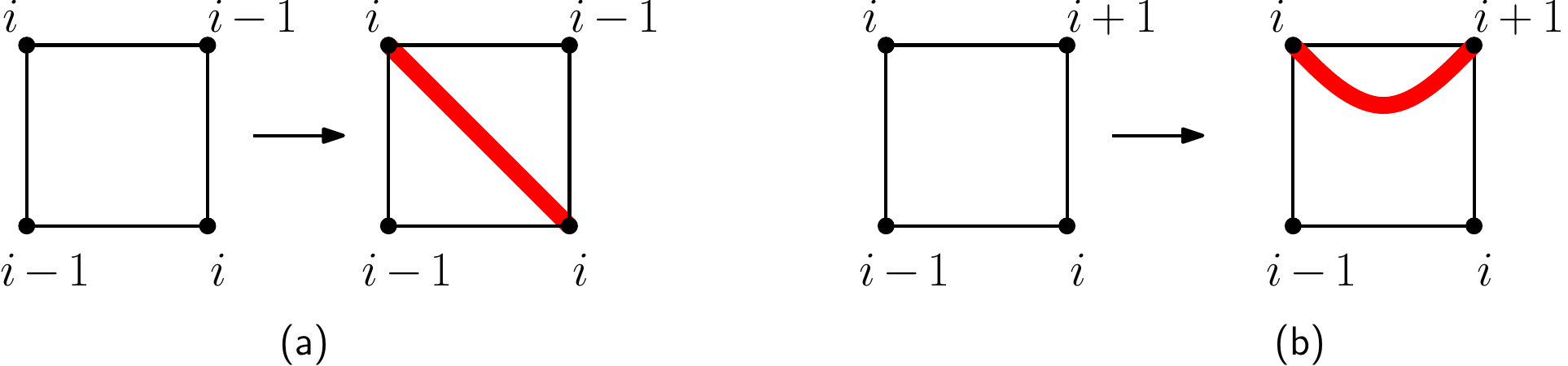}
\caption{The rules of the Marcus-Schaeffer bijection in the \emph{orientable} case. The rule in case (b) relies on the underlying orientation of the surface.}
\label{fig:basicFaceRules}
\end{figure}

Fix a rooted bipartite quadrangulation $\qq$ with $n$ faces on an \emph{orientable} surface $\mathbb{S}$, and let $v_0$ be the root vertex of $\qq$. Note that $\mathbb{S}$ is globally oriented by the root orientation of $\qq$.
 We start by labeling the vertices of $\qq$ by their distance to $v_0$. Since $\qq$ is bipartite, the labels at the extremities of each edge differ by exactly one. Therefore, the labels of corners on the border of a face form either a sequence of the form $(i - 1, i, i - 1, i)$ or of the form $(i - 1, i, i + 1, i)$ for some $i\geq 1$.
To construct the unicellular map $\Phi(\qq)$ associated with $\qq$, we add a new edge inside each face of $\qq$, which connects the two corners of that face that have a vertex label larger than their predecessor in clockwise direction around that face:
\begin{enumerate}[label=(\alph*)]
\item
\label{a}
in each face bordered by labels $(i - 1, i, i - 1, i)$ for some $i\geq 1$, the resulting edge has label $(i,i)$ (see \cref{fig:basicFaceRules}(a)); 
\label{item:basicFaceRules1}
\item
\label{b}
in each face bordered by labels $(i - 1, i, i + 1, i)$ for some $i\geq 1$, the resulting edge has label $(i,i+1)$ (see \cref{fig:basicFaceRules}(b)).
\end{enumerate}
Note that in case (a), we do not need the global orientation of $\mathbb{S}$ to specify which edge we add, but in case (b), the construction relies on the notion of ``clockwise direction'' along the face. This fact will create the main difficulty in extending the construction to the non-orientable case.

We now sketch the main idea of Marcus and Schaeffer to prove that the graph $\Phi(\qq)$ defined by the newly added edges on the vertex set $V(\qq)\setminus\{v_0\}$ is a one-face map on $\mathbb{S}$. It relies on an auxiliary graph that we call the \emph{dual exploration graph (DEG)}.
This graph is defined in the following way: we draw a new vertex inside each face of the map $\qq \cup \Phi(\qq)$ (the map consisting of the union of the quadrangulation $\qq$ and all the newly created edges) and we add one new edge going across each edge of $\qq$ as in \cref{fig:basicFaceRulesDEG}. Since every edge of $\qq$ has extremities labeled by consecutive numbers, we can orient each edge of the DEG such that it passes through the corresponding edge of $\qq$ in a way that a vertex of greater label is lying on its right side.
\begin{figure}[h!]
\centering
\includegraphics[scale=0.6]{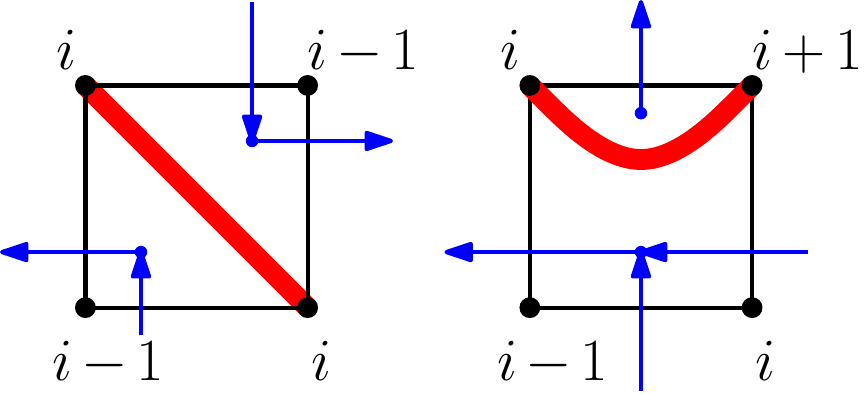}
\caption{The DEG in the orientable case (blue edges)}
\label{fig:basicFaceRulesDEG}
\end{figure}
By construction, the DEG contains a cycle that goes around the vertex~$v_0$. This is, in fact, its only cycle: if one contracts the cycle going around $v_0$ into a single vertex, the DEG becomes a tree. To see this, 
one first notices that each vertex of the DEG has a unique outgoing edge: therefore any cycle of the DEG is an oriented cycle.
Then, one has to examine how labels vary locally along edges of the DEG:
we first observe from Figure~\ref{fig:basicFaceRulesDEG} that the label present on the left of an oriented edge can only decrease when walking on the DEG. This implies that all the labels lying on the left side of each oriented cycle of the DEG are the same. By looking at Figure~\ref{fig:basicFaceRulesDEG} again, one deduces that any cycle in fact circles around a unique vertex, and moreover this vertex has only neighbours of larger label. This implies that it is the vertex $v_0$.
Finally, the fact that the contracted DEG is a tree implies that $\Phi(\qq)$ is a unicellular map. Indeed, the complement $\mathbb{S}\setminus\Phi(\qq)$ is obtained by gluing triangles, quadrangles, and digons along the tree-like structure of the contracted DEG, so it is clearly a contractible cell complex. It is thus homeomorphic to a disk, which shows that $\Phi(\qq)$ is a unicellular map.

\begin{remark}
At the suggestion of a referee, we add the following remark.
In our discussion of the orientable case, we have focused on the forward Marcus-Schaeffer bijection (from quadrangulations to one-face maps), on which it is clear that the rules (\cref{fig:basicFaceRules}(b)) depend crucially on the orientation of the surface. The reader familiar with the subject may argue that there is no such obvious objection for the \emph{backwards} Marcus-Schaeffer construction (from one-face maps to quadrangulations) to be applied on a non-orientable surface $\mathbb{S}$. Recall that, in this construction, one makes the tour around a well-labelled unicellular map on $\mathbb{S}$ and links each corner of label $i$ to the last visited corner of label $i-1$ (or to a central vertex $v_0$ if $i=1$), thus reconstructing the quadrangulation. This procedure can, indeed, be applied if the surface $\mathbb{S}$ is not orientable. However, it is easy to see that it does not always construct a quadrangulation: at the end of the construction, a \emph{twisted} edge of the original one-face map whose endpoints have labels of the form $(i,i+1)$ lies either inside a digon (if the endpoint of label $i$ is visited before the other one) or an hexagon (surrounded by corners of labels $(i,i-1,i,i+1,i,i-1)$).
\end{remark}

\bigskip
To conclude this digression on the orientable case, we emphasize once again that the Marcus-Schaeffer rule in case (b) relies on the existence of a global orientation. This orientation also induces the orientation of the edges of the DEG. In the general case, we will need to replace this global orientation by a \emph{local one} that will be constructed recursively.  To do that, we will in fact construct \emph{first} the DEG, and only in the end construct the associated unicellular map.

The DEG will be constructed by first drawing the cycle around the vertex $v_0$, and then adding recursively oriented edges crossing edges of $\qq$ with label $(i,i+1)$, by increasing value of $i$.
The rules will be chosen carefully in a way that at the end, the configuration of oriented edges in each face of $\qq$ looks locally as in \cref{fig:basicFaceRulesDEG} (and that in the orientable case we recover DEG used by Marcus and Schaeffer). Moreover, by construction, our DEG will be a forest of trees attached to a unique cycle around $v_0$, which will guarantee that the corresponding graph
 obtained by inserting new edges as in \cref{fig:basicFaceRulesDEG} is a unicellular map. We are now going to describe this construction in detail.

\begin{quote}
{\it We end here the digression on the orientable case: from now on, surfaces can be orientable or not.}
\end{quote}

\subsection{From quadrangulations to well-labeled unicellular maps}
\label{sec:DEG}

\newcommand{\DEG}{\nabla}

\subsubsection{Constructing the dual exploration graph $\DEG(\qq)$}
\label{subsubsect:SpanningTreeInSteps}

Let $\qq$ be a rooted quadrangulation on a surface, and let $v_0$ be the root vertex of $\qq$. In this subsection we describe how to draw a directed graph $\DEG(\qq)$  on the same surface.
To distinguish vertices, edges, etc.~of the quadrangulation and of the new graph we will say that the vertices, edges, etc.~of the graph $\DEG(\qq)$ are \emph{blue}, while those of $\qq$ are \emph{black}.

\smallskip

\begin{enumerate}[label=$\bullet$, ref=Step 0--\alph*, leftmargin=0cm]
\item \label{Step0a} \textbf{Step 0--a (Labeling).}
We label the vertices of $\qq$ according to their distance from the root vertex $v_0$. Note that this also induces a labeling of the corners (the label of a corner is the label of the unique vertex it is incident to). Recall that since $\qq$ is bipartite the extremities of each edge have labels that differ by one, so that faces are either of type $(i - 1, i, i - 1, i)$ or of type $(i - 1, i, i + 1, i)$ for $i \geq 1$, where the \emph{type} of a face is the sequence of its corner labels. An edge of $\qq$ whose extremities are labeled by $i$ and $i+1$ is said to be of \emph{label $i$}.

\medskip
Our goal is to draw a blue graph in such a way that at the end of the construction, each edge of the quadrangulation $\qq$ is crossed by exactly one blue edge. We introduce some terminology: edges of $\qq$ that are not crossed by a blue edge are called \emph{free}, and the \emph{label} of a blue edge is the label of the unique edge of $\qq$ it crosses.
\medskip 

We are going to construct the blue edges (and thus the graph $\DEG(\qq)$) by increasing label. We start by drawing edges of label $0$:
\item \label{Step0b} \textbf{Step 0--b (Initialization).}
We add a new blue vertex in each corner labeled by $0$ and we connect them by a cycle of blue edges around the 
root vertex $v_0$ as on \cref{fig:step0}. 
We orient this cycle in such a way that it is oriented from the root half-edge to the root corner. There is a unique vertex of the blue graph lying in the root corner of $\qq$, and this vertex has a unique corner that is separated from $v_0$ by the blue cycle. We declare that corner to be the \emph{last visited corner (LVC)} of the construction and we equip it with the orientation inherited from the one of the cycle (the LVC will be dynamically updated in the sequel). We set $i:=1$ and we continue.
\begin{figure}[h!!!!]
\begin{center}
\includegraphics[height=29mm]{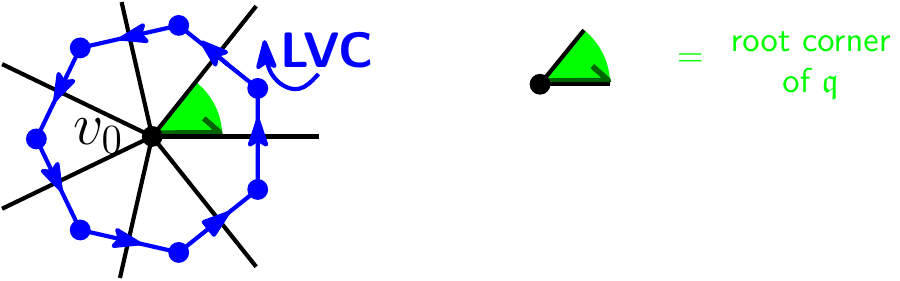}
\end{center}
\caption{\ref{Step0b}}\label{fig:step0}
\end{figure}
\end{enumerate}

\noindent We now proceed with the inductive part of the construction. 

\begin{enumerate}[label=$\bullet$, ref=Step \arabic*, leftmargin=0cm]
\item \label{Step1} \textbf{Step 1 (Choosing where to start).}
If there are no more free edges in $\qq$, we stop. Otherwise, we perform the tour of the blue graph, starting from the LVC. 
We stop as soon as we visit a face $F$ of $\qq$ having the following properties: $F$ is of type $(i-1,i,i+1,i)$, and $F$ has exactly one blue vertex already placed inside it. \cref{prop:welldef} below ensures that such a face always exists.

If the face $F$ is incident to only one free edge, we let $e$ be that edge.
If not, let $u$ be the blue vertex already contained in the face $F$. \cref{lemma:invariant} below ensures that $u$ is incident to two blue edges of label $(i-1)$, one incoming and one outgoing. We use these two oriented edges to define an orientation of $F$ by saying that they turn counterclockwise around the corner of label $(i-1)$. We then let $e$ be the first edge of label $i$ encountered clockwise around $F$ after that corner.
One can sum up the choice of the edge $e$ with \cref{fig:chooseFe} below that, by \cref{lemma:invariant} below, covers all the possible cases:
\begin{figure}[h!!!!]
\begin{center}
\includegraphics[height=30mm]{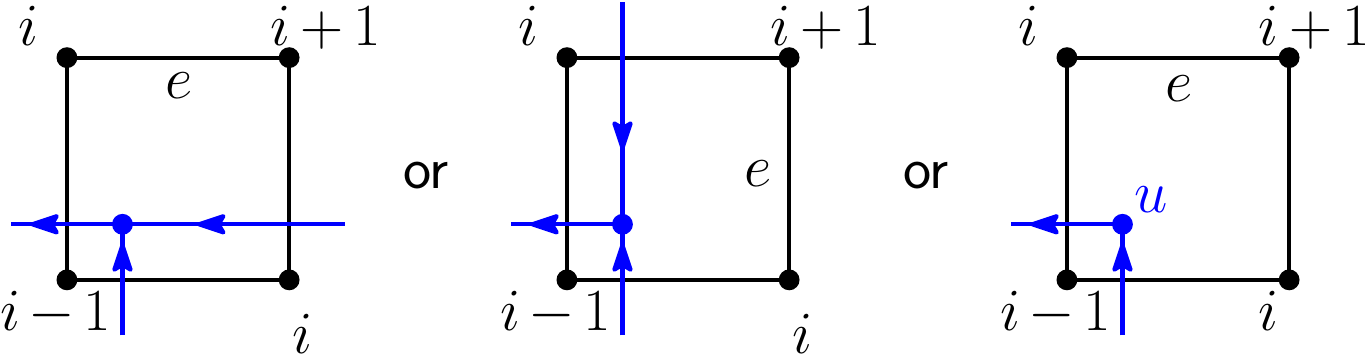}
\end{center}
\caption{The face $F$ and the edge $e$ selected by \ref{Step1}.}
\label{fig:chooseFe}
\end{figure}

\item \label{Step2} \textbf{Step 2 (Attaching a new branch of blue edges labeled by $i$ starting across $e$).}
We 
draw a new blue vertex $v$ in the unique corner of $F$ delimited by $e$ and its neighboring edge of label $i-1$, and we let $a$ be the vertex of $\qq$ incident to this corner.
 We now start drawing a path of directed blue edges starting from $v$ as follows: we cross $e$ with a blue edge leaving the face $F$, thus entering a face $F'$. If $F'$ contains a corner of label $(i-1)$, we attach the new blue edge to the blue vertex lying in that corner (\cref{lemma:invariant} ensures that this blue vertex exists\footnote{indeed, since face $F'$ is of type $(i-1,i,i+1,i)$ and edges of $\qq$ labeled by $i-1$ were already crossed, this means that just before crossing the edge $e$ the face $F'$ was of kind \ref{Bb}, \ref{Bc}, or \ref{Bd} from \cref{lemma:invariant}(B) -- in fact only \ref{Bb} or \ref{Bd} are possible although we do not need this observation here.}).
 If not, then we continue recursively drawing a path of new blue vertices and new blue edges turning around $a$, as on \cref{fig:step2} below, until we reach a face containing a corner of label $i-1$, and we finish by attaching the path to the blue vertex lying in that corner. We define the LVC as the corner lying to the right of the last directed blue edge we have drawn, in the local orientation defined by the fact that the path just drawn turns counterclockwise around $a$, see \cref{fig:step2}.
\begin{figure}[h!!!!]
\begin{center}
\includegraphics[height=55mm]{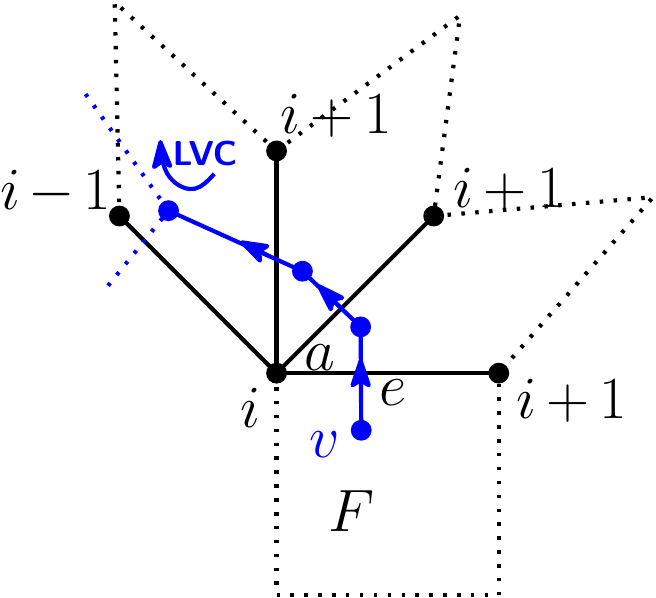}
\end{center}
\caption{\ref{Step2}}\label{fig:step2}
\end{figure}

\item \label{Step3} \textbf{Step 3 (Induction).}
If there are  no more free edges of label $i$ in $\qq$, we set $i:=i+1$, otherwise we let $i$ unchanged. We then go back to \ref{Step1} and continue.
\end{enumerate}

\begin{enumerate}[label=$\bullet$, ref=Termination, leftmargin=0cm]
\item \label{Termination} \textbf{Termination.}
We let $\DEG(\qq)$ be the blue embedded graph on $\mathbb{S}$ obtained at the end of the construction.
\end{enumerate} 
See \cref{fig:exampleDEG} page \pageref{fig:exampleDEG} for a full example of the construction on the Klein bottle.

\begin{remark}
Note that if the quadrangulation $\qq$ is orientable then in all steps of the construction we draw each oriented edge of the DEG in a way that when it crosses an edge of $\qq$ with label $(i,i+1)$, it has the vertex with smaller label on its left. Thus we recover the DEG defined by Marcus and Schaeffer in their original construction for the orientable quadrangulations (see \cref{subsect:OrientableCase}).
\end{remark}

\begin{proposition}\label{prop:welldef} The construction of $\DEG(\qq)$ is well-defined.
\end{proposition}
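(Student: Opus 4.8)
The plan is to prove well-definedness by a single induction that runs alongside the construction, maintaining throughout a structural invariant. Concretely, I would formulate an invariant $\mathcal{I}$ stating that, at any moment of the run with current label $i$: every edge of $\qq$ of label $<i$ has already been crossed by exactly one blue edge while every edge of label $>i$ is still free; the blue graph drawn so far is connected, with the shape of a forest of trees hanging off the single oriented cycle produced by \ref{Step0b}, and with every blue vertex carrying a unique outgoing edge; every corner of label $<i$ hosts exactly one blue vertex (those of label $i$ being only partially filled); and, crucially, the local picture of blue edges and vertices inside each face of $\qq$ is one of the finitely many configurations of \cref{lemma:invariant} (compare \cref{fig:chooseFe}). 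This last clause is exactly \cref{lemma:invariant}, so I would really prove \cref{lemma:invariant} and \cref{prop:welldef} simultaneously: assuming $\mathcal{I}$ when entering \ref{Step1}, I check that \ref{Step1} and \ref{Step2} can be performed and that $\mathcal{I}$ holds again after \ref{Step3}.

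Granting the invariant, the separate requirements hidden in ``well-defined'' follow in turn. For \emph{unambiguity}, since the admissible face configurations are pinned down by \cref{lemma:invariant}, each descriptor used in the construction — ``the neighboring edge of label $i-1$'' and ``the first edge of label $i$ clockwise'' in \ref{Step1}, and the blue vertex of label $i-1$ to which the branch is attached in \ref{Step2} — designates a unique existing object, and the local orientation used to read ``clockwise'' is precisely the one furnished by the two label-$(i-1)$ blue edges at the blue vertex of $F$. For \emph{termination of the branch} in \ref{Step2}: the vertex $a$ has label $i$, hence sits at distance $i$ from $v_0$ and has a neighbour at distance $i-1$ joined to it by a label-$(i-1)$ edge, so turning counterclockwise around $a$ we meet within one full turn a face bearing a corner of label $i-1$, where the branch closes up. For \emph{global termination}: each pass through \ref{Step2} crosses at least one previously free edge, there are finitely many edges, and \ref{Step3} only raises $i$ once all label-$i$ edges are crossed, so by $\mathcal{I}$ every label is fully treated before the next is begun and every edge is ultimately crossed exactly once.

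The heart of the matter, and the step I expect to be the main obstacle, is the existence claim invoked in \ref{Step1}: that as long as free edges remain, the tour of the blue graph started at the LVC reaches a face $F$ of type $(i-1,i,i+1,i)$ carrying exactly one blue vertex. A first observation makes the existence of such a face \emph{somewhere} almost formal: around any label-$i$ vertex $a$ the incident label-$(i-1)$ edges (of which there is at least one, by the shortest-path remark above) cut the rotation into arcs of label-$i$ edges, each arc is crossed by exactly one branch issued from its initial face, which is of type $(i-1,i,i+1,i)$, and hence free label-$i$ edges remain if and only if some such face still carries a single blue vertex. The delicate point is rather \emph{reachability}: one must show that the tour issued from the current LVC actually sweeps the whole active frontier, so that no qualifying face is ever stranded off the tour. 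This is where the precise position and orientation of the LVC recorded in $\mathcal{I}$, together with the forest-plus-cycle structure of the blue graph, must be used to argue that the contour visits the faces bordering the explored region in the correct cyclic order; controlling this sweep — and checking that after \ref{Step3} increments $i$ the search resumes exactly as in the base case \ref{Step0b}, now at faces of type $(i,i+1,i+2,i+1)$ — is the crux of the argument.
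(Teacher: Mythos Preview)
Your overall framework is right — prove the invariant of \cref{lemma:invariant} and well-definedness together by induction on the steps — and your treatment of unambiguity and termination is essentially what the paper does. But you have the difficulty backwards in the existence argument for \ref{Step1}.

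The reachability you flag as ``the crux'' is actually immediate once you have connectivity in your invariant $\mathcal{I}$: a face of kind~\ref{Bb} or~\ref{Bc} contains a blue vertex, the blue graph is connected, and the tour of the blue graph visits every blue corner, hence that face. No delicate LVC bookkeeping or sweep argument is needed here; the paper disposes of this in one sentence.

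The real content is showing that such a face \emph{exists at all}, and your arc argument does not close. Your claim that ``free label-$i$ edges remain if and only if some such face still carries a single blue vertex'' fails as stated: a face of kind~\ref{Bd} carries \emph{two} blue vertices yet still borders a free label-$i$ edge, and nothing in your argument rules out both boundary faces of an uncovered arc around a label-$i$ vertex being of kind~\ref{Bd} simultaneously (each having already launched its branch into the \emph{other} arc it borders). What you are missing is the paper's counting argument. Among all faces of type $(i-1,i,i+1,i)$, each pass through \ref{Step2} creates exactly one blue vertex of degree~$1$ (in the starting face) and promotes exactly one blue vertex from degree~$2$ to degree~$3$ (in the terminal face); hence at every stage there are as many degree-$1$ as degree-$3$ blue vertices in these faces, so exactly as many faces of kind~\ref{Bc} as of kind~\ref{Bd}. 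Since no face is of kind~\ref{Ba} (all label-$(i-1)$ edges are already crossed) and not all are of kind~\ref{Be} (free label-$i$ edges remain in such faces, by the first claim), at least one must be of kind~\ref{Bb} or~\ref{Bc}. This parity count is the actual heart of the proof, and it is absent from your proposal.
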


In order to prove~\cref{prop:welldef}, we first establish some invariants of the construction:
\begin{lemma}\label{lemma:invariant} The following properties remain true during the construction:
\begin{enumerate}[label=(\Alph*)]
\item \label{A} The graph $\DEG(\qq)$ is formed by a directed cycle on which trees are attached. Those trees are oriented towards the cycle.
\item \label{B} For $i\geq 1$, each face $F$ of $\qq$ of type $(i-1,i,i+1,i)$ can be of five kinds:
\begin{enumerate}[label=(\alph*)]
\item \label{Ba} there is no blue vertex inside $F$;
\item \label{Bb} there is one blue vertex of degree $2$ inside $F$; 
\item \label{Bc} there is one blue vertex of degree $3$ inside $F$;
\item \label{Bd} there are two blue vertices in $F$, of respective degrees $1$ and $2$; 
\item \label{Be} there are two blue vertices in $F$, of respective degrees $1$ and $3$; 
\end{enumerate}
Moreover, in each case, the blue vertices and edges inside $F$ are placed and oriented as on one of the following figures (up to reflection):

\vspace{1mm}
\includegraphics[width=\linewidth]{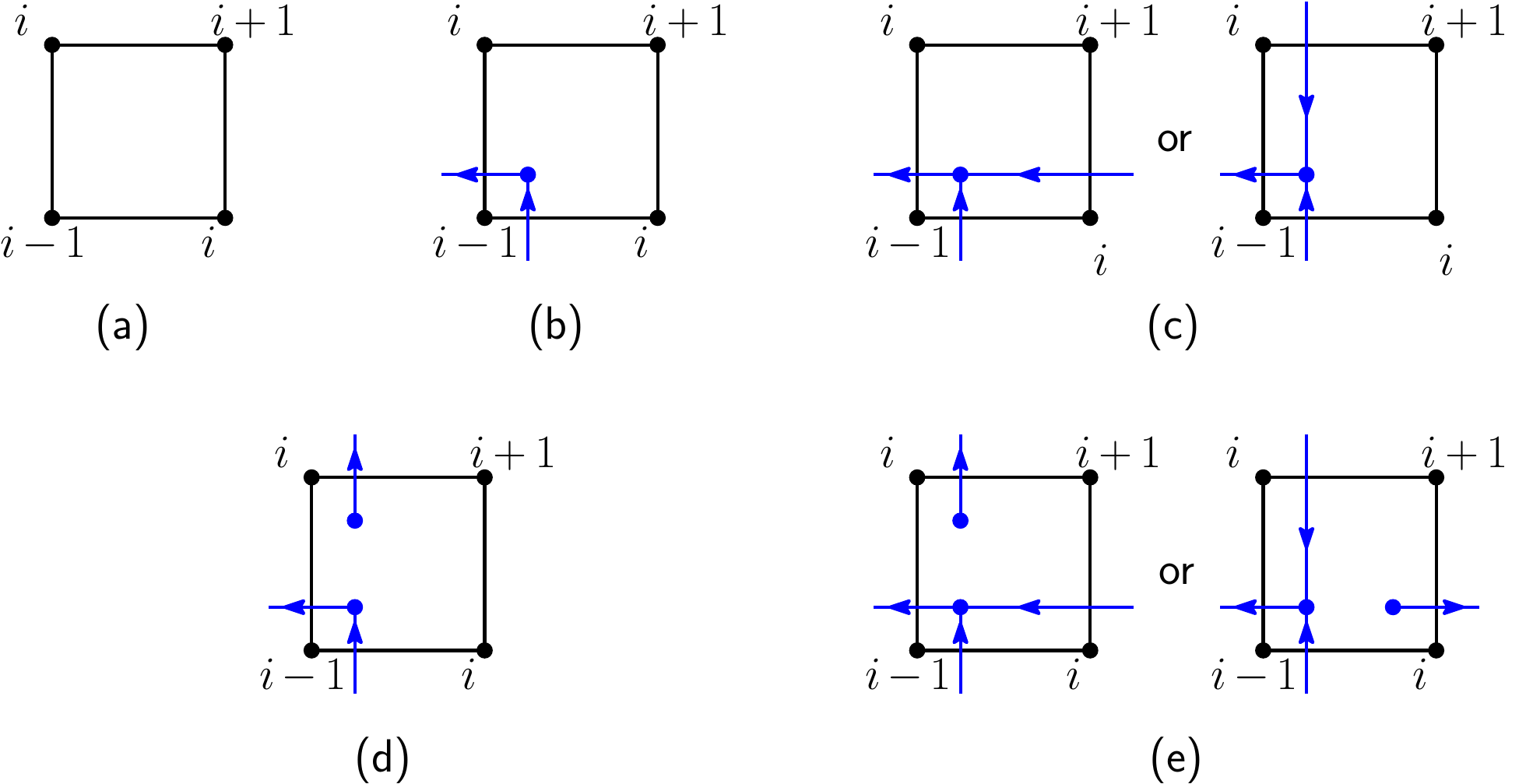}
\end{enumerate}
\end{lemma}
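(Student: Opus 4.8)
The plan is to prove \ref{A} and \ref{B} by a single induction on the elementary operations of the algorithm: the initialization \ref{Step0b}, and then each blue edge created during the successive executions of \ref{Step2}. Because \ref{Step1} and \ref{Step2} already invoke \cref{lemma:invariant} (and \cref{prop:welldef}) in order to be carried out, I would run this as a \emph{joint} induction, checking at every stage both that the next operation is well-defined (the face $F$, the edge $e$, and the blue vertex to which the branch attaches all exist and are uniquely determined) and that performing it preserves \ref{A} and \ref{B}. Proving \cref{lemma:invariant} in this way will simultaneously yield \cref{prop:welldef}. For the base case, immediately after \ref{Step0b} the blue graph is a single oriented cycle through the corners of label $0$ with no tree attached, so \ref{A} holds; for \ref{B}, a face of type $(0,1,2,1)$ carries exactly one cycle vertex, of degree $2$ and flanked by one incoming and one outgoing edge of label $0$, which is the picture of kind~\ref{Bb}, whereas every face of type $(i-1,i,i+1,i)$ with $i\geq 2$ carries no blue vertex and is of kind~\ref{Ba}.

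Invariant \ref{A} is the easy half. Each execution of \ref{Step2} draws a directed path whose internal vertices are new and whose terminal vertex is an already-existing blue vertex $w$ lying in a corner of label $i-1$, with all edges oriented from the fresh source $v$ towards $w$. At the moment of their creation the new vertices have out-degree $1$ and in-degree $0$, so adjoining the path cannot close a new cycle: it merely grafts a new branch onto the existing forest of trees attached to the cycle, oriented towards $w$ and hence, by the induction hypothesis, towards the unique cycle. Thus \ref{A} is immediate.

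The real work is in \ref{B}, which I would establish by following the newly drawn branch face by face and recording, for each face it enters, the change to its local picture. The key structural observation is that the branch meets a folded face of the \emph{current} value $i$ only at its two extremities: the starting face $F$, where inserting the source $v$ and its outgoing edge raises by one the number of blue vertices of $F$; and the face carrying the corner of label $i-1$ at which it attaches, where the arriving edge raises by one the degree of the blue vertex already sitting in that corner. In between, the branch turns around the vertex $a$ across edges of label $i$, so it only crosses faces that are either ``flat'' of type $(i,i+1,i,i+1)$, which fall outside the scope of \ref{B}, or folded of the \emph{next} value, of type $(i,i+1,i+2,i+1)$, where the pass-through creates a degree-$2$ blue vertex in the corner of label $i$ --- exactly the vertices that will later serve as attachment points. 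For each of these finitely many configurations I would check, by comparing \cref{fig:step2} with the figures accompanying \ref{B}, that the modified local picture is again one of the five listed kinds, up to reflection; this is a routine, if lengthy, enumeration.

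The hard part will be the orientation bookkeeping that underlies this enumeration. Since $\mathbb{S}$ carries no global orientation, the local orientation used to select $e$ in \ref{Step1} and to turn around $a$ in \ref{Step2} is defined purely through the two label-$(i-1)$ edges incident to the single blue vertex of $F$ (one incoming, one outgoing, as \ref{B} asserts) and through the counterclockwise convention around $a$. I must verify that this purely local orientation is precisely what makes each modified face coincide with an admissible figure, and that the clause ``up to reflection'' absorbs exactly the orientation reversals created by twisted edges. Confirming this consistency --- and, along the way, that turning around $a$ always reaches a corner of label $i-1$ already carrying a blue vertex, so that the attachment in \ref{Step2}, and hence \cref{prop:welldef}, is legitimate --- is where I expect the genuine difficulty to lie.
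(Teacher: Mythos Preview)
Your plan is essentially the paper's: induction on the elementary operations of the algorithm, tracking how each face of type $(i-1,i,i+1,i)$ evolves through the five kinds. The base case and the treatment of~\ref{A} are correct and match the paper almost verbatim.

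There is, however, one point you dismiss as ``routine'' that is not. When the branch of \ref{Step2} arrives at its terminal face (a face of type $(i-1,i,i+1,i)$ carrying a blue vertex in its corner of label $i-1$), you must rule out that this face is already of kind~\ref{Bc}. If it were, attaching the incoming edge would raise the degree of that blue vertex to~$4$, which is not among the five listed kinds, and \ref{B} would fail. The invariant \ref{B} alone does not exclude this: a face of kind~\ref{Bc} still has a free edge of label~$i$ through which a branch could, \emph{a priori}, arrive. The paper closes this gap with a short but essential dynamical argument: whenever a face is turned into kind~\ref{Bc}, the LVC lies in that face, so in the immediately following execution of \ref{Step1} this same face is re-selected and promoted to kind~\ref{Be} by the insertion of the new degree-$1$ vertex. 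Hence no face of kind~\ref{Bc} ever survives long enough to be entered by a subsequent branch. This is the one non-obvious step, and your case enumeration will not go through without it.

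A smaller issue: your plan to obtain \cref{prop:welldef} ``simultaneously'' overreaches. The well-definedness of \ref{Step2} (existence of the attachment vertex) does follow from \ref{B}, as you say. But the well-definedness of \ref{Step1} --- that a face of type $(i-1,i,i+1,i)$ with exactly one blue vertex always exists while free edges of label~$i$ remain --- does not. The paper proves this separately, via a geodesic argument (some free edge of label~$i$ borders a face of minimum label $i-1$) and a counting argument (among faces of type $(i-1,i,i+1,i)$ there are as many of kind~\ref{Bc} as of kind~\ref{Bd}, so they cannot all be of kind~\ref{Bd} or~\ref{Be}). Neither argument is part of the invariant you are maintaining.
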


\begin{proof}
The blue graph is constructed by attaching recursively oriented blue directed paths of edges to the existing graph. Since after \ref{Step0b} the blue graph is a directed cycle, Property \ref{A} is clear by induction.

Property \ref{B} is also ensured by induction. First, it is true  after \ref{Step0b} since at this point all faces of type $(0,1,2,1)$ are of kind \ref{Bb}. We now check that the  property is preserved each time we add a new blue edge. There are two ways of adding a new blue edge to a face $F$ of type $(i-1,i,i+1,i)$. The first one is to start drawing a new path of edges in \ref{Step2}, starting from a new vertex of degree $1$ in $F$. In this case, by requirement of the algorithm, $F$ has to be of kind \ref{Bb} or \ref{Bc}, and it becomes of kind \ref{Bd} or \ref{Be} after drawing the blue edge.
The other way is to visit $F$ while drawing a path of blue edges turning around a vertex in  \ref{Step2}. In this case, if $F$ is of kind \ref{Ba}, we continue turning around the vertex of label $i-1$, adding a blue vertex of degree $2$, and $F$ becomes of kind \ref{Bb}. If $F$ is of kind \ref{Bb} or \ref{Bd}, by construction we attach the current path of edges to the blue vertex of degree $2$ in $F$, and $F$ becomes of kind \ref{Bc} or \ref{Be}, respectively.

It remains to check that when we perform \ref{Step2} of the construction, we never enter a face of kind~\ref{Bc}. This is the case since, by construction of the algorithm, each time we create a face $F$ of kind~\ref{Bc}, we immediately go back to  \ref{Step1}, 
and, since $F$ contains the LVC,
we add a new vertex of degree $1$ inside $F$, thus transforming it into a face of kind~\ref{Be}

\end{proof}

\begin{proof}[Proof of \cref{prop:welldef}]
~\\
\noindent $\bullet$ We first prove that during \ref{Step1} of the construction, we always succeed in finding a face with the wanted properties.

First, we claim that if there are still free edges of label $i$ in $\qq$, 
then at least one of them is incident to a face of minimum label $i-1$. Indeed, assume the contrary. Let $e$ be a free edge of $\qq$ of label $i$ and let $a$ be the endpoint of label $i$ of $e$. Let $e=e_0, e_1, \dots, e_r$ (with $r\geq 1$) be the sequence of edges encountered clockwise around $a$ (in some conventional orientation) starting from $e$, where $e_r$ is the first edge clockwise after $e$  having label $i-1$, as on the leftmost picture below:

\begin{center}
\includegraphics[width=\linewidth]{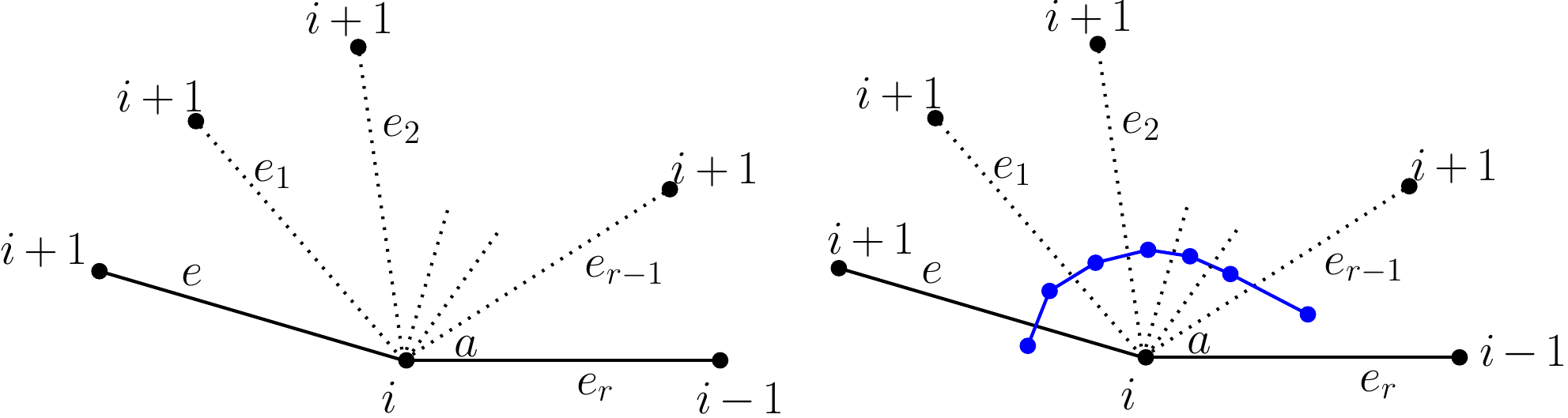}
\end{center}

\noindent Note that $e_r$ exists since $a$ must be incident to a vertex of label $i-1$ (since there is a geodesic going from $a$ to $v_0$). The edge $e_{r-1}$ has label $i$ and is incident to a face of minimum label $i-1$, so by the assumption we have made, $e_{r-1}$ is crossed by a blue edge. But according to the construction rules, this blue edge is part of a path of edges labeled by $i$ turning around the vertex $a$, and originating in a face containing a corner of label $(i-1)$. This path must cross the edge $e$ (although we do not know in which direction), as on the rightmost picture above. This is a contradiction and proves the claim.

\smallskip
We now claim that, if there are no more free edges of label less than $i$ in $\qq$ but if there are still free edges of label $i$, then there is at least one free edge of label $i$ that is incident to a face of type $(i-1,i,i+1,i)$ containing exactly one blue vertex, i.e., a face of kind \ref{Bb} or \ref{Bc} with the notation of \cref{lemma:invariant}. 
The integer $i\geq 1$ being fixed, consider the set $\mathcal{F}$ of all faces of $\qq$ having type $(i-1,i,i+1,i)$. Each time we go through Step 2 of the construction, we add one blue vertex of degree~$1$ in one of the faces in $\mathcal{F}$, and we increase the degree of a vertex of degree $2$ by attaching the current path to it. Therefore, among all blue vertices lying inside the faces of $\mathcal{F}$, there are as many vertices of degree $1$ as of degree $3$. This implies that, after performing Step 2, there are as many faces of kind \ref{Bc} as of kind \ref{Bd} in~$\mathcal{F}$. Therefore it is not possible that all faces in $\mathcal{F}$ are of kind \ref{Bd}, and since there are free edges of label $i$ remaining, we know by the previous paragraph that they are not all of kind \ref{Be}. Moreover, since by assumption there are no more free edges of label $(i-1)$, there are no faces of kind \ref{Ba} in $\mathcal{F}$, so there is at least one face in $\mathcal{F}$ that is of kind \ref{Bb} or \ref{Bc}. 

This is enough to conclude that \ref{Step1} of the algorithm always succeeds, since a face of kind \ref{Bb} or \ref{Bc} is necessarily visited when one performs the tour of the blue graph (since the blue graph intersects such faces).

\medskip

\noindent $\bullet$ To complete the proof, 
it is enough to check that all the assumptions made on the position of blue vertices and edges encountered in the faces visited during the choice of the edge $e$ in \ref{Step1} and during \ref{Step2} are valid. This is ensured by \cref{lemma:invariant}.
\end{proof}

\begin{figure}
\includegraphics[width=\linewidth]{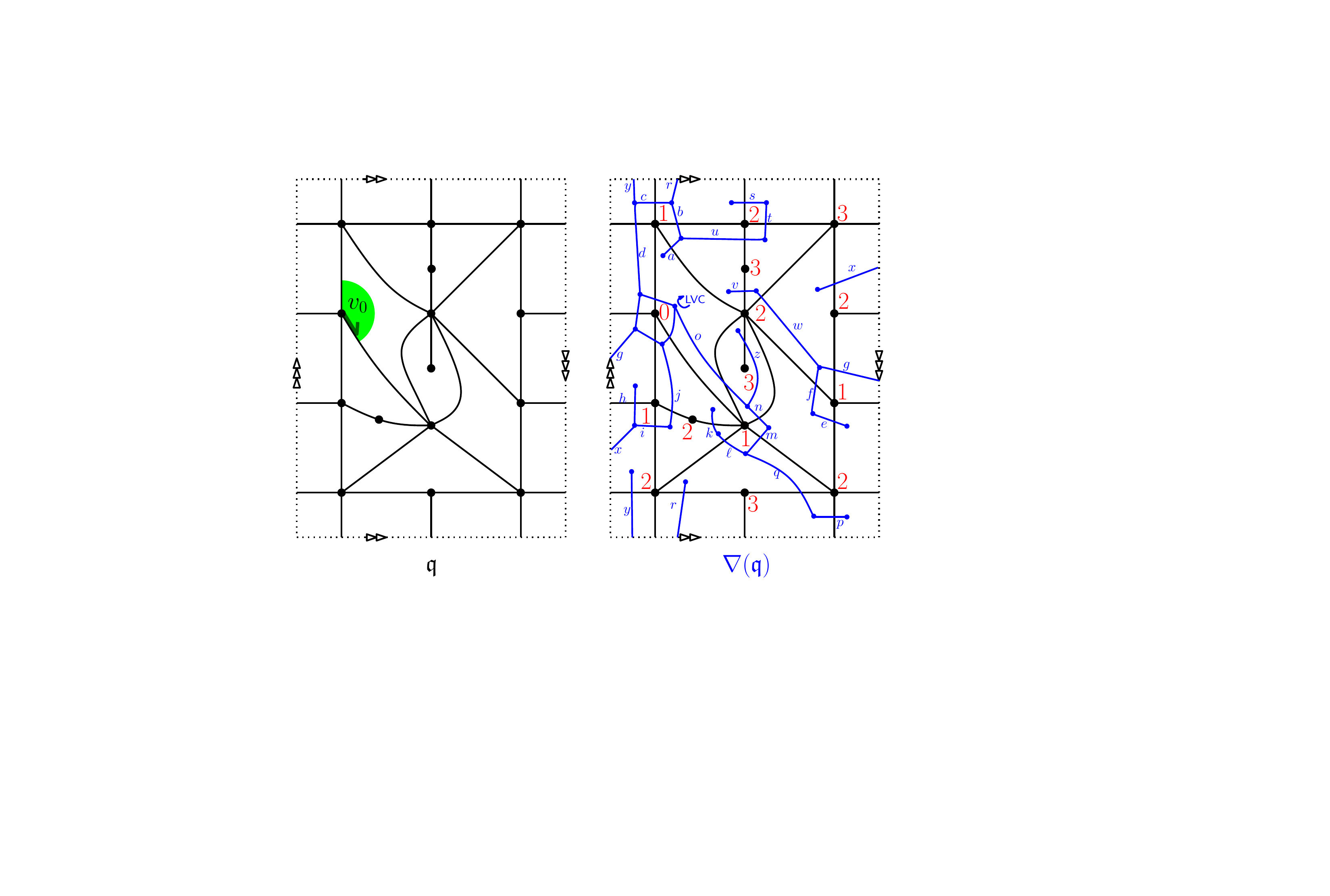}
\caption{Left: A rooted quadrangulation $\qq$ on the Klein bottle $\NN_1$ (on this picture the left side of the dotted rectangle should be glued to the right side, as well as bottom to top, as indicated by arrows). Right: The associated dual exploration graph $\DEG(\qq)$ is depicted in blue. To help the reader visualize the construction, we have numbered the blue edges appearing after \ref{Step0b} by $a,b,\dots, z$ in their order of appearance in the construction. We have not indicated the orientation of these edges, to make the picture lighter.}
\label{fig:exampleDEG}
\end{figure}

\begin{remark}
There are several places in the construction of $\DEG(\qq)$ where the rules we chose may seem arbitrary, and indeed other choices could have been made, leading to a different but equally valid construction.
For example, in \ref{Step1}, although it is crucial to choose for $F$ a face of type $(i-1,i,i+1,i)$ containing exactly one blue vertex, there is no particular reason to pick the first encountered such face around the blue graph starting from the LVC. Any other canonical choice that uses only the one-neighborhood of the already constructed blue graph would have been equally efficient, provided it satisfies the property needed to make the last argument in the proof of \cref{lemma:invariant} work (namely that after creating a face of kind \ref{Bc} we immediately reselect that face for the next round of the construction). We made this choice because it is easily described.
Similarly, in \ref{Step1}, the way we chose to break the tie in the case where $F$ contains two free edges could be replaced by some other convention. 
In the same vein, the initial choice of the LVC in \ref{Step0b} may influence the construction, and the way we chose it (which is arguably the most natural one) could be replaced by some other convention.
 However, now that these conventional choices have been made, we will work with them in the sequel, and in particular we will make consistent choices in the construction of the inverse bijection.
\end{remark}

\newcommand{\bij}{\Phi}
\subsubsection{Constructing the labeled unicellular map $\bij(\qq)$.}
Let as before $\qq$ be a rooted quadrangulation on a surface $\mathbb{S}$, and $v_0$ be its root vertex. We construct the dual exploration graph $\DEG(\qq)$ as in the previous subsection, and as before we call its edges \emph{blue}. By construction, each edge of $\qq$ is crossed by exactly one blue edge of $\DEG(\qq)$, and therefore \cref{lemma:invariant} implies that at the end of the construction, all the faces of $\qq$ are of one of the following types for $i\geq 1$, up to reflection (we forget orientation of blue edges in this picture as there are irrelevant for our purpose):

\begin{center}
\includegraphics[height=25mm]{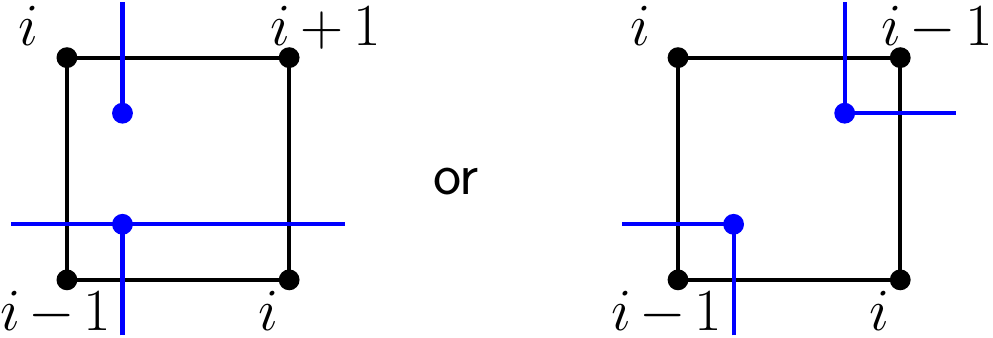}
\end{center}
\noindent We now add one red edge in each face of $\qq$ according to the rule of~\cref{fig:redBlueRule}.
\begin{figure}[h!!!!]
\includegraphics[height=25mm]{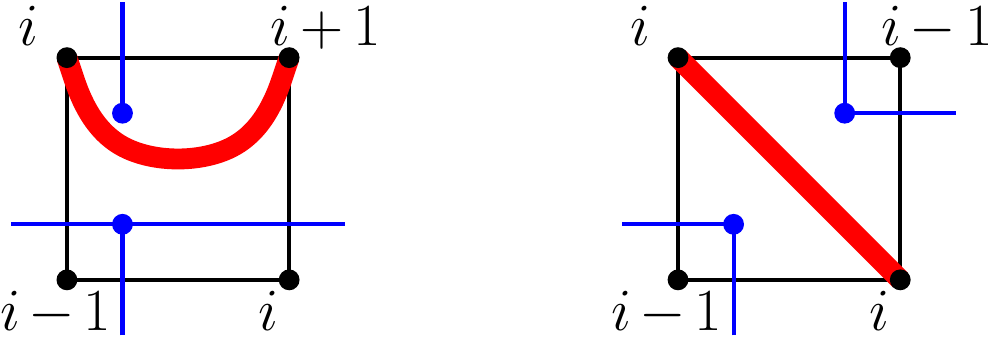}
\caption{Constructing the unicellular map $\bij(\qq)$ from the DEG $\DEG(\qq)$.}
\label{fig:redBlueRule}
\end{figure}

\noindent We let $\bij(\qq)$ be the map on $\mathbb{S}$ consisting of all the red edges, and of all the vertices of $\qq$ except $v_0$.
We declare the root corner of $\bij(\qq)$ to be the unique corner of label~$1$ of $\bij(\qq)$ incident to the root edge of $\qq$, and we equip it with the local orientation inherited from the one of the root corner of $\qq$ along its root edge.
 \cref{fig:exampleDEG2} gives an example of the construction.
\begin{figure}
\includegraphics[width=\linewidth]{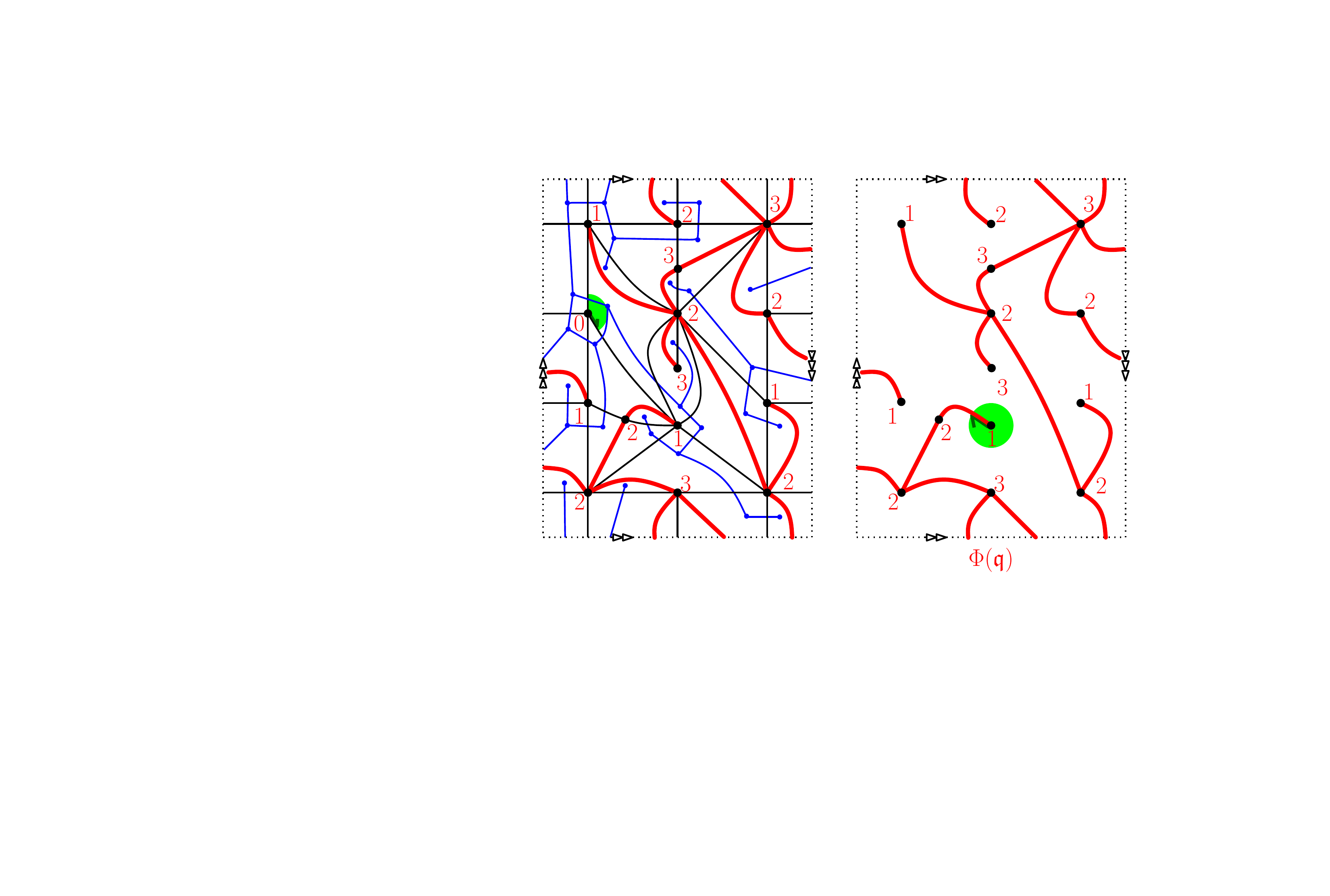}
\caption{Construction (red, fat, edges) of the labeled unicellular map $\bij(\qq)$ associated with the bipartite quadrangulation of \cref{fig:exampleDEG}.}
\label{fig:exampleDEG2}
\end{figure}

\begin{lemma}
$\bij(\qq)$ is a well-defined well-labeled unicellular map.
\end{lemma}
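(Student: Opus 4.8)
The statement splits into three claims: that $\bij(\qq)$ is well defined, that it is well-labeled, and that it is unicellular, the last being the only genuine difficulty. For the first two I would argue quickly. By \cref{lemma:invariant}(B) every face of $\qq$ ends up of one of the two types pictured before \cref{fig:redBlueRule}, so the rule of that figure inserts exactly one red edge per face in an unambiguous way; thus $\bij(\qq)$ is a well-defined embedded graph with one edge per face of $\qq$, and its root corner is well defined since the root edge of $\qq$ lies in a unique corner of $\bij(\qq)$, necessarily incident to the endpoint of label $1$. For the labeling, the vertices of $\qq$ are labeled by their distance to $v_0$ and $\bij(\qq)$ is carried by $V(\qq)\setminus\{v_0\}$, so every label is at least $1$ (property (3)) and the root vertex has label $1$ (property (1)). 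Property (2) is read off \cref{fig:redBlueRule}: a red edge placed in a face of type $(i-1,i,i-1,i)$ joins two vertices of label $i$, and one placed in a face of type $(i-1,i,i+1,i)$ joins a vertex of label $i$ to one of label $i+1$, so in all cases adjacent labels differ by at most $1$.

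For unicellularity, I first record that, by Euler's formula \eqref{eq:euler} applied to $\qq$, the graph $\bij(\qq)$ has $v(\qq)-1 = n+1-2h$ vertices and $n$ edges. It therefore suffices to show that $\mathbb{S}\setminus\bij(\qq)$ is a single open disk: this makes $\bij(\qq)$ a connected, cellularly embedded graph with a unique simply connected face, i.e.\ a unicellular map, and is consistent with $f=1$ in \eqref{eq:euler}. The tool for analyzing the complement is the dual exploration graph $\DEG(\qq)$, which by \cref{lemma:invariant}(A) is connected and is a single directed cycle $C$---drawn around $v_0$ in \ref{Step0b}---with trees attached. In each face of $\qq$ the red chord can be drawn disjointly from the blue edges and vertices present there, so cutting $\mathbb{S}$ along the red edges leaves $\DEG(\qq)$ intact and cuts each quadrangular face, along its chord, into two $2$-cells---triangles, quadrangles, or digons---which are then reglued to one another across the (uncut) black edges of $\qq$.

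The heart of the argument is topological. These cells are glued along black edges into the cycle-with-trees pattern carried by $\DEG(\qq)$; contracting its unique cycle $C$ to a point turns this pattern into a tree, and gluing disks along a tree produces a contractible complex, while the cycle $C$ merely caps off the disk around $v_0$. Hence $\mathbb{S}\setminus\bij(\qq)$ is connected and contractible, so it is an open disk, and $\bij(\qq)$ is a unicellular map. I expect the main obstacle to be making this last step rigorous without an ambient orientation: one must verify that the local configurations allowed by \cref{lemma:invariant}(B) always reglue the cells into exactly this cycle-with-trees pattern, with no extra cycle, handle, or cross-cap slipping in, so that the contraction of $C$ genuinely yields a tree and the complement is a single disk rather than some surface of positive type. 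Combined with the first paragraph, this shows that $\bij(\qq)$ is a well-labeled unicellular map on $\mathbb{S}$.
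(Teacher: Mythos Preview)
Your argument is correct and follows essentially the same approach as the paper: both use that $\DEG(\qq)$ is a cycle with trees attached (\cref{lemma:invariant}\ref{A}) to conclude that $\mathbb{S}\setminus\bij(\qq)$ is simply connected, hence a disk, and the labeling properties are checked in the same way from \cref{fig:redBlueRule}.

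Your closing worry is unwarranted, and in fact you have already said enough to dispel it. The point is that after cutting along the red edges, the resulting cells (the half-faces of $\qq$) each contain exactly one blue vertex, and two such cells are adjacent across a black edge of $\qq$ precisely when the corresponding blue vertices are joined by a blue edge. Thus $\DEG(\qq)$ is exactly the adjacency graph (the nerve) of this cell decomposition of $H=\mathbb{S}\setminus\bij(\qq)$, so $H$ deformation retracts onto $\DEG(\qq)$; no further local check is needed, and no hidden cycle, handle, or cross-cap can appear, because the homotopy type of $H$ is already pinned down by \cref{lemma:invariant}\ref{A}. The paper's proof phrases this same observation more tersely (``any loop in $H$ can be retracted to a point along the tree-like structure of $\DEG(\qq)$''), but the content is identical.
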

\begin{proof}
We first notice that $H=\mathbb{S}\setminus \bij(\qq)$ is homeomorphic to a disk. Indeed, the dual exploration graph $\DEG(\qq)$ goes through any face of $H$. From this it is easy to see that any loop in $H$ can be retracted to a point along the tree-like structure of $\DEG(\qq)$. This proves both that $\bij(\qq)$ is connected, that it is a valid map on $\mathbb{S}$, and that it has only one face.

It remains to check that the labeling of vertices makes $\bij(\qq)$ a valid well-labeled unicellular map. First, it is clear by construction that the label of the root vertex is~$1$, and that all labels in $\bij(\qq)$ are at least~$1$. Moreover, by construction, any two vertices of $\bij(\qq)$ that are linked by an edge have labels differing by $\pm 1$ or $0$, so there is nothing more to prove.
\end{proof}

\subsection{From well-labeled unicellular maps to quadrangulations}
\label{sec:reversebij}

\renewcommand{\M}{\mathfrak{u}}

We now describe the reverse construction. We let $\M$ be a rooted well-labeled unicellular map with $n$ edges on a surface $\mathbb{S}$. From $\M$ we are going to reconstruct simultaneously two graphs: the quadrangulation $\qq$ associated with $\M$, and the dual exploration graph $\DEG(\qq)$. In order to distinguish between those three graphs we will refer to edges of the unicellular map, of the quadrangulation, and of the dual exploration graph as \emph{red}, \emph{black}, and \emph{blue}, respectively. Vertices of the dual exploration graph will also be referred to as \emph{blue}. Black edges will also be referred to as \emph{internal edges}.

The construction goes in several steps that reproduce the steps of the construction of $\DEG(\qq)$. To facilitate this analogy we use a consistent numbering, for example \ref{StepR0} here is the analog of \ref{Step0a} and \ref{Step0b} in \cref{sec:DEG} (the letter ``R'' is for ``reverse'').

\smallskip

\newcommand{\rplan}{\Psi}
\newcommand{\rDEG}{\Delta}
\newcommand{\rbij}{\Lambda}

\begin{enumerate}[label=$\bullet$, ref=Step R0, leftmargin=0cm]
\item \label{StepR0} \textbf{Step R0 (Initialization)}.
Let us consider the representation of $\M$ as a labeled $2n$-gon with identified pairs of edges as in \cref{subsec:RepresentationOfMap} (we denote by $\pp$ the associated $2n$-gon and by $E_s(\M)$ and $E_t(\M)$ the associated sets of 
pairs). We draw a vertex $w_0$ labeled by $0$ inside our polygon $\pp$ and we connect each corner of $\pp$ labeled by $1$ to $w_0$ by a new (black) edge. In this way we dissect the polygon into $k_1$ areas, where $k_1$ is the number of corners of $\M$ labeled by $1$. In each such area we now draw one blue vertex and we connect them by a directed (counterclockwise) blue loop encircling $w_0$. There is a unique internal edge incident to the root corner of $\pp$, and a unique blue edge crossing it. We declare the blue corner lying at the extremity of that edge (and exterior to the cycle) to be the \emph{last visited corner (LVC)} of the construction. We set $i:=1$ and we continue.
\begin{figure}[h!!!!]
\begin{center}
\includegraphics[height=55mm]{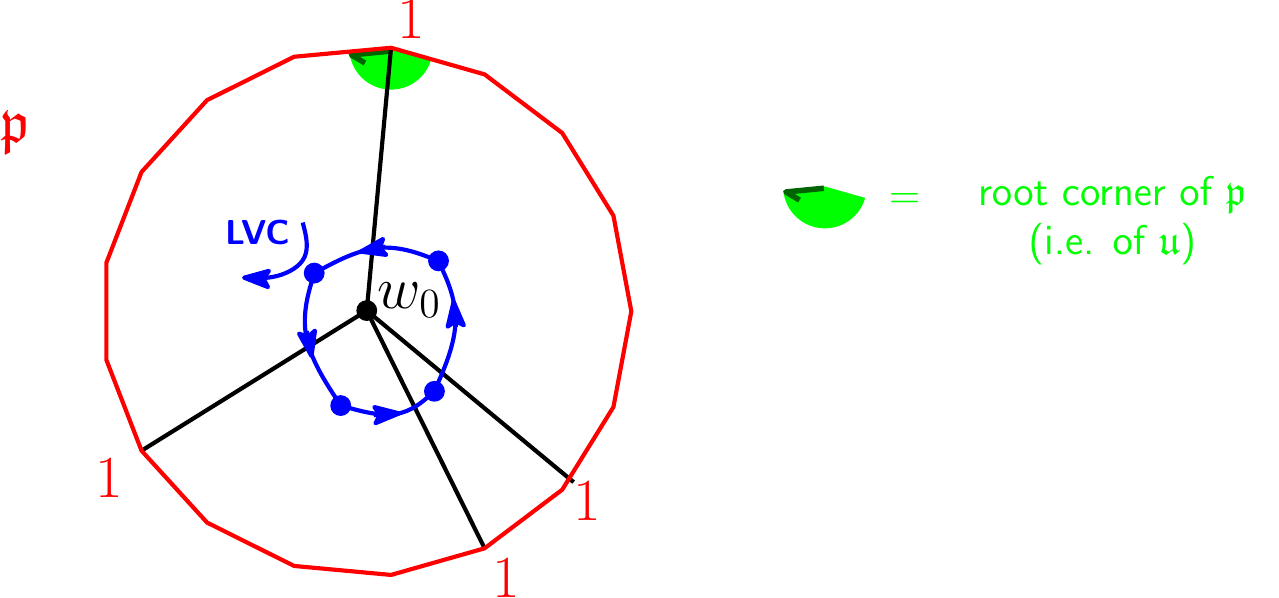}
\end{center}
\caption{Step R0}\label{fig:stepR0}
\end{figure}

\smallskip

We now proceed with the inductive step of the construction.
We are going to construct simultaneously and recursively a planar graph $\rplan(\M)$ drawn in $\pp$ and a blue directed graph $\rDEG(\M)$ drawn in $\pp$. 
 The vertices of that planar graph will be $V(\pp) \cup w_0$ and its edges (called internal edges because they are lying in the interior of $\pp$) will have the property that each vertex $w \in V(\pp) \cup w_0$ labeled by $i$ will be connected to a unique vertex $w' \in V(\pp) \cup w_0$ labeled by $i-1$.
Let us stress here the fact that $V(\pp)$ denotes the vertices of the polygon $\pp$ itself and that this notation does not take into account the subsequent identifications among those vertices given by the unicellular map structure of $\M$.

\smallskip

A property of our construction is that each time we draw an edge of $\rplan(\M)$, we draw at the same time a blue directed edge of $\rDEG(\M)$ passing through it.  As in \cref{sec:DEG} we define the \emph{label} of a blue edge as the minimum of labels of endpoints of the black edge it crosses.
The graph $\rplan(\M)$ dissects the polygon $\pp$ into several regions, and in the sequel we use the word \emph{area} (rather than face, or domain) to refer to these regions. Induction (\cref{prop:Rwelldef} below) ensures that in each area there is exactly one blue vertex $v$, which has exactly one outgoing blue edge, and the number  of the blue edges attached to $v$ is equal to the number of the corresponding internal edges lying on the boundary of that area, with multiplicity. Moreover, those areas have one of three possible types, see \cref{fig:TypesOfAreas}:
\begin{enumerate}[label=(T\arabic*)]
\item \label{T1}
the border of the area $A$ consists of exactly two edges: one edge $e$ with endpoints labeled by $i$ and $i-1$ belonging to the boundary of $\pp$ and one internal edge connecting the endpoints of $e$.  In that area there is exactly one outgoing blue edge labeled by $i-1$ and no incoming blue edges (see \cref{subfig:T1});
\item \label{T2}
the border of the area $A$ consists of a piece $x$ of the boundary of $\pp$ with endpoints labeled by $i$ (all the vertices between those endpoints, if any,
being labeled by integers strictly greater than $i$) and exactly two internal edges which connect the two endpoints of $x$ to some vertex labeled by $i-1$. In that area there are exactly one outgoing and one incoming blue edges labeled by $i-1$ that define an orientation of the area $A$ (see \cref{subfig:T2}); 
\item \label{T3}
the border of the area $A$ consists of exactly four edges: one edge $e$ with endpoints labeled by $i+1$ and $i$ belonging to the boundary of $\pp$ and three internal edges such that the border of $A$ is labeled by a cycle $(i+1, i, i-1, i)$.  In that area there are exactly two incoming blue edges labeled by $i$ and $i-1$ and one outgoing blue edge labeled by $i-1$ (see \cref{subfig:T3}).
\end{enumerate}

\begin{figure}[h!!!!!]
\centering
\subfloat[Type \ref{T1}]{
	\label{subfig:T1}
	\includegraphics[scale=0.46]{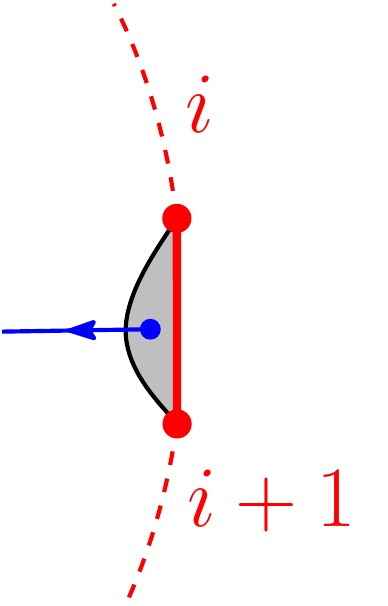}}
\quad
\subfloat[Type \ref{T2}]{
	\label{subfig:T2}
	\includegraphics[scale=0.46]{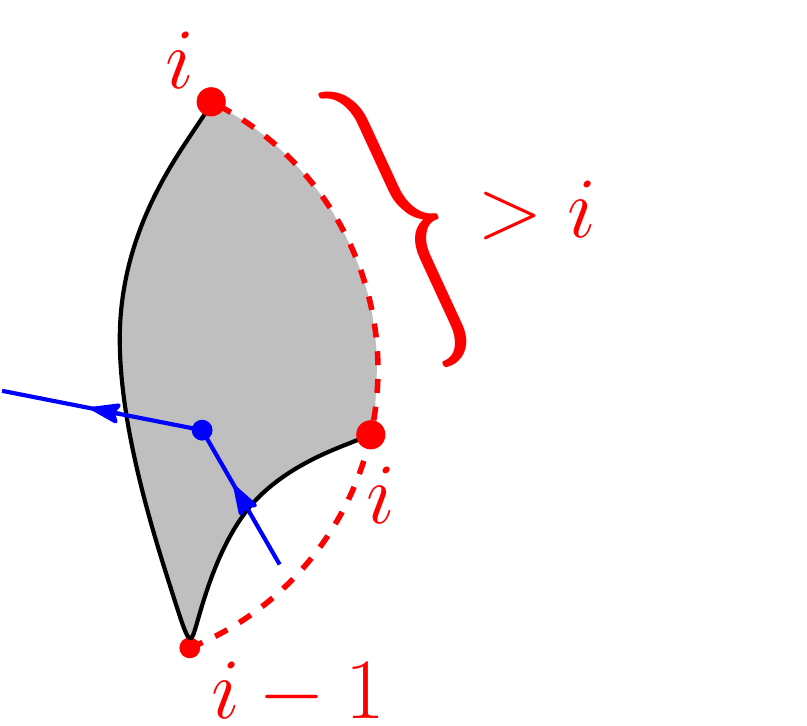}}
\quad
\subfloat[Type \ref{T3}]{
	\label{subfig:T3}
	\includegraphics[scale=0.46]{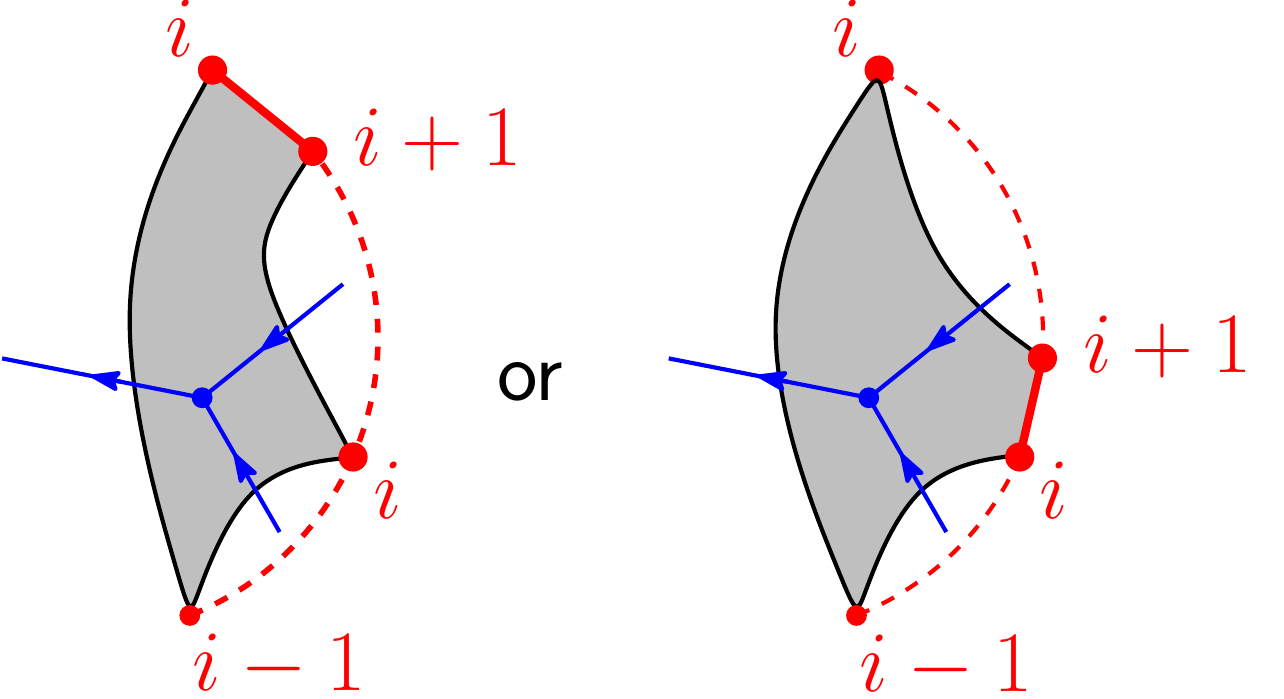}}
\caption{The three possible types of areas appearing during the construction of $\rplan(\M)$ and $\rDEG(\M)$ (up to reflection).}
\label{fig:TypesOfAreas}
\end{figure}
\end{enumerate}

\begin{enumerate}[label=$\bullet$, ref=Step R\arabic*, leftmargin=0cm]
\item \label{StepR1} \textbf{Step R1 (Choosing in which face to start, and through which edge)}.
If there are no vertices of label $i+1$ in $\pp$, go to the termination step.
Otherwise, walk along the blue graph, starting from the LVC, thus visiting some areas. We let $F$ be the first visited area having the following properties:
\begin{itemize}
\item[$\bullet$] The minimum corner label in $F$ is $i-1$.
\item[$\bullet$] $F$ is of type \ref{T2} or \ref{T3}.
\item[$\bullet$] If $F$ is of type \ref{T3}, let $e$ be the unique (red) edge of $\pp$ bordering $F$. If $F$ is of type \ref{T2}, let $e$ be the last (red) edge of $\pp$ bordering $F$ having extremities labeled $i,i+1$, in the counterclockwise orientation induced by the blue graph on $F$ (see \cref{fig:stepR1} below).
Let $\tilde{e}$ be the unique edge of $\pp$ that is matched with $e$ in the unicellular map structure inherited from $\M$, and let $\tilde{F}$ be the area containing $\tilde{e}$. Then $\tilde{F}$  is of type \ref{T2}.
\end{itemize}
\cref{prop:Rwelldef} below ensures that such an area $F$ always exists.

\begin{figure}[h!!!!!]
\begin{center}
\noindent\includegraphics[width=\linewidth]{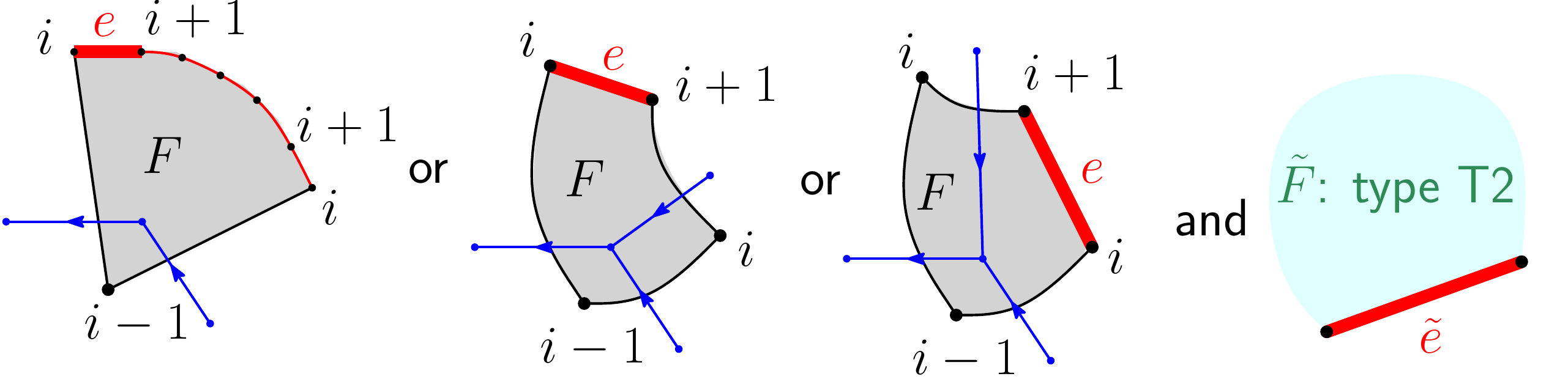}
\end{center}
\caption{\ref{StepR1}}\label{fig:stepR1}
\end{figure}

\item \label{StepR2} \textbf{Step R2 (Constructing a path of edges labeled by $i$)}. We let $F, e, \tilde{e}$ and $\tilde{F}$ be defined as above.
We let $v$ be the vertex of $\pp$ of label $i$ incident to $\tilde{e}$, and we link $v$ by new internal edges to all the corners of $\tilde{F}$ having label $i+1$, without crossing any existing blue edge. We thus subdivide $\tilde{F}$ into several new areas.
We let $f_1,f_2,\dots,f_k$ be these areas, starting from the one incident to $\tilde{e}$, turning around~$v$. 
Then $f_k$ contains a unique blue vertex of degree $2$, call it $v_k$. 
 We now add a new blue vertex $v_i$ in each area $f_i$ for $1\leq i \leq k-1$, and we connect the vertices $v_1,v_2,\dots,v_k$ by a blue directed path as on the following figure:
\begin{figure}[h!!!!!]
\begin{center}
\noindent\includegraphics[height=4cm]{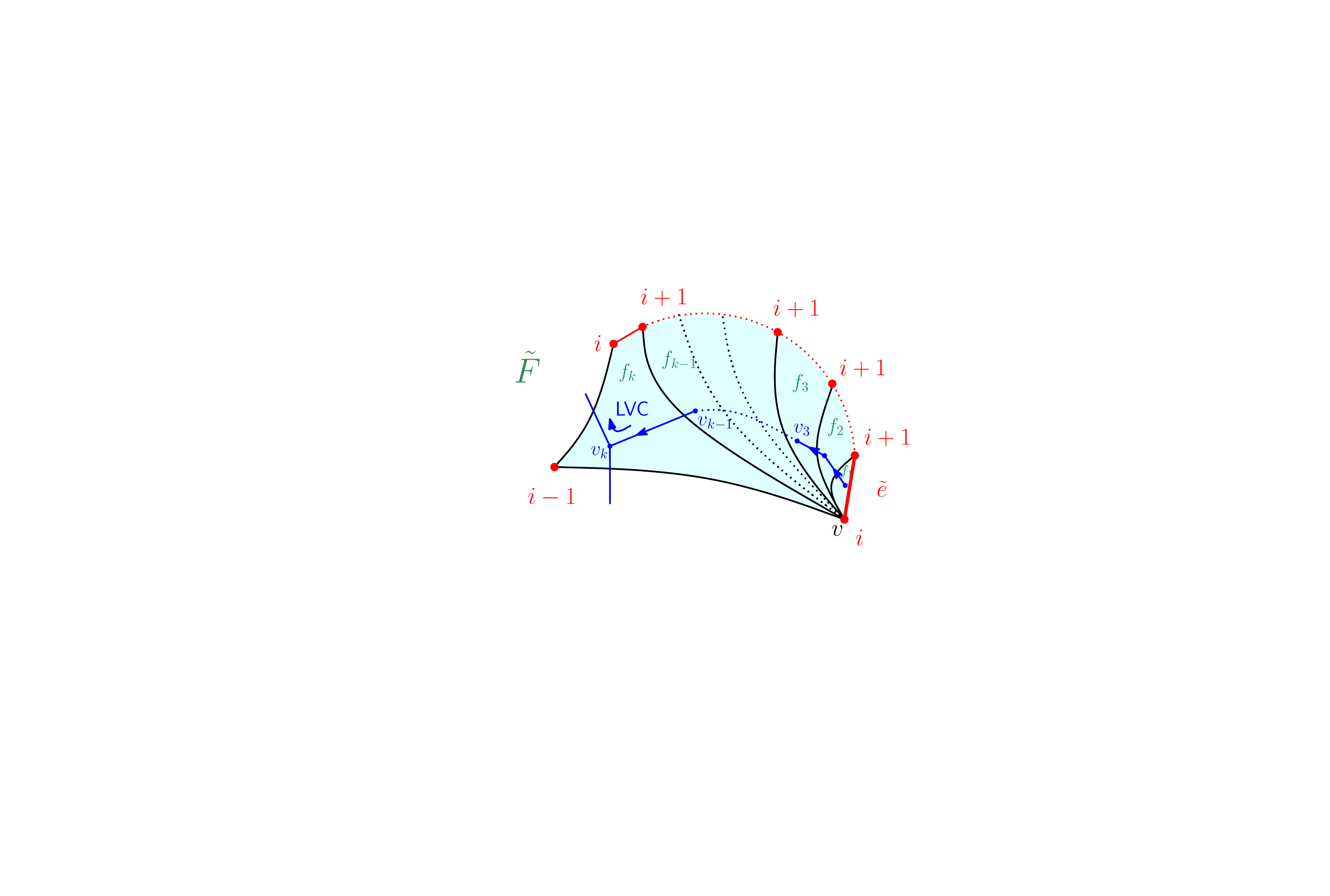}
\end{center}
\caption{\ref{StepR2}} \label{fig:stepR2}
\end{figure}

\noindent We declare the corner of $v_k$ incident to the last drawn blue edge and exterior to $v$ to be the last visited corner (LVC) as on~\cref{fig:stepR2}.

\item \label{StepR3} \textbf{Step R3 (Induction).} If each vertex of label $i+1$ of $\pp$ is linked to an internal edge, we set $i:=i+1$, otherwise we let $i$ unchanged. We then go back to \ref{StepR1} and continue.
\end{enumerate}

\begin{enumerate}[label=$\bullet$, ref=Termination, leftmargin=0cm]
\item \label{TerminationR} \textbf{Termination}. We perform the identifications of edges of $\pp$ according to the unicellular map structure of $\M$, thus reconstructing the surface $\mathbb{S}$. We call $\rbij(\M)$ the map on $\mathbb{S}$ consisting of all the internal edges, with vertex set $V(\M)\cup\{w_0\}$.
\end{enumerate}

\smallskip
\cref{fig:exampleReverse} gives an example of the construction.

\begin{figure}
\begin{center}\includegraphics[width=\linewidth]{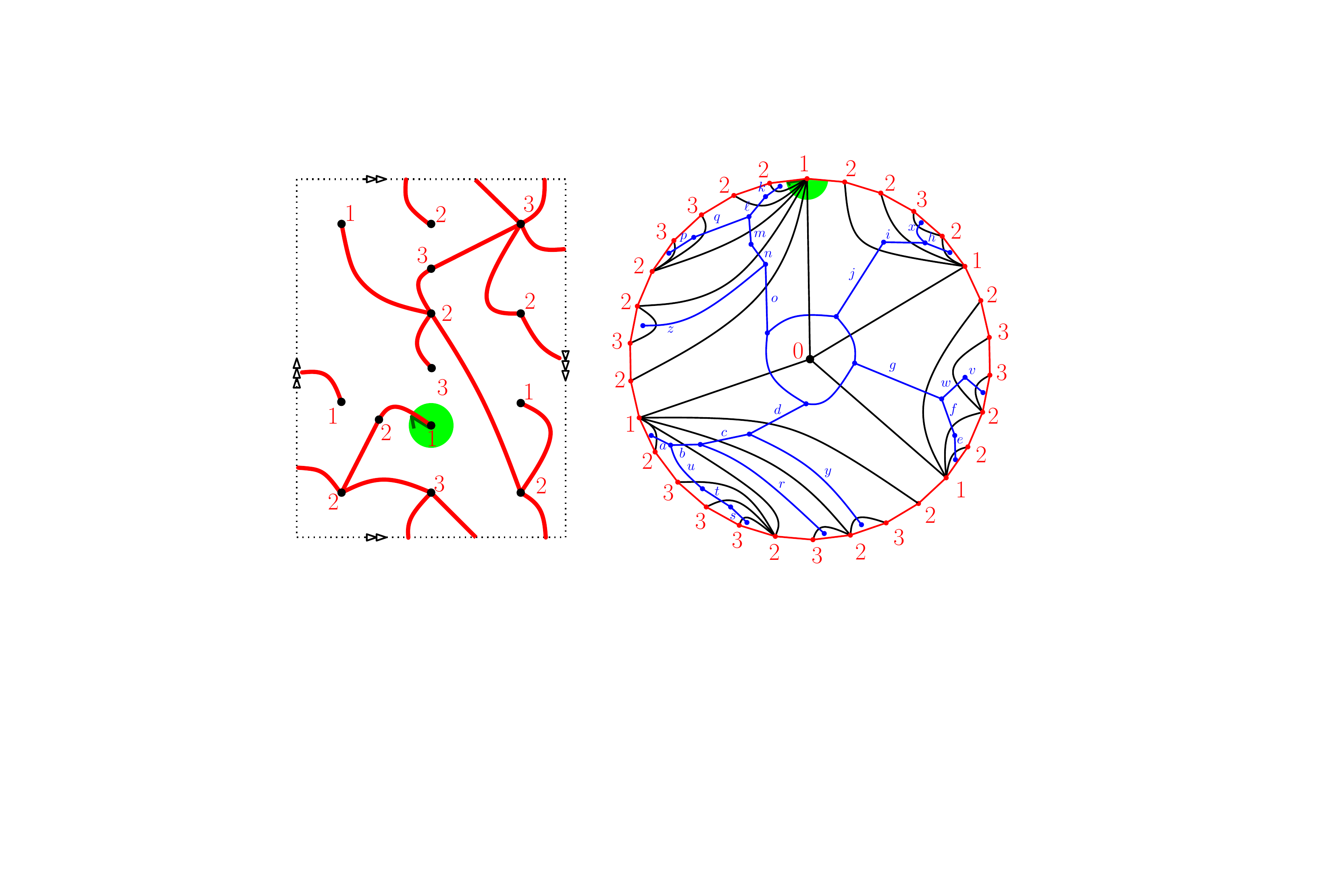}
\end{center}
\caption{Left: A labeled unicellular map $\M$ on the Klein bottle (the root corner is indicated in green). Right: The same unicellular map displayed as a polygon, and the construction of the associated quadrangulation (black edges). To help the reader visualize the construction, blue edges added after \ref{StepR0} have been numbered $a,b,\dots,z$ in their order of appearance in the construction. To make the picture lighter we have not drawn the orientation of the blue edges (they are all oriented towards the inner blue cycle, which is oriented counterclockwise).}
\label{fig:exampleReverse}
\end{figure}
\begin{proposition}\label{prop:Rwelldef}
The map $\rbij(\M)$ is a well-defined bipartite quadrangulation.
\end{proposition}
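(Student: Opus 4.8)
The plan is to prove the two halves of the statement in turn: that the construction is \emph{well-defined} (every step can be carried out and produces a definite output), and that this output is a \emph{bipartite quadrangulation}. The whole argument runs parallel to the forward analysis of \cref{sec:DEG}: I first establish the reverse analog of the structural invariant \cref{lemma:invariant}, then the reverse analog of \cref{prop:welldef}, and finally read off the face degrees from the local pictures of the areas after gluing. Throughout, recall that each of the $2n$ corners of $\pp$ has label $\geq 1$ and is joined to a unique parent of label one less, so $\rplan(\M)$ has exactly $2n$ internal (black) edges, none of which is identified by the boundary gluing.

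First I would prove, by induction on the steps, that at every stage the blue graph $\rDEG(\M)$ is a directed cycle with trees attached and oriented toward it, that each area contains exactly one blue vertex with exactly one outgoing blue edge, and that every area is of one of the types \ref{T1}, \ref{T2}, \ref{T3} of \cref{fig:TypesOfAreas}. The base case is \ref{StepR0}, where the areas created around $w_0$ are all of type \ref{T2}; the inductive step checks that subdividing a type-\ref{T2} area $\tilde F$ in \ref{StepR2}, by joining the label-$i$ vertex $v$ to the label-$(i+1)$ corners of $\tilde F$, produces only areas of types \ref{T1}, \ref{T2}, \ref{T3}, and evolves the blue graph as prescribed. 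Next I would show that \ref{StepR1} always succeeds: as long as vertices of label $i+1$ remain unjoined, there is a reachable area $F$ of type \ref{T2} or \ref{T3} with minimal label $i-1$. As in \cref{prop:welldef} this goes through a counting argument of the same flavor --- each pass through \ref{StepR2} adds one degree-$1$ blue vertex while raising one blue vertex from degree $2$ to degree $3$, which balances the numbers of the relevant area kinds and forces a suitable $F$ to exist --- and reachability follows because the blue graph meets every area. That the twin $\tilde F$ is necessarily of type \ref{T2} is then read off from the invariant.

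The heart of the proof is the quadrangulation property, which I would derive from a clean counting observation. Since $\rDEG(\M)$ is a connected cycle-with-trees it has as many vertices as edges, namely $2n$, so there are exactly $2n$ areas; as each area has a nonempty boundary piece on $\partial\pp$ and there are $2n$ edge-sides in all, \emph{each area borders exactly one edge-side of $\pp$}. The faces of $\rbij(\M)$ are obtained by gluing $\pp$ along its matching and merging, across each red edge of $\M$, the two areas flanking its two sides; since each area has a single edge-side, every face is the union of exactly two areas, giving $n$ faces. The decisive point is the type of the pairing. Two matched edge-sides carry the same multiset of endpoint labels, which by well-labeledness is $\{i,i\}$ or $\{i,i+1\}$. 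In the equal case both flanking areas are type-\ref{T2} triangles with corners $(i,i,i-1)$, whose union is a quadrangle of type $(i-1,i,i-1,i)$; in the difference-one case one side ends up flanked by a type-\ref{T1} digon (produced when it is processed in \ref{StepR2}) and the other by a type-\ref{T3} quadrilateral (once its label-$(i+1)$ endpoint is joined), whose union has $1+3=4$ black sides and is a quadrangle of type $(i-1,i,i+1,i)$. In both cases the merged face has degree $4$, reproducing exactly the two patterns of \cref{fig:redBlueRule}. Bipartiteness is then immediate: every black edge joins labels $\ell$ and $\ell-1$, so coloring vertices by the parity of their label gives a proper two-coloring.

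I expect the main obstacle to be the intertwined induction of the first two steps: as in the forward direction the invariants and the success of \ref{StepR1} must be proved together, and the genuinely delicate verification is that a difference-one red edge never has \emph{both} of its sides flanked by type-\ref{T1} areas (nor both by type-\ref{T3}), which would yield a digon or a hexagon rather than a quadrangle. The careful tie-breaking in \ref{StepR1} and the requirement that the twin area $\tilde F$ be of type \ref{T2} are precisely what exclude this, and confirming it rigorously requires tracking the local evolution of the two flanking areas across the relevant calls to \ref{StepR2} --- the one genuinely technical ingredient of the proof, the rest being the counting and gluing bookkeeping above.
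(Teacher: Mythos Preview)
Your overall strategy is sound and closely mirrors the paper's, but there is a genuine gap in how you handle the success of \ref{StepR1}. You propose to locate an area $F$ of type \ref{T2} or \ref{T3} with minimum label $i-1$ by a degree-counting argument analogous to the forward direction, and then assert that ``the twin $\tilde F$ is necessarily of type \ref{T2}'' follows from the invariant. That last assertion is false: not every candidate $F$ satisfying the first two bullets of \ref{StepR1} has coarea of type \ref{T2}; the third bullet is a genuine additional constraint, and the proof must exhibit \emph{some} candidate satisfying it. The forward-style counting (balancing degree-$1$ and degree-$3$ blue vertices) is blind to the matching on $\partial\pp$ and therefore cannot control cotypes.

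The paper closes this gap via an explicit cotype invariant (\cref{lemma:PropertiesOfR}): (i) every area of type \ref{T3} has cotype \ref{T1}, except possibly the most recently created one, whose cotype is then \ref{T1} or \ref{T2}; and (ii) at any stage at most one area of type \ref{T2} has cotype \ref{T1}. With these in hand, a short case analysis shows that if the obvious candidate $F$ fails the cotype test, a valid one can be found by looking at the area matched to the \emph{other} boundary edge of $F$. The same invariant (i), together with your observation $\#\text{\ref{T1}}=\#\text{\ref{T3}}$, also dispatches exactly what you flag as ``the one genuinely technical ingredient'': at termination every \ref{T3} area has cotype \ref{T1}, hence by equinumerosity every \ref{T1} area has cotype \ref{T3}, and the difference-one red edges are forced to glue a \ref{T1} to a \ref{T3}. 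Your clean counting that there are $2n$ areas and $2n$ edge-sides (so each terminal area borders exactly one edge-side) is a nice shortcut that the paper does not spell out, but it does not substitute for the cotype invariant --- that is the missing idea in your outline.
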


In order to prove~\cref{prop:Rwelldef}, we first establish some properties of the construction. First, we introduce some helpful terminology. If $F$ is an area of type \ref{T2} or \ref{T3}, we note $e=e(F)$ for the edge defined as in~\cref{fig:stepR1}. We let $\tilde{e}(F)$ be the edge matched with $e(F)$ in $\M$, and  we let $\tilde{F}$ be the area containing $\tilde{e}(F)$.
We say that $\tilde{F}$ is the \emph{coarea} of $F$ and that its type (\ref{T1},\ref{T2} or \ref{T3}) is the \emph{cotype} of $F$.

\begin{lemma}
\label{lemma:PropertiesOfR}
The following properties hold true:
\begin{enumerate}[ref=(\arabic*)]
\item
After any step of the reverse construction each area of type \ref{T3} has cotype \ref{T1} except, possibly, the last area created, and then the only possible cotypes are \ref{T1} and \ref{T2};
\label{lemma:PropertiesOfR:1}
\item
After any step of the reverse construction the number $N$ of areas of type \ref{T2} and cotype \ref{T1} is $0$ or $1$.
\label{lemma:PropertiesOfR:2}
\end{enumerate}
\end{lemma}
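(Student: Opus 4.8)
The plan is to prove both assertions of \cref{lemma:PropertiesOfR} simultaneously, by induction on the number of elementary rounds (\ref{StepR1}--\ref{StepR3}) already performed, carrying them as a single coupled invariant. For the base case I would look at the configuration right after \ref{StepR0}: there the internal edges join $w_0$ to every corner of label $1$, so each area is bounded by a piece of $\partial\pp$ whose two endpoints carry label $1$ and whose interior vertices (if any) carry labels at least $2$, together with two internal edges running to $w_0$. Since $\M$ is well-labeled there are no labels below $1$, so every area is of type \ref{T2} at level $1$; consequently every coarea is again of type \ref{T2}. Hence there is no area of type \ref{T3} (so the first assertion holds vacuously) and $N=0$ (so the second holds).

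For the inductive step I would first pin down exactly what the surgery of \ref{StepR2} does. By the induction hypothesis the selected coarea $\tilde F$ is of type \ref{T2}, say at level $i$, its boundary piece $x$ having endpoints $v$ and $q$ of label $i$; joining $v$ to every corner of $\tilde F$ of label $i+1$ cuts $\tilde F$ into the areas $f_1,\dots,f_k$. Inspecting the labels, $f_1$ is the digon bounded by $\tilde e(F)$ and the first new internal edge, hence of type \ref{T1}; the middle areas $f_2,\dots,f_{k-1}$ are of type \ref{T2} at level $i+1$; and the last area $f_k$, which receives the updated LVC, is of type \ref{T3} at level $i$. The decisive observation is that the edge $\tilde e(F)$ itself lies on the boundary of the digon $f_1$: hence, \emph{regardless} of whether $F$ was of type \ref{T2} or \ref{T3}, the coarea of $F$ after the step is $f_1$, so $F$ acquires cotype \ref{T1}.

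Equipped with this local picture I would then propagate the two invariants. For the first assertion, the only area of type \ref{T3} produced by the round is $f_k$, and it is precisely the last area created; all pre-existing \ref{T3} areas retain their type, so it is enough to show that any \ref{T3} area that ceases to be ``the last created'' has in the meantime gained cotype \ref{T1}. This is where the LVC, placed inside $f_k$, is essential: at the next entry into \ref{StepR1} the walk along the blue graph starts inside $f_k$, so if the running index $i$ is unchanged and $f_k$ still has cotype \ref{T2}, then $f_k$ is the area that gets selected and processed, which by the decisive observation turns its cotype into \ref{T1} before any new \ref{T3} area is born; the remaining case, in which the index advances, forces $f_k$ to have cotype \ref{T1} already, and I would derive this from the fact that the index advances only once every corner of label $i+1$ has been connected. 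For the second assertion I would keep a balance for $N$: the round changes the cotype of the processed area from \ref{T2} to \ref{T1} (which raises $N$ by one only when $F$ is itself of type \ref{T2}), while the destruction of $\tilde F$ and the redistribution of its boundary edges among $f_1,\dots,f_k$ alter the cotypes of the areas whose coarea was $\tilde F$; I would show that these contributions compensate, so that $N$ never leaves $\{0,1\}$.

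The hard part will be this last balance, because subdividing $\tilde F$ acts \emph{non-locally} on cotypes: every area $G$ whose designated edge $e(G)$ is matched, through the gluing data $E_s(\M),E_t(\M)$ of $\M$, to a boundary edge of $\tilde F$ has its cotype switch from \ref{T2} to the type of whichever $f_j$ now contains that edge. Keeping simultaneous control of all these switches, and coordinating them with the LVC-driven order in which areas are selected, is the crux of the proof; it is here that the precise choice rules of \ref{StepR1} --- the selection of $e$ and, above all, the requirement that the coarea $\tilde F$ be of type \ref{T2} --- have to be used in full.
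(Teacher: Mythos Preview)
Your outline captures the right inductive shape, but two genuine gaps remain.

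For part~(1), you never exclude the possibility that the freshly created area $f_k$ has cotype~\ref{T3}.  The lemma asserts that the last created \ref{T3} area has cotype \ref{T1} or \ref{T2}; you only argue that once $f_k$ ceases to be ``last created'' it has been forced to cotype~\ref{T1}.  The paper closes this gap by a direct contradiction: if $f_k$ had cotype~\ref{T3}, its coarea would itself be a \ref{T3} area of cotype~\ref{T2}, hence (by induction) it was the \emph{previous} last-created area, and the rules of \ref{StepR2} then force $e(f_k)$ to lie in a \ref{T1} area, contradicting that $f_k$ is of type~\ref{T3}.

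For part~(2), your worry about ``non-local'' cotype switches is largely unfounded, and it is precisely by seeing why that the paper avoids the balance computation you leave open.  The point is that the digon $f_1$ has a \emph{single} red boundary edge, namely $\tilde e(F)$; hence the \emph{only} area that can acquire cotype~\ref{T1} in this round is $F$ itself.  All other areas $G$ whose coarea was $\tilde F$ now have coarea among $f_2,\dots,f_k$, which are of type \ref{T2} or \ref{T3}, so their cotype does not become \ref{T1}.  Moreover the new \ref{T2} areas $f_2,\dots,f_{k-1}$ have minimum label $i$, so $e(f_j)$ has extremities $i{+}1,i{+}2$, and no \ref{T1} area with such a red edge exists yet (no internal edge of label $i{+}1$ has been drawn); hence none of them has cotype~\ref{T1}.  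This immediately gives ``$N$ stays constant'' when $F$ is of type~\ref{T3}.  When $F$ is of type~\ref{T2} the paper does \emph{not} attempt a balance at all: it argues backwards that if an internal edge of label $i$ has already been drawn, the previously created \ref{T3} area $G$ must (since it was not reselected) have cotype~\ref{T1}, forcing the larger area $G'$ it came from to have been the unique contributor to $N$; subdividing $G'$ then dropped $N$ to $0$, so the current round can raise it to at most $1$.

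In short: the bookkeeping you anticipate for~(2) is unnecessary once you observe that $f_1$ has a unique red edge and that the new \ref{T2} areas are one level too high to have cotype~\ref{T1}; and for~(1) you still owe the exclusion of cotype~\ref{T3} for the last-created area.
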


\begin{proof}
Both properties will be proved by induction.

\noindent \emph{(1)} In order to prove \cref{lemma:PropertiesOfR}--\ref{lemma:PropertiesOfR:1} we assume that we just constructed an area $F$ of type \ref{T3} in \ref{StepR2}, by subdividing a larger area $F'$ of type \ref{T2} into smaller areas (this is the only possible way for creating an area of type \ref{T3}), and we examine the next step of the construction. If the cotype of $F$ is \ref{T3} then its coarea $\tilde{F}$  was of cotype \ref{T2} in the previous step of the construction, and by induction hypothesis it was the last created area. This leads to a contradiction: indeed, the construction rules imply that in \ref{StepR2} following the creation of the area $\tilde{F}$, we subdivide  $F'$ in a way that the edge $\tilde{e}(e(\tilde{F})) = e(F)$ (we recall that $\tilde{e}(e(\tilde{F}))$ is by definition the edge matched with $e(\tilde{F})$ in $\M$, that $e(\tilde{F})$ is the edge defined as in~\cref{fig:stepR1} for the area $\tilde{F}$ of type \ref{T2} or \ref{T3}, and that $\tilde{F}$ is defined as the coarea of $F$; the fact that $\tilde{e}(e(\tilde{F})) = e(F)$ is a simple observation) belongs to an area of type \ref{T1}. This contradicts our assumption that $F$, which contains $e(F)$, is of type \ref{T3}. Therefore there remains two possible situations:
\begin{itemize}
\item The cotype of area $F$ is \ref{T1} (in which case \cref{lemma:PropertiesOfR}--\ref{lemma:PropertiesOfR:1} holds true);
\item The cotype of area $F$ is \ref{T2}. Then in the next step of the construction (\ref{StepR1}) we select the area $F$ and we perform \ref{StepR2} to subdivide $\tilde{F}$ into smaller areas in a way that $\tilde{e}(F)$ belongs to an area of type \ref{T1} (after the subdivision). After this step the cotype of $F$ is \ref{T1}, which finishes the proof of the inductive step.
\end{itemize}

\noindent \emph{(2)} In order to prove \cref{lemma:PropertiesOfR}--\ref{lemma:PropertiesOfR:2} we observe first that after \ref{StepR0} we have $N=0$ since there are no areas of type \ref{T1}. We now look how $N$ is modified in each round of the construction. Let us assume that we are going to create an internal edge labeled by $i$ by applying \ref{StepR2}. There are two possible situations:
\begin{itemize}
\item If the area $F$ selected in \ref{StepR1} is of type \ref{T3}, we continue in \ref{StepR2} by subdividing its coarea $\tilde{F}$ into one area of type \ref{T1} (that becomes the coarea of $F$), one area of type \ref{T3}, and possibly some new areas of type \ref{T2}. Since the smallest corner label inside these new areas of type \ref{T2} is $i$, their cotypes are different from \ref{T1}, because no internal edge labeled by $i+1$ exists yet. Therefore $N$ stays constant during this process.
\item If the area $F$ selected in \ref{StepR1} is of type \ref{T2}, there are two possibilities. Either we have not drawn yet any internal edge of label $i$, in which case we have $N=0$ by construction, or we have already drawn such an edge, in which case the last created area in the execution of the algorithm (\ref{StepR2}) was an area $G$ of type \ref{T3} and minimum label $i-1$. Since $G \neq F$ this means that $G$ was not selected in the \ref{StepR1} following its creation, and by the rules of \ref{StepR1} this means that its cotype is not \ref{T2}. By \cref{lemma:PropertiesOfR}--\ref{lemma:PropertiesOfR:1} this implies that $G$ has cotype \ref{T1}. This means that in \ref{StepR2} during which $G$ was created by subdividing a larger area $G'$ of type \ref{T2}, the area $G'$ had cotype \ref{T1}. Hence, by induction, at that time we had $N=1$ and $G'$ was the \emph{unique} area contributing to $N$. Therefore right after subdividing $G'$ and creating $G$ we had $N=0$, so after subdividing $\tilde{F}$ the number $N$ is at most $1$, which finishes the proof of the inductive step.
\end{itemize}

\end{proof}

\begin{proof}[Proof of \cref{prop:Rwelldef}]

\noindent $\bullet$ 
First, the fact that during the construction, all areas remain of type \ref{T1}, \ref{T2} and \ref{T3} is clear by induction, since in \ref{StepR2} all newly created areas are of this type (the area $f_1$ is of type \ref{T1}, $f_2,f_2,\dots,f_{k-1}$ are of type \ref{T2}, and $f_k$ is of type \ref{T3}), and this is the only step in which new areas are created.

\smallskip

\noindent$\bullet$ We now check that \ref{StepR1} is always successful in finding a face $F$ with the desired properties. 
Assume that, at some point of the execution of the algorithm, each vertex of $\pp$ labeled by $i$ is connected to at least one internal edge and that there is still at least one vertex of $\pp$ of label $i+1$ that is not connected to an internal edge. Let $v$ be such a vertex and let $F$ be the area that $v$ belongs to. Then $F$ is of type \ref{T2}.

We first note that if $F$ has cotype \ref{T2}, then we are done since $F$ is a valid choice. Moreover, if  $F$ has cotype \ref{T3}, its coarea $\tilde{F}$ is a valid choice and we are done too. So we now assume that $F$ has cotype \ref{T1}. By \cref{lemma:PropertiesOfR}--\ref{lemma:PropertiesOfR:2} $F$ is the only area of type \ref{T2} and cotype \ref{T1}.
 Let $e = e(F) \in \pp$ and let $e'$ be the other edge of $\pp$ bordering $F$ of minimum label $i$. Let $\tilde{F}'$ be the area containing the edge $\tilde{e'}$ matched with $e'$ in~$\M$. We distinguish cases according to the type of $\tilde{F}'$:
\begin{itemize}
\item We first claim that $\tilde{F}'$ cannot be of type \ref{T1}.
 Indeed observe that $e'$ is the \emph{first} edge of $\pp$ bordering $F$ having extremities labeled $i,i+1$, in the counterclockwise orientation induced by the blue graph on $F$. But in the construction rules,  an area of type \ref{T1} can be constructed only as the coarea of some area $G$ matched to the \emph{last} edge of $\pp$ bordering $G$ having extremities labeled $i,i+1$, in the counterclockwise orientation induced by the blue graph on~$G$.
\item If $\tilde{F}'$ is of type \ref{T3}, we are done since $\tilde{F}'$ is a valid choice.
\item If $\tilde{F}'$ is of type \ref{T2}, by \cref{lemma:PropertiesOfR}--\ref{lemma:PropertiesOfR:2} we know that $\tilde{F}'$ has cotype \ref{T2} or \ref{T3}. In the first case  $\tilde{F}'$ is a valid choice, and in the second case its coarea is one.
\end{itemize}

This concludes the proof that all the operations in \ref{StepR1} are well defined and that we always succeed in selecting a valid area. Since all the operations made in \ref{StepR2} rely only on the assumption that $\tilde{F}$ has type \ref{T2}, there is nothing more to prove and we conclude that the construction of $\rDEG(\M)$ and $\rplan(\M)$ is well-defined.

\medskip

\noindent$\bullet$ It remains to prove that $\rbij(\M)$ is a bipartite quadrangulation on $\mathbb{S}$. First, note that in the end of the construction all the areas of type \ref{T2} have degree three (since all the vertices of $\pp$ are connected to an internal edge). Moreover, by \cref{lemma:PropertiesOfR}--\ref{lemma:PropertiesOfR:1}, each area of type \ref{T3} has cotype \ref{T1}, and because in each \ref{StepR2} exactly one area of type \ref{T1}, and exactly one area of type \ref{T3} are created, the numbers of areas of type \ref{T1} and \ref{T3} are the same. Therefore the embedded graph $\rbij(\M)$ defined by the drawing of internal edges on the surface $\mathbb{S}$ decomposes $\mathbb{S}$ into two kind of components: faces obtained by gluing two areas of type \ref{T2} and degree 3 along their red edge; and faces obtained by gluing an area of type \ref{T3} and an area of type \ref{T1} along their red edge, as on~\cref{fig:mergingFaces} below.
In both cases these components are quadrangles, which proves both that $\rbij(\M)$ is a well-defined map on $\mathbb{S}$ (i.e., it provides a cellular decomposition), and that it is a quadrangulation. Finally, the fact that $\rbij(\M)$ is bipartite is clear since internal edges always link vertices whose labels have a different parity. 
\begin{figure}[h!!!!!]
\includegraphics[width=0.8\linewidth]{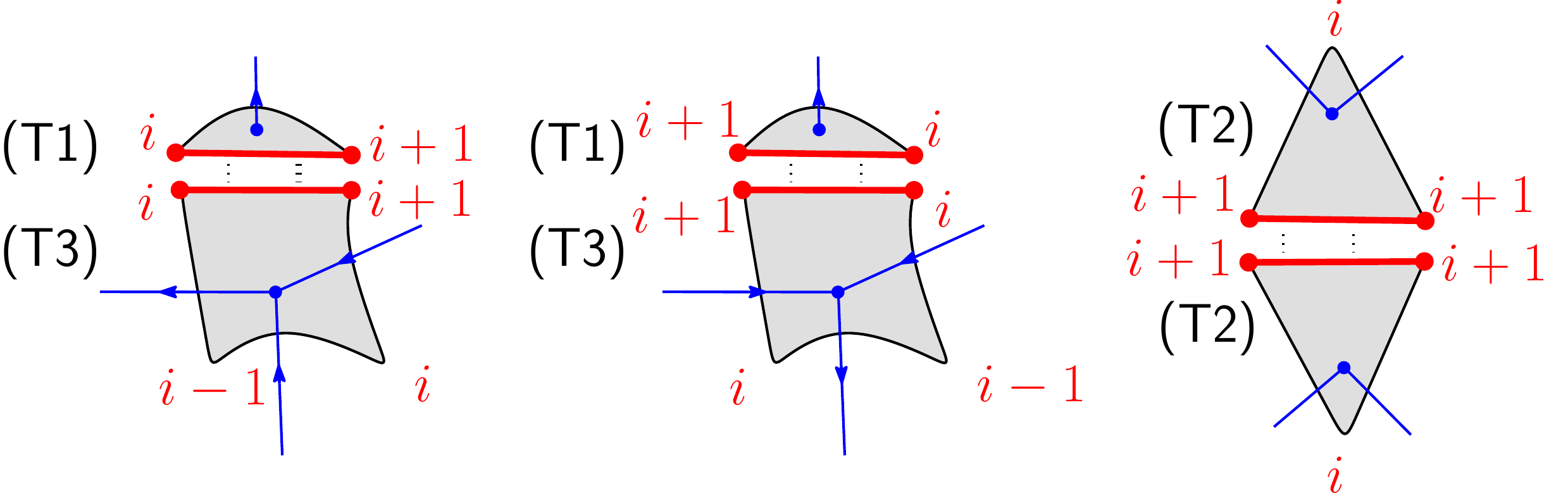}
\caption{Possible faces in $\rbij(\M)$ (up to reflection). Each of them is obtained by gluing together two areas (delimited by the planar graph $\rplan(\M)$ in $\pp$) 
along an edge-identification given by the matching structure of $\M$.
}\label{fig:mergingFaces}
\end{figure}
\end{proof}

We conclude this section with two important observations. By construction, the vertex-set of the quadrangulation $\qq=\rbij(\M)$ is $V(\M)\cup\{v_0\}$. Thus we can define the \emph{label} of a vertex of $\qq$ different from $v_0$ as its label in $\M$, and define the label of $v_0$ as $0$. We have:
\begin{lemma}\label{lemma:labelsDistances}
The label of a vertex in $V(\M)\cup\{v_0\}$ is equal to its distance to $v_0$ in the quadrangulation $\rbij(\M)$.
\end{lemma}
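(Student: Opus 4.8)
The plan is to establish the two inequalities $d_{\qq}(w,v_0)\le \ell(w)$ and $d_{\qq}(w,v_0)\ge \ell(w)$ separately, where $\qq=\rbij(\M)$, where $\ell(w)$ denotes the label of $w$ (with the convention $\ell(v_0):=0$ for the central vertex $v_0$, denoted $w_0$ during the construction), and where $d_{\qq}$ is the graph distance in $\qq$. The whole argument rests on two elementary structural features of the reverse construction, both immediate from the description of \ref{StepR0}--\ref{StepR2}: first, every internal (black) edge of $\qq$ joins two vertices whose labels are consecutive integers; second, every vertex of label $i\ge 1$ is the endpoint of at least one internal edge whose other endpoint has label $i-1$ (its ``parent'' edge).

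For the upper bound I would exhibit, for each vertex $w$ with $\ell(w)=i$, an explicit walk of length $i$ from $w$ to $v_0$ in $\qq$. Recall from the description preceding the list of area types that each vertex of $V(\pp)\cup\{w_0\}$ of label $j\ge 1$ is joined by a unique internal edge to a vertex of label $j-1$; after the identifications of \ref{TerminationR}, this still provides, for every vertex of $V(\M)\cup\{v_0\}$ of label $j\ge 1$, a neighbour in $\qq$ of label $j-1$. Iterating the choice of such a parent yields a descending sequence $w=w_i,w_{i-1},\dots,w_1,w_0=v_0$ with $\ell(w_j)=j$ and each $\{w_j,w_{j-1}\}$ an edge of $\qq$; this is a walk of length $i$, so $d_{\qq}(w,v_0)\le i=\ell(w)$.

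For the lower bound I would use that every edge of $\qq$ joins labels differing by exactly $1$: indeed the internal edges are created either in \ref{StepR0}, joining a corner of label $1$ to $w_0$, or in \ref{StepR2}, joining a vertex of label $i$ to corners of label $i+1$. Consequently, along any path $v_0=u_0,u_1,\dots,u_k=w$ in $\qq$ one has $|\ell(u_m)-\ell(u_{m-1})|=1$ for every $m$, and therefore $\ell(w)=\ell(u_k)-\ell(u_0)\le \sum_{m=1}^{k}|\ell(u_m)-\ell(u_{m-1})|=k$. Taking $k=d_{\qq}(w,v_0)$ gives $\ell(w)\le d_{\qq}(w,v_0)$. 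Combining the two bounds yields $d_{\qq}(w,v_0)=\ell(w)$, as claimed.

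The argument is short, and its only subtlety lies in the bookkeeping across the identification step \ref{TerminationR}: one must check that the parent edge attached to each polygon vertex survives as an edge of $\qq$ incident to the corresponding vertex of $\M$, and that the label function, inherited from $\M$ and extended by $\ell(v_0)=0$, is well defined on the identified vertex set. Both points are immediate, since $\rbij(\M)$ is by definition the set of all internal edges with vertex set $V(\M)\cup\{w_0\}$, and since all corners identified by the unicellular structure of $\M$ carry the same label. I therefore expect no genuine obstacle beyond invoking \cref{prop:Rwelldef} to guarantee that $\qq$ is a connected quadrangulation, so that distances are finite and the two ingredients above apply verbatim.
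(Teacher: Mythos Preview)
Your proof is correct and follows essentially the same approach as the paper: both arguments hinge on the two facts that every internal edge joins vertices with consecutive labels, and that every vertex of label $i\ge 1$ has an internal edge to a vertex of label $i-1$. The paper packages these into a one-line induction on~$i$, while you spell out the two inequalities separately, but the content is the same.
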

\begin{proof}
Just note that each vertex of label $i\geq1$ is linked by an internal edge to at least one vertex of label $i-1$, and to no vertex of label less that $i-1$. So the statement follows by induction on $i$.
\end{proof}

Moreover, let $d_\qq(v,w)$ denote the distance in $\qq$ between two vertices $v,w \in V(\qq)$. For any corner $c$ in $\M$ labeled by $i > 1$ there exists a unique corner $\suc(c)$ in $\M$ labeled by $i-1$ and a unique internal edge $c \to \suc(c)$ that connects these two corners (constructed in \ref{StepR2}). In particular, for any corner $c$ in $\M$ labeled by $i \geq 1$ there exists a geodesic in $\qq$ between the vertex $v(c)$ incident to the corner $c$ and the vertex $v_0$ given by $c \to \suc(c) \to \suc^2(c) \to \cdots \to \suc^{i-1}(c) \to v_0$, where $\suc^{i-1}(c) \to v_0$ is the unique internal edge connecting the corner $\suc^{i-1}(c)$ labeled by $1$ to the vertex $v_0$. We call it \emph{simple geodesic}. Let $c_1,c_2$ be two corners of $\M$ and let $[c_1,c_2]$ be the set of all visited corners during the walk along the boundary of the unique face of $\M$ starting from $c_1$ and finishing in $c_2$. The following observation has been communicated to us by Bettinelli \cite{BettinelliLemma2014} (with a slightly different proof).
It will not be used in the present paper, but 
it will be crucial in studying the convergence of the random quadrangulations in our forthcoming paper \cite{BettinelliChapuyDolega2015}:
\begin{lemma}
\label{lem:DistancesOfGeneric}
For any two corners $c_1, c_2$ of $\M$ the following inequality holds:
\[ d_{\qq}(v(c_1),v(c_2)) \leq \ell(c_1) + \ell(c_2) - 2\left(\max\left(\min_{x \in [c_1,c_2]}(\ell(x)),\min_{x \in [c_2,c_1]}(\ell(x))\right)-1\right),\]
where $v(c_i)$ denotes the unique vertex of $\qq$ incident to $c_i$ and $\ell(c_i)$ denotes the label of $c_i$.
\end{lemma}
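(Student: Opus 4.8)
The plan is to exploit the \emph{simple geodesic} structure that has just been set up, combined with an analysis of how labels behave along the boundary walk of the unique face of $\M$. Recall that for any corner $c$ labeled by $i\geq 1$, the construction in \ref{StepR2} produces a successor corner $\suc(c)$ of label $i-1$, joined by an internal (black) edge $c\to\suc(c)$, and iterating gives the simple geodesic $c\to\suc(c)\to\cdots\to\suc^{i-1}(c)\to v_0$ of length exactly $\ell(c)$ from $v(c)$ to $v_0$. The first observation I would record is the key compatibility property: the successor map respects the cyclic order of the boundary walk, in the sense that if $c'$ is the \emph{last} corner of label $i-1$ visited before $c$ (equivalently, the last visited corner of label $\ell(c)-1$ when walking backwards from $c$ along the face boundary), then $\suc(c)=c'$. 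This is exactly the Marcus--Schaeffer/Chassaing--Schaeffer ``last visited corner'' rule, and it is what makes simple geodesics interact well with intervals $[c_1,c_2]$ of the boundary walk.

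With that in hand, the strategy mirrors the classical ``cactus bound'' or ``confluence of geodesics'' argument used in the orientable case (e.g.\ \cite{ChassaingSchaeffer}). Let $m=\max\bigl(\min_{x\in[c_1,c_2]}\ell(x),\min_{x\in[c_2,c_1]}\ell(x)\bigr)$ and without loss of generality assume the maximum is attained on $[c_1,c_2]$, so that $\min_{x\in[c_1,c_2]}\ell(x)=m$. The core claim is that the two simple geodesics emanating from $c_1$ and $c_2$ \emph{merge} at a common corner of label $m$: more precisely, walking backwards along the boundary from $c_1$ one reaches a corner of label $m$ (by the pigeonhole/continuity of labels, since labels change by at most one along the walk and some corner in $[c_1,c_2]$ has label exactly $m$), and the successor chain from that corner coincides, from height $m$ downward, with the successor chain from the corresponding corner reached from $c_2$. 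Concatenating the piece of the simple geodesic from $v(c_1)$ down to a common corner $c^\star$ of label $m$ with the reversed piece from $c^\star$ up to $v(c_2)$ produces a walk in $\qq$ of length $(\ell(c_1)-m+1)+(\ell(c_2)-m+1)=\ell(c_1)+\ell(c_2)-2(m-1)$, which is the desired bound. The ``$+1$'' terms and the shift by $m-1$ rather than $m$ come from the fact that the two geodesics need only reach a common \emph{vertex} at level $m$ (or adjacent levels), so one must be slightly careful about whether they meet exactly at level $m$ or are connected by one extra edge there.

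The step I expect to be the main obstacle is establishing the \emph{confluence} of the two simple geodesics at a shared corner of label $m$, i.e.\ that the backward successor chains from $c_1$ and from $c_2$ actually coincide below level $m$ rather than merely both passing through level $m$. In the orientable Chassaing--Schaeffer setting this follows cleanly from the fact that the successor of a corner depends only on the labels read along the boundary walk to its left; here one must check that the \emph{same} statement survives the local-orientation bookkeeping of \ref{StepR0}--\ref{StepR2}, and in particular that twisted edges of $\M$ do not spoil the ``$\suc$ = last-visited-corner-of-label-$\ell(c)-1$'' identity globally along the face. Since the proof of \cref{prop:Rwelldef} already shows that $\rbij(\M)$ is a genuine quadrangulation and that the successor internal edges are well defined, I would argue that the successor relation is determined purely by the cyclic sequence of labels on the boundary of the single face of $\M$ (which is an intrinsic, orientation-free datum once the root corner is fixed), so confluence reduces to the elementary one-dimensional lemma that two corners whose enclosing boundary arc has minimum label $m$ share a common ancestor at level $m$. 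Once this reduction is justified, the remaining estimate is a routine length count and the inequality follows.
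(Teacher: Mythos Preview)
Your overall strategy---follow the two simple geodesics from $c_1$ and $c_2$ until they merge, then concatenate---is the right one and is exactly what the paper does. But the argument breaks at the point you yourself flag as the main obstacle, and your proposed resolution of that obstacle is incorrect.

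You assert that $\suc(c)$ is the last corner of label $\ell(c)-1$ visited before $c$ along the face boundary, and that ``the successor relation is determined purely by the cyclic sequence of labels on the boundary''. Neither is true in the non-orientable setting. The paper explicitly remarks (end of Section~\ref{subsect:OrientableCase}) that applying the classical ``last visited corner'' rule on a non-orientable surface does \emph{not} in general produce a quadrangulation; this is precisely why the reverse construction needs the DEG and the recursive local-orientation choices of \ref{StepR1}--\ref{StepR2}. In \ref{StepR2}, the vertex $v$ to which the corners of label $i+1$ in $\tilde F$ are attached is the endpoint of label $i$ of the edge $\tilde e$, and \emph{which} of the two boundary corners of label $i$ this is depends on the matching $e\leftrightarrow\tilde e$, not on the label sequence alone. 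So the successor map is not a function of the labels, and your reduction to a ``one-dimensional lemma'' does not go through.

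What the paper actually uses is a strictly weaker but sufficient property, read directly off \cref{fig:stepR2}: if $c_1,c_2$ are two corners of the same label $i$ with all corners in the open arc $(c_1,c_2)$ of label $>i$, then there is a \emph{single} corner $c\in\{c_1,c_2\}$ such that every simple geodesic emanating from a corner in $(c_1,c_2)$ passes through $c$. In other words, you do not know \emph{which} boundary corner the geodesics exit through, but they all exit through the \emph{same} one. One then takes $x_1,x_2$ to be the first and last corners of label $l-1$ in $[c_2,c_1]$ (where $l$ is your $m$); then $[c_1,c_2]\subset(x_1,x_2)$, all labels in $(x_1,x_2)$ are $\geq l$, and the property above with $i=l-1$ forces both simple geodesics from $c_1$ and from $c_2$ to pass through the same $x\in\{x_1,x_2\}$. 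This gives confluence at level $l-1$ (not $l$), yielding a path of length $(\ell(c_1)-l+1)+(\ell(c_2)-l+1)=\ell(c_1)+\ell(c_2)-2(l-1)$; your stated confluence level $m$ is off by one, which is why your length count needed the unexplained ``$+1$'' terms.
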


\begin{proof}
Let $l := \max\left(\min_{x \in [c_1,c_2]}(\ell(x)),\min_{x \in [c_2,c_1]}(\ell(x))\right)$. We claim that 
\begin{equation}
\label{eq:CrucialEquality}
\suc^{\ell(c_1)-l+1}(c_1) = \suc^{\ell(c_2)-l+1}(c_2).
\end{equation}
This will finish the proof since
\begin{multline*} 
c_1 \to \suc(c_1) \to \cdots \to \suc^{\ell(c_1)-l+1}(c_1) = \\
= \suc^{\ell(c_2)-l+1}(c_2) \to \suc^{\ell(c_2)-l}(c_2) \to \cdots \to \suc(c_2) \to c_2
\end{multline*}
is a path between $v(c_1)$ and $v(c_2)$ of length
\[ \left(\ell(c_1)-l+1\right) + \left(\ell(c_2)-l+1\right) = \ell(c_1) + \ell(c_2) - 2(l-1).\]

Now, notice that any simple geodesic has the following property. If $c_1$ and $c_2$ are two corners of $\M$ with the same label $i \geq 1$, and all the corners in the open interval $(c_1,c_2)$ have labels strictly greater than $i$, then there exists a corner $c$ such that either $c = c_1$ or $c = c_2$ and such that any simple geodesic passing through $(c_1,c_2)$ has to pass through $c$. It is a straightforward consequence of the \ref{StepR2}, see \cref{fig:stepR2}.

Without loss of generality, we can assume that the 
label $l$ is realized by some corner $x$ lying in the segment $[c_1,c_2]$ and that $l > 1$ (if $l=1$, then the lemma holds true trivially). 
Let $x_1$ be the last corner in $[c_2,c_1]$ of label $l-1$ and let $x_2$ be the first corner in $[c_2,c_1]$ of label $l-1$
(note that $x_1$ and $x_2$ are well defined, because the minimum label in $[c_2,c_1]$ is $1$ since $\M$ is well-labelled, and $1<l$ by assumption).
 Then all the corners of the open interval $(x_1,x_2)$ have labels at least $l$, and the interval $[c_1,c_2]$ is contained in the interval $(x_1,x_2)$. Thus, from the above observation, there exists a corner $x$ such that either $x= x_1$ or $x = x_2$, and such that both simple geodesics from $c_1$, and from $c_2$ are passing through $x$. But this is exactly what we wanted to prove, namely \eqref{eq:CrucialEquality} holds true.
\end{proof}

\begin{remark}
An anonymous referee asked about the algorithmic complexity of our bijection. Clearly, the construction of $\bij(\qq)$ from $\qq$ can be done in time $O(n^2)$ since there are at most $n$ iterations of the main loop of the algorithm, and since \ref{Step1} can {\it a priori} take time $O(n)$ (indeed, one could have to follow the whole tour of the DEG before finding the desired face). In fact, using appropriate data structures, the complexity can be made linear (or quasilinear if one takes into account pointing and addressing operations). A way to achieve this is to maintain three cyclic lists of corners: the list $L$ of all corners of the DEG in the order they are visited when walking along the DEG, together with the position of the LVC in that cyclic list; and for $\epsilon\in\{0,1\}$, the list $L_\epsilon$ which is the sublist of the previous one consisting of corners of the DEG that belong to a face of type $(i'-1,i',i'+1,i')$ such that there is a unique blue vertex in that face, where $i'=i+\epsilon$ and $i$ is the running variable of the algorithm. In other words, $L_0$ lists corners that belong to a face that has the desired property to be selected at \ref{Step1} of the algorithm, and $L_1$ lists corners that may have this property after we increment $i$ by one. We also need to remember the position of the LVC in these lists.
 Assuming we have this data, the cost of finding the desired face in \ref{Step1} takes constant-time: one just has to look at the next position in the list $L_0$. There is a cost, however, in maintaining these three lists, since everytime we add a blue branch of edges in \ref{Step2}, we add some new corners to the tour -- thus changing the list $L$, and we may change the status of some of the faces -- thus changing the lists $L_0$ and $L_1$. However, over the whole execution of the algorithm, since each corner of the DEG is added at most once to each list, and since the status of each face is updated at most $3$ times, the total cost of these maintaining operations is linear (assuming that all access and pointing operations are done in constant time). 
Finally, when we finish the main loop of the algorithm and we update the value $i$ to $i+1$, we also have to update $L_0$ and $L_1$, which is easy to do in constant time by setting $L_0:=L_1$ and $L_1:=\emptyset$.
A similar construction shows that $\rbij(\M)$ can also be computed in linear time.
\end{remark}

\subsection{Bijection}\label{sec:bijection}
We are now ready to prove \cref{theo:MS1}, in the following, more precise, form.
\begin{theorem}
For each $n\geq 1$ and each surface $\mathbb{S}$, the mapping $\bij$ and $\rbij$ are inverse bijections between the set of rooted bipartite quadrangulations on $\mathbb{S}$ with $n$ faces and the set of rooted well-labeled unicellular maps on $\mathbb{S}$ with $n$ edges.
\end{theorem}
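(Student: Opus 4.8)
The plan is to prove the stronger statement that both compositions $\rbij\circ\bij$ and $\bij\circ\rbij$ are the identity. The guiding principle is that the reverse construction of \cref{sec:reversebij} is engineered to retrace the forward construction of \cref{sec:DEG} step by step: the dual exploration graph lives in the complement of the red unicellular map, and the local rules of the two procedures are mutually inverse (compare \cref{fig:redBlueRule} with \cref{fig:mergingFaces}, and \cref{fig:step2} with \cref{fig:stepR2}). I would therefore phrase the whole argument in terms of the DEG and prove that it is reconstructed identically in both directions, together with the quadrangulation.

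First I would set up the cutting/gluing dictionary relating the two sets of local rules. Starting from a rooted quadrangulation $\qq$, cut the surface $\mathbb{S}$ along the red edges of $\bij(\qq)$ (which is unicellular, as shown above); this unfolds $\mathbb{S}$ into a $2n$-gon, and I claim it is precisely the polygon $\pp$ attached to $\bij(\qq)$ by the representation of \cref{subsec:RepresentationOfMap}. Under this identification the black edges of $\qq$ appear as internal edges inside $\pp$, the graph $\DEG(\qq)$ is drawn in $\pp$, and the local configurations of \cref{fig:redBlueRule} get dissected into the areas of \cref{fig:TypesOfAreas}, in exact correspondence — read backwards — with the recombination of areas into quadrangles described in \cref{fig:mergingFaces}. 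This establishes that the forward rule \cref{fig:redBlueRule} and the reverse gluing \cref{fig:mergingFaces} are inverse local operations, providing the static backbone for the dynamic comparison.

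The core is then an induction on the iteration number of the main loop, proving that after each round the partial DEG produced by $\bij$ on $\qq$ and the partial DEG produced by $\rbij$ on $\M:=\bij(\qq)$ coincide under the cutting map — including the orientation of the blue edges and the position of the LVC. The base case is that \ref{Step0b} and \ref{StepR0} create the same oriented blue cycle around $v_0$ (identified with $w_0$) and the same initial LVC; this follows from the rooting conventions, since the root corner of $\bij(\qq)$ is defined from the root edge of $\qq$, and \ref{StepR0} reads the LVC off the root corner of $\pp$. For the inductive step I would check that the face $F$ and edge $e$ selected by \ref{Step1} correspond, via cutting, to the area selected by \ref{StepR1} and its edge: both procedures walk along the same blue graph from the same LVC and stop at the first suitable configuration, and the area-type bookkeeping of \cref{lemma:PropertiesOfR} matches the face-kind bookkeeping of \cref{lemma:invariant} case by case (the coarea mechanism of \ref{StepR1} mirroring the passage between a face of kind \ref{Bb} or \ref{Bc} and its partner). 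One then verifies that the branch of blue edges attached while turning counterclockwise around the label-$i$ vertex $a$ in \ref{Step2} is, after cutting, exactly the branch attached while turning around the matched vertex $v$ in \ref{StepR2}, and that the two LVC updates agree. At termination this yields $\rplan(\bij(\qq))=\qq$ as rooted quadrangulations and $\rDEG(\bij(\qq))=\DEG(\qq)$, hence $\rbij\circ\bij=\id$.

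Finally, the symmetric identity $\bij\circ\rbij=\id$ follows from the same induction read in the opposite direction: starting from a well-labeled unicellular map $\M$, the internal edges and areas built by $\rbij$ reassemble, after the identifications of \ref{TerminationR}, into $\rbij(\M)$ together with $\rDEG(\M)$, and applying $\bij$ re-cuts along the red edges and recovers $\M$ and the same DEG, because the local rules are inverse and the selection and LVC updates again agree at every step. Combined with the well-definedness results (\cref{prop:welldef}, \cref{prop:Rwelldef}, and the lemmas showing the outputs are of the correct type), the two compositions being the identity shows that $\bij$ and $\rbij$ are mutually inverse bijections. I expect the main obstacle to be exactly the verification that \ref{Step1} and \ref{StepR1} make matching selections at every iteration: one must show that the LVC is updated identically on both sides and that the ``first suitable face, respectively area, encountered along the blue graph'' is the same under cutting, which forces a careful case analysis through the finitely many local configurations of \cref{lemma:invariant} and \cref{lemma:PropertiesOfR}, tracking orientations, the tie-breaking convention, and the coarea correspondence.
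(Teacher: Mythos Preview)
Your proposal is correct and follows essentially the same approach as the paper: reduce everything to showing that the two dual exploration graphs coincide, and prove this by a step-by-step induction matching \ref{Step1}/\ref{StepR1} and \ref{Step2}/\ref{StepR2}, with the base case given by \ref{Step0b}/\ref{StepR0}. The paper treats the direction $\bij\circ\rbij=\id$ first (starting from $\M$) because there the quadrangulation edges are all known in advance, making the comparison of \ref{Step1} and \ref{StepR1} more direct; in the other direction (your first), the paper needs an extra preliminary observation you did not single out --- that each corner of $\bij(\qq)$ of label $i$ contains a \emph{unique} edge of $\qq$ of label $i-1$ --- which is what forces all the label-$(i{+}1)$ corners inside a type-\ref{T2} area to be linked to the same label-$i$ corner and hence makes \ref{StepR2} reproduce \ref{Step2}.
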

\begin{proof}
First, we have already proved that $\bij$ and $\rbij$ are well-defined mappings between the two sets in question. 

Before we prove that for any well-labeled unicellular map $\M$ one has $\bij(\qq)=\M$ where $\qq=\rbij(\M)$, and for any rooted bipartite quadrangulation $\qq$ one has $\rbij(\M)=\qq$ where $\M=\bij(\qq)$, we make the following observation. Note that by \cref{lemma:labelsDistances} all the notions of labelings used in the different constructions are the same. Moreover, by comparing~\cref{fig:redBlueRule} and~\cref{fig:mergingFaces}, we note that the relative position of the blue and black edges in both the forward and reverse construction are the same, and moreover, they force the position of red edges. 
So in order to prove that $\bij(\qq)=\M$ where $\qq=\rbij(\M)$
 (and, similarly, to prove that $\rbij(\M)=\qq$ where $\M=\bij(\qq)$) 
 it suffices to prove that the two blue graphs coincide, more precisely that $\DEG(\qq)=\rDEG(\M)$.

We will give a detailed proof in the first direction (starting from a well-labeled unicellular map $\M$ and $\qq=\rbij(\M)$). The proof of the other direction (when $\qq$ is a bipartite quadrangulation and $\M = \bij(\qq)$) is very similar and will be given with slightly less details.

\medskip
{\bf First direction.} 
Let us fix an arbitrary well-labelled unicellular map $\M$ and let $\qq=\rbij(\M)$. As explained above in order to prove that $\bij(\qq)=\M$, it suffices to show that the two blue graphs coincide, \textit{i.e.} that $\DEG(\qq)=\rDEG(\M)$.
Our proof is by induction and the idea is that in the forward and reverse construction, the rules are the same: the two graphs can be constructed simultaneously, step by step, and one can check that all the construction rules coincide. More precisely number the edges of $\DEG(\qq)$ (of $\rDEG(\M)$, respectively) in the order they are drawn during the construction by $e_1, e_2\dots, e_{2n}$ (by $e'_1,e'_2,\dots,e'_{2n}$, respectively). Then we claim that $e_i=e'_i$ for all $i$. For $i=1,2,\dots,d$ where $d$ is the degree of the root vertex $v_0$ in $\qq$ (equivalently $d$ is the number of corners of $\M$ labeled by $1$), this is clear by comparing \ref{Step0a}, \ref{Step0b} and \ref{StepR0}. Moreover, the LVC's coincide after these steps. Now suppose that $e_k=e'_k$ for $k \leq m$, and that after drawing the edges $e_m$ and $e'_m$ the forward and reverse algorithms have the same LVC,  and assume that we are going to construct a blue edge labeled by $i$.

\medskip

\noindent$\bullet$ In both \ref{Step1} and \ref{StepR1} we perform a tour around the already constructed blue graph starting from the LVC and we stop in some particular blue vertex. In \ref{Step1} that is the first vertex that lies in a face of $\qq$ bordered by $(i-1,i,i+1,i)$ and such that there are no other blue vertices in this face. We are going to prove that in \ref{StepR1} we stop in the same vertex. First, notice that after each step in the reverse construction there is exactly one blue vertex in each already constructed area. Moreover, at the end of the construction each
face of $\qq$ will be divided into exactly two areas.
We claim that the rules of \ref{StepR1} ensure that the blue vertex $v$ chosen in this step belongs to a face of $\qq$ bordered by $(i-1,i,i+1,i)$ and such that there are no blue vertices in this face. Indeed, there are two possible situations:
\begin{enumerate}
\item
\label{th:item1}
The vertex $v$ belongs to an area $F$ of type \ref{T3} with minimum label $i-1$ and cotype \ref{T2}. This means that in the next \ref{StepR2} we will subdivide the coarea $\tilde{F}$ of $F$ into smaller subareas, and a new subarea $\tilde{F}'$ of $\tilde{F}$ of type \ref{T1} will be the coarea of $F$ after the subdivision. Thus in the end of the construction $v$ will belong to the face $f(v)$ of $\qq$ consisting of areas $F$ of type \ref{T3} and $\tilde{F}$ of type \ref{T1}, 
bordered by $(i-1,i,i+1,i)$. Moreover, the unique blue vertex of degree $1$ belonging to this face is constructed when we create $\tilde{F}'$, hence it does not exists yet when we stop at the vertex $v$. This proves the claim in this case. 
Moreover, we note that when we stop at $v$, there is a unique free edge in $\qq$ incident to $f(v)$, namely the internal edge belonging to the area $\tilde{F}'$ (to be constructed in the following \ref{StepR2}), see~\cref{subfig:bigProofCase1-2}.

\begin{figure}[h!!!!!]
\centering
\subfloat[]{
	\label{subfig:bigProofCase1-1}
	\includegraphics[scale=0.47]{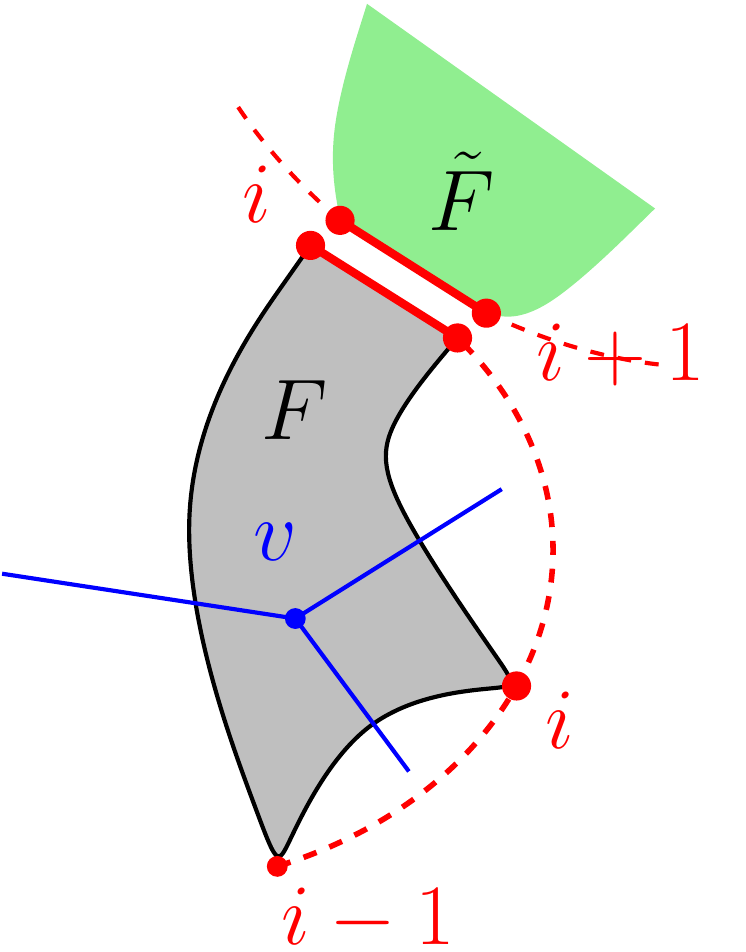}}
\quad
\subfloat[]{
	\label{subfig:bigProofCase1-2}
	\includegraphics[scale=0.47]{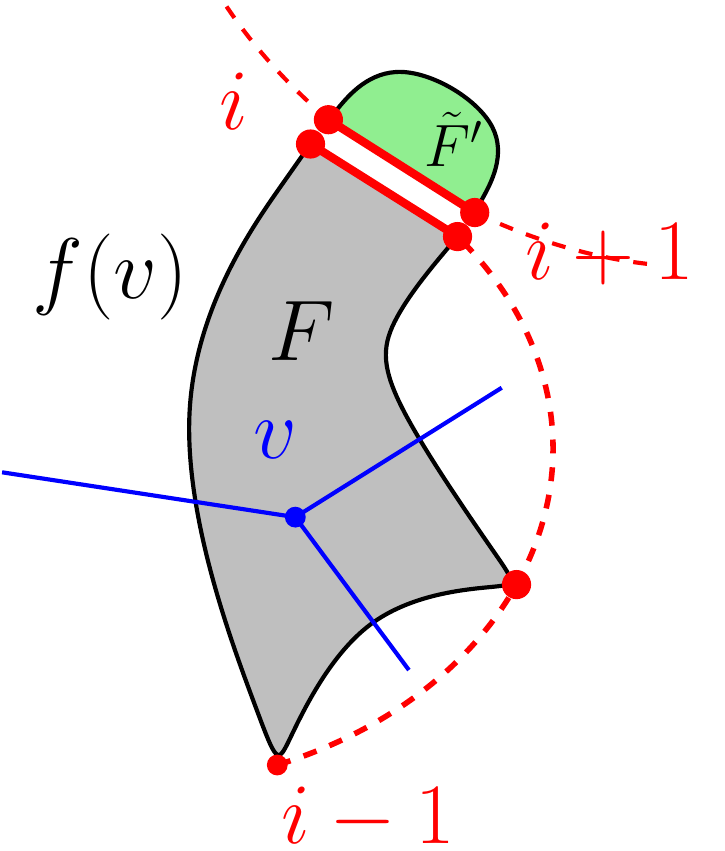}}
\quad
\subfloat[]{
	\label{subfig:bigProofCase1-3}
	\includegraphics[scale=0.47]{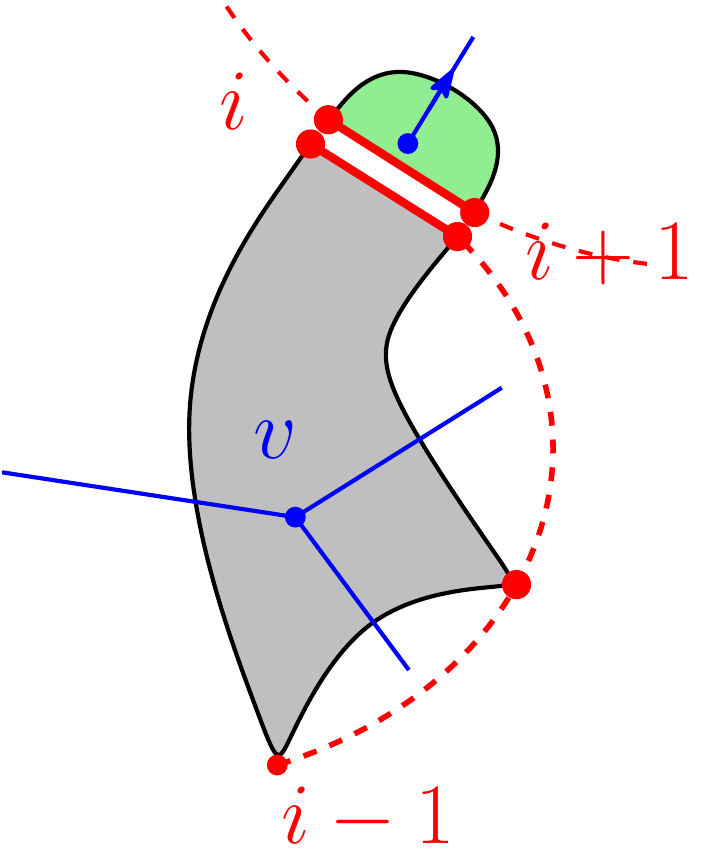}}
\caption{\protect\subref{subfig:bigProofCase1-1}: the vertex $v$ selected at \ref{StepR1} of the algorithm, in case \eqref{th:item1}. Only internal edges constructed at that step of the algorithm are displayed. \protect\subref{subfig:bigProofCase1-2}: The same vertex $v$, with the blue graph displayed at the same time of the construction as in \protect\subref{subfig:bigProofCase1-1}, but where we display the quadrangulation $\qq=\rbij(\M)$ as it will be in the end of the construction. \protect\subref{subfig:bigProofCase1-3}: Both the quadrangulation $\qq=\rbij(\M)$ and the blue graph $\DEG(\qq)=\rDEG(\M)$ at the end of the construction.}
\label{fig:bigProofCase1}
\end{figure}

\item
\label{th:item2}
The vertex $v$ belongs to an area $F$ of type \ref{T2} containing the label $i+1$, having minimum label $i-1$ 
and cotype \ref{T2}. This means that in the next \ref{StepR2} we will subdivide the coarea $\tilde{F}$ of $F$ into smaller subareas, and a new subarea $\tilde{F}'$ of $F$ of  type \ref{T1} will be the coarea of $F$ after subdivision. Since at the end of the construction each area of type \ref{T1} has cotype \ref{T3} (see \cref{fig:mergingFaces}), this means that, in $\qq$, $v$ belongs to a face $f(v)$ consisting of an area $F'$ of type \ref{T3} (that is a subarea of $F$) and the area $\tilde{F}'$ (see \cref{subfig:bigProofCase2-3}).
This face is bordered by $(i-1,i,i+1,i)$ and the unique blue vertex of degree $1$ belonging to it will be constructed when we create $\tilde{F}'$, hence it does not exists at the current stage of the construction (see \cref{subfig:bigProofCase2-2}).
This proves the claim in this case.
Moreover, observe that the edge of label $i$ appearing first in $f(v)$, after the unique corner labeled by $i-1$ encountered clockwise with respect to the orientation inherited from the blue graph around $f(v)$, coincides with the unique external edge of $\pp$ incident to $\tilde{F}'$. See \cref{fig:bigProofCase2}. 
\begin{figure}
\centering
\subfloat[]{
	\label{subfig:bigProofCase2-1}
	\includegraphics[scale=0.42]{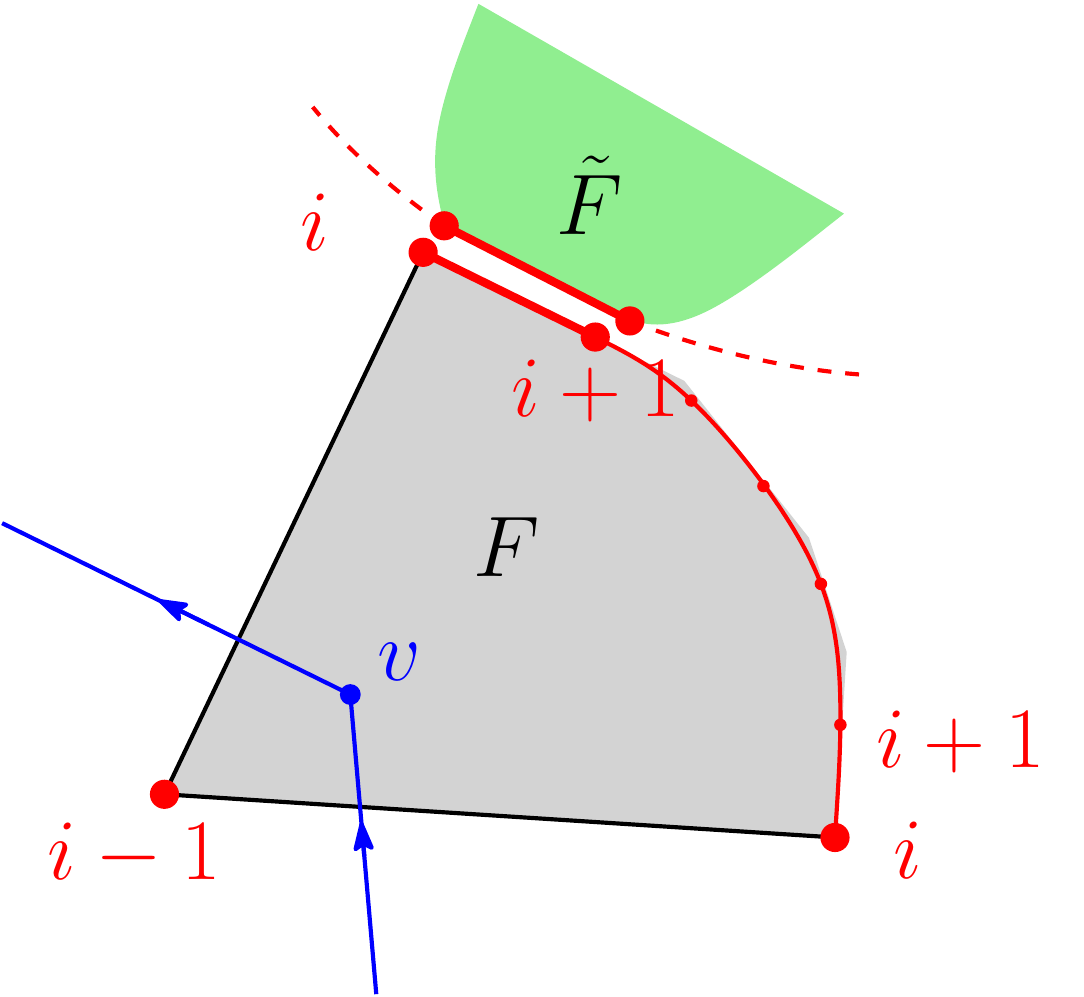}}
\subfloat[]{
	\label{subfig:bigProofCase2-2}
	\includegraphics[scale=0.42]{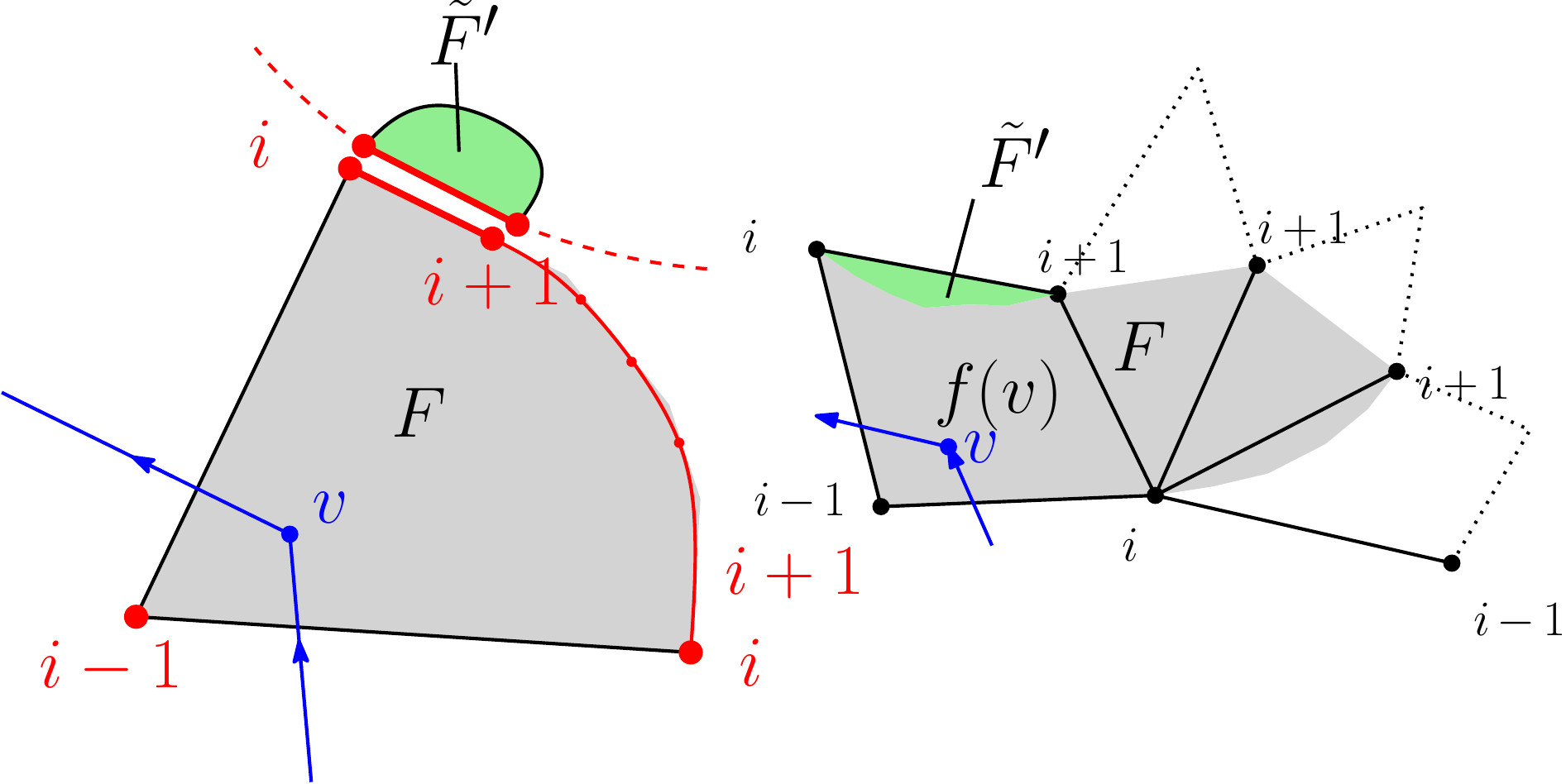}}

\subfloat[]{
	\label{subfig:bigProofCase2-3}
	\includegraphics[scale=0.42]{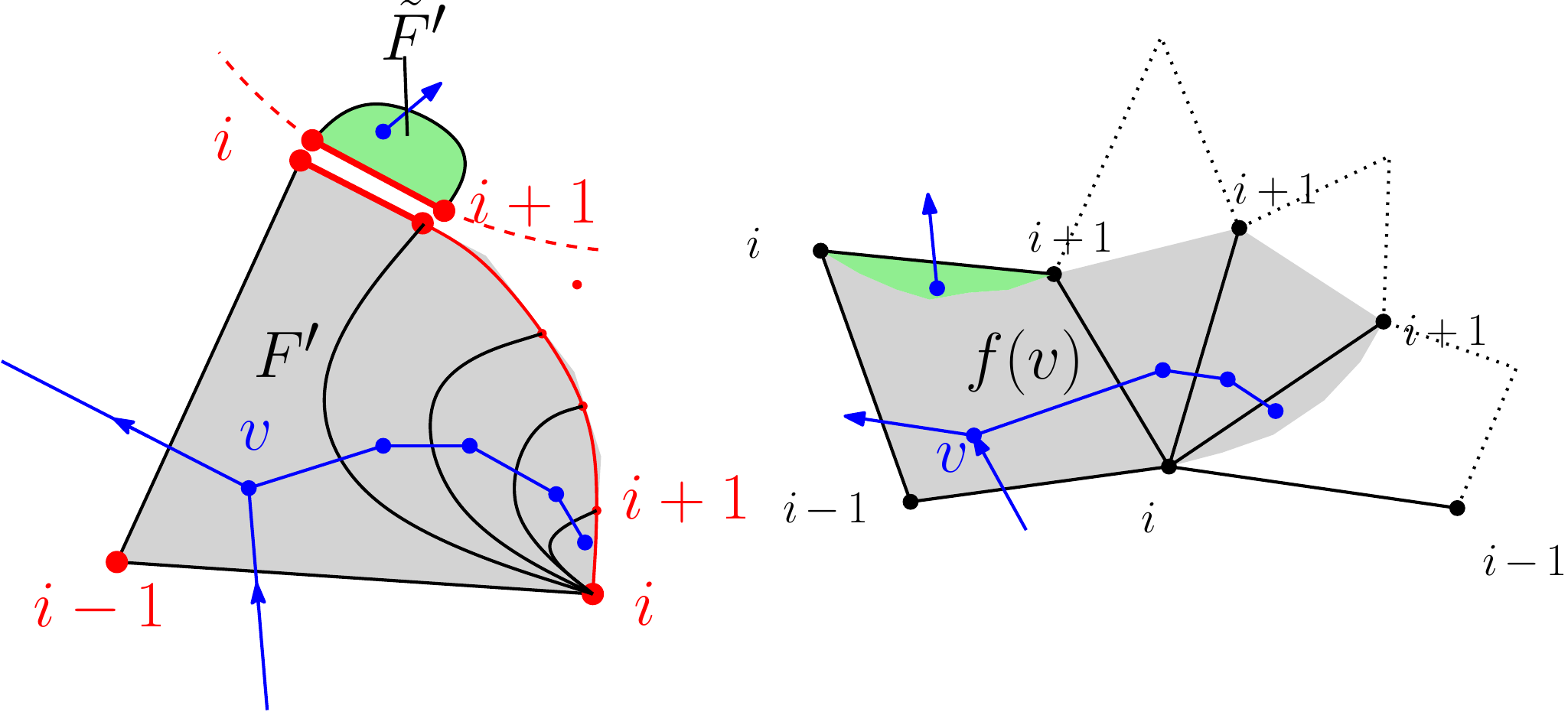}}
\caption{\protect\subref{subfig:bigProofCase2-1}: the vertex $v$ selected at \ref{StepR1} of the algorithm, in case \eqref{th:item2}. Only internal edges constructed at that step of the algorithm are displayed. \protect\subref{subfig:bigProofCase2-2}: The same vertex $v$, with the blue graph displayed at the same time of the construction as in \protect\subref{subfig:bigProofCase2-1}, but where we display the quadrangulation $\qq=\rbij(\M)$ on the right hand side as it will be in the end of the construction. We also indicate positions of areas $F$ and $\tilde{F}'$ relative to the position of edges of the quadrangulation $\qq$. \protect\subref{subfig:bigProofCase2-3}: Both the quadrangulation $\qq=\rbij(\M)$ and the blue graph $\DEG(\qq)=\rDEG(\M)$ at the end of the construction.}
\label{fig:bigProofCase2}
\end{figure}
\end{enumerate}
In order to finish the proof that both \ref{Step1} and \ref{StepR1} gives the same rules for 
selecting the vertex $v$,
we have to show that $v$ is the first blue vertex in our tour that belongs to the face of $\qq$ bordered by $(i-1,i,i+1,i)$ and such that there are no other blue vertices in this face. Assume, by contradiction, that there is a blue vertex $v'$ appearing before $v$ in our tour that has desired properties and assume that this is the first vertex with these properties. First, $v'$ cannot belong to an area $G$ of type \ref{T1}, since its coarea $\tilde{G}$ is either of type \ref{T3} or of type \ref{T2} and in both cases there already exists a second blue vertex in the face $f(v')$. From the same reason $v'$ cannot belong to an area of cotype \ref{T1}. We examine the remaining possibilities:
\begin{enumerate}
\item
The vertex $v'$ belongs to an area $G$ of type \ref{T3} and cotype \ref{T2} (by \cref{lemma:PropertiesOfR} and considerations above this is the only possible cotype). But the condition that $f(v')$ is bordered by $(i-1,i,i+1,i)$ implies that the minimum label of $G$ is $i-1$. This is a contradiction with the assumption that $v$ (which is different from $v'$) is the first vertex with these properties.
\item
The vertex $v'$ belongs to an area $G$ of type \ref{T2} and cotype \ref{T3}. By \cref{lemma:PropertiesOfR} coarea $\tilde{G}$ was the last created area. But it means that the blue vertex contained in $\tilde{G}$ is a valid choice in \ref{StepR1} and it appears before $v'$
 (since it is the first visited vertex in the tour) 
which is a contradiction.
\item
The vertex $v'$ belongs to an area $G$ of type \ref{T2} and cotype \ref{T2}. Since $f(v')$ is bordered by $(i-1,i,i+1,i)$ then the minimum label of $G$ is $i-1$ and area $G$ contains a vertex labeled by $i+1$. But this is a contradiction with the assumption that $v$ (which is different from $v'$) is the first vertex with these properties.
\end{enumerate}
This concludes the proof that \ref{Step1} and \ref{StepR1} give the same rules for selecting the next blue vertex to continue the construction.

\medskip

\noindent$\bullet$ We now prove that the rules to draw the new blue edges in \ref{Step2} and \ref{StepR2} coincide. Let $F, e, \tilde{e}$ and $\tilde{F}$ be defined as in \ref{StepR2}
 and let $v$ be the vertex of $\pp$ of label $i$ incident to $\tilde{e}$. 
In \ref{StepR2}, we subdivide $\tilde{F}$ into several areas $f_1, f_2,\dots, f_k$ as on \cref{fig:stepR2}, by adding new internal edges incident to $v$. These edges are (by definition) edges of $\qq$, and no other edge of $\qq$ incident to $v$ will be inserted in the corners separating these edges later in the construction. Therefore it is clear that the path
of blue edges created at this step (\cref{fig:stepR2}) turns counterclockwise around the vertex~$v$ in~$\qq$ as in \cref{fig:step2}. Moreover, this path ends at an already existing blue vertex (namely, $v_k$) lying in a face of $\qq$ of minimal label $i-1$. Hence, the only thing to prove, in order to prove that the rules of \ref{Step2} applied in $\qq$ lead to the construction of the same blue edges, is that the face of $\qq$ containing $v_k$ is the \emph{first} encountered around $v$ having minimal label $i-1$.
But this is clear since all the areas $f_2, \dots,f_{k-1}$ containing $v_2, \dots, v_{k-1}$ are of type \ref{T2} and have minimal label $i$, hence in $\qq$, each vertex $v_2, \dots, v_{k-1}$ can belong only to a face of type $(i,i+1,i,i+1)$ or $(i,i+1,i+2,i+1)$. This concludes the proof that \ref{Step2} and \ref{StepR2} construct the same blue edges.

\noindent$\bullet$ Finally, it is clear that \ref{Step3} and \ref{StepR3} lead to the same rule.

It means that $e_i = \tilde{e}_i$ for $1 \leq i \leq m+k-1$ and by the inductive argument it follows that $\DEG(\qq)=\rDEG(\M)$, which finishes the proof of the first direction. 

\medskip
{\bf Second direction.}
We now fix an arbitrary bipartite quadrangulation $\qq$, and we let $\M=\bij(\qq)$. As explained at the beginning of the proof, in order to prove that $\rbij(\M)=\qq$, it suffices to show that the two blue graphs coincide, \textit{i.e.} that $\rDEG(\M)=\DEG(\qq)$. As before, we are going to do it by induction, by showing that these two graphs can be constructed simultaneously and that the rules of construction are the same.
 More precisely let $e_i$ ($e'_i$, respectively) be the edge of $\qq$ (of $\rbij(\M)$, respectively) which is crossed by the $i$-th edge of $\DEG(\qq)$ ($\rDEG(\M)$, respectively) to be added during the construction. We are going to prove by induction that the two maps $\M \cup \{e_1,\dots,e_i\}$ and $\M \cup \{e'_1,\dots,e'_i\}$ coincide, which will conclude the proof.

\smallskip
 In order to do that, let us start from the following observation. For every corner of $\M$ labeled by $1$, there is a unique edge of $\rbij(\M)$ that connects it with the root vertex $v_0$, and for every corner of $\M$ labeled by $i \geq 2$ there is a unique edge of $\rbij(\M)$ that connects this corner with some corner of $\M$ labeled by $i-1$. This is straightforward from the rules of the construction of $\rbij(\M)$. We are going to show that the map $\qq$ has the same property, namely for every corner of $\M$ labeled by $1$, there is a unique edge of $\qq$ that connects it with the root vertex $v_0$, and for every corner of $\M$ labeled by $i \geq 2$ there is a unique edge of $\qq$ that connects this corner with some corner of $\M$ labeled by $i-1$.
First, it is obvious from the rules of the construction of $\bij(\qq)$ 
(see \cref{fig:redBlueRule}) 
that for every corner of $\M$ labeled by $i \geq 1$, there exists an edge of $\qq$ with label $i$, or $i-1$ having one extremity in that corner.

 We are going to prove that for each such corner there exists precisely one such edge with label $i-1$. Let us fix some corner $c$ of $\M$ labeled by $i$, and first assume (by contradiction), that all the edges of $\qq$ lying in $c$ have label $i$. Looking at the two possibilities shown on 
\cref{fig:redBlueRule}
one can see that the two edges of $\M$ adjacent to the corner $c$ belong to faces $f$ and $f'$ of $\qq$, respectively, both of type $(i-1,i,i+1,i)$. Moreover, since all the edges of $\qq$ lying in the corner $c$ have label $i$, there exists a path $p$, which is a connected component of $\DEG(\qq)$, going from $f$ to $f'$ and passing through all the faces of $\qq$ lying in $c$, as shown in \cref{subfig:OneDecreasingEdge1}. But \cref{lemma:invariant} ensures that $\DEG(\qq)$ is connected, hence $p = \DEG(\qq)$, which is clearly not true, and proves that there is at least one edge of $\qq$ labeled by $i-1$ and lying in $c$.
\begin{figure}[h!]
\centering
\subfloat[]{
	\label{subfig:OneDecreasingEdge1}
	\includegraphics[scale=0.7]{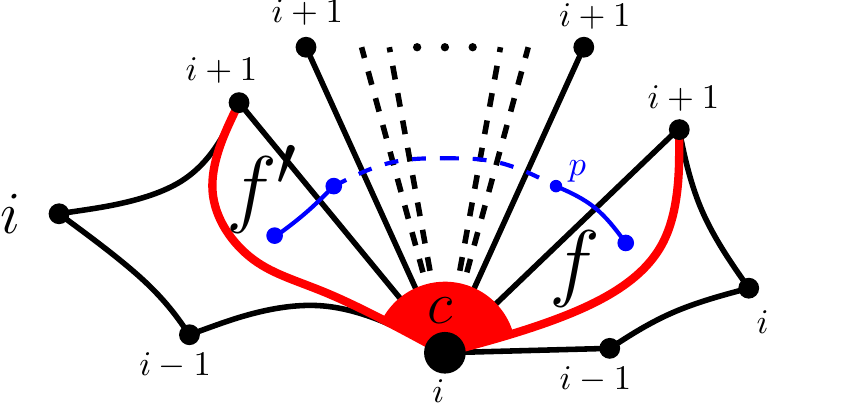}}
\subfloat[]{
	\label{subfig:OneDecreasingEdge2}
	\includegraphics[scale=0.7]{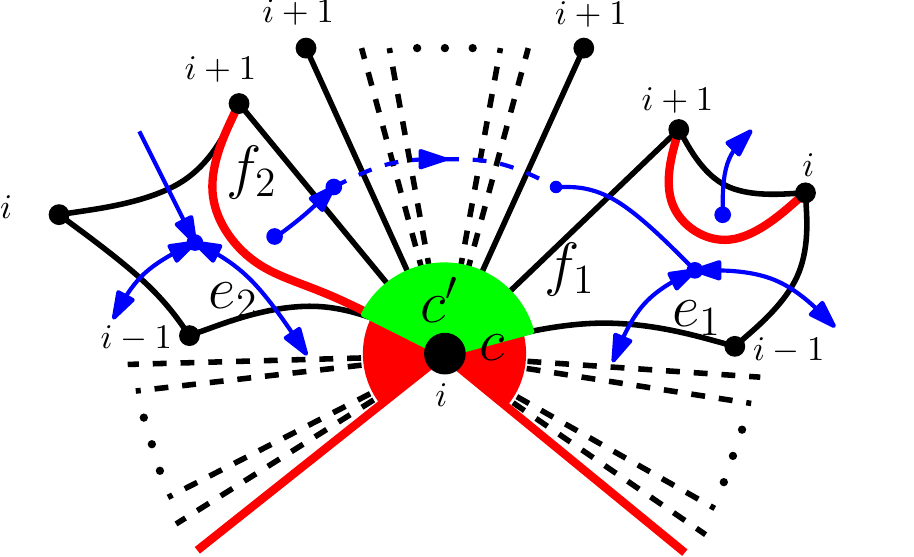}}
\caption{Illustration for the proof of \cref{theo:MS1}.} 
\label{fig:OneDecreasingEdge}
\end{figure}

Now assume (again by contradiction) that there are at least two edges of $\qq$ labeled by $i-1$ and lying in $c$. Two such consecutive edges have to be separated by at least one edge of $\qq$ labeled by $i$ 
(otherwise, there will be a face of type $(i,i-1,i,i-1)$ or $(i,i-1,i-2,i)$ lying in the corner $c$ and thus the edge of $\M$ lying in that face has label $i-1$, and lies in the corner $c$, which contradicts the assumption that $c$ is a corner of $\M$). 
Let $e_1$ and $e_2$ be two consecutive edges of $\qq$ labeled by $i-1$ and lying in the corner $c$ and let $c'$ be the unique corner between $e_1$ and $e_2$, which is a subcorner of $c$. Finally, let 
$f_1$ be the unique face of $\qq$ of type $(i-1,i,i+1,i)$ delimited by $e_1$ and lying in $c'$, and similarly, let $f_2$ be the unique face of $\qq$ of type $(i-1,i,i+1,i)$ delimited by $e_2$ and lying in $c'$, see \cref{subfig:OneDecreasingEdge2}.
 Then, the edge of $\M$ lying in the face $f_1$ connects a vertex labeled by $i+1$ to a corner of $\M$ labeled by $i$ (different from the corner~$c$). But then the rules of construction of $\DEG(\qq)$, see 
\cref{fig:redBlueRule}, 
say that the edge of $\M$ lying in $f_2$ has to connect a vertex labeled by $i+1$ with a vertex incident to $c$, which is a contradiction with the fact that $c$ is a corner of $\M$. This finishes the proof of the fact that for each corner of $\M$ labeled by $i \geq 1$, there exists a unique edge of $\qq$ with label $i-1$ lying in that corner.

\smallskip
We can now give the main arguments of the inductive proof that the maps $\M_j:=\M \cup \{e_1,\dots,e_j\}$, and $\M'_j:=\M \cup \{e'_1,\dots,e'_j\}$ are the same. We assume (induction hypothesis) that this is the case, where $e_j=e'_j$ is the edge crossed by the last edge added at some execution of \ref{Step2} and \ref{StepR2}. By induction hypothesis, both $\M_j$ and $\M'_j$ have only areas of types \ref{T1}, \ref{T2}, and \ref{T3} (since this is the case for $\M'_j$).
Moreover, let $f$ be an area of $\M_j=\M'_j$ of type \ref{T2} bordered by corners of $\M$ of labels $i,i+1,\dots,i+1,i$ for some $i\geq 1$. By the property that we have just proved (that any corner of $\M$ of label $i+1$ contains a unique edge of $\qq$ of label $i$), we deduce that all corners of label $i+1$  of $f$ are linked by an edge of $\qq$ to the \emph{same} corner of label $i$ of $f$ (since otherwise there would exist a face of $\qq$ of degree at least 5, which is impossible).

With this observation in mind, it is easy to conclude the proof by a careful observation of the rules of both constructions, similarly as in the proof of the first direction.
First, one has to prove that \ref{Step1} and \ref{StepR1} select the same area.
For this, since the first $j$ edges of $\DEG(\qq)$ and $\rDEG(\qq)$ are the same, walking around $\DEG(\qq)$ in \ref{Step1} is the same as walking around $\rDEG(\qq)$ in \ref{StepR1}, and it suffices to show that the first face meeting the requirements of \ref{Step1} is the same as the first face meeting the requirements of \ref{StepR1}. This is done by a case-by-case analysis of the different types of areas in $\M_j=\M'_j$, which is very similar to the one presented in the proof of the first direction.

Now, consider \ref{Step2} of the algorithm. During this step, we draw a new branch of blue edges inside an area $f$ of $\M_j$ of minimal label $i-1$: before drawing this path, this area cannot be of type \ref{T1} nor \ref{T3} so it has to be of type \ref{T2}. As observed above, all corners of label $i+1$  of $f$ are linked by edges of $\qq$ to the \emph{same} corner of label $i$ of $f$. It is then clear that the path of blue edges drawn in \ref{Step2} in $\M_j$ and in \ref{StepR2} in $\M'_j$ coincide, and that the black edges added to $\M'_j$ across this path coincide with the edges of $\qq$ crossing the corresponding blue path in \ref{Step2}. It is also clear that the position of the LVC will be updated coherently between both maps. 
This concludes the proof of the second direction.

\end{proof}

As promised, we now deduce \cref{theo:MS2} from \cref{theo:MS1}. 
Let $\qq$ be a rooted bipartite quadrangulation and let $v_0$ be a pointed vertex of $\qq$. Let us choose a corner $\rho(\qq,v_0)$ of $\qq$ incident to $v_0$. In general, there are several ways to choose this corner, and a simultaneous choice of the corner $\rho(\qq, v_0)$ for all $\qq$ and $v_0$ is called an \emph{oracle}. Given an oracle, we can consider the \emph{rerooted quadrangulation} $\qq'$ obtained by declaring that $\rho(\qq, v_0)$ is the root corner of $\qq'$. This quadrangulation is equipped with an additional marked corner (the original root corner of $\qq$). We can then apply the bijection $\bij$ to $\qq'$ and we obtain a well-labeled unicellular map $\M$. Since $\M$ has $2n$ corners and $\qq'$ has $4n$ corners, we can use the marked corner of $\qq'$ to mark (in some fixed canonical way) a corner of $\M$, and to get an additional sign $\epsilon\in\{+,-\}$ (for example choose the marked corner to be the unique corner of $\M$ incident to the root edge $e$ of $\qq$, and choose the sign $\epsilon$ according to whether the root vertex of $\qq$ has greater or smaller label than the other extremity of $e$ in the distance labeling of the rerooted quadrangulation $\qq'$). We declare this corner to be the new root of $\M$, and we now shift all the labels of $\M$ by the same integer, in such a way that this corner receives the label $1$. We thus have obtained a labeled unicellular map $\M'$, that carries a marked oriented corner $c$ of minimum label (the root corner of $\M$), together with a  sign $\epsilon$, and we denote by $\Lambda_\rho(\qq,v_0):=(\M',c,\epsilon)$.

We can now prove \cref{theo:MS2} in the following more precise form:
\begin{theorem}
For each $n\geq 1$ and each surface $\mathbb{S}$, there exists a choice of the oracle $\rho$ that makes $\Lambda_\rho$ a bijection between the set of rooted bipartite quadrangulations on $\mathbb{S}$ with $n$ faces and a marked vertex $v_0$, and the set of rooted labeled unicellular maps on $\mathbb{S}$ with $n$ edges and equipped with a sign $\epsilon\in\{+,-\}$.
\end{theorem}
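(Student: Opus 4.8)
The plan is to deduce this statement from \cref{theo:MS1} --- i.e.\ from the fact, proved above, that $\bij$ and $\rbij$ are mutually inverse bijections between rooted bipartite quadrangulations and rooted well-labeled unicellular maps --- rather than to construct a new bijection. The first step is to observe that, before any information is discarded, the recipe defining $\Lambda_\rho$ is \emph{tautologically} a bijection. Introduce the augmented map $\widetilde\Lambda$ sending a triple $(\qq,v_0,\rho)$ (a rooted bipartite quadrangulation with a marked vertex $v_0$ and a chosen incident corner $\rho$) to the triple $(\M',c,\epsilon)$ produced exactly as in the definition of $\Lambda_\rho$. Rerooting $\qq$ at $\rho$ makes $v_0$ the distance origin, so $\bij$ applies; its root corner is the label-$1$ corner facing the root half-edge of the rerooted quadrangulation, which after the shift is precisely the minimum-label corner $c$. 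By \cref{lemma:labelsDistances} and the correspondence between corners of the unicellular map and edges of $\qq$ used in \cref{theo:MS1}, the only remaining marked datum --- the original root corner of $\qq$ with the orientation of its root edge --- is encoded faithfully by the oriented root of $\M'$ and the sign $\epsilon$. The inverse simply runs $\rbij$ on the well-labeled map obtained from $(\M',c)$ by rerooting at $c$ and shifting, and then reconstructs the root corner of $\qq$ from the root of $\M'$ and $\epsilon$; that the two are mutually inverse is immediate from \cref{theo:MS1}. Hence $\widetilde\Lambda$ is a bijection.

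The theorem is then obtained by forgetting $\rho$ on the left and $c$ on the right, and the role of the oracle is to keep exactly one representative in each fibre. A corner $\rho$ at $v_0$ picks out the root half-edge of the rerooted quadrangulation, hence determines the distinguished minimum-label corner $c=c_\rho$ of the labeled map it produces, while $\epsilon$, being read off from the distance labeling from $v_0$, is independent of $\rho$; symmetrically, the fibre of $\widetilde\Lambda$ over a fixed $(\M',\epsilon)$ is indexed by the minimum-label corners of $\M'$. Choosing the oracle therefore amounts to choosing, coherently, one minimum-label corner per signed labeled map, so that the selected triples hit each $(\M',\epsilon)$ exactly once. The sign $\epsilon$ is exactly what promotes the informal $2$-to-$1$ correspondence of \cref{theo:MS2} to a true bijection: it records which endpoint of the root edge lies nearer to $v_0$, the one piece of data lost when the root corner is reconstructed.

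The main obstacle is to prove that a coherent choice of oracle exists, i.e.\ that the reduced map is an honest bijection and not merely a correspondence. The difficulty is that $\bij$ really does depend on the root: changing $\rho$ alters the initial last visited corner of \ref{Step0b}, hence the local orientations propagated along $\DEG(\qq)$, and therefore the unicellular map $\bij(\qq)$ itself. This root-dependence --- the very phenomenon behind the failure of the unrooted variant noted right after \cref{theo:MS1} --- means that the labeled maps attached to a fixed $(\qq,v_0)$ through different $\rho$ are \emph{not} rerootings of one another, and dually that different minimum-label corners of a fixed $\M'$ yield genuinely different quadrangulations; so the two fibrations of $\widetilde\Lambda$ do not tautologically match and one cannot simply pass to the quotient. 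I would resolve this by regarding $\widetilde\Lambda$ as a bipartite incidence between pointed quadrangulations and signed labeled maps and extracting a perfect matching: first the cardinality identity $|\{(\qq,v_0)\}| = 2\,|\{\M'\}|$ (which, via \cref{theo:MS1} and Euler's formula, reduces to comparing the numbers of labeled and well-labeled unicellular maps), then a selection principle for the representatives, for instance via Hall's condition. The verification of that condition is precisely where a careful analysis of how $\bij$ responds to rerooting must be carried out, and constitutes the heart of the argument; the oracle is then read off from the matching, and by construction makes $\Lambda_\rho$ the stated bijection.
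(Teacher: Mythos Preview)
Your approach is the same as the paper's: both pass through the ``augmented'' bijection $\widetilde\Lambda$ (which the paper leaves implicit), form the bipartite incidence between pointed quadrangulations $(\qq,v_0)$ and signed labeled unicellular maps $(\M',\epsilon)$, and invoke Hall's marriage theorem to extract a perfect matching that defines the oracle. Where you diverge is in your assessment of the difficulty of verifying Hall's condition, which you describe as ``the heart of the argument'' and for which you anticipate ``a careful analysis of how $\bij$ responds to rerooting''.

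In fact no such analysis is needed, and this is the one idea your proposal is missing. The paper's observation is that if one stratifies both sides by the degree $d$ of the pointed vertex $v_0$ --- which, by the second part of \cref{theo:MS1}, equals the number of corners of minimum label in the associated labeled unicellular map, \emph{independently of the choice of $\rho$}, since that number is $|E(N_1,N_0)|=\deg(v_0)$ --- then the bipartite incidence (with multiplicity) is $2d$-regular: each $(\qq,v_0)$ admits exactly $2d$ choices of oriented corner $\rho$ at $v_0$, and each $(\M',\epsilon)$ admits exactly $2d$ choices of oriented minimum-label corner $c$, with $\widetilde\Lambda$ matching these two sets of choices bijectively. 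A regular bipartite graph always has a perfect matching, so Hall's condition is automatic and the oracle exists. Your proof is therefore correct in outline but incomplete; the missing step is precisely this regularity observation, which replaces the hard-looking rerooting analysis you were bracing for by a one-line counting argument.
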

\begin{proof}
For $d\geq 1$, consider the set $\mathcal{Q}_{\mathbb{S},n,d}$ of all rooted quadrangulations on $\mathbb{S}$ with $n$ faces equipped with a pointed vertex of degree $d$, and the set $\mathcal{U}_{\mathbb{S},n,d}$ of all rooted labeled unicellular maps on $\mathbb{S}$ with $n$ edges in which there are $d$ corners of minimum label and carrying a sign $\epsilon\in\{+,-\}$.

We now form a bipartite graph with vertex set $\mathcal{Q}_{\mathbb{S},n,d}\biguplus\mathcal{U}_{\mathbb{S},n,d}$ as follows. For each $(\qq,v_0)\in\mathcal{Q}_{\mathbb{S},n,d}$ we link it by an edge to all the pairs $(\M',\epsilon) \in \mathcal{Q}_{\mathbb{S},n,d}$ such that there exists a choice of the corner $\rho(\qq,v_0)$ and a corner $c$ of $\M'$ such that $\Lambda_\rho(\qq,v_0):=(\M',c,\epsilon)$.

From~\cref{theo:MS1}, this bipartite graph is regular: each rooted and pointed quadrangulation $(\qq, v_0)$ is incident to exactly $2d$ labeled unicellular maps (that corresponds to $2d$ possible choices of the oriented corner $\rho(\qq,v_0)$), and each labeled unicellular map $\M$ can be obtained in $2d$ ways (corresponding to the $2d$ ways of choosing an oriented corner of minimum label). Therefore from the Hall perfect matching theorem, this graph has a perfect matching. This perfect matching defines an oracle on quadrangulations pointed at a vertex of degree $d$, and doing this for all $d\geq 1$ we construct an oracle that fulfills the statement of the theorem.
\end{proof}

\section{Enumerative and probabilistic consequences}
\label{sec:consequences}

\subsection{Structure of the generating functions}\label{subsec:gf}
The first consequence of the bijection given in this paper is the combinatorial interpretation of the algebraicity of map generating functions, and more precisely of their rationality in terms of some parameters. The bijection being established, the situation is \emph{totally} similar to~\cite{ChapuyMarcusSchaeffer2009}, whose enumerative sections could be copied here almost verbatim.
In the rest of this section, we state the enumerative consequences of our bijection, and we give a brief overview of the ideas of the proofs,  referring to~\cite{ChapuyMarcusSchaeffer2009} for complete proofs in the analogous case of orientable surfaces.
\begin{theorem}[Combinatorial interpretation of the algebraicity and structure of map generating functions]
\label{theo:rationality}
Let $\mathbb{S}$ be a surface of type $h$, and let $\vec q_\mathbb{S}(n)$ be the number of rooted maps on $\mathbb{S}$ with n edges (equivalently, rooted quadrangulations with $n$ faces). 
Let 
$$Q_{\mathbb{S}}(t) := \sum_{n\geq 0} (n+2-2h) \vec q_{\mathbb{S}}(n) t^n
$$ be the generating function of rooted maps pointed at a vertex or a face, by the number of edges (equivalently, rooted quadrangulations pointed at a vertex, by the number of faces).

Then $Q_\mathbb{S}(t)$ is an algebraic function. More precisely, let $U\equiv U(t)$ and $T\equiv T(t)$ be the two formal power series defined by:
$$
T=1+3 t T^2, ~ ~ ~ ~ ~ U=tT^2(1+U+U^2).
$$
Then $Q_\mathbb{S}(t)$ is a rational function in $U$.
\end{theorem}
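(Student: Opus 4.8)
The plan is to reduce the statement to an enumeration of \emph{labeled unicellular maps} by means of the pointed bijection, and then to run the scheme decomposition exactly as in the orientable case of~\cite{ChapuyMarcusSchaeffer2009}. First I would use \cref{theo:MS2}. As noted right after \cref{theo:MS1}, a quadrangulation of type $h$ with $n$ faces has exactly $n+2-2h$ vertices, so $(n+2-2h)\vec q_{\mathbb S}(n)$ is the number of rooted bipartite quadrangulations on $\mathbb S$ with $n$ faces carrying a marked vertex. By the precise form of \cref{theo:MS2}, these are in bijection with rooted labeled unicellular maps on $\mathbb S$ with $n$ edges equipped with a sign $\epsilon\in\{+,-\}$, and since the sign merely doubles the count we obtain $(n+2-2h)\vec q_{\mathbb S}(n)=2\,L_{\mathbb S}(n)$, where $L_{\mathbb S}(n)$ is the number of rooted labeled unicellular maps on $\mathbb S$ with $n$ edges. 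Hence $Q_{\mathbb S}(t)=2\,\mathcal L_{\mathbb S}(t)$ with $\mathcal L_{\mathbb S}(t)=\sum_{n\ge 0}L_{\mathbb S}(n)t^n$, and it suffices to prove that $\mathcal L_{\mathbb S}$ is rational in $U$.

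Next I would set up the \emph{scheme} decomposition. Given a labeled unicellular map, one iteratively deletes its degree-one vertices and then dissolves its degree-two vertices, producing a unicellular map $s$ of the same type $h$. Euler's formula bounds the number of edges of $s$ in terms of $h$ (for the part with all vertices of degree $\ge 3$ one gets $e(s)\le 6h-3$, the finitely many cross-cap configurations being treated as base cases), so there are finitely many schemes of type $h$; the sphere $\SSS_0$ is the degenerate case where $s$ is the one-vertex map and $\mathcal L_{\SSS_0}=T$. This yields a finite decomposition $\mathcal L_{\mathbb S}=\sum_{s}C_s(t)$ over schemes $s$ of type $h$. To compute each contribution $C_s$, I would attach a labeled tree in every corner, each contributing the tree series $T=1+3tT^2$ (indeed $[t^n]T=3^n\,\mathrm{Cat}_n$ counts labeled plane trees with $n$ edges), and replace every edge of $s$ by a labeled path carrying a controlled net label variation; resumming a single such edge-filling over all admissible label variations produces a generating function rational in $U$. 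Combined with the identity $tT^2=(T-1)/3$ coming from the defining equation of $T$, which gives
\[
T=\frac{1+4U+U^2}{1+U+U^2}
\]
and shows that $T$ itself is rational in $U$, every $C_s$ is rational in $U$, and therefore so is the finite sum $\mathcal L_{\mathbb S}$, hence $Q_{\mathbb S}=2\mathcal L_{\mathbb S}$. Algebraicity then follows because $U$ is algebraic over $\mathbb Q(t)$ (eliminate $T$ between $T=1+3tT^2$ and $U=tT^2(1+U+U^2)$), so any rational function of $U$ is algebraic.

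The genuinely routine parts are the tree-attachments and the fact that everything is organized by finitely many schemes. The main obstacle, and the only substantively new point compared with~\cite{ChapuyMarcusSchaeffer2009}, is that the schemes occurring here may be \emph{non-orientable} and may contain \emph{twisted} edges. I would therefore need to verify that developing a twisted scheme-edge into a labeled path yields the same type of series, rational in $U$, as a straight edge, and that the label-matching performed at the vertices of the scheme—the step where the summation over integer label increments is turned, via the appropriate change of variable, into a rational function of $U$—goes through unchanged in the presence of twists. Once this is checked, the evaluation of each $C_s$ is verbatim that of the orientable case, and the asserted rationality of $Q_{\mathbb S}(t)$ in $U$ follows.
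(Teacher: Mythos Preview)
Your proposal is correct and follows essentially the same route as the paper: reduce via \cref{theo:MS2} to rooted labeled unicellular maps, decompose by the (finitely many) schemes, substitute a doubly-rooted labeled tree of series $M_{i-j}=BU^{|i-j|}$ on each scheme edge, and resum over the integer polytope of scheme-vertex labels to get rational functions of $U$ (this is exactly \cref{prop:explicitGF} combined with~\eqref{eq:finiteSum}).

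One clarification: the ``main obstacle'' you flag---twisted scheme edges---is in fact a non-issue, and the paper does not even mention it. When you expand a scheme edge into a chain of degree-$2$ vertices carrying planted trees, the label constraints along that chain are purely local (adjacent labels differ by at most $1$), so the generating function of what you insert depends only on the endpoint label difference $|i-j|$ and not on whether the edge is straight or twisted; the twist affects only how the chain sits on the surface, not its enumeration. This is why the paper states that once the bijection is in hand the enumerative argument of~\cite{ChapuyMarcusSchaeffer2009} applies \emph{verbatim}. The actual work---and the only place some care is needed---is the resummation over compatible labelings of the scheme vertices, which you gloss a bit quickly; the paper handles it by first normalizing the relative order of the scheme labels (the ``normalized scheme'' $(\mathfrak s,\ell^*)$) and then summing geometric series in $U$ over the resulting cone.
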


Similarly as in the orientable case~\cite{ChapuyMarcusSchaeffer2009}, the main singularity of $Q_\mathbb{S}(t)$ is easily localized and its exponent is determined combinatorially, from which we obtain a combinatorial interpretation of a famous result of Bender and Canfield~\cite{BC1}:
\begin{theorem}[Combinatorial interpretation of the map counting exponents]\label{theo:asymptotics}
For each $h\in\{\frac{1}{2},1,\frac{3}{2},2,\dots\}$, there exists a constant $p_h$ such that the number of rooted maps with $n$ edges on the non-orientable surface of type $h$ satisfies:
$$
\vec q_{h}(n) \sim p_h n^{\frac{5}{2}(h-1)} 12^n.
$$
\end{theorem}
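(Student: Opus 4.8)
The plan is to derive the estimate from \cref{theo:rationality} by a singularity analysis of $Q_{\mathbb{S}}(t)$, following the orientable case of~\cite{ChapuyMarcusSchaeffer2009} almost verbatim. Writing $\mathbb{S}=\NN_h$, we have $[t^n]Q_{\mathbb{S}}(t)=(n+2-2h)\,\vec q_h(n)\sim n\,\vec q_h(n)$, so a transfer theorem shows that the claim is equivalent to the statement that $Q_{\mathbb{S}}$ has a unique dominant singularity at $t_c=\tfrac1{12}$, of algebraic type with exponent $-\tfrac{5h-1}{2}$. Indeed, if $Q_{\mathbb{S}}(t)\sim c_{\mathbb{S}}(1-12t)^{-(5h-1)/2}$ with $c_{\mathbb{S}}>0$, then singularity analysis gives $[t^n]Q_{\mathbb{S}}\sim \frac{c_{\mathbb{S}}}{\Gamma(\frac{5h-1}2)}\,n^{(5h-3)/2}\,12^n$, and dividing by $n+2-2h\sim n$ yields $\vec q_h(n)\sim p_h\,n^{5(h-1)/2}\,12^n$ with $p_h=c_{\mathbb{S}}/\Gamma(\tfrac{5h-1}2)>0$. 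Thus the whole problem reduces to locating, and determining the type of, the dominant singularity of $Q_{\mathbb{S}}=R_{\mathbb{S}}(U)$, where $R_{\mathbb{S}}$ is the rational function supplied by \cref{theo:rationality}.

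First I would analyse the auxiliary series. Solving $T=1+3tT^2$ with $T(0)=1$ gives $T=\frac{1-\sqrt{1-12t}}{6t}$, analytic on $[0,t_c)$ with a square-root singularity at $t_c=\tfrac1{12}$, where $T(t_c)=2$ and $T=2-2\sqrt{1-12t}+O(1-12t)$. Setting $s:=tT^2$, the equation for $U$ reads $sU^2-(1-s)U+s=0$, whose relevant root is
\[
U=\frac{(1-s)-\sqrt{(1-3s)(1+s)}}{2s}.
\]
The branch point of $U$ sits where $1-3s=0$, i.e. at $s=\tfrac13$, which occurs precisely at $t=t_c$ since $s(t_c)=t_cT(t_c)^2=\tfrac13$, and there $U(t_c)=1$. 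As $1-3s\sim 2\sqrt{1-12t}$ near $t_c$, one gets $\sqrt{1-3s}\sim\sqrt2\,(1-12t)^{1/4}$ and
\[
U(t)=1-\sqrt6\,(1-12t)^{1/4}+o\!\left((1-12t)^{1/4}\right).
\]
This unusual quartic-root exponent is the source of the non-integer counting exponents. Since $U$ and $Q_{\mathbb{S}}$ are power series with non-negative coefficients, Pringsheim's theorem together with a standard aperiodicity argument identifies $t_c$ as the unique dominant singularity of $Q_{\mathbb{S}}$, attained as $U\to1^-$; it therefore suffices to read off the leading behaviour of $R_{\mathbb{S}}(U)$ as $U\to1$.

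The combinatorial heart of the argument, to be carried out exactly as in~\cite{ChapuyMarcusSchaeffer2009}, is to show that $R_{\mathbb{S}}$ has a \emph{pole} at $U=1$ whose order works out to $10h-2$, with strictly positive leading coefficient, so that by the expansion above
\[
Q_{\mathbb{S}}(t)=R_{\mathbb{S}}(U)\sim \mathrm{const}\cdot(1-U)^{-(10h-2)}\sim c_{\mathbb{S}}\,(1-12t)^{-(5h-1)/2},\qquad c_{\mathbb{S}}>0.
\]
The order is governed by the \emph{dominant schemes}: decomposing a labeled unicellular map (the object in bijection with pointed quadrangulations via \cref{theo:MS2}) into its scheme — the reduced one-face map obtained by pruning leaves and suppressing degree-$2$ vertices — together with the labeled trees and label-sequences grafted along its edges, one finds that the pole order at $U=1$ contributed by a scheme with $v$ vertices and $e$ edges is maximised by the \emph{trivalent} ones. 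For such a scheme Euler's formula forces $2e=3v$ and $v-e=1-2h$, hence $v=4h-2$ and $e=6h-3$ independently of orientability, which is what yields the uniform exponent $\tfrac{5(h-1)}{2}$ in both the orientable case~\eqref{eq:tgintro} and the non-orientable case~\eqref{eq:pgintro}. The degenerate type $h=\tfrac12$, where the dominant ``scheme'' is a single cross-cap, is the base case and can be checked directly, consistently with \cref{cor:PP}.

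I expect the main obstacle to be precisely this last step: establishing that the pole order of $R_{\mathbb{S}}$ at $U=1$ is exactly $10h-2$ with a non-vanishing coefficient. This requires the full scheme-by-scheme bookkeeping of the generating functions — tracking the per-edge and per-vertex orders, verifying that trivalent schemes strictly dominate the others, and, crucially for the non-orientable setting and unlike~\cite{ChapuyMarcusSchaeffer2009}, checking that trivalent one-face maps of type $h$ carrying cross-caps and twisted edges do exist for every non-orientable $h$ and that their contributions add up without cancellation. Everything else — the singular expansion of $U$ and the final transfer — is routine.
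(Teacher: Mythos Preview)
Your proposal is correct and follows essentially the same route as the paper: express $Q_{\mathbb{S}}$ via the scheme decomposition, use the quartic-root expansion $U=1-\sqrt{6}(1-12t)^{1/4}+O(\sqrt{1-12t})$, and read off the dominant singularity from the trivalent schemes on $\NN_h$ (with the projective plane handled separately). The one point the paper makes more explicit is that the maximal pole comes not just from trivalent schemes but from pairs $(\mathfrak{s},\ell^*)$ with $\ell^*$ \emph{injective}: the relevant quantity is $L=|E(\mathfrak{s})|+K-1$ with $K=|\ell^*(V(\mathfrak{s}))|$, so maximising requires both $|E|=6h-3$ and $K=|V|=4h-2$, yielding $L=10h-6$ and hence the pole of order $10h-2$ in $U$ that you state.
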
 

\begin{remark}
In order to be able to perform the enumeration of labeled one-face maps combinatorially, it is crucial to remove the positivity condition of labels (i.e., to use \cref{theo:MS2} rather than \cref{theo:MS1}). Because of this, we still cannot give a direct combinatorial proof that the series of rooted maps (without pointing) is algebraic. However, this is known to be the case from the generating function approach~\cite{BC1}. Note that the same remark applies to the orientable case treated in~\cite{ChapuyMarcusSchaeffer2009}. 
\end{remark}
\begin{remark}
The interesting point in \cref{theo:rationality} is that it is proved bijectively. Note however that it does not capture the full structure of the generating function $Q_\mathbb{S}(t)$. Indeed, $Q_\mathbb{S}(t)$ is known by generating function techniques~\cite{ArquesGiorgetti} to be a rational function in the parameters $T(t)$ and $a$ with $a=\frac{(1-U)(1+U)}{1+4U+U^2}$, which is a stronger statement than the one we prove here. 
\end{remark}
\begin{remark}
The combinatorial approach gives only a very complicated value of the constant $p_h$ of \cref{theo:asymptotics}, as a finite sum that is \emph{in principle} possible to compute for each fixed $h$ but in practice is too big even for $h=2$ (see~\cref{rem:pg} below). Again, this is not the end of the story: the deep algebraic structure of map generating functions (related to integrable hierarchies of differential equations) allows one to determine a simple recurrence formula for  the sequence $(p_h)$~\cite{Carrell2014}, but we are unable to give a combinatorial interpretation of it. Again, the situation in the orientable case is similar (see e.g.~\cite{BenderGaoRichmond2008}).
\end{remark}

As we said, the proof of~\cref{theo:rationality} is similar to the orientable case, so we just give a rapid account of where it comes from and refer to~\cite{ChapuyMarcusSchaeffer2009} for details. 
In the rest of this subsection we assume that $h>1/2$, i.e. we exclude the case of the projective plane that would require additional notation, whereas it is simple enough to be treated by hand. See~\cref{subsec:PP} for an explicit treatment of that case.

\smallskip

If $\M$ is a labeled unicellular map, its \emph{scheme}~$\mathfrak{s}$ is the unicellular map obtained by removing recursively all the vertices of degree~$1$ of~$\M$, and then replacing all the maximal chains of vertices of degree~$2$ by an edge.
 The vertices of~$\mathfrak{s}$ (call them $v_1, v_2, \dots, v_k$) naturally inherit integer labels $\ell(v_1), \ell(v_2), \dots, \ell(v_k)$ from the labeling of $\M$. We now normalize these labels by preserving their relative ordering so that they form an integer interval of the form $[1..K]$ where $K$ is the number of different labels in $\mathfrak{s}$. In other words, we define $\ell^*$ such that $\ell^*(v_i)<\ell^*(v_j)$ iff $\ell(v_i)<\ell(v_j)$ and $\cup_i\{\ell^*(v_i)\} = [1..K]$. The pair $(\mathfrak{s}, \ell^*)$ is called the \emph{normalized scheme} of $\M$.

We now define a \emph{scheme} as a unicellular map with no vertex of degree $1$ nor $2$. Note that this terminology is compatible with the above, since each scheme is \emph{the} scheme of some unicellular map (for example, of itself).
For each surface we let $\mathcal{S}_\mathbb{S}$ be the set of schemes on $\mathbb{S}$. We have:

\begin{lemma}\label{lemma:finiteSchemes}
For each surface $\mathbb{S}$, the set $\mathcal{S}_\mathbb{S}$ is finite. Moreover, the maximal number of vertices of a scheme on $\mathbb{S}$ is $4h-2$, where $h$ is the type of $\mathbb{S}$. This number is realized by and only by schemes whose all vertices have degree $3$. Those schemes also have the largest number of edges among elements of $\mathcal{S}_\mathbb{S}$, which is $6h-3$.
\end{lemma}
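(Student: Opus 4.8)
The plan is to derive everything from Euler's formula together with a degree-sum (handshake) count. First I would record that a scheme is by definition a unicellular map, hence has a single face, so substituting $f=1$ into Euler's formula \eqref{eq:euler} gives the relation $e = v - 1 + 2h$ between its number $v$ of vertices and its number $e$ of edges. Since we work under the running assumption $h>1/2$, this already forces $e\geq 2$; in particular the (connected) underlying graph carries at least one edge, so every vertex has degree at least $1$, and combined with the defining property of a scheme (no vertex of degree $1$ or $2$) every vertex in fact has degree at least $3$.

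Next I would apply the handshake identity $2e=\sum_{v}\deg(v)\geq 3v$. Substituting $e=v-1+2h$ yields $2(v-1+2h)\geq 3v$, that is $v\leq 4h-2$, which is the claimed bound. The inequality $\sum_v\deg(v)\geq 3v$ is an equality if and only if every vertex has degree exactly $3$, so the bound $v=4h-2$ is attained precisely by the cubic schemes. For such a scheme the relation $e=v-1+2h$ gives $e=(4h-2)-1+2h=6h-3$; moreover, since $e=v-1+2h$ is strictly increasing in $v$ and holds for every scheme, the edge count of a scheme is maximized exactly when $v$ is, so the cubic schemes are also the unique schemes realizing the maximal number $6h-3$ of edges.

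For finiteness of $\mathcal{S}_\mathbb{S}$, I would invoke the bound just obtained: every scheme has at most $6h-3$ edges, and by the matching description of unicellular maps recalled in \cref{subsec:RepresentationOfMap} there are only finitely many unicellular maps with a bounded number of edges (a unicellular map with $n$ edges is encoded by a perfect matching of $[2n]$ together with the choice of which matched pairs are twisted, and there are finitely many such data). Hence $\mathcal{S}_\mathbb{S}$ is finite.

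The computation is entirely routine; the one point that is not pure bookkeeping is the \emph{realizability} of the extremal value, namely checking that $4h-2$ is genuinely attained and not merely an upper bound. This requires exhibiting at least one cubic one-face map on each surface of type $h\geq 1$ (for instance, building up from the minimal cases, the one-face embeddings of the three-edge theta-graph on the torus and on the Klein bottle for $h=1$, and adding handles or cross-caps to increase $h$); the existence of such maps is classical, and I expect it to be the only step requiring a short explicit construction rather than the Euler--handshake argument above.
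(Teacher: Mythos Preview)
Your argument is correct and follows essentially the same route as the paper: Euler's formula for a one-face map combined with the degree-sum (handshake) inequality, yielding the bound $v\leq 4h-2$ with equality exactly in the cubic case, and then $e=6h-3$ via $e=v-1+2h$. The paper packages the same computation slightly differently (it derives the constraint $\sum_{i\geq 3}(i-2)d_i=4h-2$ on the degree sequence and deduces finiteness from the finiteness of such sequences, whereas you bound the edge count and invoke the matching encoding of unicellular maps), and it does not explicitly address the realizability point you flag; but these are cosmetic differences, not a different method.
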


\begin{proof} The proof can be copied verbatim from~\cite[Section 4.6]{ChapuyMarcusSchaeffer2009} by replacing ``genus $g$'' with ``type $h$'', but since the statement is not detached from the text in that reference, let us recall the argument. By Euler's formula, a unicellular map with $n$ edges on a surface $\mathbb{S}$ of type $h$ has $v=n+1-2h$ vertices. Call $d_i$ its number of vertices of degree $i$ for $i\geq 1$. We have $\sum_i d_i=v$ and $\sum_i i d_i =2n$. For a scheme we have moreover $d_1=d_2=0$, and we deduce that $\sum_{i\geq 3}(i-2)d_i = 4h-2$. There are only finitely many integer sequences $(d_i)_{i\geq 3}$ satisfying this condition, which in turns implies that there are finitely many schemes on $\mathbb{S}$. Moreover,it is easy to see that a sequence maximizing the quantity $\sum d_i$ (or $\sum_i id_i$) under that condition is such that $d_i=0$ for $i>3$, and the lemma follows.
\end{proof}
\noindent The idea to prove~\cref{theo:rationality} is therefore to express the generating function of all labeled unicellular maps as a (finite) sum over schemes. We let $f_{(\mathfrak{s},\ell^*)}(t)$ be the generating function of rooted unicellular maps of normalized scheme $(\mathfrak{s},\ell^*)$. By~\cref{theo:MS2} we thus have:
\begin{align}\label{eq:finiteSum}
Q_{\mathbb{S}}(t) = 2 
\sum_{\mathfrak{s}\in\mathcal{S}_\mathbb{S}}
\sum_{K=1}^{|V(\mathfrak{s})|}
\sum_{\ell^*: V(\mathfrak{s}) \rightarrow [1..K]\atop
\mbox{\scriptsize surjective}}
f_{(\mathfrak{s},\ell^*)}(t).
\end{align}
Now, each unicellular map of normalized scheme $(\mathfrak{s},\ell^*)$ can be reconstructed as follows: 
\begin{itemize}[itemsep=0pt, topsep=0pt, leftmargin=20pt]
\item choose a labeling $\ell(v_1), \ell(v_2), \dots, \ell(v_k)$ of the vertices of the scheme compatible with $\ell^*$.
\item replace each edge $e$ of the scheme of extremities $e_-$, $e_+$ by a doubly-rooted labeled tree with roots labeled $\ell(e_-)$, $\ell(e_+)$.
\item choose an oriented corner as the root.
\end{itemize}
If follows by classical combinatorial decompositions of Motzkin walks (detailed in~\cite[Section 4.4]{ChapuyMarcusSchaeffer2009}) that the generating function of doubly-rooted labeled trees with roots of labels $i$ and $j$ is given by:
\begin{align}\label{eq:geometric}
M_{i-j} = B U^{|i-j|},
\end{align}
where $U$ is given in~\cref{theo:MS1} and $B=\frac{t(1+2U)}{1-t(1+2U)}$, so that:
\begin{align}\label{eq:polytope}
f_{(\mathfrak{s},\ell^*)}(t) = 
\frac{1}{|E(\mathfrak{s})|} \frac{td}{dt}
\sum_{\ell \mbox{\scriptsize ~compatible}\atop \mbox{\scriptsize with }\ell^*} \prod_{e\in E(\mathfrak{s})} M_{|\ell(e_-)-\ell(e_+)|} - {\bf 1}_{\ell(e_-)=\ell(e_+)}.
\end{align}
where the derivative accounts for the choice of the root corner, and the weight 
$\frac{1}{|E(\mathfrak{s})|}$ accounts for the multiplicity of the construction due to the fact that the scheme $\mathfrak{s}$ is rooted (more precisely the operator $4\cdot \frac{td}{dt}$ selects an oriented root corner in the map, and the prefactor $\tfrac{1}{4|E(\mathfrak{s})|}$ takes into account the rooting of the scheme; in the displayed result the simplification by $4$ has already  been made). Note that the indicator function in~\eqref{eq:polytope} is there to exclude the empty tree. Since the set of labelings $\ell$ compatible with a given $\ell^*$ forms an integer polytope, and given the geometric nature of the sequence $M_i$ (Equation~\eqref{eq:geometric}), it is to be expected that the infinite sum~\eqref{eq:polytope} leads to a rational function in $U$. The details of this resummation are similar to the proof of~\cite[Thm.2]{ChapuyMarcusSchaeffer2009}, and lead to:
\begin{proposition}\label{prop:explicitGF}
The generating function 
$f_{(\mathfrak{s},\ell^*)}(t) $ of labeled unicellular maps of normalized scheme $(\mathfrak{s},\ell^*)$ is given by:
$$
f_{(\mathfrak{s},\ell^*)}(t) = 
\frac{1}{|E(\mathfrak{s})|} \frac{td}{dt}
\frac{U^{e_=}(1+2U)^{e_{=}}(1+U+U^2)^{e_{\neq}}}{(1-U^2)^{|E(\mathfrak{s})|}}
\prod_{j=2}^K \frac{U^{d(j)}}{1-U^{d(j)}},
$$
where $e_=$ ($e_{\neq}$, respectively) is the number of edges of $\mathfrak{s}$ whose extremities have the same (different, respectively) label, where $K=\mathrm{card} \ell^*(V(\mathfrak{s}))$ is the number of different values taken by $\ell^*$, and where for $j\in[2..K]$ we let $d(j)$ be the number of edges $e$ of $\mathfrak{s}$ such that $\ell^*(e_-)<j\leq\ell^*(e_+)$.
\end{proposition}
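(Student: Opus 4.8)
The plan is to take Equation~\eqref{eq:polytope} as given and to carry out the resummation it announces, following~\cite[Section~4.4 and Thm.~2]{ChapuyMarcusSchaeffer2009} almost verbatim. Everything reduces to evaluating, in closed form, the inner sum
\[
S \;:=\; \sum_{\ell \text{ compatible with } \ell^*}\; \prod_{e\in E(\mathfrak{s})}\Bigl(M_{|\ell(e_-)-\ell(e_+)|}-\mathbf{1}_{\ell(e_-)=\ell(e_+)}\Bigr),
\]
after which one simply applies the operator $\tfrac{td}{dt}$ and divides by $|E(\mathfrak{s})|$.

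First I would make the index set explicit. Compatibility with $\ell^*$ forces $\ell(v)=\ell(v')$ whenever $\ell^*(v)=\ell^*(v')$ and preserves the strict order otherwise, so a compatible $\ell$ is the same datum as the strictly increasing sequence $m_1<m_2<\dots<m_K$ of values it takes on the $K$ levels of $\ell^*$; equivalently it is encoded by the gaps $g_j:=m_j-m_{j-1}\ge 1$ for $j=2,\dots,K$ (the overall additive constant being pinned by the labeling convention, so that it contributes no further freedom). This identifies the summation range with the lattice points of the cone $\{g_2,\dots,g_K\ge 1\}$, which is the ``integer polytope'' mentioned after~\eqref{eq:polytope}.

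The heart of the computation is an exchange of summation. An edge $e$ with, say, $\ell^*(e_-)\le \ell^*(e_+)$ satisfies $|\ell(e_-)-\ell(e_+)|=\sum_{\ell^*(e_-)<j\le\ell^*(e_+)}g_j$, so that when the factors $M_{|\cdot|}=B\,U^{|\cdot|}$ are multiplied out, the total exponent of $U$ contributed by all the edges equals $\sum_{j=2}^{K} d(j)\,g_j$, where $d(j)$ is exactly the number of edges crossing level $j$. Edges with equal endpoint labels contribute the gap-independent constant $M_0-1$ (the subtracted indicator discards the degenerate empty chain, which must be excluded since an edge of $\mathfrak{s}$ cannot collapse), while edges with different endpoint labels contribute $B$ times their power of $U$; hence the $U$-powers decouple across levels and the cone sum factorizes into independent geometric series:
\[
S \;=\; (M_0-1)^{e_=}\,B^{\,e_{\neq}}\prod_{j=2}^{K}\;\sum_{g_j\ge 1}\bigl(U^{d(j)}\bigr)^{g_j}
\;=\;(M_0-1)^{e_=}\,B^{\,e_{\neq}}\prod_{j=2}^{K}\frac{U^{d(j)}}{1-U^{d(j)}},
\]
the geometric sums being legitimate in $\mathbb{Q}[[t]]$ because $U=O(t)$.

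It then remains to rewrite the gap-independent prefactor $(M_0-1)^{e_=}B^{e_{\neq}}$ in terms of $U$ alone. Using the defining relations $T=1+3tT^2$ and $U=tT^2(1+U+U^2)$ to express $t$ and $T$ as rational functions of $U$ and substituting into the closed forms for $M_0$ and $B$, one checks that the equal-label factor becomes $\tfrac{U(1+2U)}{1-U^2}$ and the different-label factor becomes $\tfrac{1+U+U^2}{1-U^2}$, which reproduces the prefactor $\tfrac{U^{e_=}(1+2U)^{e_=}(1+U+U^2)^{e_{\neq}}}{(1-U^2)^{|E(\mathfrak{s})|}}$ since $e_=+e_{\neq}=|E(\mathfrak{s})|$. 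Applying $\tfrac{1}{|E(\mathfrak{s})|}\tfrac{td}{dt}$, which accounts for the choice of an oriented root corner together with the rooting of the scheme, then yields the stated formula. I expect the main obstacle to be the bookkeeping of the summation exchange --- verifying that after swapping the sums the coefficient of each $g_j$ is precisely the crossing number $d(j)$, and that the equal-label edges genuinely decouple from the cone sum --- whereas the final reduction of the prefactor to a rational function of $U$ is routine (if slightly tedious) algebra of the defining series.
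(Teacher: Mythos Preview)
Your proposal is correct and follows essentially the same approach as the paper, which itself does not give a standalone proof but refers to~\cite[Thm.~2]{ChapuyMarcusSchaeffer2009} for the resummation of~\eqref{eq:polytope}. Your parametrization of compatible labelings by the gaps $g_j\ge 1$, the exchange of summation that turns the exponent of $U$ into $\sum_j d(j)g_j$, and the factorization into independent geometric series are exactly the argument carried out in that reference; the identification $B=\frac{1+U+U^2}{1-U^2}$ (equivalently $B-1=\frac{U(1+2U)}{1-U^2}$) follows from $tT^2=\frac{U}{1+U+U^2}$ as you indicate, so the claimed prefactor is correct.
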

Note that together with~\eqref{eq:finiteSum} this concludes the proof of~\cref{theo:rationality}. Finally,  it is easy to study the dominant singularity of the generating function $f_{(\mathfrak{s},\ell^*)}(t)$. First, $U(t)$ has radius of convergence $\tfrac{1}{12}$, and  we have when $t\rightarrow\tfrac{1}{12}$:
$$
U(t)=1-\sqrt{6}(1-12t)^{1/4} + O(\sqrt{1-12t}). 
$$
Therefore  $f_{(\mathfrak{s},\ell^*)}(t)$  has radius of convergence $\frac{1}{12}$, and its unique dominant singularity is of the form $c(1-12z)^{-\frac{L}{4}-1}$ where $L=|E(\mathfrak{s})|+K-1$. This implies that the leading contribution to \eqref{eq:finiteSum} comes from schemes in which the value of $L$ is maximal. From Lemma~\ref{lemma:finiteSchemes} these cases are exactly the ones where all vertices of the scheme have degree $3$ (so that $|E(\mathfrak{s})|=6h-3$) and in which all vertices of the scheme get different labels (so that $K=4h-2$), leading to the maximum value of $L=10h-6$, and a dominant singularity for $f_{(\mathfrak{s},\ell^*)}(t)$ of the form $c(1-12z)^{\frac{1}{2}-\frac{5}{2}h}$.
It is then straightforward to obtain \cref{theo:asymptotics}  by applying a standard singularity analysis of generating functions, see again~\cite{ChapuyMarcusSchaeffer2009} for details in the orientable case.

\begin{remark}\label{rem:pg}
Similarly as what was done for $t_g$ in~\cite[Cor.10]{ChapuyMarcusSchaeffer2009}, we obtain an expression of the constant $p_h$ that seems quite difficult to use in practice,  except maybe for very small values of $h$:
$$
p_{h}= \frac{3^h}{(6h-3)2^{11h-7}\Gamma\left(\frac{5h-3}2\right)}
\left(
\sum_{\mathfrak{s},\ell^*}\prod_{j=2}^{4h-2}\frac1{d(j)}
\right)
$$
where the summation is taken on pairs $(\mathfrak{s},\ell^*)$ such that $\mathfrak{s}$ is a scheme on the surface $\mathcal{N}_h$ with all vertices of degree $3$ and $\ell^*:V(\mathfrak{s})\rightarrow[1..4h-2]$ is a bijection. The notation for $d(j)$ is as above. 
\end{remark}

\subsection{The case of the projective plane}
\label{subsec:PP}

In the case of the projective plane, unicellular maps have a simple structure: a single cycle (whose neighborhood on the surface forms a M\"obius strip), with plane trees attached on it. These objects are easily enumerated by hand leading to the following result (unfortunately, although the enumeration is straightforward, the result turns out be expressed in summatory form):
\begin{corollary}[Bijective counting of maps on the projective plane]\label{cor:PP}
The number of rooted maps with $n$ edges on the projective plane is equal to:
$$
\frac{n}{n+1} \sum_{k\geq 1}  \frac{4 b_k}{n+k}{2n-1 \choose n-k} 3^{n-k},
$$
where $b_k = \sum_{a+2b=k} {k \choose a,b,b}$.
\end{corollary}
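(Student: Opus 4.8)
The plan is to count rooted labeled unicellular maps on the projective plane directly and transport the result back to quadrangulations through \cref{theo:MS2}. Write $L(n)$ for the number of rooted labeled unicellular maps on $\NN_{1/2}$ with $n$ edges. Since a quadrangulation of type $\tfrac12$ with $n$ faces has $n+1$ vertices, the precise form of \cref{theo:MS2} (a bijection between rooted quadrangulations with a marked vertex and rooted labeled unicellular maps carrying a sign) gives $(n+1)\,\vec q(n) = 2L(n)$, where $\vec q(n)$ is the number of rooted maps with $n$ edges on $\NN_{1/2}$. Using the elementary identity $\binom{2n}{n-k}=\frac{2n}{n+k}\binom{2n-1}{n-k}$, one checks that $\frac{2}{n+1}\binom{2n}{n-k}=\frac{n}{n+1}\cdot\frac{4}{n+k}\binom{2n-1}{n-k}$, so the corollary is \emph{equivalent} to the clean identity $L(n)=\sum_{k\ge1} b_k\binom{2n}{n-k}3^{n-k}$, which is what I would establish.

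First I would pin down the structure of unicellular maps on $\NN_{1/2}$. Removing leaves iteratively yields the \emph{core}, a connected one-face map of minimum degree $2$; by Euler's formula it has equal numbers of vertices and edges, hence is $2$-regular, i.e.\ a single cycle $C_k$ of some length $k\ge1$. On the projective plane this cycle is necessarily one-sided: its regular neighbourhood is a Möbius band and its complement is an open disk whose boundary is a $2k$-gon. Consequently every unicellular map on $\NN_{1/2}$ is obtained \emph{uniquely} by planting a rooted plane tree in each of the $2k$ corners of $C_k$, the tree-edges numbering $n-k$. This is precisely the degenerate regime excluded from \cref{lemma:finiteSchemes}, where $4h-2=0$ forces the scheme to collapse to a single loop, which is exactly why the projective plane must be handled separately.

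I would then count in two independent layers. \emph{Labels:} the map has cyclomatic number $1$, so after fixing the label of the root vertex the $n-k$ tree-edges carry free increments in $\{-1,0,+1\}$ (a factor $3^{n-k}$), while the $k$ increments around $C_k$ must sum to $0$; the number of such closed walks of length $k$ with steps $-1,0,+1$ is exactly $b_k=[x^k](1+x+x^2)^k$. By invariance under global translation this count does not depend on the reference vertex, giving $b_k\,3^{n-k}$ labelings per rooted combinatorial map. \emph{Shapes:} the number of sequences of $2k$ rooted plane trees with $n-k$ edges is $\tfrac{k}{n}\binom{2n}{n-k}$ by the cycle lemma (Lagrange inversion on the Catalan series), and converting this ``marked-core'' count into a genuine rooting — comparing the $2k$ corners of $C_k$ used to linearize the boundary tour with the $2n$ corners of the whole map — multiplies by $\tfrac{n}{k}$, yielding $\binom{2n}{n-k}$ rooted combinatorial maps of core length $k$. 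Multiplying the two layers and summing over $k\ge1$ gives $L(n)=\sum_k b_k\binom{2n}{n-k}3^{n-k}$, hence the corollary.

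The Euler-characteristic argument and the two cycle-lemma computations are routine. The delicate step is the rooting/automorphism bookkeeping producing the exact constant $\binom{2n}{n-k}$ (equivalently the prefactor $\tfrac{n}{n+1}$): one must verify that the $\tfrac{n}{k}$ correction between core corners and corners of the full map is uniform over all maps of core length $k$, which rests on the fact that a root kills all automorphisms. I expect the point requiring the most care to be the most degenerate case $k=n$ (the bare Möbius cycle with no trees), and in particular $n=1$, where the single twisted loop has a large automorphism group; it is reassuring that the formula already returns the correct values $\vec q(1)=1$ and $\vec q(2)=10$.
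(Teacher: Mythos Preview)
Your proposal is correct and follows essentially the same route as the paper: decompose a unicellular map on $\NN_{1/2}$ into a core cycle of length $k$ (whose neighbourhood is a M\"obius band) with plane trees grafted at its $2k$ corners, count the labelings as $b_k\cdot 3^{n-k}$, count the forests by the cycle lemma, and correct for the rooting by a factor $n/k$. The only cosmetic difference is that you simplify $\frac{2k}{n+k}\binom{2n-1}{n-k}\cdot\frac{n}{k}$ to $\binom{2n}{n-k}$ up front, whereas the paper leaves it in the unsimplified form; the bookkeeping you flag as delicate (the $n/k$ rooting correction) is exactly the ``$4n/4k$'' multiplicity argument the paper makes.
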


\begin{proof}
Call \emph{bridge} a lattice walk on $\mathbb{Z}$ starting at $0$, ending at $0$, and taking steps in $\{0, -1, +1\}$. Then clearly the number $b_k$ given in the statement of the theorem is the number of bridges with $k$ steps.

We claim that all labeled unicellular maps with $n$ edges on the projective plane can be constructed as follows. First, choose a bridge of size $k$ for some $k\geq 1$, close it in order to form a cycle of length $k$ with a marked vertex of label $0$, and embed this cycle in the projective plane so that its neighborhood forms a M\"obius strip. Now, on each of the $2k$ corners of this map, attach a labeled plane tree in order to have $n$ edges in total. The number of ways to do that is the number of plane forests with $n-k$ edges and $2k$ components, which is $\frac{2k}{n+k}{2n-1\choose n-k}$, times $3^{n-k}$ to choose the label variations along each edge of the forest.
Finally, there are $2n$ ways to choose a root corner in such a map, and $2$ ways to choose its orientation. In this way we obtain a doubly rooted map (in addition to its root it is also rooted at the origin of the bridge we started with). This doubly rooted map has $4k$ preimages by this construction, so we need to add a multiplicative factor of $\frac{4n}{4k}$ to compensate this multiplicity. We obtain that the number of rooted labeled unicellular maps with $n$ edges on the projective plane is equal to:
$$
 \sum_{k\geq 1}b_k \frac{2k}{n+k}{2n-1 \choose n-k} 3^{n-k} \times  \frac{n}{k} .
$$
By \cref{theo:MS2}, we multiply this number by $2$ to get the number of rooted and pointed quadrangulations with $n$ faces on the projective plane. Since such a quadrangulation has $n+1$ vertices, we finally need to divide the result by $n+1$ to get the number we want.
\end{proof}
\noindent We leave the reader check that \Cref{cor:PP} is equivalent to the generating function expression already computed in~\cite{BC1}.

\subsection{Distances in random quadrangulations}

Let $\M$ be a map and $v \in V(\M)$. We define the \emph{radius} of $\M$ centered at $v$ as the quantity
\[ R(\M,v) := \max_{u \in V(\M)} d_\M(v,u).\]
For any $r > 0$, we also define the \emph{profile of distances} from the distinguished vertex~$v$:
\[ I_{(\M,v)}(r) := \#\{u \in V(\M): d_\M(v,u) = r\}.\]

Then, using the now-standard machinery developed by \cite{ChassaingSchaeffer, LeGall2006, Bettinelli1} one can prove the following theorem:

\begin{theorem}
\label{theo:Distributions}
Let $q_n$ be uniformly distributed over the set of rooted, bipartite quadrangulations with $n$ faces on a surface $\mathbb{S}$, and let $v_0$ be the root vertex of $q_n$. 
Conditionally on $q_n$ let $v_*$
be a vertex chosen uniformly in $V(q_n)$. Then, there exists a continuous stochastic process $\frak{L}^\mathbb{S} = (\frak{L}^\mathbb{S}_t, 0 \leq t \leq 1)$ such that the following convergence results hold:
\begin{enumerate}[label=(C\arabic*)]
\item 
$\left(\frac{9}{8n}\right)^{1/4}R(q_n,v_*) \xrightarrow[n \to \infty]{(d)} \sup \frak{L}^\mathbb{S} - \inf \frak{L}^\mathbb{S}$;
\label{eq:DistributionOfRadius}
\item
$\left(\frac{9}{8n}\right)^{1/4}d_{q_n}(v_0,v_{*}) \xrightarrow[n \to \infty]{(d)} \sup \frak{L}^\mathbb{S}$;
\label{eq:DistributionOfTwoPoints}
\item
$\frac{I_{(q_n,v_*)}\left((8n/9)^{1/4}\cdot\right)}{n+2-2h} \xrightarrow[n \to \infty]{(d)} \mathcal{I}^\mathbb{S}$,
\newline
where $\mathcal{I}^\mathbb{S}$ is the occupation measure of $\frak{L}^\mathbb{S}$ above its infimum, defined as follows: for every non-negative, measurable $g :\R_+ \to \R_+$,
\[ \langle \mathcal{I}^\mathbb{S}, g \rangle = \int_{0}^1 \dt g(\frak{L}^\mathbb{S}_t - \inf \frak{L}^\mathbb{S}). \]
\label{eq:DistributionOfProfile}
\end{enumerate}
These convergences hold in distribution (and in the case of \ref{eq:DistributionOfProfile}, the underlying topology is the topology of weak convergence on real probability measures on $\mathbb{R}_+$).
\end{theorem}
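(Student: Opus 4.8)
The plan is to push the whole problem through Theorem~\ref{theo:MS2} to the world of labeled unicellular maps, where the rescaled label process is known to converge, and then to read the three quantities off from the limit. First I would note that a quadrangulation of type $h$ with $n$ faces has exactly $n+2-2h$ vertices, so drawing $q_n$ uniformly and then $v_*$ uniformly among its vertices is the same as drawing a uniform rooted bipartite quadrangulation carrying a pointed vertex. By the $2$-to-$1$ correspondence of Theorem~\ref{theo:MS2}, this object corresponds to a uniform rooted labeled unicellular map $\M_n$ on $\mathbb{S}$ with $n$ edges (the multiplicity $2$ and the sign $\epsilon$ being harmless). The label--distance dictionary then reads: for every vertex $v\ne v_*$ one has $d_{q_n}(v_*,v)=\ell(v)-\ell_{min}+1$, where $\ell$ is the label in $\M_n$ and $\ell_{min}$ its minimum. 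Since the root vertex $v_0$ of $q_n$ is the root vertex of $\M_n$, it carries label $1$, so that $d_{q_n}(v_0,v_*)=2-\ell_{min}$; likewise $R(q_n,v_*)=\ell_{max}-\ell_{min}+1$, where $\ell_{max}$ denotes the maximal label, and $I_{(q_n,v_*)}(r)$ equals the number of vertices of $\M_n$ of label $r+\ell_{min}-1$.

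Second, I would analyze $\M_n$ via its scheme, exactly as in Section~\ref{subsec:gf}. By Lemma~\ref{lemma:finiteSchemes} there are finitely many schemes, and with probability tending to $1$ the scheme of $\M_n$ is one of the dominant ones (all vertices of degree~$3$); conditionally on the scheme, $\M_n$ is obtained by grafting labeled forests along its edges, with label increments along tree-edges uniform in $\{-1,0,+1\}$ (variance $2/3$). Reading labels along the contour of the unique face yields a discrete label process which, once time is rescaled to $[0,1]$ and labels are rescaled by $(9/(8n))^{1/4}$, converges to the continuous process $\frak{L}^\mathbb{S}$ of the statement. This convergence --- together with the accompanying tightness, the convergence of the scheme and of its vertex-labels, and the joint convergence of the per-edge label processes --- is furnished by the now-standard machinery of \cite{ChassaingSchaeffer,LeGall2006} in the planar case and \cite{Bettinelli1} in positive type. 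The point is that all the relevant quantities depend only on the forest-and-scheme combinatorics produced by our bijection, which is identical to the orientable one (Section~\ref{subsec:gf}); orientability never enters the label analysis, so the estimates carry over after replacing ``genus~$g$'' by ``type~$h$'' and $\mathcal{S}$ by $\mathcal{S}_{\mathbb{S}}$.

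Finally, I would deduce \ref{eq:DistributionOfRadius}--\ref{eq:DistributionOfProfile} from $\frak{L}^\mathbb{S}$ by the continuous-mapping theorem together with the dictionary above. Convergence \ref{eq:DistributionOfRadius} is immediate, since $(9/(8n))^{1/4}R(q_n,v_*)=(9/(8n))^{1/4}(\ell_{max}-\ell_{min}+1)\to\sup\frak{L}^\mathbb{S}-\inf\frak{L}^\mathbb{S}$. Convergence \ref{eq:DistributionOfProfile} holds because the renormalized profile is exactly the empirical distribution of the rescaled labels measured from their minimum, which converges to the occupation measure of $\frak{L}^\mathbb{S}$ above its infimum. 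For \ref{eq:DistributionOfTwoPoints} the dictionary gives $(9/(8n))^{1/4}d_{q_n}(v_0,v_*)=(9/(8n))^{1/4}(2-\ell_{min})\to-\inf\frak{L}^\mathbb{S}$; since the label increments are symmetric one has $\frak{L}^\mathbb{S}\stackrel{d}{=}-\frak{L}^\mathbb{S}$, whence $-\inf\frak{L}^\mathbb{S}\stackrel{d}{=}\sup\frak{L}^\mathbb{S}$, and as the convergence is only in distribution this yields the stated limit.

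The main obstacle is the second step: transporting the scaling-limit analysis of \cite{Bettinelli1} to the non-orientable setting. The delicate ingredients --- tightness of the rescaled label process, convergence of the scheme together with its vertex-labels, and the possibly non-orientable gluings of the contour along the scheme --- all reduce to estimates on the grafted forests and on a finite family of schemes, quantities that are insensitive to how the one-face map is embedded. I would therefore present this step as a sketch, making the dictionary and the symmetry argument explicit while referring to \cite{Bettinelli1} for the analytic details, in keeping with the paper's stated policy for Section~\ref{sec:consequences}.
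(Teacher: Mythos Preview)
Your approach is essentially the same as the paper's: push through Theorem~\ref{theo:MS2}, invoke the scheme decomposition of Section~\ref{subsec:gf}, import the convergence of the rescaled label process from~\cite{Bettinelli1} (whose arguments depend only on the forest/scheme combinatorics and not on orientability), and read off the three statements by continuous mapping and the label symmetry $\frak{L}^\mathbb{S}\stackrel{d}{=}-\frak{L}^\mathbb{S}$.

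One point deserves more care than you give it. For \ref{eq:DistributionOfProfile} you write that ``the renormalized profile is exactly the empirical distribution of the rescaled labels measured from their minimum, which converges to the occupation measure of $\frak{L}^\mathbb{S}$ above its infimum''. The occupation measure of the limiting label process is the limit of the empirical distribution of labels \emph{along the contour}, i.e.\ over \emph{corners}; the profile $I_{(q_n,v_*)}$ counts \emph{vertices}. A vertex of degree $d$ contributes $d$ corners, so these two empirical measures are not ``exactly'' the same. The paper closes this gap with a short but genuine argument: orient the edges of $\M_n$ (tree edges towards the leaves, scheme-chain edges along a fixed orientation of each scheme edge) so that every vertex except the $O(1)$ scheme vertices has in-degree~$1$; this yields a map $\langle\cdot\rangle:[0,2n]\to[0..2n]$ with $|\langle s\rangle-s|\le 1$ under which a uniform contour time becomes, up to $O(1/n)$ in total variation, a uniform vertex. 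With that device in hand your continuous-mapping argument goes through.
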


We do not give the full proof of this theorem, since introducing all technical details is out of the scope of this paper, and the main technical points are totally analogous to what was done in~\cite{Bettinelli1} in the orientable case. However, let us briefly describe the origin of the process $\frak{L}^\mathbb{S}$ for interested reader. 

The idea is to generalize the \emph{label process} from the planar case. The label process of a pointed, rooted, planar quadrangulation with $n$ faces is a continuous function $\ell_n : [0,2n] \to \R$ defined in the following way: we set $\ell_n(0)$ to be the label of the root corner of the associated tree by the Schaeffer bijection, and we start walking along the boundary of that tree, visiting corners of the tree one by one. The function $\ell_n(i)$ records the label of the corner visited in the $i$-th step. In that way in the $2n$-th step we visit the root corner again, and we extend this function on the interval $[0,2n]$ by linearity. The \emph{contour process} $c_n : [0,2n] \to \R$ is defined in a similar way, but it records the height of the vertex visited at the $i$-th step rather than its label. Namely, $c_n(i)$ is the distance in the tree between the root vertex and the vertex visited in the $i$-th step of the walk. Then, it can be shown \cite{ChassaingSchaeffer} that if ones starts with a uniformly chosen random rooted and pointed planar quadrangulation with $n$ faces, the normalised contour process $\frac{1}{\sqrt{n}}c_n(2n \cdot)$ converges in distribution to the so-called \emph{normalized Brownian excursion} $c^\infty$ which, informally, can be understood as a standard Brownian motion conditioned to remain non-negative on $[0,1]$ and to take value $0$ at the time $1$. Moreover, the normalised label process $\left(\frac{9}{8n}\right)^{1/4}\ell_n(2n\cdot)$ converges to the so-called \emph{head of the Brownian snake} $\ell^\infty = (\ell^\infty_t, 0\leq t \leq 1)$ which is, conditionally on $c^\infty$, the continuous Gaussian process with covariance
$$\Cov(\ell^\infty_s, \ell^\infty_t) = \inf\{c^\infty_u : \min(s,t,) \leq u \leq \max(s,t)\}.$$
\medskip

For rooted and pointed bipartite quadrangulations with $n$ faces on a general surface $\mathbb{S}$, one  can define analogues of the label process and the contour process in the same way as Bettinelli did for orientable surfaces~\cite[Section 3]{Bettinelli1}. Bettinelli's construction is more technical than the planar construction, nevertheless the definition of the label process is the same: for $0\leq i\leq 2n$, one lets $\ell_n(i)$ be the label of the corner visited at the $i$-th step when going around the unicellular map, starting
from the root, associated with the pointed map $(q_n,v_*)$ via \cref{theo:MS2}. Using the decomposition of labeled one-face maps via schemes sketched in \cref{subsec:gf}, one can view, roughly speaking, the label process of a random labeled one-face map on $\mathbb{S}$ as the ``concatenation'' of the label processes of finitely many random labeled forests (each forest corresponding to the trees attached to one edge-side of the scheme).
Once the notion of scheme has been adapted to cover the case of general surfaces, the details of this decomposition are totally independent of the notion of orientability. In particular, the construction in \cite[Sections 4,5,6]{Bettinelli1} and the convergence results in that reference apply verbatim to our case. It follows that the normalized label process of the uniform random, rooted and pointed, bipartite quadrangulation $(q_n,v_*)$ with $n$ faces on the surface~$\mathbb{S}$  converges after normalization:
\begin{align}\label{eq:convProcess}
\left(\frac{9}{8n}\right)^{1/4} \ell_n^{\mathbb{S}}(2n\cdot)\longrightarrow \frak{L}^{\mathbb{S}}_{(\cdot)},
\end{align}
where $\frak{L}^{\mathbb{S}}_{(\cdot)} = \frak{L}^{\mathbb{S}}$ is a continuous process that describes the concatenation of the limiting label processes of each individual forest, defined analogously as the process $\frak{L}_\infty$ of~\cite[p. 1632]{Bettinelli1}. Here, the convergence is in distribution for the uniform topology on continuous functions on $[0,1]$.

We conclude by sketching how the statements in \cref{theo:Distributions} follow from the convergence~\eqref{eq:convProcess}. First, it follows from \cref{theo:MS2} that in the discrete setting, $R(q_n,v_*)$ is equal to $\max_{v\in V(q_n)} \ell(v) -\min_{v\in V(q_n)} \ell(v)+1$. Therefore \ref{eq:DistributionOfRadius} follows from \eqref{eq:convProcess} and the fact that $\max$ and $\min$ are continuous for the uniform norm. Similarly, $d_{q_n}(v_*,v_0)$ is equal to  $1-\min_{v\in V(q_n)} \ell(v)$, and \ref{eq:DistributionOfTwoPoints} follows from the continuity of $\min$ and the fact that labeled one-face maps are invariant by global reflexion of all labels.
Finally, let us sketch a way of deducing \ref{eq:DistributionOfProfile} from \eqref{eq:convProcess}, inspired by the proof of Theorem 4.1.2 in \cite{Miermont:cours}.  To simplify the argument we choose a probability space on which the convergence~\eqref{eq:convProcess} holds almost surely, and we want to prove that for each real uniformly continuous bounded function $g$ the quantity $\frac{1}{n+2-2h}\sum_{k} I_{(q_n,v_*)}(k) g\left((\frac{9}{8n})^{1/4} k \right)$ converges to $\langle \mathcal{I}^\mathbb{S}, g \rangle$. First, note that by definition the first quantity is equal to 
\begin{align}\label{eq:profileAsExpectation}
\mathbb{E}_{v_{**}}\left[g\left((\frac{9}{8n})^{1/4}d_{q_n}(v_*,{v_{**}})\right)\right]
\end{align}
 where $\mathbb{E}_{v_{**}}$ denotes the expectation over a random uniform vertex ${v_{**}}$ in $V(q_n)$, conditionnally on $q_n$. The main point now will be to argue that the value of $d_{q_n}(v_*,{v_{**}})$ is well approximated in the limit by an evaluation of the shifted label process at a uniform random point of $[0,2n]$. To do this, consider the labeled unicellular map $\M_n$ associated with $q_n$. Orient the edges of $\M_n$ as follows: first choose an arbitrary orientation for each edge $e$ of the scheme, and orient each edge of $\M_n$ in the chain of vertices corresponding to $e$ with this orientation; then orient all the edges that belong to the trees ``attached'' to these chains towards the leaves of the trees. In this way, all the vertices of $\M_n$ except at most $6h(\mathbb{S})-3$ of them (the vertices of the scheme) have indegree exactly one. Now, for each $s\in [0,2n]$, consider the edge-side $\bar e$ of $\M_n$ that is visited by the contour of $\M_n$ at time $s$, starting from the root, where we now think of $s$ as a continuous time. There exists $\langle s\rangle \in \{\lfloor s \rfloor, \lceil s \rceil\}$ such that the vertex visited by the contour at time $\langle s\rangle$ is the out-vertex of $\bar e$. In this way we construct a function 
 $\langle \cdot \rangle: [0,2n] \rightarrow [0..2n]$ such that $|\langle s \rangle -s | \leq 1$, and such that if $U$ is a uniform variable on $[0,1]$, the vertex of $\M_n$ visited by the contour at time $\langle 2n U \rangle$ is almost a uniform vertex on $V(\M_n)$ -- in the sense that the two differ by at most $O(\frac{1}{n})$ in total variation.
It follows that \eqref{eq:profileAsExpectation} is equivalent when $n$ tends to infinity to
$$\mathbb{E}_U\left[g\left((\frac{9}{8n})^{1/4}(\ell_n(\langle 2n U\rangle)+1-\min_v \ell_n(v)\right)\right].$$
Since $|\langle s \rangle -s | \leq 1$, this last quantity clearly converges, when $n$ goes to infinity, to $\mathbb{E}_U\left[g(\frak{L}^\mathbb{S}_\infty(U)-\min_{[0,1]} \frak{L}^\mathbb{S}_\infty)\right]=\langle \mathcal{I}^\mathbb{S}, g \rangle$.

We stop here our considerations on random maps leaving, in particular, the question of Gromov-Haussdorf tightness, convergence, and the study of Gromov-Hausdorff limits, to the forthcoming work~\cite{BettinelliChapuyDolega2015}.

\renewcommand{\M}{\mathcal{M}}

\section{Extension: the Miermont/Ambjørn-Budd bijection for general surfaces}
\label{sec:Miermont}

\subsection{Miermont bijection for general surfaces}
\label{subsect:Miermont}

In this section we are extending \cref{theo:MS2} to the case of multi-pointed quadrangulations. This construction extends Miermont's one \cite{Miermont} to the case of \emph{all} surfaces, that is orientable and non-orientable. Before we state our theorem we need some additional definitions.

\begin{definition}
\label{def:MapWithSources}
For any map $\M$ and integer $k\geq 1$,  let $W=(w_1,w_2,\dots,w_k)$ be a $k$-tuple of distinct vertices of $M$, and let $d$ be a function $d : W \rightarrow \Z$ such that:
\begin{enumerate}[label=(DS\arabic*)]
\item
\label{con:Delay0th}
$\min_{w \in W}d(w) = 0$;
\item
$|d(w_i) - d(w_j)| < d_\M(w_i,w_j)$ for any $1\leq i < j \leq k$;
\label{con:Delay1st}
\item
$d(w_i) - d(w_j) + d_\M(w_i,w_j) \in 2\N$ for any $1\leq i < j \leq k$.
\label{con:Delay2nd}
\end{enumerate} 
Then we will call elements of $W$ \emph{sources} and we say that  the triple $(\M,W, d)$ defines a \emph{map with $k$ delayed sources}, where $k$ is the cardinality of $W$. The values $d(w_i)$ for $w_i\in W$ are called the \emph{delays}. 
\end{definition}

Whenever we have a map with $k$ delayed sources $(\M,(w_1,\dots,w_k),d)$, we associate with it the \emph{distance from delayed sources}:
\begin{equation} 
\label{eq:DistanceOfSources}
\ell^d: V(\M) \rightarrow \Z, \quad \ell^d(v) := \min_{1 \leq i \leq k}\left( d_\M(v,w_i) + d(w_i)\right).
\end{equation}

Notice that when all delays are equal to $0$, then the function $\ell^d$ measures the distance from the given vertex to the set of all sources $\{w_1,\dots,w_k\}$, which explains our terminology.

We are going to prove the following theorem:

\begin{theorem}
\label{theo:Miermont2}
For each surface $\mathbb{S}$ and integers $n,k\geq 1$, there exists a $2$-to-$1$ correspondence between the set of rooted bipartite quadrangulations on $\mathbb{S}$ with $n$ faces and $k$ delayed sources $(W=(w_1,\dots,w_k), d)$, and labeled maps on $\mathbb{S}$ with $n$ edges and $k$ faces, numbered from $1$ to $k$.

Moreover, if for a given bipartite quadrangulation with $k$ delayed sources $(\qq, W, d)$ we denote by $N_i$ ($\tilde{N}_i$, respectively) the set of vertices $v \in V(\qq)$ ($v \in V(\qq)\setminus W$, respectively) such that $\ell^d(v) = i$, and by $E(N_i,N_{i-1})$ the set of edges between $N_i$ and $N_{i-1}$, then the associated labeled map $\M$ has $|\tilde{N}_i|$ vertices of label $i+\ell_{min}-1$ and $|E(N_i,N_{i-1})|$ corners of label $i+\ell_{min}-1$, where $\ell_{min}$ is the minimum vertex label in $\M$.
\end{theorem}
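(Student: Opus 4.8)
The plan is to mimic the proof of \cref{theo:MS2}, transporting the dual-exploration-graph machinery of \cref{sec:DEG}--\cref{sec:bijection} to the delayed multi-source setting; the only genuinely new ingredient is that the exploration graph now carries $k$ cycles instead of one. The first step is to check that $\ell^d$ plays the exact role that the distance to $v_0$ played before. Using that $\qq$ is bipartite together with~\ref{con:Delay2nd}, the quantity $d_\qq(v,w_i)+d(w_i)$ has a parity depending only on the colour of $v$ and not on $i$; hence $\ell^d$ has constant parity on each colour class, and, being $1$-Lipschitz for the graph metric, the two endpoints of every edge differ by exactly $1$ in $\ell^d$, so that every face of $\qq$ is again of type $(i-1,i,i-1,i)$ or $(i-1,i,i+1,i)$. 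The decisive extra property is that every source is a \emph{strict local minimum} of $\ell^d$: for a neighbour $v$ of $w_i$ one has $\ell^d(v)\le d(w_i)+1$ from $d_\qq(v,w_i)=1$, while the strict triangle inequality~\ref{con:Delay1st} gives $\ell^d(v)\ge d(w_i)$, and parity excludes equality, whence $\ell^d(v)=d(w_i)+1$. Consequently no red edge of the rule of \cref{fig:redBlueRule} is ever incident to a source (just as $v_0$ was never incident to one), and by~\ref{con:Delay0th} the global minimum of $\ell^d$ is $0$.

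Next I would run the construction of \cref{sec:DEG} essentially verbatim, with a single modification to the initialization: when the running label reaches a value $i$, and before performing the inductive steps, I open a new directed blue cycle around each source $w_j$ with $d(w_j)=i$ (the case $i=0$ reproducing \ref{Step0b}). The inductive steps \ref{Step1}--\ref{Step3} are purely local and unchanged, so \cref{lemma:invariant} generalizes, with Property~\ref{A} now reading ``$\DEG(\qq)$ is a forest of trees attached to $k$ directed cycles'' and Property~\ref{B} untouched; the existence argument of \cref{prop:welldef} then carries over, once one checks that opening a cycle at level $i$ only adds degree-$2$ blue vertices to faces that will later be seen as type $(i,i+1,i+2,i+1)$, so that the degree-$1$/degree-$3$ balance used in that proof is unaffected. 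The map $\Phi(\qq)$ is then defined by the same red-edge rule on the vertex set $V(\qq)\setminus W$, and the face containing $w_j$ inherits the number $j$.

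The main new point is the face count: I would show $\Phi(\qq)$ has exactly $k$ faces, each a disk containing one source. Since $\DEG(\qq)$ meets every face of $\qq$ and never crosses a red edge, it lies in $\mathbb{S}\setminus\Phi(\qq)$; and because each blue branch is attached to an already-drawn vertex without crossing a red edge, $\DEG(\qq)$ splits into exactly $k$ connected components, one per cycle. Each such component is a single cycle with trees hanging on it; its cycle encircles exactly one source $w_j$, which belongs to $\mathbb{S}\setminus\Phi(\qq)$, so the cycle is contractible there and the corresponding component of the complement retracts to a disk. Hence the complement has $k$ disk components, consistent with Euler's formula: removing the $k$ sources leaves $n+2-2h-k$ vertices and $n$ red edges, forcing $f=k$. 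The statements on $|\tilde{N}_i|$ and $|E(N_i,N_{i-1})|$ then follow from the analog of \cref{lemma:labelsDistances}, namely that each non-source vertex of label $i$ is joined by exactly one internal edge to a vertex of label $i-1$, so that the labels of $\Phi(\qq)$ coincide with $\ell^d$ up to the global shift by $\ell_{min}-1$.

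Finally, the reverse construction and the verification that the two maps are mutually inverse are obtained by copying \cref{sec:reversebij} and \cref{sec:bijection}, now tracking $k$ blue cycles rather than one; the $2$-to-$1$ statement and the sign $\epsilon\in\{+,-\}$ appear exactly as in the passage from \cref{theo:MS1} to \cref{theo:MS2}, through the oracle and Hall-matching argument compensating the $2d$-fold choice of an oriented root corner of minimal label. I expect the topological bookkeeping of the $k$-cycle exploration graph --- checking that distinct cycles bound distinct faces and that \cref{prop:welldef} still succeeds when cycles are opened at several different levels --- to be the main obstacle, the local rules themselves transferring with no essential change from the one-source case.
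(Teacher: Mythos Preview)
Your overall strategy matches the paper's: prove a ``rooted-at-each-source'' version first (the paper calls this \cref{theo:Miermont1}) via a multi-cycle DEG, then derive the $2$-to-$1$ statement by an oracle/Hall argument exactly as in the passage from \cref{theo:MS1} to \cref{theo:MS2}. Your opening paragraph (that $\ell^d$ changes by exactly $1$ across edges and that sources are strict local minima) is correct and is essentially the paper's \cref{lem:DistanceFromSources}; your face-count argument and the oracle/Hall step also match.

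There is however one genuine gap and one difference worth flagging. The gap is in your claim that ``the inductive steps \ref{Step1}--\ref{Step3} are purely local and unchanged''. They are not: after \ref{StepM0b} the blue graph already has several connected components (one cycle per source), and the single-$LVC$ tour of \ref{Step1} only walks around one of them, so it may fail to discover a valid face of type $(i-1,i,i+1,i)$ that is only adjacent to a \emph{different} component. The paper fixes this by maintaining $k$ last-visited corners $LVC_1,\dots,LVC_k$ and, in \ref{StepM1}, cycling $LVC:=LVC_{j+1}$ whenever the tour returns to its starting point without success; the update rule in \ref{StepM2} then moves the $LVC$ to the component that the new branch has just been attached to. Without this bookkeeping the well-definedness proof (your analogue of \cref{prop:welldef}) does not go through, and in particular the ``degree-$1$/degree-$3$ balance'' argument you invoke only shows that a valid face \emph{exists somewhere}, not that the tour will reach it.

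The difference is in the initialization. You propose to open the blue cycle around $w_j$ only when the running label reaches $d(w_j)$, whereas the paper opens \emph{all} $k$ cycles at once in \ref{StepM0b}, regardless of the delay values, and starts the induction at $i=1$. Since the cycle around a source with delay $c$ only touches faces of minimum label $c$, it does not interfere with the processing of labels $<c$, so the two choices ultimately produce the same blue graph; but the paper's choice avoids the extra checks you anticipate (``cycles opened at several different levels'') and makes the $LVC$-cycling mechanism uniform from the start. In short: your plan is sound once you add the multi-$LVC$ tour rule; the remaining steps (reverse construction, the analogue of \cref{lemma:PropertiesOfR}, and the Hall-matching passage to the $2$-to-$1$ statement) are indeed verbatim from the one-source case, as the paper confirms.
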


Similarly as in the case of \cref{theo:MS2}, we will first prove a different theorem, from which \cref{theo:Miermont2} follows.

\begin{theorem}
\label{theo:Miermont1}
For each surface $\mathbb{S}$ and integers $n,k \geq 1$, there exists a bijection between the set of bipartite quadrangulations on $\mathbb{S}$ with $n$ faces, $k$ delayed sources $(W=(w_1,\dots,w_k), d)$, such that each source is marked with a distinguished oriented corner incident to it, 
and well-labeled maps on $\mathbb{S}$ with $n$ edges and $k$ faces ordered from $1$ to $k$, such that each face contains a distinguished oriented corner, and such that the minimum label of distinguished corners is $1$.

Moreover, if for a given bipartite quadrangulation with $k$ delayed sources $(\qq, W, d)$ we denote by $N_i$ ($\tilde{N}_i$, respectively) the set of vertices $v \in V(\qq)$ ($v \in V(\qq)\setminus W$, respectively) such that $\ell^d(v) = i$, and by $E(N_i,N_{i-1})$ the set of edges between $N_i$ and $N_{i-1}$, then the associated well-labeled map $\M$ has $|\tilde{N}_i|$ vertices of label $i$ and $|E(N_i,N_{i-1})|$ corners of label $i$.
\end{theorem}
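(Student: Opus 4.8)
The plan is to transcribe the single-source construction of \cref{sec:DEG,sec:reversebij,sec:bijection} almost verbatim, replacing the unique root vertex $v_0$ by the $k$ sources and the unique blue cycle by $k$ blue cycles, one per source. Everything rests on a preliminary \emph{labeling lemma}: for a bipartite quadrangulation with $k$ delayed sources $(\qq,W,d)$, the function $\ell^d$ of \eqref{eq:DistanceOfSources} assigns to the two endpoints of every edge labels differing by exactly $1$, and each source $w_i$ is a \emph{strict local minimum} of $\ell^d$ all of whose neighbours carry label $d(w_i)+1$. Indeed, each term $d_\qq(\cdot,w_i)+d(w_i)$ varies by at most one along an edge, so $|\ell^d(u)-\ell^d(v)|\le 1$; condition~\ref{con:Delay2nd} forces $\ell^d$ to have constant parity on each part of the bipartition, so the difference is odd, hence exactly $1$. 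For the local-minimum claim, \ref{con:Delay1st} together with \ref{con:Delay2nd} gives $d_\qq(w_i,w_j)+d(w_j)\ge d(w_i)+2$ for $j\ne i$, so every neighbour of $w_i$ realises its distance to the sources through $w_i$ and receives label $d(w_i)+1$. In particular $\ell^d$ vanishes only at the zero-delay sources (which exist by~\ref{con:Delay0th}), the strict local minima of $\ell^d$ are \emph{exactly} the sources, every face of $\qq$ is again of type $(\ell-1,\ell,\ell-1,\ell)$ or $(\ell-1,\ell,\ell+1,\ell)$, and the local rules of \cref{fig:basicFaceRules,fig:redBlueRule} apply unchanged.

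With this in hand I would run the forward construction of \cref{sec:DEG} using $\ell^d$ as the labelling, changing only the initialization: the induction proceeds by increasing label starting from $0$, and the first time the running value $i$ reaches a delay $d(w_j)$ we insert a new oriented blue cycle around $w_j$ through its corners of label $d(w_j)$. This is legitimate because, by the labeling lemma, at that moment all edges incident to $w_j$ are free edges of label $d(w_j)$, so $w_j$ plays locally exactly the role $v_0$ plays in~\ref{Step0b}; the oriented corner marking $w_j$ provides the initial LVC of its cycle. Steps~\ref{Step1}--\ref{Step3} are then applied verbatim. The invariants of \cref{lemma:invariant} are purely local and survive without change, and the counting of faces of kinds~\ref{Bb}--\ref{Be} in the proof of \cref{prop:welldef} is a global tally that is insensitive to the number of cycles, so well-definedness goes through. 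Inserting red edges as in \cref{fig:redBlueRule} and deleting all of $W$ yields the map $\bij(\qq)$ on the vertex set $V(\qq)\setminus W$, the marked corner of the face surrounding $w_i$ being read off from the marking of $w_i$ (it has label $d(w_i)+1$, whence the minimum distinguished label is $1$ by~\ref{con:Delay0th}).

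The new and genuinely delicate point---which I expect to be the main obstacle---is to prove that $\bij(\qq)$ has \emph{exactly} $k$ faces, naturally indexed by the sources. By the generalisation of property~\ref{A} of \cref{lemma:invariant}, $\DEG(\qq)$ is a disjoint union of oriented cycles with in-trees; since every blue vertex has a unique outgoing edge, each connected component carries exactly one cycle, and by the label-monotonicity argument recalled in \cref{subsect:OrientableCase} a cycle can only encircle a strict local minimum of $\ell^d$, i.e.\ a source. Hence the components of $\DEG(\qq)$ are in bijection with the $k$ cycles created at initialization, i.e.\ with the sources; contracting each of the $k$ cycles turns $\DEG(\qq)$ into a forest of $k$ trees, so $\mathbb{S}\setminus\bij(\qq)$ is obtained by gluing triangles, quadrangles and digons along $k$ tree-like skeletons and is homeomorphic to $k$ disks. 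This shows $\bij(\qq)$ is a genuine map on $\mathbb{S}$ with exactly $k$ faces, each containing a single source, which fixes the numbering $1,\dots,k$ from $w_1,\dots,w_k$; positivity of all non-source labels makes it well-labeled.

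For the reverse map $\rbij$ I would follow \cref{sec:reversebij}, but now cutting the $k$-face map along all of its edges produces $k$ polygons; inside polygon $i$ one places a centre $w_i$ at label $\big(\text{label of the distinguished corner of face }i\big)-1$ and runs~\ref{StepR0}--\ref{StepR3} in parallel over the polygons, ordered globally by increasing label. The type analysis of \cref{lemma:PropertiesOfR} and the proof of \cref{prop:Rwelldef} are local and transfer, showing $\rbij(\M)$ is a bipartite quadrangulation; that the reconstructed $(W,d)$ satisfies \ref{con:Delay0th}--\ref{con:Delay2nd} follows from the analogue of \cref{lemma:labelsDistances} (so that $\ell^d(w_i)=d(w_i)$ and the $w_i$ are strict local minima): \ref{con:Delay0th} is the normalisation of distinguished labels, while \ref{con:Delay1st} and \ref{con:Delay2nd} are forced because a geodesic between two strict local minima cannot be label-monotone and because $\ell^d$ changes parity along every edge. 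Finally, that $\bij$ and $\rbij$ are mutually inverse is the step-by-step ``the two blue graphs coincide'' induction of \cref{sec:bijection}, run componentwise once cycles are matched with sources/faces; the rules being local and identical in both directions, it transfers with no change. The ``moreover'' statistics then drop out from the identity ``label of $v$ equals $\ell^d(v)$'' together with the face-by-face correspondence between corners of label $i$ and edges of $E(N_i,N_{i-1})$, exactly as for \cref{theo:MS1}. Everything outside the $k$-disk/$k$-cycle topology is a local, verbatim transcription of the single-source case.
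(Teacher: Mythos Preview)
Your approach is essentially the same as the paper's, and the overall architecture (labeling lemma, $k$ blue cycles with attached trees, local rules unchanged, $k$ polygons in the reverse direction, step-by-step blue-graph coincidence) is exactly what the paper does. A few points of comparison are worth recording.

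First, the paper inserts \emph{all} $k$ blue cycles at once in Step~M0--b, before the induction, rather than inserting the cycle around $w_j$ dynamically when the running value $i$ reaches $d(w_j)$. Both variants work; the paper's is slightly cleaner because one then does not have to argue separately that the edges incident to $w_j$ are still free when its cycle is drawn. Relatedly, you say ``Steps~1--3 are then applied verbatim'', but this is not quite right: with several connected components one needs a mechanism to pass from one component to another when the tour from the current $LVC$ returns to itself without finding a suitable face. The paper handles this explicitly by maintaining one $LVC_j$ per component and cycling $LVC:=LVC_{j+1}$ in Step~M1; this small bookkeeping point should be added to your sketch.

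Second, for the claim that $\bij(\qq)$ has exactly $k$ faces, you invoke the label-monotonicity argument of \cref{subsect:OrientableCase} to say that any blue cycle must encircle a strict local minimum. That argument is phrased for \emph{orientable} surfaces (it uses a global notion of ``left side'' along a cycle) and does not transfer literally. It is also unnecessary: once the analogue of property~\ref{A} of \cref{lemma:invariant} is established (each Step~M2 attaches a directed tree branch to a single existing component), the only cycles are the $k$ initialized ones and components never merge; this is the argument the paper uses.

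Third, in the reverse construction you set the label of $w_i$ to be the label of the distinguished corner of face $i$ minus one. The paper instead sets $d(w_j)=\min_{v\in V(\pp_j)}\ell(v)-1$ and uses the distinguished corner only to orient the blue cycle and to initialise $LVC_j$. On the image of $\bij$ the two prescriptions coincide (the distinguished corner produced by the forward map always has the minimum label in its face, since corners of face $j$ correspond to $\qq$-edges crossed by the $j$-th blue component, all of label $\ge d(w_j)$), so your variant is not wrong for the bijection, but the paper's formulation is the one that makes well-definedness of $\rbij$ transparent for an arbitrary well-labeled $k$-face map.
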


\begin{remark}
Notice that for $k=1$  \cref{theo:Miermont1} coincides with \cref{theo:MS1} and \cref{theo:Miermont2} coincides with \cref{theo:MS2}. Indeed, a rooted map with unique delayed source $(\M, \{w\}, d)$ coincides with the pointed rooted map $(\M,w)$. 
\end{remark}

The construction behind \Cref{theo:Miermont2} is almost identical to the one presented in \cref{sec:bij}, so instead of proving everything twice, we only present differences between both constructions with necessary proofs. 

\begin{lemma}
\label{lem:DistanceFromSources}
For a given bipartite quadrangulation with $k$ delayed sources $(\qq,$ $(w_1,\dots,w_k), d)$, and for any pair of adjacent vertices $v,w \in V(\qq)$ one has
\[ |\ell^d(v) - \ell^d(w)| =1.\]
\end{lemma}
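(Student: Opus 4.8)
The plan is to prove the two inequalities $|\ell^d(v)-\ell^d(w)|\le 1$ and $|\ell^d(v)-\ell^d(w)|\ge 1$ separately: the first by a Lipschitz argument and the second by a parity argument. Together with the observation that the difference turns out to be odd, these force equality.

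First I would record the upper bound, which is routine. For each fixed source $w_i$, the function $v\mapsto d_\qq(v,w_i)+d(w_i)$ is $1$-Lipschitz with respect to the graph distance of $\qq$, since $d_\qq(\cdot,w_i)$ is. As $\ell^d$ is the pointwise minimum over $i$ of finitely many $1$-Lipschitz functions, it is itself $1$-Lipschitz, so for adjacent $v,w$ (which are at distance $1$) we get $|\ell^d(v)-\ell^d(w)|\le 1$. Concretely, if $i_0$ realizes the minimum defining $\ell^d(v)$, then the triangle inequality gives $\ell^d(w)\le d_\qq(w,w_{i_0})+d(w_{i_0})\le d_\qq(v,w_{i_0})+1+d(w_{i_0})=\ell^d(v)+1$, and symmetrically $\ell^d(v)\le\ell^d(w)+1$.

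The main content is the parity statement: I claim that $\ell^d(v)$ and $\ell^d(w)$ have opposite parities, so that in particular their difference is nonzero (hence, with the bound above, exactly $\pm 1$). Since $\qq$ is bipartite and connected, assign to each vertex $x$ its colour $c(x)\in\{0,1\}$; then every walk between two vertices alternates colours, so $d_\qq(x,y)\equiv c(x)+c(y)\pmod{2}$ for all $x,y$. In particular, for any two sources $w_i,w_j$ and any vertex $x$ one has $d_\qq(x,w_i)-d_\qq(x,w_j)\equiv c(w_i)+c(w_j)\equiv d_\qq(w_i,w_j)\pmod{2}$. On the other hand, condition \ref{con:Delay2nd} says $d(w_i)-d(w_j)+d_\qq(w_i,w_j)$ is even, i.e.\ $d(w_i)-d(w_j)\equiv d_\qq(w_i,w_j)\pmod{2}$. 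Combining the two congruences gives $d_\qq(x,w_i)+d(w_i)\equiv d_\qq(x,w_j)+d(w_j)\pmod{2}$, so the parity of $d_\qq(x,w_i)+d(w_i)$ does not depend on the source $w_i$; call it $p(x)$. Thus $\ell^d(x)\equiv p(x)\pmod{2}$, whichever source attains the minimum.

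Finally I would combine the two ingredients. For adjacent $v,w$, bipartiteness gives $c(v)\neq c(w)$, hence $d_\qq(v,w_i)\not\equiv d_\qq(w,w_i)\pmod{2}$ for any fixed $i$, so $p(v)\neq p(w)$ and therefore $\ell^d(v)\not\equiv\ell^d(w)\pmod{2}$. In particular $\ell^d(v)\neq\ell^d(w)$, giving $|\ell^d(v)-\ell^d(w)|\ge 1$; together with the Lipschitz bound this yields $|\ell^d(v)-\ell^d(w)|=1$. The only subtle point is the interplay between bipartiteness and condition \ref{con:Delay2nd} in the parity step; once the colouring congruence $d_\qq(x,y)\equiv c(x)+c(y)\pmod{2}$ is in hand, everything else is immediate, and condition \ref{con:Delay1st} (which guarantees positivity, but not parity) is not needed here.
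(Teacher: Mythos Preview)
Your proof is correct and follows essentially the same approach as the paper: a Lipschitz bound for $|\ell^d(v)-\ell^d(w)|\le 1$, followed by a parity argument combining bipartiteness with condition~\ref{con:Delay2nd} to rule out equality. The only cosmetic difference is that you establish the parity invariance of $d_\qq(x,w_i)+d(w_i)$ in $i$ directly, whereas the paper phrases the same computation as a proof by contradiction (assuming $\ell^d(v)=\ell^d(w)$ and deriving a violation of~\ref{con:Delay2nd}); the content is identical.
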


\begin{proof}
Clearly, by bipartition of $\qq$, for any adjacent pair of vertices $v,w \in V(\qq)$, and for any $i \in [k]$ we have that $|\ell^d_i(v) - \ell^d_i(w)| = 1$, where $\ell^d_i(v) = d_\qq(w_i,v) + d(w_i)$. Taking $\ell^d = \min_{1 \leq i \leq k}\ell^d_i$, we have that $|\ell^d(v) - \ell^d(w)| \leq 1$. If there exists adjacent pair of vertices $v,w \in V(\qq)$ such that $\ell^d(v) = \ell^d(w)$, then there exists $i \neq j$ such that $\ell^d_i(v) = \ell^d_j(w)$. This means that
\[ d_\qq(v,w_i) - d_\qq(w,w_j) + d(w_i) - d(w_j) = 0,\]
and since $u$ and $w$ are adjacent we have that $d_\qq(v,w_i) - d_\qq(w,w_i) \equiv 1 \mod 2$. Using bipartition of $\qq$ again, we obtain that $d_\qq(w,w_i) - d_\qq(w,w_j) \equiv d_\qq(w_i,w_j) \mod 2$ and putting it all together we have that
\[ d_\qq(w_i,w_j) + d(w_i) - d(w_j) \equiv 1 \mod 2,\]
contradicting \cref{def:MapWithSources}--\ref{con:Delay2nd}.
\end{proof}

We are ready now to describe the construction that follows the same steps as the construction of \Cref{sec:bij}. We number these steps in the same way preceding them with the letter ``M'' for ``multipointed''. 
Notions such as labels of edges, corners, or types of faces that are not explicitly redefined keep the same meaning as in~\Cref{sec:bij}.

\subsubsection{Constructing the dual exploration graph $\DEG(\qq, W, d)$}
\label{subsubsect:SpanningTreeInStepsMiermont}

Similarly to \cref{subsubsect:SpanningTreeInSteps} for a quadrangulation with $k$ delayed sources $(\qq, W, d)$ on a surface $\mathbb{S}$ we describe how to draw a directed graph $\DEG(\qq, W, d)$ on the same surface. As before, to distinguish vertices, edges, etc.~of the quadrangulation and of the new graph we will say that the vertices, edges, etc.~of the graph $\DEG(\qq, W, d)$ are \emph{blue}, while the edges of $\qq$ are \emph{black}.

\smallskip
 
\begin{enumerate}[label=$\bullet$, ref=Step M0--\alph*, leftmargin=0cm]
\item \label{StepM0a} \textbf{Step M0--a.} We label the vertices of $\qq$ according to their distance from the delayed sources $(W, d)$.
By \cref{lem:DistanceFromSources},  the faces of $\qq$ are either of type $(i - 1, i, i - 1, i)$ or of type $(i - 1, i, i + 1, i)$ for $i > 0$.

\medskip
As in~\Cref{sec:bij}, our goal is to draw a blue graph in such a way that at the end of the construction, each edge of the quadrangulation $\qq$ is crossed by exactly one blue edge. As before, we call \emph{free} a black edge that has not yet been crossed by a blue edge in the construction.
\medskip 

\item \label{StepM0b} \textbf{Step M0--b.}
For each source $w_i$, $1\leq i\leq k$, we add a new blue vertex in each corner incident to $w_i$, and we connect these blue vertices together by a cycle of blue edges encircling $w_i$, as on \cref{fig:step0}.
We orient this cycle as in the previous construction, using the orientation of the distinguished corner of $w_i$ (call it $c_i$). There is a unique vertex of the blue graph lying in $c_i$, and this vertex has a unique corner that is separated from $w$ by the blue cycle. We declare that corner to be the $i$-th \emph{last visited corner ($LVC_i$)} of the construction and we equip it with the orientation inherited from the one of the cycle (the $LVC_i$ will be dynamically updated in the sequel). At the end of this step we constructed $k$ blue oriented cycles, one around each source, and we distinguished $k$ oriented corners $LVC_1, \dots, LVC_k$.

As in~\Cref{sec:bij}, we are going to construct the blue edges (and thus the graph $\DEG(\qq, W, d)$) by increasing label. 
We will start by drawing edges of label $1$, so we let $i := 1$. We will start our construction at the first $LVC$, so we set $LVC:=LVC_1$ (the value of $LVC$ will be dynamically updated in the sequel).
\end{enumerate}

\noindent We now proceed with the inductive part of the construction. 
\begin{enumerate}[label=$\bullet$, ref=Step M\arabic*, leftmargin=0cm]
\item \label{StepM1} \textbf{Step M1.}
If there are no more free edges in $\qq$, we stop. Otherwise, we perform the tour of the blue graph, starting from the $LVC$, which by construction is equal to $LVC_j$ for some $1\leq j \leq k$. If we reach the $LVC$ again, we skip to the next connected component of the blue graph by letting $LVC := LVC_{j+1}$ and we go back to \ref{StepM1} (here $LVC_{k+1} = LVC_1$ by convention).
We stop as soon as we visit a face $F$ of $\qq$ having the following properties: $F$ is of type $(i-1,i,i+1,i)$, and $F$ has exactly one blue vertex already placed inside it. As in the case of~\Cref{sec:bij}, such a face always exists (this is proved in \cref{prop:welldefMiermont} below).

We now apply the same construction as in~\Cref{sec:bij} to select an edge $e$ according to the rules of~\Cref{fig:chooseFe}.
Namely, if the face $F$ is incident to only one free edge, we let $e$ be that edge.
If not, the same argument as in \cref{lemma:invariant} ensures that the blue vertex $u$ contained in $F$ is incident to two blue edges of label $(i-1)$, one incoming and one outgoing. We then let $e$ be the first edge of label $i$ encountered clockwise around $F$ after that corner (where $F$ is oriented in such a way that the two oriented blue edges turn counterclockwise around the corner of label $(i-1)$). 

\item \label{StepM2} \textbf{Step M2.} This step is identical to \ref{Step2} in \Cref{sec:bij}. Namely, we 
draw a new blue vertex $v$ in the unique corner of $F$ delimited by $e$ and its neighboring edge of label $i-1$, and we let $a$ the vertex of $\qq$ incident to this corner.
Now, as on \cref{fig:step2}, we draw a path of directed blue edges starting from $v$, that starts by crossing $e$, and continues recursively by turning 
counterclockwise 
around $a$ until it reaches a face containing a corner of label $i-1$, and finally connects to the blue vertex $w$ of label $i-1$ in that face (the same argument as in \cref{lemma:invariant} ensures that this blue vertex exists).
Then, there exists a unique $1 \leq j \leq k$ such that $LVC_j$ is attached to the connected component of the blue graph containing vertex $w$ and we update the value of $LVC_j$, as the corner lying to the right of the last directed blue edge we have drawn, in the local orientation defined by the fact that the path just drawn turns counterclockwise around $a$, and we also update $LVC := LVC_j$.

\item \label{StepM3} \textbf{Step M3.} If there are  no more free edges of label $i$ in $\qq$, we set $i:=i+1$, otherwise we let $i$ unchanged. We then go back to Step 1 and continue.
\end{enumerate}

\begin{enumerate}[label=$\bullet$, ref=Termination, leftmargin=0cm]
\item \label{StepMTermination} \textbf{Termination.} We let $\DEG(\qq, W, d)$ be the blue embedded graph on $\mathbb{S}$ obtained at the end of the construction.
\end{enumerate}

\cref{subfig:exampleDEG-Miermont2} gives an example of the construction.
\begin{figure}
\centering
\subfloat[]{
	\label{subfig:exampleDEG-Miermont1}
	\includegraphics[width=0.33\linewidth]{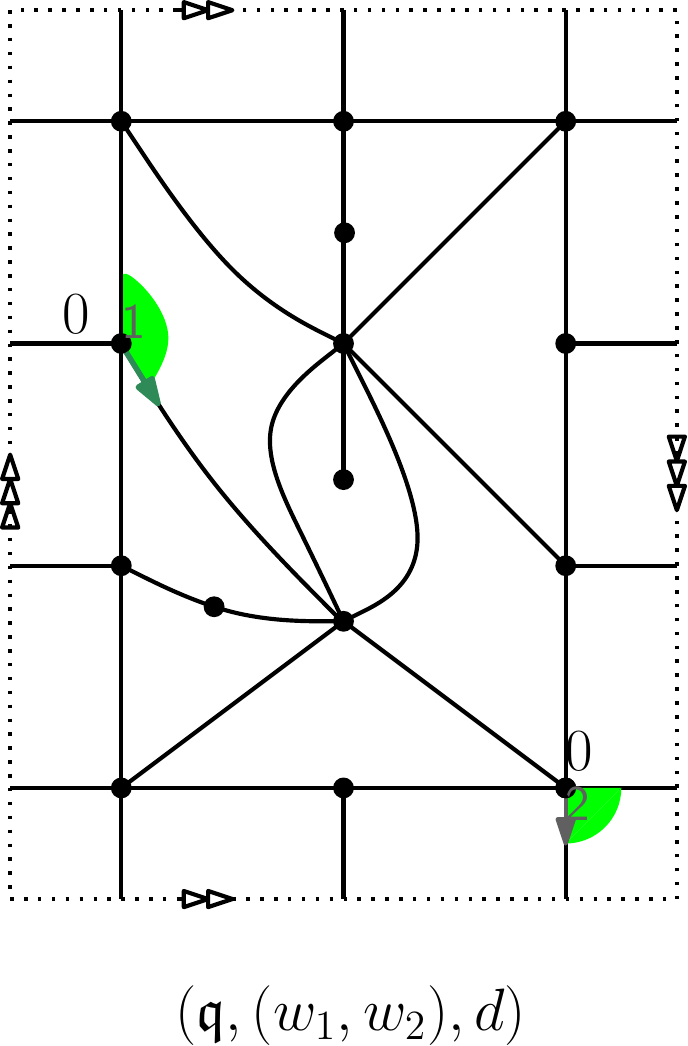}}
\subfloat[]{
	\label{subfig:exampleDEG-Miermont2}
	\includegraphics[width=0.33\linewidth]{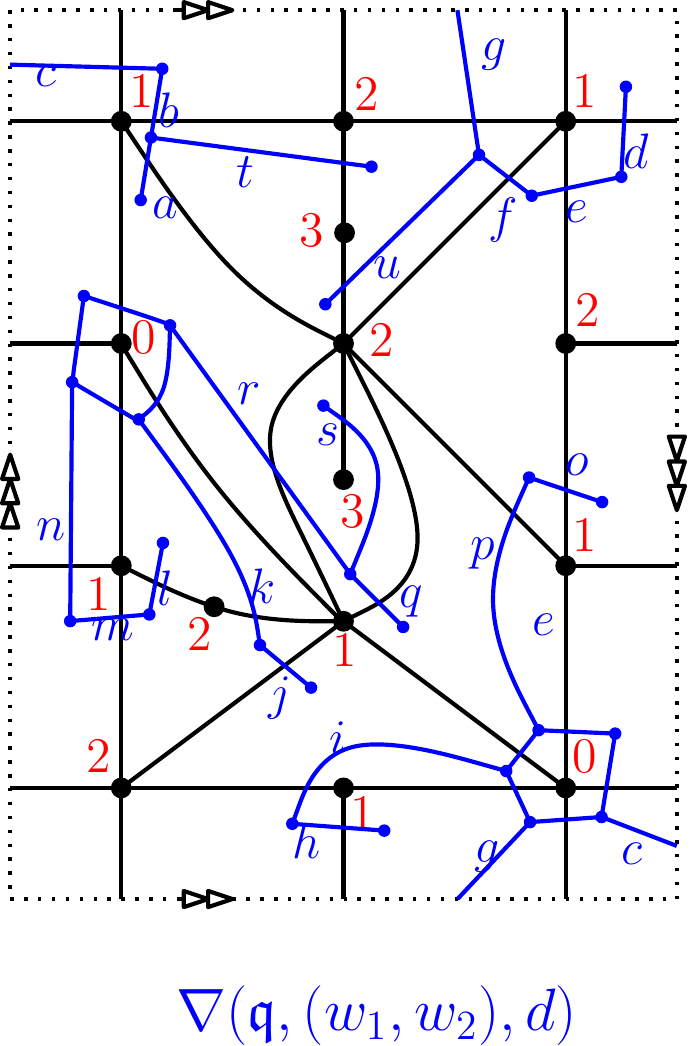}}
\subfloat[]{
	\label{subfig:exampleDEG-Miermont3}
	\includegraphics[width=0.33\linewidth]{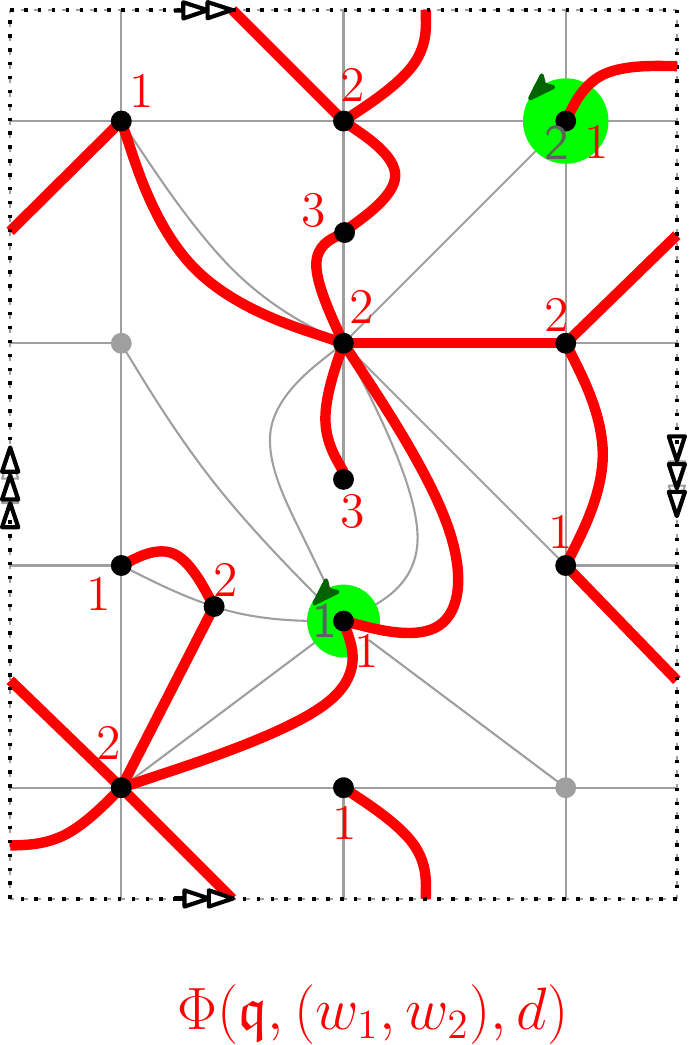}}
\caption{Construction (in red) of the labeled map $\bij(\qq, \{w_1,w_2\},d)$ associated with the bipartite quadrangulation of \cref{fig:exampleDEG} with two ordered rooted sources.}
\label{fig:exampleDEG-Miermont}
\end{figure}

\begin{proposition}\label{prop:welldefMiermont} The construction of $\DEG(\qq, W, d)$ is well-defined.
\end{proposition}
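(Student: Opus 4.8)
The plan is to reproduce the proof of \cref{prop:welldef} almost verbatim, the only genuinely new ingredients being the presence of $k$ connected blue components and the round-robin mechanism over $LVC_1,\dots,LVC_k$ in \ref{StepM1}. First I would record the analog of \cref{lemma:invariant}. By \cref{lem:DistanceFromSources}, adjacent vertices of $\qq$ still have labels $\ell^d$ differing by exactly one, so every face is of type $(i-1,i,i-1,i)$ or $(i-1,i,i+1,i)$, and the five face-configurations \ref{Ba}--\ref{Be} keep exactly the same meaning. Since \ref{StepM2} is word-for-word identical to \ref{Step2}, the inductive argument proving property \ref{B} applies unchanged. Property \ref{A} must be restated: after \ref{StepM0b} the blue graph is a disjoint union of $k$ directed cycles, one encircling each source $w_i$, and each execution of \ref{StepM2} attaches a directed path to an \emph{already existing} blue vertex, hence never merges two components. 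Consequently $\DEG(\qq,W,d)$ remains, throughout the construction, a forest of trees attached to these $k$ cycles, all trees oriented towards the cycle of their own component.

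Next I would show that \ref{StepM1} always succeeds in selecting a valid face. The two combinatorial claims underlying \cref{prop:welldef} are purely local and transfer directly. The first, that whenever free edges of label $i$ remain at least one of them is incident to a face of minimum label $i-1$, is proved exactly as before by turning clockwise around the label-$i$ endpoint $a$ of a putative counterexample; the only adaptation is that $a$ now lies on a geodesic to its \emph{nearest source}, which by \cref{eq:DistanceOfSources} still guarantees a neighbour of label $i-1$. The second is the counting argument: once no free edge of label $<i$ remains, each execution of \ref{StepM2} creates exactly one blue vertex of degree $1$ and raises exactly one blue vertex from degree $2$ to degree $3$ inside the faces of type $(i-1,i,i+1,i)$, so among those faces there are as many of kind \ref{Bc} as of kind \ref{Bd}; since free edges of label $i$ remain, they are not all of kind \ref{Be}, and since no free edge of label $i-1$ remains, none is of kind \ref{Ba}, forcing at least one face of kind \ref{Bb} or \ref{Bc}.

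The one point requiring a new argument is that the round-robin tour of \ref{StepM1} actually reaches such a face. Here I would invoke property \ref{A}: the guaranteed face of kind \ref{Bb} or \ref{Bc} meets the blue graph and therefore lies in one of the $k$ components. Starting from the current $LVC=LVC_j$, the tour traverses the entire component containing it (a cycle with attached trees, hence all its blue corners), and on returning to $LVC_j$ the rule $LVC:=LVC_{j+1}$, read cyclically with $LVC_{k+1}=LVC_1$, advances to the next component; since there are only $k$ components and at least one of them contains a valid face, the process halts at a valid $F$ after at most one full pass. I expect this bookkeeping over components—verifying that the skipping mechanism visits each component and cannot loop forever without stopping—to be the only real obstacle, everything else being inherited from \cref{prop:welldef}. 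Finally, as in the single-source case, all positional assumptions made when selecting the edge $e$ in \ref{StepM1} and when drawing the path in \ref{StepM2} are guaranteed by the invariant lemma, which completes the proof.
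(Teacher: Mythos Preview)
Your proposal is correct and follows essentially the same approach as the paper. In fact the paper's proof is even terser than yours: it declares the argument ``almost identical'' to that of \cref{prop:welldef} and spells out \emph{only} the adaptation of the first claim (existence of a neighbour of label $i-1$ via a geodesic to the realising source $w_j$, after noting $a$ cannot itself be a source since all edges incident to sources are crossed in \ref{StepM0b}), treating the round-robin bookkeeping over the $k$ components and the restatement of invariant~\ref{A} as routine enough to omit.
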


\begin{proof}
This proof is almost identical to the proof of \cref{prop:welldef}. The only slight difference is in the proof of the following claim: if there are still free edges of label $i$ in $\qq$, 
then at least one of them is incident to a face of minimum label $i-1$. In order to prove it, we assume the contrary. Let $e$ be a free edge of $\qq$ of label $i$ and let $a$ be the endpoint of label $i$ of $e$. Let $e=e_0, e_1, \dots e_r$ with $r\geq 1$ be the sequence of edges encountered clockwise around $a$ (in some conventional orientation) starting from $e$, where $e_r$ is the first edge clockwise after $e$  having label $i-1$.
\noindent Vertex $a$ is not a source, because all edges incident to sources are not free by \ref{StepM0b}. Let $w_j$ be a source such that $\ell^d(a) = d(w_j) + d_\qq(w_j,a)$. Let $a'$ be the vertex lying on some geodesic between $a$ and $w_i$ such that $d_\qq(w_j,a') = d_\qq(w_j,a) - 1$. Then $\ell^d(a') < i$ and $a'$ is linked to $a$ by an edge, thus $\ell^d(a) = i-1$ by \cref{lem:DistanceFromSources}, which proves that the edge $e_r$ exists. The edge $e_{r-1}$ has label $i$ and is incident to a face of minimum label $i-1$, so by the assumption we have made, $e_{r-1}$ is crossed by a blue edge. But according to the construction rules, this blue edge is part of a path of edges labeled $i$ turning around the vertex $a$ and originating in a face containing a corner of label $(i-1)$. This path must cross the edge $e$ (although we do not know in which direction). This is a contradiction and proves the claim.
\end{proof}

\subsubsection{Constructing the labeled map $\bij(\qq, W, d)$.}
Exactly as in the case of rooted, non-pointed quadrangulations, there are two possible types of faces of $\qq$ at the end of the construction of $\DEG(\qq, W, d)$ and we add one red edge in each face of $\qq$ according to the rule of~\cref{fig:redBlueRule}.
We let $\bij(\qq, W, d)$ be the map on $\mathbb{S}$ consisting of all the red edges, and of all the vertices of $\qq$ except $W$. For every root corner $c_i$ of $(\qq,W,d)$ there is a unique corner of $\bij(\qq,W,d)$ of label~$d(c_i)+1$  incident to the corresponding root edge of $(\qq,W,d)$. We define $c_i'$ to be that corner and we equip it with the local orientation inherited from the one of the root corner $c_i$.  \cref{subfig:exampleDEG-Miermont3} gives an example of the construction.

\begin{lemma}
$\bij(\qq, W, d)$ is a well-defined well-labeled map with $k$ faces numbered from $1$ to $k$, and one distinguished oriented corner in each face, where $k = |W|$.
\end{lemma}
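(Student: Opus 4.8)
The plan is to reproduce, component by component, the topological argument that proves $\bij(\qq)$ is a well-defined well-labeled unicellular map, and then to read off the number of faces from Euler's formula. First I would record the structure of the blue graph: running the same induction as in \cref{lemma:invariant}, one checks that $\DEG(\qq,W,d)$ is a disjoint union of $k$ pieces $C_1,\dots,C_k$, where $C_j$ consists of the directed cycle encircling the source $w_j$ produced at \ref{StepM0b} together with a forest of trees oriented towards it. As in \cref{sec:DEG}, this forces every face of $\qq$ to have, at the end of the construction, one of the two types preceding \cref{fig:redBlueRule}, so the red edges — and hence the map $\bij(\qq,W,d)$ — are well defined.

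Next I would analyse $H:=\mathbb{S}\setminus\bij(\qq,W,d)$. Exactly as in the single-source case, the blue graph $\DEG(\qq,W,d)$ passes through every region of $H$, so each component of $H$ deformation retracts onto the blue edges it contains. The key point is then that a \emph{connected} component of $H$ cannot retract onto a disjoint union of several of the pieces $C_j$, since $\pi_0$ is a homotopy invariant; hence each component of $H$ contains exactly one piece $C_j$, which yields a bijection between the faces of $\bij(\qq,W,d)$ and the sources $w_1,\dots,w_k$. Within the component carrying $C_j$, any loop first retracts along the attached trees onto the cycle of $C_j$, which in turn bounds the disk around $w_j$ contained in $H$; so the component is simply connected, i.e.\ a topological disk. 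I expect this paragraph to be the main obstacle, as it is exactly here that the multiplicity $k$ replaces the single cycle of the unicellular case.

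Granting that all faces are disks, the embedding is cellular, which forces the underlying graph to be connected (an extra component would leave some face non-simply-connected), so $\bij(\qq,W,d)$ is a genuine map on $\mathbb{S}$ with exactly $k$ faces; I would then number the face containing $w_j$ (equivalently $C_j$) by $j$. As a consistency check, $\bij(\qq,W,d)$ has $v(\qq)-k=(n+2-2h)-k$ vertices and $n$ edges (one red edge per face of the $n$-face quadrangulation $\qq$), so \eqref{eq:euler} returns $f=k$.

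Finally I would verify the labeling and the marked corners. By \cref{lem:DistanceFromSources} every red edge joins corners whose labels differ by $0$ or $1$, so adjacent vertices of $\bij(\qq,W,d)$ differ by at most $1$; moreover $\ell^d(v)\geq 1$ for every non-source $v$, since $\ell^d(v)=0$ would force $d_\qq(v,w_i)=d(w_i)=0$ for some $i$, i.e.\ $v=w_i\in W$. The distinguished corner $c_j'$ has label $d(w_j)+1$, and the minimum of these over $j$ equals $1$ by \cref{def:MapWithSources}--\ref{con:Delay0th}; hence the map is well-labeled with distinguished corners of minimum label $1$. Since each $c_j'$ lies in the face containing $w_j$, namely face $j$, every one of the $k$ faces carries exactly one distinguished oriented corner, which completes the verification.
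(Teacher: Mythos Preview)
Your proof is correct and follows essentially the same approach as the paper: both argue that the complement $H=\mathbb{S}\setminus\bij(\qq,W,d)$ has exactly $k$ simply-connected components by exploiting the structure of $\DEG(\qq,W,d)$ as $k$ disjoint cycle-plus-trees pieces, and then deduce that $\bij(\qq,W,d)$ is a valid map with $k$ faces. Your write-up is in fact more detailed than the paper's in several places---the $\pi_0$ argument pinning down the bijection between components of $H$ and pieces $C_j$, the explicit Euler-formula consistency check, and the careful verification that all labels are at least $1$ and that the minimum distinguished-corner label is $1$---whereas the paper dispatches the well-labeled condition with ``it is clear''.
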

\begin{proof}
We first notice that the number of connected components of $H=\mathbb{S}\setminus \bij(\qq, W, d)$ is equal to the number of connected components of $\DEG(\qq, W, d)$, that is equal to $k$ which follows from the construction of $\DEG(\qq, W, d)$. Indeed, in \ref{StepM0b} we constructed $k$ disjoint blue loops and in the following steps of the construction of $\DEG(\qq, W, d)$ we didn't change the number of the connected components of the already constructed blue graph. Moreover, every connected component $C$ of $H$ is simply connected since any loop in $C$ can be retracted to a point along the tree-like structure of the connected component of $\DEG(\qq, W, d)$ lying in $C$. This proves both that $\bij(\qq, W, d)$ is a valid map on $\mathbb{S}$, and that it has $k$ faces.

The fact that each root corner $c_i'$ lies in the different face of $\bij(\qq, W, d)$ is a consequence of the fact that each corner $c_i'$ belongs to the corresponding root edge of $\qq$ that is crossed by different connected component of $\DEG(\qq, W, d)$. Finally, it is clear that $\bij(\qq, W, d)$ is well-labeled.
\end{proof}

\subsubsection{From well-labeled maps to quadrangulations}
\label{sec:reversebijMiermont}

We now describe the reverse construction.
Again, the construction is almost identical to the construction from \cref{sec:reversebij}, and we will go rapidly through the parts that are truly identical.
 We let $\M$ be a $k$-rooted well-labeled map on a surface $\mathbb{S}$ with $n$ edges and $k$ faces, where $k$-rooted means that there are $k$ ordered roots in $\M$ such that each root corner lies in a different face of $\M$. From $\M$ we are going to reconstruct simultaneously two graphs: the quadrangulation with $k$ rooted delayed sources $(\qq, W, d)$ associated with $\M$, and the dual exploration graph $\DEG(\qq, W, d)$. As before, edges of the map $\M$, of the quadrangulation, and of the dual exploration graph will be called \emph{red}, \emph{black}, and \emph{blue}, respectively, vertices of the dual exploration graph will also be refereed to as \emph{blue}, and black edges will also be referred to as \emph{internal edges}.

\smallskip

\begin{enumerate}[label=$\bullet$, ref=Step MR0, leftmargin=0cm]
\item \label{StepMR0} \textbf{Step MR0}.
We consider the representation of $\M$ as a collection of $k$ ordered labeled polygons, each with one marked oriented corner, and with identified pairs. This representation easily generalizes the one of \cref{subsec:RepresentationOfMap}, so we don't give the details\footnote{The only difference is that now an edge is said \emph{twisted} iff the corresponding gluing is not compatible with the orientations (fixed by the orientations of the polygons) of the two edge-sides thus glued.}. We denote by $\pp$ the associated collection of polygons $\pp = \{\pp_j\}_{j \in [k]}$, by $c_j'$ the  marked oriented corner in $\pp_j$, and by $E_s(\M)$ and $E_t(\M)$ the associated sets of matchings. Inside each polygon $\pp_j$ we draw a vertex $w_j$ labeled by $\min_{v \in V(\pp_j)}\ell(v)-1$ and we connect each corner of $\pp_j$ labeled by $\min_{v \in V(\pp_j)}\ell(v)$ to $w_j$ by a new (black) edge. In this way we dissect each polygon $\pp_j$ into $k_j$ areas, where $k_j$ is the number of corners of $\pp_j$ labeled by $\min_{v \in V(\pp_j)}\ell(v)$. In each such area we now draw one blue vertex and we connect them by a directed (counterclockwise with respect to the orientation given by the corner $c_j'$) blue loop encircling $w_j$. There is a unique internal edge incident to the root corner $c_j'$, and a unique blue edge crossing it. We declare the blue corner lying at the extremity of that edge (and exterior to the cycle) to be the \emph{$j$-th last visited corner ($LVC_j$)} of the construction. We set $LVC := LVC_1$ and $i:=1$, and we continue.

\smallskip

We now proceed with the inductive step of the construction that will be totally similar to the easier case described in \cref{sec:reversebij}, that is we are going to construct simultaneously and recursively a planar graph $\rplan(\M)$ drawn in $\pp$ and a blue directed graph $\rDEG(\M)$ drawn in $\pp$ such that each connected component of $\rplan(\M)$ and of $\rDEG(\M)$ is drawn in some corresponding polygon $\pp_j$. 
 The vertices of that planar graph will be $V(\pp) \cup \{w_1,\dots,w_k\}$ and its edges (called internal edges because they are lying in the interior of polygons from $\pp$) will have the property that for every polygon $\pp_j$ each vertex $w \in V(\pp_j)$ labeled by $i$ will be connected to a unique vertex $w' \in V(\pp_j) \cup \{w_j\}$ labeled by $i-1$. Here, we will use the same notation and notions as we did in \cref{sec:reversebij} (such as area, types of areas, labels, etc.).
Again, the structure of the construction is analogous to the one of \cref{sec:reversebij} and we use the same indexing for the different steps, with a prefix letter ``M''.
\end{enumerate}

\smallskip

\begin{enumerate}[label=$\bullet$, ref=Step MR\arabic*, leftmargin=0cm]
\item \label{StepMR1} \textbf{Step MR1}.
If there are no vertices of label $i+1$ in $\pp$, go to the termination step.
Otherwise, walk along the blue graph, starting from the $LVC$ which is equal to $LVC_j$ for some $1\leq j\leq k$. As in \ref{StepM1}, if we visit the $LVC$ twice, we change its position to $LVC := LVC_{j+1}$ and we continue our tour from the new position. We let $F$ be the first visited area having the following properties:
\begin{itemize}
\item[$\bullet$] The minimum corner label in $F$ is $i-1$.
\item[$\bullet$] $F$ is of type \ref{T2} or \ref{T3}.
\item[$\bullet$] If $F$ is of type \ref{T3}, let $e$ be the unique (red) edge of $\pp$ bordering $F$. If $F$ is of type \ref{T2}, let $e$ be the last (red) edge of $\pp$ bordering $F$, having extremities labeled by $i$, and $i+1$, in the counterclockwise orientation induced by the blue graph on $F$ (see \cref{fig:stepR1}).
Let $\tilde{e}$ be the unique edge of $\pp$ that is matched with $e$ in the map structure inherited from $\M$, and let $\tilde{F}$ be the area containing $\tilde{e}$. Then $\tilde{F}$  is of type \ref{T2}.
\end{itemize}
Note that the rules for choosing $F$ are exactly the same as in \ref{StepR1}. \cref{prop:RwelldefMiermont} ensures that such an area $F$ always exists.

\item \label{StepMR2} \textbf{Step MR2}.
We follow the same rules as in \ref{StepR2} in \cref{sec:reversebij}.
Namely, we let $F, e, \tilde{e}$ and $\tilde{F}$ be defined as above, and $v$ be the vertex of $\pp_j$ of label $i$ incident to $\tilde{e}$. We first link $v$ by new internal edges to all the corners of $\tilde{F}$ having label $i+1$, without crossing any existing blue edge, thus subdividing $\tilde{F}$ into several new areas $f_1,f_2,\dots,f_k$.
We now add a new blue vertex $v_i$ in the new area $f_i$ for $1\leq i \leq k-1$, and we connect the vertices $v_1,v_2,\dots,v_k$ by a blue directed path as in \ref{StepR2} described in \cref{sec:reversebij}, see \cref{fig:stepR2} again.
\noindent We declare the corner of $v_k$ incident to the last drawn blue edge and exterior to $v$, as on~\cref{fig:stepR2}, to be the new $LVC_j$, and we set $LVC = LVC_j$.

\item \label{StepMR3} \textbf{Step MR3}. If each vertex of label $i+1$ of $\pp$ is linked to an internal edge, we set $i:=i+1$, otherwise we let $i$ unchanged. We then go back to \ref{StepMR1} and continue.
\end{enumerate}

\begin{enumerate}[label=$\bullet$, ref=Termination, leftmargin=0cm]
\item \label{StepMRTermination} \textbf{Termination.} We perform the identifications of edges of $\pp$ according to the map structure of $\M$, thus reconstructing the surface $\mathbb{S}$. We call $\rbij(\M)$ the map on $\mathbb{S}$ consisting of all the internal edges, with vertex set $V(\M)\cup\{w_1,\dots,w_k\}$.
\end{enumerate}

\smallskip
\cref{fig:exampleReverse-Miermont} gives an example of the construction.

\begin{figure}
\includegraphics[width=\linewidth]{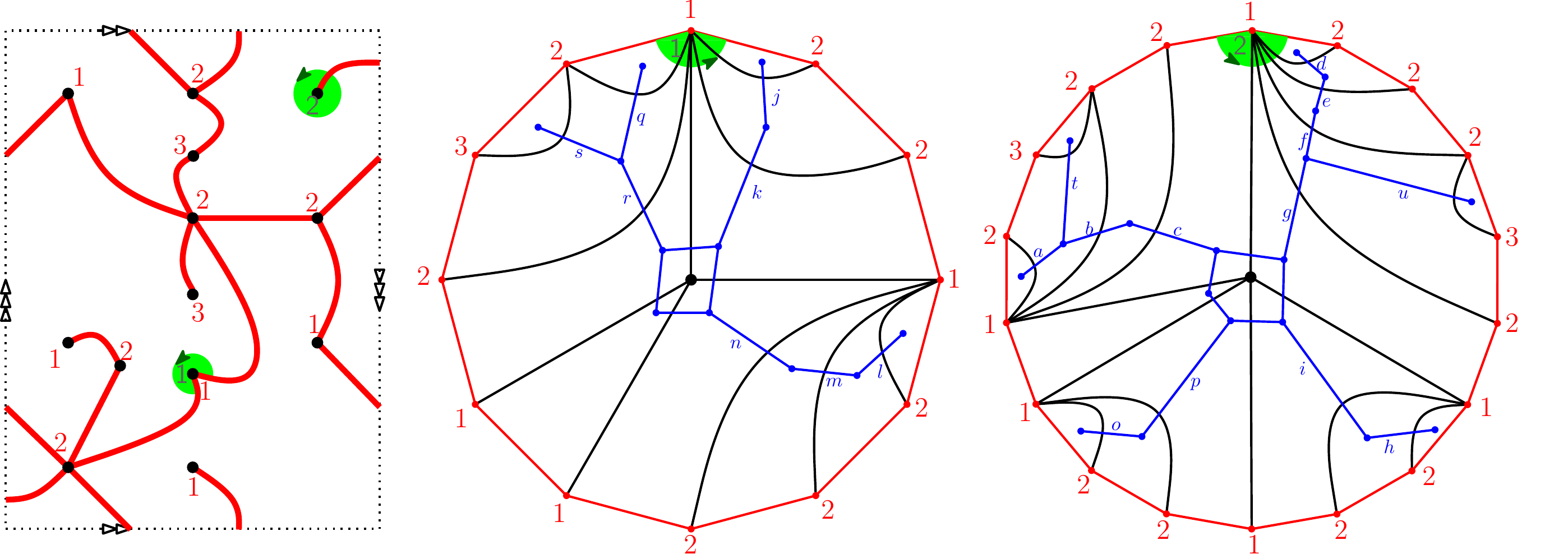}
\caption{Left: A labeled map $\M$ with two rooted and ordered faces on the Klein bottle. Right: The same map displayed as a gluing of two polygons, and the construction of the associated quadrangulation (black edges). To help the reader visualize the construction, blue edges added after \ref{StepMR0} have been numbered $a,b,\dots,z$ in their order of appearance in the construction. To make the picture lighter we have not drawn the orientation of the tree edges (they are all oriented towards the inner cycles).}
\label{fig:exampleReverse-Miermont}
\end{figure}

\begin{proposition}\label{prop:RwelldefMiermont}
The map $\rbij(\M)$ is a well-defined bipartite quadrangulation.
\end{proposition}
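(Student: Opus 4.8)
The plan is to follow the proof of \cref{prop:Rwelldef} essentially verbatim, since by design the inductive step (\ref{StepMR1}--\ref{StepMR2}) uses exactly the same local rules as (\ref{StepR1}--\ref{StepR2}); the only structural novelties are that the blue graph now has $k$ connected components (one blue cycle per source, built in \ref{StepMR0}), that the running corner $LVC$ cycles through $LVC_1,\dots,LVC_k$, and that the matching $E_s(\M)\cup E_t(\M)$ may now identify an edge of one polygon $\pp_j$ with an edge of a different polygon $\pp_{j'}$. First I would record the multi-source analogue of \cref{lemma:PropertiesOfR}: after any step, every area of type \ref{T3} has cotype \ref{T1} except possibly the last one created (whose cotype is \ref{T1} or \ref{T2}), and the number $N$ of areas of type \ref{T2} and cotype \ref{T1} is $0$ or $1$. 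The proof is unchanged, because the argument is purely temporal—it only tracks the order in which areas are created and subdivided—and rests on the involutive identity $\tilde e(e(\tilde F))=e(F)$ for the matching of $\M$, which holds whether or not $e$ and $\tilde e$ lie in the same polygon. Hence no new phenomenon arises from inter-polygon matchings, and $N$ is tracked globally across all of $\pp$.

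Next I would check that all areas stay of type \ref{T1}, \ref{T2}, or \ref{T3}, which is clear since \ref{StepMR2} is the only step creating areas and, exactly as in \ref{StepR2}, it produces one area $f_1$ of type \ref{T1}, areas $f_2,\dots,f_{k-1}$ of type \ref{T2}, and one area $f_k$ of type \ref{T3}; and that \ref{StepMR1} always succeeds. For the latter, suppose at some stage every $V(\pp)$-vertex of label $i$ is linked to an internal edge while some $V(\pp)$-vertex $v$ of label $i+1$ is not; let $F$ be the area containing $v$, which is necessarily of type \ref{T2}. The same case analysis as in \cref{prop:Rwelldef}, according to the cotype of $F$ (or the type of the area matched to the first boundary edge of $F$ labeled $i,i+1$), combined with the analogue of \cref{lemma:PropertiesOfR}--\ref{lemma:PropertiesOfR:2}, exhibits a valid area; since the blue graph passes through every area and the $LVC$-cycling forces the tour to visit all $k$ components, this valid area is eventually encountered. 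One extra point I would verify here is that the single global running variable $i$ interacts correctly with the different base levels $m_j-1=\min_{v\in V(\pp_j)}\ell(v)-1$ of the sources: for $i+1<m_j$ there are no $V(\pp)$-vertices of label $i+1$ in $\pp_j$, and the label-$m_j$ vertices of $\pp_j$ are already linked to $w_j$ in \ref{StepMR0}, so the condition of \ref{StepMR3} lets $i$ climb up to each $m_j$ before the genuine processing of $\pp_j$ begins; thus each component ``activates'' at the correct moment.

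Finally I would run the same counting argument to conclude. At the end of the construction every area of type \ref{T2} has degree three (all $V(\pp)$-vertices are linked), and since each pass through \ref{StepMR2} creates exactly one area of type \ref{T1} and one of type \ref{T3}, these two types are equinumerous; by the analogue of \cref{lemma:PropertiesOfR}--\ref{lemma:PropertiesOfR:1} each \ref{T3} area has cotype \ref{T1}. Performing the edge-identifications of \ref{StepMRTermination}, the surface decomposes into cells, each obtained either by gluing two degree-three \ref{T2} areas or by gluing a \ref{T3} area to a \ref{T1} area along their red edges, exactly as in \cref{fig:mergingFaces}; in both cases the cell is a quadrangle. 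This shows simultaneously that $\rbij(\M)$ is a valid cellular map on $\mathbb{S}$ and that it is a quadrangulation, and bipartiteness is immediate since every internal edge joins vertices whose labels differ by one, hence have opposite parity. The main obstacle, and the only place where $k>1$ must be argued rather than copied, is confirming that the cotype invariants survive the cross-polygon matchings and the switching of the active component; once that is in place the rest of the proof is identical to the single-source case.
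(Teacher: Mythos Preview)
Your proposal is correct and follows exactly the approach the paper takes: the paper states that the proof is identical to that of \cref{prop:Rwelldef} (in particular, that \cref{lemma:PropertiesOfR} holds in this more general setting) and skips it. You have spelled out the details, including the minor checks about cross-polygon matchings and the cycling of the $LVC$ through components, which the paper leaves implicit.
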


The proof of this statement is exactly the same as the proof of \cref{prop:Rwelldef} (in particular, \cref{lemma:PropertiesOfR} holds true for this more general construction), and we skip it. We conclude with the following observation.
Let us define $d: \{w_1,\dots,w_k\} \rightarrow \Z$ by setting $d(w_j) = \min_{v \in V(\pp_j)}\ell(v)-1$. In $\qq$, we distinguish the corner $c_j$ incident to $w_j$ defined as the unique oriented corner whose associated edge is incident to the corner $c_j'$ and such that the orientations of $c_j$ and $c_j'$ are the same. We have:

\begin{lemma}\label{lemma:labelsDistancesMiermont}
The map $(\rbij(\M), (w_1,\dots,w_k), d)$ is a quadrangulation with $k$ delayed sources and $k$ ordered roots $c_1,\dots,c_k$ attached to them. The label $\ell$ of any vertex in $V(\M)$ is equal to its distance $\ell^d$ from the delayed sources.
\end{lemma}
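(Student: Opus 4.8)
The plan is to reduce the statement to a few local facts about $\qq:=\rbij(\M)$ that are immediate from the reverse construction, and then to argue exactly as in the one-source case. Since \cref{prop:RwelldefMiermont} already guarantees that $\qq$ is a bipartite quadrangulation, and since the sources $w_1,\dots,w_k$ are pairwise distinct (each $w_j$ is the interior vertex created inside a distinct polygon $\pp_j$ in \ref{StepMR0}), it only remains to check the three conditions of \cref{def:MapWithSources} and the equality $\ell=\ell^d$. Two structural facts will be used throughout: \textbf{(i)} every internal edge of $\qq$ joins two vertices whose labels differ by exactly $1$ — this is built into the rules of \ref{StepMR0} and \ref{StepMR2}, which always link a corner of label $i$ to a vertex of label $i-1$ — so that the labelling is $1$-Lipschitz, $|\ell(u)-\ell(v)|\le d_\qq(u,v)$, and $\ell$-parity is a proper $2$-colouring of $\qq$; and \textbf{(ii)} each source $w_j$ carries the label $d(w_j)=\min_{v\in V(\pp_j)}\ell(v)-1$ and \emph{all} its neighbours have label $d(w_j)+1$, since the only edges incident to $w_j$ are those created in \ref{StepMR0}.

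For the equality $\ell=\ell^d$ on $V(\M)$, I would mimic the proof of \cref{lemma:labelsDistances}. The upper bound $\ell^d(v)\le\ell(v)$ is obtained by following, from a corner of $v$, the successor internal edges produced in \ref{StepMR2}, which decrease the label by one at each step until one reaches a source; this exhibits a path from $v$ to some $w_j$ of length $\ell(v)-d(w_j)$, whence $\ell^d(v)\le d_\qq(v,w_j)+d(w_j)\le\ell(v)$. The lower bound $\ell^d(v)\ge\ell(v)$ follows from fact (i) together with $\ell(w_i)=d(w_i)$: for every $i$ one has $d_\qq(v,w_i)+d(w_i)\ge(\ell(v)-d(w_i))+d(w_i)=\ell(v)$, and taking the minimum over $i$ yields the claim.

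It then remains to verify the delay conditions. Condition \ref{con:Delay0th} holds because the well-labelling of $\M$ forces all labels to be at least $1$ while some distinguished corner carries label $1$; hence the global minimum label is $1$, i.e. $\min_j\min_{v\in V(\pp_j)}\ell(v)=1$, so $\min_j d(w_j)=0$. For \ref{con:Delay2nd}, fact (i) makes $\ell$-parity a proper $2$-colouring, so $d_\qq(w_i,w_j)\equiv\ell(w_i)-\ell(w_j)=d(w_i)-d(w_j)\pmod 2$, which makes $d(w_i)-d(w_j)+d_\qq(w_i,w_j)$ even; its non-negativity is the Lipschitz bound $d_\qq(w_i,w_j)\ge|d(w_i)-d(w_j)|$, so it lies in $2\N$. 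The condition \ref{con:Delay1st} is the delicate point, and this is where fact (ii) is essential: if one had $|d(w_i)-d(w_j)|=d_\qq(w_i,w_j)$ with, say, $d(w_i)<d(w_j)$, then along a geodesic the label would increase by exactly one at each step, so the vertex preceding $w_j$ would have label $d(w_j)-1$, contradicting fact (ii) that every neighbour of $w_j$ has label $d(w_j)+1$; hence the inequality is strict.

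Finally, the ordered roots $c_1,\dots,c_k$ are well defined: the unique internal edge incident to each distinguished corner $c_j'$ in \ref{StepMR0} determines a unique oriented corner $c_j$ at $w_j$, and these are attached to distinct sources by construction. The only genuinely non-routine step is \ref{con:Delay1st}; everything else is a direct transcription of the one-source arguments, the single new ingredient being the local property (ii) that each source is a strict local minimum of the labelling.
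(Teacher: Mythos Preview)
Your proof is correct and follows essentially the same route as the paper's: both establish facts (i) and (ii), use the Lipschitz bound and the explicit successor-edge geodesic to get $\ell=\ell^d$, and rule out equality in \ref{con:Delay1st} by observing that a label-monotone geodesic would force a neighbour of a source to have smaller label. The only cosmetic difference is that the paper verifies the delay conditions first and $\ell=\ell^d$ second, and states slightly more precisely that for $v\in V(\pp_j)$ the minimum in $\ell^d(v)$ is attained at the source $w_j$ of that polygon.
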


\begin{proof}
First, we have that
\[ \min_{1 \leq j \leq k}d(w_j) = \min_{v \in V(\pp)}\ell(v)-1 = 0,\]
thus condition~\ref{con:Delay0th} from \cref{def:MapWithSources} holds true. Since every two vertices of $\rbij(\M)$ are linked by an edge only if their labels differ by $1$, notice that~\ref{con:Delay2nd} from \cref{def:MapWithSources} is clearly satisfied. Moreover, from the same reason, for any $1 \leq i < j \leq k$ we have that $|d(w_i)-d(w_j)| \leq d_{\rbij(\M)}(w_i,w_j)$, with equality iff there exists a geodesic path from $w_i$ to $w_j$ with strictly monotone labels. But such a geodesic path does not exist since for every $1 \leq i \leq k$ each vertex linked to $w_i$ by an internal edge has greater label than $w_i$. This proves~\ref{con:Delay1st} from \cref{def:MapWithSources}.

It remains to prove that labelings $\ell$ and $\ell^d$ coincide. First, we show that for every vertex $v \in V(\pp_j)$, we have that $\ell(v) = d(w_j) + d_{\rbij(\M)}(v,w_j)$. It is a consequence of the fact that every corner of label $i$ is linked to some corner of label $i-1$ for all $i > d(w_j)$. Together with the fact that $d_{\rbij(\M)}(v,v') \leq |\ell(v) - \ell(v')|$ for any pair of vertices $v,v' \in V(\M)$ it shows that for a fixed $v \in V(\pp_j)$ one has
\[ \ell(v) \leq d(w_i) + d_{\rbij(\M)}(v,w_i)\]
for any $1 \leq i \leq k$ with an equality for $i=j$, and hence
\[ \ell(v) = \min_{1 \leq j \leq k}\left(d(w_i) + d_{\rbij(\M)}(v,w_i)\right) =\ell^d(v), \]
which finishes the proof.
\end{proof}

\subsubsection{Bijection}
We are now ready to reformulate \cref{theo:Miermont1} in the following more precise form.
\begin{theorem}
For each $n\geq 1$ and each surface $\mathbb{S}$, the mapping $\bij$ and $\rbij$ are reverse bijections between the set of bipartite quadrangulations on $\mathbb{S}$ with $n$ faces and $k$ ordered rooted delayed sources, and the set of well-labeled maps on $\mathbb{S}$ with $n$ edges and $k$ ordered rooted faces.
\end{theorem}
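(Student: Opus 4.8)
The plan is to prove the theorem exactly along the lines of the single-source bijection theorem established in \cref{sec:bijection}, adapting every argument to the presence of $k$ sources and $k$ faces. Since \cref{prop:welldefMiermont} and \cref{prop:RwelldefMiermont} already guarantee that $\bij$ and $\rbij$ are well-defined maps between the two sets, it remains only to show they are mutually inverse. As in the single-source case, the local configurations of blue, black, and red edges inside each face (compare \cref{fig:redBlueRule} and \cref{fig:mergingFaces}) force the red edges once the blue graph is known, and \cref{lemma:labelsDistancesMiermont} ensures that the distance-from-sources labeling $\ell^d$ used in the forward construction coincides with the intrinsic labeling used in the reverse one. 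Consequently, to prove both $\bij(\rbij(\M))=\M$ and $\rbij(\bij(\qq,W,d))=(\qq,W,d)$ it suffices to prove that the two blue graphs coincide, that is $\DEG(\qq,W,d)=\rDEG(\M)$.

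I would establish this by the same simultaneous induction as in the single-source proof: number the blue edges $e_1,\dots,e_{2n}$ and $e'_1,\dots,e'_{2n}$ in their order of creation in the forward and reverse constructions, and prove $e_i=e'_i$ for all $i$ by induction, maintaining also that the current last-visited corner agrees on both sides. The base case compares \ref{StepM0a}--\ref{StepM0b} with \ref{StepMR0}: instead of a single cycle we now draw $k$ disjoint oriented blue cycles, one around each source $w_j$ (resp. one inside each polygon $\pp_j$), and the $k$ initial corners $LVC_1,\dots,LVC_k$ agree on both sides because the rooting data (the matched oriented corners $c_j$ and $c_j'$) are identified by construction. The inductive step then reproduces the three bullet points of the single-source argument, namely that \ref{StepM1} and \ref{StepMR1} select the same face (resp. area) and the same edge $e$, that \ref{StepM2} and \ref{StepMR2} draw the same blue path (turning counterclockwise around the same vertex and terminating at the same pre-existing blue vertex of label $i-1$), and that \ref{StepM3} and \ref{StepMR3} update $i$ identically. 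The face-selection and path-drawing analyses underpinning \cref{prop:welldef}, \cref{lemma:invariant}, and \cref{lemma:PropertiesOfR} carry over unchanged to the multi-source setting (this is exactly the content of \cref{prop:RwelldefMiermont}), so these matchings follow from the induction hypothesis just as before.

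The only genuinely new ingredient is the bookkeeping of the several last-visited corners and the cross-component jumps. In both directions the walk along the blue graph starts from the common current $LVC = LVC_j$, and if it returns to its starting corner without finding a selectable face it jumps to the next component via the convention $LVC:=LVC_{j+1}$ (cyclically, with $LVC_{k+1}=LVC_1$). Since this skipping rule is literally identical in \ref{StepM1} and \ref{StepMR1}, and since the number of connected components of the blue graph stays equal to $k$ throughout (each source keeps its own component, as used in the proof that $H=\mathbb{S}\setminus\bij(\qq,W,d)$ has exactly $k$ components), the induction hypothesis that the two blue graphs and the current $LVC$ agree implies that the same component is searched in the same cyclic order on both sides. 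Once the same face $F$ and edge $e$ are selected, the single-source analysis applied within that one component shows the same blue path is drawn and the relevant $LVC_j$ is updated in the same way, preserving the inductive invariant.

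The hard part will be to verify that nothing in the cross-component jumping breaks the ``first encountered'' selection rule, that is, that a selectable face in one component is never skipped in favour of a later one in another component in one construction but not the other. This is resolved by noting that the jump to $LVC_{j+1}$ is triggered only after the tour of the current component has been completed without success, so that during each round exactly one component is active and its selection is governed entirely by the single-source analysis of \cref{sec:bijection}; the synchronization of the $LVC$'s carried through the induction keeps the active component matched between the two directions. With these points in place one obtains $e_i=e'_i$ for all $i$, hence $\DEG(\qq,W,d)=\rDEG(\M)$, and the theorem (the precise form of \cref{theo:Miermont1}) follows in both directions.
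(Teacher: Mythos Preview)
Your proposal is correct and follows exactly the approach the paper intends: the paper's own proof reads, in its entirety, ``The proof of this theorem is similar to the one of \cref{theo:MS1} and we skip it,'' and what you have written is precisely the natural elaboration of that similarity, including the one genuinely new bookkeeping point (the synchronized cycling through the $LVC_j$'s across components). Your inductive invariant---that the partial blue graphs together with the tuple $(LVC_1,\dots,LVC_k)$ and the current $LVC$ agree on both sides---is the right one, and once it is maintained the single-source case-analysis from \cref{sec:bijection} carries over verbatim inside each component.
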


The proof of this theorem is 
similar to the one of \cref{theo:MS1} and we skip it. 

We now deduce \cref{theo:Miermont2} from \cref{theo:Miermont1}.
The argument is a direct adaptation of the one used to deduce \cref{theo:MS2} from \cref{theo:MS1} in \cref{sec:bijection}, so we just indicate the main differences.
The simultaneous choice, for each rooted bipartite quadrangulation $(\qq,W,d)$ with $k$ delayed sources of an oriented corner $\rho_i(\qq,W)$ incident to the source $w_i$ in $\qq$ for each $1\leq i \leq k$ is called an \emph{oracle}.
Once an oracle is fixed, we can view $\qq$ as a quadrangulation carrying $k$ \emph{rooted} delayed sources, call it $\qq'$ (with $w_i$ rooted at $\rho_i(\qq, W)$). This quadrangulation is equipped with an additional marked corner (the original root corner of $\qq$). We can then apply the bijection $\bij$ to $(\qq',W,d)$ and we obtain a well-labeled map $\M$ with $k$ rooted, ordered faces. Since $\M$ has $2n$ corners and $\qq'$ has $4n$ corners, we can use the marked corner of $\qq'$ to mark a corner of $\M$ and to get an additional sign $\epsilon\in\{+,-\}$ as in \cref{sec:bijection}. Declaring this corner to be the new root of $\M$, we can now shift all the labels of $\M$ by the same integer, in such a way that this corner receives the label $1$. We thus have obtained a labeled map $\M'$ with $k$ rooted, ordered faces, that carries a marked oriented corner $c$ of label $1$ (the root corner of $\M$), together with a  sign $\epsilon$, and we denote by $\Lambda_\rho(\qq,W,d):=(\M',c,\epsilon)$.

We thus obtain \cref{theo:Miermont2} in the following more precise form:
\begin{theorem}
\label{theo:Miermont2'}
For each $n\geq 1$ and each surface $\mathbb{S}$, there exists a choice of the oracle $\rho$ that makes $\Lambda_\rho$ a bijection between the set of rooted bipartite quadrangulations on $\mathbb{S}$ with $n$ faces and $k$ delayed sources, and the set of rooted labeled maps on $\mathbb{S}$ with $n$ edges and $k$ ordered faces equipped with a sign $\epsilon\in\{+,-\}$.
\end{theorem}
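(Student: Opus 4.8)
The plan is to follow verbatim the strategy used in \cref{sec:bijection} to deduce \cref{theo:MS2} from \cref{theo:MS1}: I would upgrade the bijection of \cref{theo:Miermont1} (which keeps track of a distinguished oriented corner at each source) to a $2$-to-$1$ correspondence by a Hall-type matching argument producing a suitable oracle $\rho$. The only genuinely new feature compared with the one-source case is that the oracle now consists of $k$ independent choices, one oriented corner per source, so all the relevant multiplicities become products over the $k$ sources.

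First I would stratify both sides by the degrees of the sources. Fixing a $k$-tuple $\vec d=(d_1,\dots,d_k)$ of positive integers, let $\mathcal{Q}_{\mathbb{S},n,\vec d}$ be the rooted bipartite quadrangulations with $n$ faces carrying $k$ delayed sources $(W,d)$ in which $w_j$ has degree $d_j$, and let $\mathcal{U}_{\mathbb{S},n,\vec d}$ be the rooted labeled maps with $n$ edges, $k$ ordered faces and a sign $\epsilon$, in which the $j$-th face has exactly $d_j$ corners carrying the minimum label of that face. I would first check that $\Lambda_\rho$ respects this stratification: by \ref{StepMR0} the source $w_j$ of degree $d_j$ is glued, in the reverse construction, to exactly the $d_j$ corners of minimum label of the $j$-th face, and neither the global label shift nor the choice of root and sign affects these per-face counts. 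Thus, for every oracle, $\Lambda_\rho$ maps $\mathcal{Q}_{\mathbb{S},n,\vec d}$ into $\mathcal{U}_{\mathbb{S},n,\vec d}$, and it suffices to build a matching stratum by stratum, since every quadrangulation lies in exactly one stratum.

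Next I would form the bipartite incidence graph on $\mathcal{Q}_{\mathbb{S},n,\vec d}\biguplus\mathcal{U}_{\mathbb{S},n,\vec d}$, joining $(\qq,W,d)$ to $(\M',\epsilon)$ whenever some choice of oriented corners $\rho_i(\qq,W)$ and some corner $c$ of $\M'$ give $\Lambda_\rho(\qq,W,d)=(\M',c,\epsilon)$, and argue that this graph is regular of degree $\prod_{j=1}^k 2d_j$ on both sides. On the quadrangulation side the incident maps are indexed by the oracles, i.e. by the $\prod_j 2d_j$ ways of orienting a corner at each of the $k$ sources; on the map side each $(\M',\epsilon)$ arises from the $\prod_j 2d_j$ ways of orienting a minimum-label corner in each of the $k$ faces. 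A bipartite graph regular of positive degree on both sides has parts of equal size and satisfies Hall's condition, hence admits a perfect matching; this matching selects one oracle per quadrangulation and makes $\Lambda_\rho$ a bijection on the stratum. Taking the union of these oracles over all $\vec d\in(\Z_{\geq1})^k$ yields the desired global oracle and finishes the proof.

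The step I expect to require the most care is the verification of biregularity, and in particular the injectivity needed to get \emph{exactly} $\prod_j 2d_j$ on the quadrangulation side: I must argue that two distinct oracles produce distinct images. This is where I would lean on \cref{theo:Miermont1}: distinct oracles turn $\qq$ into distinct quadrangulations with rooted delayed sources, which the bijection sends to distinct well-labeled maps, hence---after recording the root corner of $\qq$ as a marked corner together with a sign, using the $4n=2\cdot 2n$ ratio of corners---to distinct elements $(\M',\epsilon)$. The matching count on the map side is the mirror statement, again reading \cref{theo:Miermont1} backwards. Once biregularity is established, the perfect matching and the assembly of the oracle over all $\vec d$ are routine.
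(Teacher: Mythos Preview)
Your proposal is correct and follows essentially the same approach as the paper: stratify by the degree tuple $(d_1,\dots,d_k)$ of the sources (equivalently, the numbers of minimum-label corners in each face), build the biregular bipartite incidence graph of degree $\prod_j 2d_j$ using \cref{theo:Miermont1}, and apply Hall's marriage theorem stratum by stratum. The paper's proof is a terse two-sentence sketch of exactly this argument; you simply spell out the biregularity check in more detail.
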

\begin{proof}
This is a straightforward adaptation of the proof of \cref{theo:MS2}.
For fixed $d_1,d_2,\dots d_k \geq 1$, consider the set $\mathcal{Q}_{\mathbb{S},n,d_1,\dots,d_k}$ of all rooted quadrangulations on $\mathbb{S}$ with $n$ faces and $k$ delayed sources such that the $i-th$ source has degree $d_i$, and the set $\mathcal{U}_{\mathbb{S},n,d_1,\dots,d_k}$ of all rooted labeled maps on $\mathbb{S}$ with $n$ edges and $k$ faces carrying a sign $\epsilon\in\{+,-\}$, such that the numbers of corners of minimum label in the $i-th$ face is given by the numbers $d_i$. Then the bijection of \cref{theo:Miermont1} endows the  disjoint union of these two sets with a regular bipartite graph structure, and applying Hall's marriage theorem, we obtain the existence of the wanted oracle. Details are similar to the proof of~\cref{theo:MS2}.
\end{proof}

\subsection{Ambjørn-Budd bijection for general surfaces}
\label{subsect:AB}

In the case of orientable surfaces, Ambjørn and Budd~\cite{AmbjornBudd} designed yet another variant of Schaeffer's bijection, that proved to be useful in the study of scaling limits of maps (see e.g. \cite{BJM}). The Ambjørn-Budd bijection is actually a corollary of Miermont's bijection, in the sense that it can be deduced from it.
As we will see, the same is true for non-orientable surfaces. In this section, using the generalization of Miermont's bijection done in the previous section, we will generalize Ambjørn and Budd's construction to all surfaces.

Let $(\M,v_0)$ be a map with pointed vertex $v_0$. We will say that a vertex $u \in V(\M)$ is \emph{extremal} in the pointed map $\M$ if 
\[ d_\M(u,v_0) > d_\M(v,v_0)\]
for any vertex $v$ that is adjacent to $u$. Otherwise, we will say that the vertex $u$ is \emph{non-extremal}. 

\begin{lemma}
\label{lem:distances}
Let $(\M,v_0)$ be a pointed map. Then, for any vertex $v \in V(\M)$, there exists an extremal vertex $u \in V(\M)$ such that
\[ d_\M(v_0,v) + d_\M(v,u) = d_\M(v_0,u).\]
\end{lemma}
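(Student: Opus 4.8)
The plan is to take $u$ to be a vertex, as far as possible from $v_0$, that already has $v$ on one of its geodesics to $v_0$. Concretely, set
\[ S := \{ u \in V(\M) : d_\M(v_0,v) + d_\M(v,u) = d_\M(v_0,u) \}. \]
This set is nonempty since $v \in S$ (take $u=v$), and it is finite, so I can choose $u \in S$ maximizing $d_\M(v_0,u)$. By the very definition of $S$ this $u$ satisfies the identity claimed in the lemma, so the only thing left to prove is that such a maximizing $u$ is \emph{extremal}.

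To show this, suppose for contradiction that $u$ is not extremal, so there is a neighbour $w$ of $u$ with $d_\M(w,v_0) \geq d_\M(u,v_0)$. Because adjacent vertices of a bipartite quadrangulation have distances to $v_0$ of opposite parity (hence differing by exactly one), this forces $d_\M(w,v_0) = d_\M(u,v_0)+1$. Combining the triangle inequality $d_\M(v_0,w) \leq d_\M(v_0,v) + d_\M(v,w)$ with $d_\M(v,w) \leq d_\M(v,u)+1$ and the membership $u\in S$ gives
\[ d_\M(v_0,u)+1 = d_\M(v_0,w) \leq d_\M(v_0,v)+d_\M(v,w) \leq d_\M(v_0,v)+d_\M(v,u)+1 = d_\M(v_0,u)+1. \]
All inequalities are therefore equalities; in particular $d_\M(v_0,v)+d_\M(v,w)=d_\M(v_0,w)$, so $w \in S$. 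But $d_\M(v_0,w) = d_\M(v_0,u)+1 > d_\M(v_0,u)$ contradicts the maximality of $u$. Hence $u$ is extremal, which completes the argument.

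The step I expect to be the crux is ruling out a neighbour $w$ of $u$ lying at the \emph{same} distance from $v_0$: for such a $w$ one can only conclude $d_\M(v_0,v)+d_\M(v,w)\in\{d_\M(v_0,u),\,d_\M(v_0,u)+1\}$, and in the degenerate case $w$ could belong to $S$ without increasing $d_\M(v_0,\cdot)$, so the maximality argument would stall. This is not a mere technicality: for a triangle embedded in the sphere no vertex is extremal at all, so the statement is genuinely false without some parity hypothesis. Bipartiteness of the quadrangulation — equivalently, the absence of odd closed walks — is exactly what forces every neighbour to differ in distance by one and thereby eliminates this case. Everything else (nonemptiness of $S$, finiteness yielding a maximizer, and the triangle-inequality bookkeeping) is routine.
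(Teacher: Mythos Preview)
Your argument is correct and essentially the same as the paper's: both pick a vertex $u$ beyond $v$ that is as far as possible from $v_0$ along a geodesic through $v$, and observe that such a $u$ must be extremal. The paper phrases this as an explicit greedy iteration (repeatedly step to a neighbour at distance one more from $v_0$ until no such neighbour exists), whereas you take the maximum over the set $S$ and argue by contradiction; these are interchangeable formulations of the same maximality idea.

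Your final paragraph is a genuine improvement over the paper. The lemma is stated for an arbitrary pointed map, and the paper's proof silently assumes that a non-extremal vertex has a neighbour at strictly greater distance from $v_0$, which is exactly the step you flag. Your triangle counterexample shows the lemma is false as stated; it holds only under a parity hypothesis such as bipartiteness (which is the only context in which the paper uses the notion). So you have not only matched the paper's proof but also located and explained a gap that the paper leaves implicit.
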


\begin{proof}
If $v$ is an extremal vertex, we can choose $u = v$. Otherwise, there exists a vertex $u_1$ adjacent to $u_0:=v$ such that $d_\M(v_0,u_1) = d_\M(v_0,u_0) + 1$. 
Iterating as long as we can, we find an integer $I$ and vertices $u_{i+1}$ for $0\leq i \leq I$ such that a vertex $u_{i+1}$ is adjacent to $u_i$, $d_\M(v_0,u_{i+1}) = d_\M(v_0,u_i) + 1$, and $I$ is the largest index such that the vertex $u_{I+1}$ exists. Then $u := u_{I+1}$ is the desired extremal vertex.
\end{proof}

The notion of extremal vertices is important in the following theorem.
\begin{theorem}[Ambjørn-Budd bijection for general surfaces]
\label{theo:AB}
For each surface $\mathbb{S}$ and integer $n\geq 1$, there exists a $2$-to-$1$ correspondence between the set of rooted bipartite quadrangulations on $\mathbb{S}$ with $n$ faces carrying a pointed vertex $v_0$, and rooted maps on $\mathbb{S}$ with $n$ edges carrying a pointed vertex $\widetilde{v_0}$. Moreover, if a bipartite quadrangulation has $n_i$ non-extremal vertices at distance $i$ from the pointed vertex $v_0$ for some $i\geq 1$ and $k$ extremal vertices, then its associated map has $n_i$ vertices at distance $i$ from the pointed vertex $\widetilde{v_0}$ and $k$ faces.
\end{theorem}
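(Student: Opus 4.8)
{\bf Proof strategy.}

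The plan is to deduce \Cref{theo:AB} from the multi-pointed construction of \Cref{theo:Miermont2} by a careful choice of the delays. The key idea of Ambj\o{}rn and Budd is that the Ambj\o{}rn-Budd bijection corresponds to the ``uniform delay'' specialization of Miermont's bijection: one takes \emph{all} vertices of the quadrangulation as sources, with delays given (up to the normalization \ref{con:Delay0th}) by the distance labeling itself. Concretely, starting from a rooted bipartite quadrangulation $\qq$ with a pointed vertex $v_0$, I would set $W := V(\qq)$ and define $d(w) := d_\qq(v_0,w) - c$ for an appropriate constant $c$ making $\min_{w\in W} d(w)=0$. With this choice the distance from delayed sources \eqref{eq:DistanceOfSources} satisfies $\ell^d(v) = d_\qq(v_0,v) - c$ for every vertex $v$, since for each $v$ the minimum is realized by $w=v$ itself (as $d_\qq(v,v)+d(v)=d(v)$, and the triangle inequality together with the definition of $d$ prevents any other source from doing better).

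\textbf{Verifying the delay conditions and identifying the faces.}
First I would check that $(W,d)$ as above is a valid collection of delayed sources in the sense of \Cref{def:MapWithSources}. Condition~\ref{con:Delay0th} holds by the choice of $c$; condition~\ref{con:Delay2nd} is immediate from bipartiteness, since $d(w_i)-d(w_j)=d_\qq(v_0,w_i)-d_\qq(v_0,w_j)$ has the same parity as $d_\qq(w_i,w_j)$; and the strict inequality~\ref{con:Delay1st} $|d(w_i)-d(w_j)|<d_\qq(w_i,w_j)$ is exactly the statement that no geodesic between two vertices is monotone along the chosen labeling all the way, which follows because consecutive vertices on a geodesic differ by $\pm1$ in distance to $v_0$ but cannot stay strictly monotone unless they coincide with the sources themselves. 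The crucial combinatorial point is then the identification of the number of faces of the image map: by \Cref{theo:Miermont2} the resulting labeled map has $k=|W|$ faces, one per source, but one must argue that the sources carrying no incident edge of the one-face map (equivalently, the vertices that do \emph{not} become vertices of the image) are precisely the \emph{non-extremal} vertices, while the \emph{extremal} vertices become the faces. Here \Cref{lem:distances} is the relevant tool: it guarantees that the ``flow'' defined by following increasing-distance edges terminates at extremal vertices, so that the faces of the image map are in bijection with the extremal sources.

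\textbf{The main obstacle.}
The delicate part is the passage from the full ``one source per vertex'' picture of \Cref{theo:Miermont2} to the statement of \Cref{theo:AB}, which speaks of an ordinary \emph{pointed} map rather than a map with $k$ ordered faces. I expect the main difficulty to lie in verifying that, under the uniform-delay specialization, the $k$ faces produced by Miermont's bijection collapse correctly: each non-extremal source contributes no new vertex and each extremal source contributes exactly one face, so that the labeled map of \Cref{theo:Miermont2} can be reinterpreted as a map whose vertices are the non-extremal vertices of $\qq$ (at the same distances $i$, giving the claimed $n_i$) and whose faces number $k$, the number of extremal vertices. One then recovers a \emph{single} distinguished vertex $\widetilde{v_0}$ and the $2$-to-$1$ correspondence by absorbing the sign $\epsilon\in\{+,-\}$ and the face-ordering data exactly as in the deduction of \Cref{theo:MS2} from \Cref{theo:MS1}. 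I would carry out this bookkeeping by tracking how the local rules of \Cref{fig:redBlueRule} act around an extremal versus a non-extremal vertex, checking that extremal vertices lie inside a single face of the image (hence ``disappear'' as vertices and ``reappear'' as faces), and then matching the distance statistics $n_i$ and the face count $k$ directly against the vertex and face statistics of \Cref{theo:Miermont2}.
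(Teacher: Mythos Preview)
Your approach has a genuine gap: the choice $W=V(\qq)$ with $d(w)=d_\qq(v_0,w)$ does \emph{not} satisfy the delay condition~\ref{con:Delay1st}. Indeed, take $w_i=v_0$ and $w_j$ any neighbour of $v_0$ (or more generally any two vertices lying on a common geodesic to $v_0$): then $|d(w_i)-d(w_j)|=|d_\qq(v_0,w_i)-d_\qq(v_0,w_j)|=d_\qq(w_i,w_j)$, and the \emph{strict} inequality fails. Your justification (``cannot stay strictly monotone unless they coincide with the sources themselves'') is vacuous here, since every vertex is a source. So the Miermont machinery cannot be invoked on this triple $(\qq,W,d)$, and the subsequent plan to let non-extremal sources ``collapse'' has no object to act on.

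The paper proceeds by a different, and in a sense dual, specialization: it \emph{reverses} the distance labeling, setting $\ell(v)=\max_{w}d_\qq(w,v_0)-d_\qq(v,v_0)$. Under this reversal the \emph{extremal} vertices of $(\qq,v_0)$ become the local \emph{minima} of $\ell$, and those (and only those) are taken as the sources; the pointed vertex $v_0$ becomes the \emph{unique local maximum}. The key lemma then is that the Miermont bijection preserves local maxima (\cref{lem:LabelsAreDistances}), so the image map also has a unique local maximum $\widetilde{v_0}$; this is exactly what lets one forget the labels and remember only the pointed vertex (since $\ell(v)=\ell(\widetilde{v_0})-d_\M(v,\widetilde{v_0})$). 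The face count $k$ and the distance statistics $n_i$ then drop out of \cref{theo:Miermont2} directly, and the residual face-ordering is disposed of by the same Hall-matching trick you mention. If you want to repair your argument, the fix is not to take more sources but fewer: only the extremal vertices, with delays given by the reversed labeling.
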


In order to prove the theorem we first state the following lemma:
\begin{lemma}
\label{lem:LabelsAreDistances}
Let $(\qq,W, d)$ be a quadrangulation on $\mathbb{S}$ with $n$ faces and $k$ delayed sources and let $\Lambda_\rho(\qq,W, d) = (\M,c,\epsilon)$ be the associated labeled map, via the bijection of \cref{theo:Miermont2'}. Then, for every $v \in V(\M)$ $v$ is a local maximum in $\Lambda_\rho(\qq,W, d)$ iff $v$ is a local maximum in $(\qq,W, d)$.
\end{lemma}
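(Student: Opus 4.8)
The plan is to unwind the definition of ``local maximum'' on both sides of the Miermont bijection and show that the crossing rule of $\DEG$ exactly encodes the property of being a local maximum. First I would fix notation: a vertex $v$ is a \emph{local maximum} in $(\qq,W,d)$ if every neighbour of $v$ in $\qq$ has strictly smaller $\ell^d$-value, i.e.\ if every black edge incident to $v$ goes ``down'' in label. By \cref{lem:DistanceFromSources} the labels of adjacent vertices always differ by exactly $1$, so $v$ is a local maximum iff no black edge incident to $v$ has its higher-labelled endpoint elsewhere; equivalently, iff $v$ is the endpoint of label $i$ of \emph{every} incident edge of label $i-1$, and is incident to no edge of label $i$.

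Next I would translate this into the language of the red edges of $\M=\Lambda_\rho(\qq,W,d)$. Recall from the construction of $\bij$ (\cref{fig:redBlueRule}) that in a face of type $(i-1,i,i+1,i)$ the red edge joins the two corners of label $i$, while in a face of type $(i-1,i,i-1,i)$ it joins the two corners of label $i$ as well, with the rule depending on the DEG. The key observation I would extract from the face-types is that a red edge incident to a corner of $v$ raises the label (connects $v$ to a vertex of label $\ell^d(v)+1$) precisely when a certain incident black edge of label $\ell^d(v)$ is present, i.e.\ precisely when $v$ is \emph{not} a local maximum of $\qq$. Conversely, if $v$ is a local maximum in $\qq$, then all faces around $v$ are of type $(i-1,i,i-1,i)$ or have $v$ at the top corner, forcing every red edge at $v$ to decrease or preserve the label, so $v$ is a local maximum in $\M$. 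I would make this precise by a local case analysis around a single vertex $v$, examining the cyclic sequence of faces incident to $v$ and the red edge inserted in each.

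The cleanest route, I expect, is to use the label-preservation already recorded in \cref{lemma:labelsDistancesMiermont}: the labels $\ell$ in $\M$ and $\ell^d$ in $\qq$ agree on $V(\M)=V(\qq)\setminus W$. Since being a local maximum is a statement purely about the labels of a vertex and its neighbours, it suffices to compare, for a fixed vertex $v$, its neighbours in $\qq$ with its neighbours in $\M$ and check that the \emph{existence of an up-neighbour} is preserved. A red edge at $v$ in $\M$ of the form $(i,i+1)$ arises exactly from a face of $\qq$ of type $(i-1,i,i+1,i)$ in which $v$ occupies the corner of label $i$; but such a face exists around $v$ iff $v$ has a $\qq$-neighbour of label $i+1$, i.e.\ iff $v$ is not a local maximum in $\qq$. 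This equivalence, read in both directions, gives the lemma.

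The main obstacle will be the bookkeeping in the local case analysis: one must verify that the red-edge rule of \cref{fig:redBlueRule} never accidentally creates an up-edge at $v$ from a face in which $v$ sits at a label-$i$ corner of a type-$(i-1,i,i-1,i)$ face (where no up-edge should appear), and conversely that every genuine up-neighbour of $v$ in $\qq$ does produce a red up-edge at $v$. The delicate point is that the two corners of label $i$ in a face need not both belong to $v$, so one has to track which corner of each incident face belongs to $v$ and confirm that the inserted red edge is incident to $v$ with the correct label variation; handling the type-$(i-1,i,i+1,i)$ faces at the ``side'' corners of label $i$ (rather than the top corner of label $i+1$) is exactly where orientability played a role in the classical argument, so I would lean on \cref{lemma:invariant} to guarantee the blue configuration is one of the allowed types and thereby pin down the red edge unambiguously.
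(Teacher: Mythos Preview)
Your forward direction (local maximum in $\qq$ implies local maximum in $\M$) is fine and matches the paper's one-line argument: if $v$ has label $i$ and only neighbours of label $i-1$ in $\qq$, every face around $v$ is either of type $(i-1,i,i-1,i)$ or of type $(i-2,i-1,i,i-1)$ with $v$ at the top, and in both cases the red edge incident to $v$ cannot go up.

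The backward direction, however, has a genuine gap. You correctly identify the obstacle: in a face of type $(i-1,i,i+1,i)$ there are \emph{two} corners of label $i$, and the red edge joins the vertex of label $i+1$ to only \emph{one} of them. So the mere existence of such a face incident to $v$ does not give a red up-edge at $v$; your sentence ``such a face exists around $v$ iff $v$ has a $\qq$-neighbour of label $i+1$'' establishes only one half of the equivalence you need. You then propose to ``lean on \cref{lemma:invariant}'', but that lemma only lists the possible blue configurations in a single face; it does not tell you which of the two $i$-corners receives the red edge, and in fact either outcome is compatible with the allowed configurations.

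The paper closes this gap with an argument you do not mention. If in the face $f$ the red edge misses $v$ and hits the other $i$-vertex $v'$, then the orientations of the two blue edges crossing the sides $\{w,v'\}$ and $\{w,v\}$ of $f$ are forced (one outgoing, one incoming). The incoming blue edge at $v$'s side must have been drawn during some execution of \ref{StepM2} as part of a branch of label $i$ that originated in \emph{another} face $f'$ of type $(i-1,i,i+1,i)$; by the rules of \ref{StepM1} (\cref{fig:chooseFe}) that branch starts at a new blue vertex placed in the corner between $e$ and the adjacent edge of label $i-1$, which forces the red edge of $f'$ to join its vertex of label $i+1$ to $v$. In short, one must trace the DEG back from $f$ to locate a second face $f'$ in which the up-edge is guaranteed to land on $v$. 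This tracing argument is the missing idea in your proposal.
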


\begin{proof}
If $v \in V(\M)$ is a local maximum in $(\qq,W, d)$ then by construction it is clearly a local maximum in $\Lambda_\rho(\qq,W, d)$. 

To prove the converse, let $v \in V(\M)$ be a vertex labeled by $i = \ell(v)$, which is not a local maximum in $(\qq,W, d)$. We need to prove that there exists a vertex $w \in V(\M)$ adjacent to $v$ in $\M$ and labeled by $i+1$. From \cref{lemma:labelsDistancesMiermont} and strictly from the construction of the map $\M$ we know that there exists a face $f \in F(\qq)$ of type $(i - 1, i, i + 1, i)$ containing the vertex $v$. Let $e \in E(\M)$ be the edge of $\M$ lying inside $f$ that connects vertices labeled by $i$ and $i+1$ (it exists from the rules of the construction: see \cref{fig:redBlueRule}). If this edge contains the vertex $v$, there is nothing to prove, so let us assume that the vertex $w$ labeled by $i+1$ is connected by $e$ to the second vertex $v' \neq v \in V(\qq)$ labeled by $i$ belonging to the face $f$ (see \cref{subfig:GeodesicInMiermont1}). \begin{figure}[h!]
\centering
\subfloat[]{
	\label{subfig:GeodesicInMiermont1}
	\includegraphics[scale=0.6]{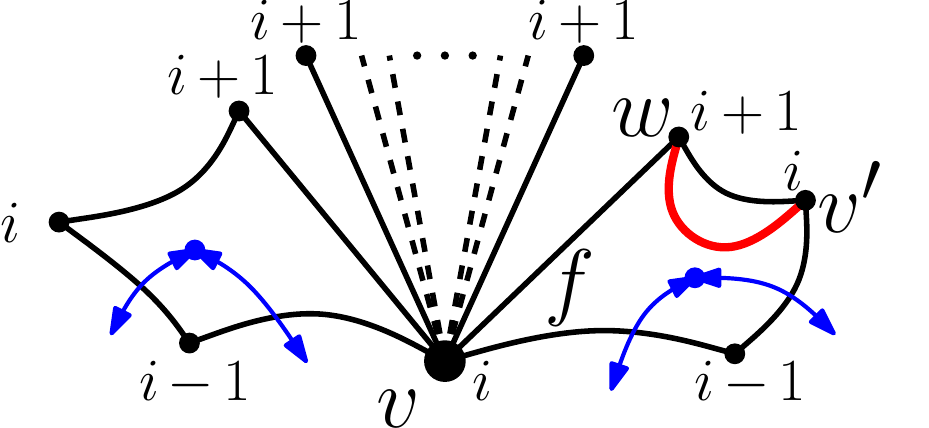}}
\subfloat[]{
	\label{subfig:GeodesicInMiermont2}
	\includegraphics[scale=0.6]{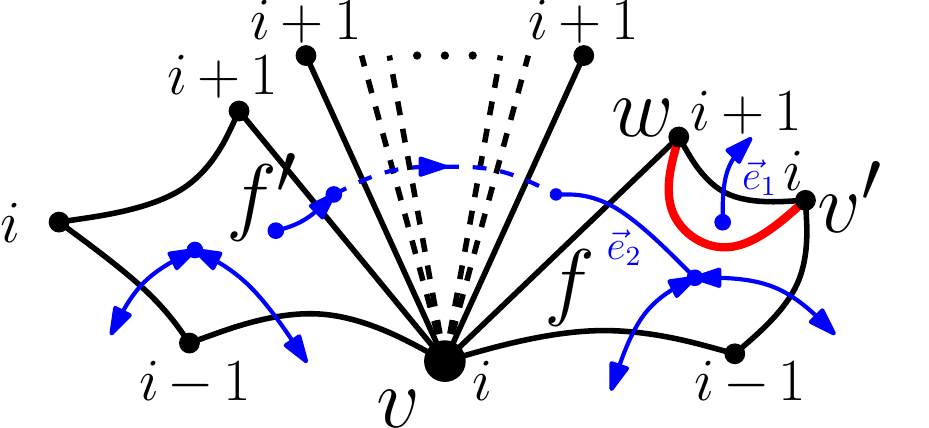}}
	
\subfloat[]{
	\label{subfig:GeodesicInMiermont3}
	\includegraphics[scale=0.6]{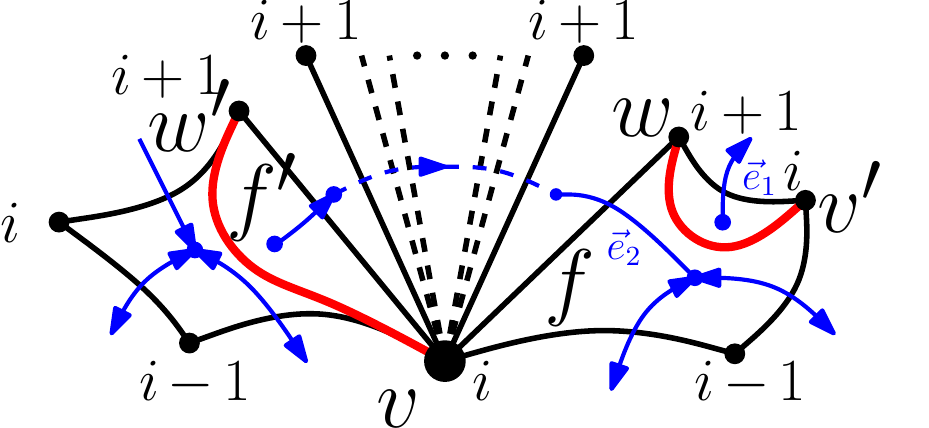}}
\caption{Illustration for the proof of \cref{lem:LabelsAreDistances}.} 
\label{fig:basicFaceRulesMMM}
\end{figure}
It means that the blue directed edge $\vec{e_1}$ crossing the edge of the face $f$ that connects $w$ with $v'$ is outgoing, hence the blue directed edge $\vec{e_2}$ crossing the edge of the face $f$ that connects $w$ with $v$ is incoming, because it is forced by the rules of the construction (see \cref{fig:redBlueRule}). Therefore, at some point of the construction of the labeled map $\M$ (\ref{StepM1}) there was chosen a face $f' \in F(\qq)$ of type $(i - 1, i, i + 1, i)$ from where a blue directed branch labeled by $i+1$ and prolonging the directed edge $\vec{e_2}$ was going out (see \cref{subfig:GeodesicInMiermont2}). But this means that the edge of the map $\M$ that is drawn inside the face $f'$ connects $v$ to some vertex $w' \in V(\qq)$ labeled by $i+1$ (see \cref{subfig:GeodesicInMiermont3}), which finishes the proof.
\end{proof}

Using the lemma above, one can restate \cref{theo:Miermont2'} in a different form. Then \cref{theo:AB} will be an obvious consequence of this reformulation.

\begin{definition}
Let $\qq$ be a rooted bipartite quadrangulation, and let $\ell : V(\qq) \to \N$ be a coloring of its vertices. We say that a triple $(\qq,\ell,\prec)$ is \emph{properly labeled} if every edge is labeled by $(i,i+1)$ for some $i \in \N$, at least 
one vertex of $\qq$ is labeled by $0$, and $\prec$ is a linear order on the set $V_{min}(\qq) \subset V(\qq)$ of all local minima of $\qq$ with respect to $\ell$.
\end{definition}

\begin{corollary}
\label{cor:theoMiermont}
For each $n\geq 1$ and each surface $\mathbb{S}$, there exists a choice of the oracle $\rho$ that makes $\Lambda_\rho$ a bijection between the set of rooted bipartite properly labeled quadrangulations on $\mathbb{S}$ with $n$ faces, and the set of rooted labeled maps on $\mathbb{S}$ with $n$ edges, ordered faces, and equipped with a sign $\epsilon\in\{+,-\}$.

Moreover, 
\begin{itemize}
\item there is a one-to-one correspondence between local minima $V_{min}(\qq)$ and faces of $\Lambda_\rho(\qq,\ell,\prec)$ such that if $f$ is a face of $\Lambda_\rho(\qq,\ell,\prec)$ associated with a vertex $v \in V_{min}(\qq)$ then $\ell(v)$ is given by the minimum of the labels around $f$ minus $1$;
\item labeled vertices of $\Lambda_\rho(\qq,\ell,\prec)$ correspond to labeled vertices $V(\qq)\setminus V_{min}(\qq)$;
\item local maxima of $(\qq,\ell,\prec)$ correspond to local maxima of $\Lambda_\rho(\qq,\ell,\prec)$.
\end{itemize}
\end{corollary}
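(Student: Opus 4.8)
The plan is to recognize that a properly labeled quadrangulation is merely a repackaging of a quadrangulation with delayed sources, so that the corollary follows from \cref{theo:Miermont2'} by translating the data on each side. The bridge between the two viewpoints is the observation that the \emph{delayed sources coincide with the local minima} of the distance-from-sources labeling $\ell^d$, and that the delays are recorded by the labels themselves. Concretely, I would first set up a dictionary: to a quadrangulation with $k$ delayed sources $(\qq,W,d)$ I associate the triple $(\qq,\ell^d, \prec)$, where $\prec$ is the order $w_1 \prec \dots \prec w_k$ on $V_{min}(\qq)$, and conversely to a properly labeled quadrangulation $(\qq,\ell,\prec)$ I associate $W=V_{min}(\qq)$ ordered by $\prec$ with delays $d(w)=\ell(w)$.

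The forward half of the dictionary rests on the claim that $V_{min}(\qq)=W$ and $\ell^d(w_i)=d(w_i)$. Indeed, condition \ref{con:Delay1st} gives $d_\qq(w_i,w_j)+d(w_j) > d(w_i)$ for $j\neq i$, so the minimum defining $\ell^d(w_i)$ in \eqref{eq:DistanceOfSources} is attained only at $j=i$, whence $\ell^d(w_i)=d(w_i)$; together with \cref{lem:DistanceFromSources} this forces every neighbour of $w_i$ to carry label $d(w_i)+1$, so $w_i$ is a local minimum. Conversely, starting from any local minimum $v$ and following a geodesic towards a source $w_j$ realizing $\ell^d(v)$ produces a first neighbour of label $\ell^d(v)-1$ unless that geodesic has length zero, so $v$ must itself be a source.

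The reverse half is where the real work lies, and its heart is the identity $\ell=\ell^d$. Since every edge is labeled $(i,i+1)$, the labeling $\ell$ is $1$-Lipschitz on $\qq$, so $\ell(v)\le \ell(w)+d_\qq(v,w)$ for every local minimum $w$, giving $\ell\le \ell^d$; for the reverse inequality I would descend from $v$ along edges decreasing the label by one until reaching a local minimum $w$ (the descent must terminate since labels lie in $\N$), a path of length $\ell(v)-\ell(w)$ that is therefore a geodesic and yields $\ell(v)=d_\qq(v,w)+\ell(w)\ge \ell^d(v)$. With $\ell=\ell^d$ established, the delay conditions are routine: \ref{con:Delay0th} holds because $\ell$ attains the value $0$ at a (necessarily local-minimal) vertex; \ref{con:Delay2nd} is the parity relation $\ell(w_i)-\ell(w_j)\equiv d_\qq(w_i,w_j)\pmod 2$ coming from bipartiteness; and \ref{con:Delay1st} is the Lipschitz bound $|\ell(w_i)-\ell(w_j)|\le d_\qq(w_i,w_j)$ made \emph{strict}, because equality would force a monotone geodesic ending at a neighbour of $w_j$ of smaller label, contradicting that $w_j$ is a local minimum. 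This shows the two halves are mutually inverse, giving a bijection between rooted properly labeled quadrangulations with exactly $k$ local minima and rooted quadrangulations with $k$ ordered delayed sources.

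It then remains to compose this dictionary, taken over all $k\ge 1$, with the bijection $\Lambda_\rho$ of \cref{theo:Miermont2'}: ordering $V_{min}(\qq)$ by $\prec$ matches the ordering of sources, hence of faces, while the root and the sign $\epsilon$ carry over verbatim. For the three additional assertions, the correspondence between $V_{min}(\qq)$ and faces is exactly the source/face correspondence of \cref{theo:Miermont2'}; the relation $\ell(v)=(\min\text{ label around }f)-1$ is precisely the defining identity $d(w_j)=\min_{v\in V(\pp_j)}\ell(v)-1$ of the reverse construction recorded in \cref{lemma:labelsDistancesMiermont}; the identification of the labeled vertices of the map with $V(\qq)\setminus V_{min}(\qq)$ is the fact that the image map has vertex set $V(\qq)\setminus W$; and the correspondence of local maxima is exactly \cref{lem:LabelsAreDistances}, whose conclusion is insensitive to the global shift of labels. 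I expect the identity $\ell=\ell^d$ together with the strictness of \ref{con:Delay1st} to be the one genuinely delicate point, and would treat it most carefully, as everything else is bookkeeping once the equivalence ``sources $=$ ordered local minima'' is in place.
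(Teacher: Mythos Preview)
Your proposal is correct and follows essentially the same approach as the paper: you set up the same dictionary between properly labeled quadrangulations and quadrangulations with delayed sources (sources $=$ ordered local minima, delays $=$ labels), compose with \cref{theo:Miermont2'}, and read off the three additional properties from the construction and \cref{lem:LabelsAreDistances}. The paper's own proof is much terser, asserting the dictionary is ``easy to see'' without spelling out the identity $\ell=\ell^d$ or the strictness of \ref{con:Delay1st}, whereas you carefully verify these; your write-up is thus a faithful expansion of their argument rather than a different route.
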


\begin{proof}
It is clear from \cref{def:MapWithSources} and from \cref{lem:DistanceFromSources} that for any rooted bipartite quadrangulation with $k$ delayed sources $(\qq,W,d)$ the triple $(\qq,\ell^d,\prec)$ is a rooted bipartite properly labeled quadrangulation with $k$ local minima, where $\prec$ is a linear order of the set $W  = V_{min}(\qq)$. On the other hand, given a rooted bipartite properly labeled quadrangulation $(\qq,\ell,\prec)$ with $k$ local minima one can associate with it a rooted bipartite quadrangulation with $k$ delayed sources given by $(\qq,V_{min}(\qq),d)$, where $d(v) = \ell(v)$, and it is easy to see that this establishes a one-to-one correspondence between those sets. Then, the first two properties of the corollary hold trivially by construction, while the last property is a reformulation of \cref{lem:LabelsAreDistances}.
\end{proof}

\begin{proof}[Proof of \cref{theo:AB}]
Let $(\qq,v_0)$ be a rooted pointed bipartite quadrangulation with $k$ extremal vertices $(w_1,\dots,w_k)$. We associate with it a rooted bipartite properly labeled quadrangulation $(\qq,\ell,\prec)$ with $k$ local minima $V_{min}(\qq,\prec) = (w_1,\dots,w_k)$ by setting
\[\ell(v) = \max_{w \in V(\qq)}d_{\qq}(w,v_0)-d_{\qq}(v,v_0).\]
It is evident from the construction that we established a bijection between the set of rooted pointed bipartite quadrangulations with $k$ extremal vertices $w_1,\dots,w_k$ and the set of rooted bipartite properly labeled quadrangulation with $k$ local minima. Hence, by \cref{cor:theoMiermont} composing this bijection with the bijection $\Lambda_\rho$ of \cref{theo:Miermont2'},
we obtain a bijection between rooted pointed bipartite quadrangulations with $k$ extremal vertices $(\qq,v_0)$, and labeled, rooted, maps with $k$ ordered faces, and additional sign $(\M,\epsilon)$, where the labels have some special property. Namely, observe that $(\qq,\ell,\prec)$ has a unique local maximum at the vertex $v_0$, labeled by $\ell_{max} := \max_{w \in V(\qq)}d_{\qq}(w,v_0)$. Thus, by \cref{cor:theoMiermont}, the corresponding map $(\M,\ell,\epsilon) := \Lambda_\rho(\qq,\ell,\prec)$ has a unique local maximum at the vertex $\widetilde{v_0}  = v_0 \in V(\M)$. This means that the labels $\ell$ in the map $\M$ are encoded uniquely by the pointed map $(\M,\widetilde{v_0})$, which establishes desired bijection (indeed, for any vertex $v \in \M$ one has $\ell(v) = \ell(v) - d_\M(v,\widetilde{v_0})$, since $\widetilde{v_0}$ is the unique local maximum of $(\M,\ell)$).
Now, notice that we have the following equality between sets of vertices:
\[ \{v \in V(\qq)\setminus\{w_1,\dots,w_k\}: d_\qq(v_0,v) = i\} = \{v \in V(\M): d_\M(\widetilde{v_0},v) = i\},\]
because they both coincide with the multiset
\[ \{v \in V(\qq)\setminus\{v_1',\dots,v_k'\}:  \ell(v_0) - \ell(v) = i\} =  \{v \in V(\M): \ell(\widetilde{v_0}) - \ell(v) = i\}.\]
cIn this way we proved that there is a bijection between the set of rooted pointed bipartite quadrangulations on $\mathbb{S}$ with $n$ faces and $k$ ordered extremal vertices, and between the set of rooted pointed labeled maps on $\mathbb{S}$ with $n$ edges and $k$ ordered faces. To conclude the proof we need to show that one can omit the word ''ordered'' in the previous statement, and one can do it, again, by building an appropriate bipartite graph and applying Hall's marriage theorem. As we already used this technique several times in this paper, we let it as an easy exercise to the reader.
\end{proof}

\section*{Acknowledgments}

G.C. thanks Gilles Schaeffer for mentioning the problem to him, back in 2007.

\bibliographystyle{alpha}

\bibliography{nonorientable}

\end{document}